\documentclass[11pt, a4paper,reqno]{amsart}
\usepackage[a4paper,margin=0.85in]{geometry}
\usepackage[utf8]{inputenc}
\usepackage{amsmath,amsfonts,amsthm,amssymb, amsopn,mathtools}
\usepackage[final, pdftex, pdfpagelabels, pdfstartview = {FitH}, bookmarks, colorlinks, plainpages = false, linktoc=all, linkcolor=magenta, citecolor=cyan, urlcolor=blue,filecolor=black]{hyperref}

\usepackage{xcolor}
\usepackage{tikz}
\usepackage[textwidth=20mm]{todonotes}
\usepackage{cleveref}
\usepackage{autonum}
\usepackage{pdfrender, tikz-cd, rotating}
\usepackage{fancyhdr}
\usepackage[shortlabels]{enumitem}
\usepackage{etoolbox, array, setspace}
\usepackage{lscape}
\usepackage{tabularx}
\usepackage{longtable} % para tablas largas
\usepackage{caption}
\usepackage{subcaption}
\usepackage{float}
\usepackage{genyoungtabtikz}
\usetikzlibrary{patterns,patterns.meta}
\usetikzlibrary{arrows.meta,calc}
\makeatletter
\let\amsart@tocsection\tocsection
\let\amsart@tocchapter\tocchapter
\let\tocsection\@undefined
\let\tocchapter\@undefined
\makeatother
\usepackage[nottoc]{tocbibind}
\makeatletter
\let\tocsection\amsart@tocsection
\let\tocchapter\amsart@tocchapter
\providecommand{\@openbib@code}{}
\makeatother

\usepackage{ytableau} % para insertar tableau

\usepackage{url}
\newcounter{rowcntr}[table]
\renewcommand{\therowcntr}{\arabic{rowcntr}}
\newcolumntype{N}{>{\refstepcounter{rowcntr}\therowcntr}c}
\AtBeginEnvironment{table}{\setcounter{rowcntr}{0}}

\usepackage[margin=2cm]{caption}
\usepackage{stackengine}
\usepackage{xparse}

\newcommand{\DE}[3]{D({#1}_{#2,#3}) }

\ExplSyntaxOn
\NewDocumentCommand{\D}{>{\SplitArgument{2}{,}}m}{\plaza_D_aux#1}
\NewDocumentCommand{\plaza_D_aux}{mmm}
  { \DE{\tl_trim_spaces:n{#1}}{\tl_trim_spaces:n{#2}}{\tl_trim_spaces:n{#3}} }
\ExplSyntaxOff

\newcommand{\bfN}{\mathbf{N}}
\newcommand{\bfM}{\mathbf{M}}
\newcommand{\bfH}{\underline{\Tilde{\mathbf{H}}}}

\newcommand{\sph}{\mathcal{H}^{\textbf{sph}}}

\newcommand{\OP}[1]{P_{#1}}
\newcommand{\OQ}[1]{Q_{#1}}
\newcommand{\labs}{\left|}
\newcommand{\rabs}{\right|}
\newcommand{\bfP}{\mathbf{P}}
\newcommand{\hgt}{\text{ht}}
\newcommand{\pint}[2]{\langle #1 , #2 \rangle}

\newtheorem{introtheorem}{Theorem}

\newtheorem{introconjecture}{Conjecture}

\newtheorem{theorem}{Theorem}[section]
\newtheorem{claim}[theorem]{Claim}
\newtheorem{lemma}[theorem]{Lemma}
\newtheorem{remark}[theorem]{Remark}
\newtheorem{definition}[theorem]{Definition}
\newtheorem{example}[theorem]{Example}

\newtheorem{corollary}[theorem]{Corollary}
\newtheorem{proposition}[theorem]{Proposition}

\newenvironment{proof.}{\paragraph{\textit{Proof:}}}{\hfill$\blacksquare$}

\newenvironment{myproof}[1][Proof]{%
  \par\addvspace{\medskipamount}%
  \noindent\textbf{#1.} \ignorespaces%
}{%
  \hfill$\square$\par\addvspace{\medskipamount}%
}

\usepackage{multicol}
\newcommand{\interior}[1]{%
  {\kern0pt#1}^{\mathrm{o}}%
}

\tikzset{
  pics/square/.default={1},
  pics/square/.style = {
    code = {
    \draw[pic actions] (0,0) rectangle (#1,#1);
    }
  }
}

\newcommand{\grayrect}[2][5]{%
\begin{tikzpicture}[scale=0.45]
    \foreach \r/\c in {#2}{
        \pgfmathtruncatemacro{\x}{\c-6}
        \fill[gray!35] (\x,\r-1) rectangle ++(1,1);
    }

    \draw (0,0) rectangle (3,#1);
    \foreach \x in {1,2} {
        \draw (\x,0) -- (\x,#1);
    }
    \foreach \y in {1,...,#1} {
        \draw (0,\y) -- (3,\y);
    }

    \node at (0.5,#1+0.45) {$6$};
    \node at (1.5,#1+0.45) {$7$};
    \node at (2.5,#1+0.45) {$8$};
\end{tikzpicture}%
}

\def\alcoves{\begin{tikzpicture}
    \fill[blue!30] (0,0) -- (60:10 ) -- (120:10 ) -- cycle;
    \fill[red] (0,0) -- (240:2 ) -- (300:2 ) -- cycle;
    \foreach \i in {0, 1, ..., 5} {
      \draw[->, ultra thick, blue] (0, 0) -- (30 + \i*60:3.4641);
      }
      
    \foreach \m in {0,1,2,3,4,5} {
       \draw[red] (0,0) -- (\m*60:10);
    }
    
      \draw[->, ultra thick, blue] (0, 0) -- (1*60:2);
      \draw[->, ultra thick, blue] (0, 0) -- (2*60:2);
      
      \pgfmathsetmacro\cx{2 * cos(120)}
      \pgfmathsetmacro\sx{2 * sin(120)}
      \pgfmathsetmacro\sy{2 * sin(60)}
      \pgfmathsetmacro\cy{2 * cos(60)}

      \foreach \k in {-5,...,5} {
        \foreach \j in {-5,...,5} {
            \pgfmathsetmacro\g{\k+\j};
            \ifthenelse{ 6 > \g \AND  -6 < \g  }{\filldraw (\k*\cx+\j*\cy,\k*\sx+\j*\sy) circle (2pt);}{}
        } 
      }
    \foreach \m / \n in {2/8,4/6,6/4,8/2,10/0} {
        \foreach \r in {0,1,2,3,4,5} {
            \draw[shorten >=-\n cm,shorten <=-\n cm] (\r*60 - 60:\m) -- (\r*60:\m);
        }
    }
\end{tikzpicture}}

\def\positiveroots{
\begin{tikzpicture}[baseline={(-5,-3)}]
\draw (-2,1) -- (-11.5,-8.5) -- (7.5, -8.5) -- (-2,1);
\foreach \y in {0,...,10}
    \foreach \x in {0,..., \y }{
        \pgfmathsetmacro\cx{int(\x + 1)};
        \pgfmathsetmacro\cy{int(10 - \y + \x)};
        \pgfmathsetmacro\z{int(10 - \y - 1)};
        \ifthenelse{ \z > 0 }{ 
        \draw (-\y + 2*\x-2,-\y) node {\huge $\alpha_{\cx,\cy}$};}{}
       }

\end{tikzpicture}
}

\def\positiverootsgeqcincosiete{
\begin{tikzpicture}[baseline={(-5,-3)}]
\draw (-2,1) -- (-9.5,-6.5) -- (-2.5,-6.5) -- (-1.5,-7.5) -- (6.5, -7.5) -- (-2,1);
\foreach \y in {0,...,10}
    \foreach \x in {0,..., \y }{
        \pgfmathsetmacro\cx{int(\x + 1)};
        \pgfmathsetmacro\cy{int(10 - \y + \x)};
        \pgfmathsetmacro\z{int(10 - \y - 1)};
        \ifthenelse{ \z > 1 }{ 
        \draw (-\y + 2*\x-2,-\y) node {\huge $\alpha_{\cx,\cy}$};}{}
       }

\end{tikzpicture}
}

\def\actionofsseisincincosiete{
\begin{tikzpicture}[baseline={(-5,-3)}]
\draw[fill, green!20!] (-8,-5) -- (-6,-3) -- (-2,-7) -- (-4,-9) -- (-8,-5);
\draw[fill, green!20!] (3,-4) -- (5,-6) -- (2,-9) -- (0,-7)-- (3,-4);
\draw[blue] (2,-8) node {\huge $\alpha_{7,8}$};
\draw[blue] (-4,-8) node {\huge $\alpha_{4,5}$};
\draw[<->] (-6.3,-4.3) -- (-6.7,-4.7); 
\draw[<->] (-5.3,-5.3) -- (-5.7,-5.7);
\draw[<->] (-4.3,-6.3) -- (-4.7,-6.7);
\draw[<->] (-3.3,-7.3) -- (-3.7,-7.7);

\draw[<->] (3.3,-5.3) -- (3.7,-5.7); 
\draw[<->] (2.3,-6.3) -- (2.7,-6.7);
\draw[<->] (1.3,-7.3) -- (1.7,-7.7);

\foreach \y in {0,...,10}
    \foreach \x in {0,..., \y }{
        \pgfmathsetmacro\cx{int(\x + 1)};
        \pgfmathsetmacro\cy{int(10 - \y + \x)};
        \pgfmathsetmacro\z{int(10 - \y - 1)};
        \ifthenelse{ \z > 1 }{ 
        \draw (-\y + 2*\x-2,-\y) node {\huge $\alpha_{\cx,\cy}$};}{}
       }
\end{tikzpicture}
}

\def\excobkcero{\begin{tikzpicture}[baseline={(0,-7)}]

\foreach \y in {3,...,15}
    \foreach \x in {0,..., \y }{
        \pgfmathsetmacro\cx{int(\x + 1)};
        \pgfmathsetmacro\cy{int(15 - \y + \x)};
        \pgfmathsetmacro\z{int(15 - \y - 1)};
        \ifthenelse{ \z > 2 }{ 
        \draw (-\y + 2*\x ,-\y) node {\huge $\alpha_{\cx,\cy}$};}{}
       }
\draw[blue] (-3.5,-2.5) -- (-11.5,-10.5) -- (-6.3,-10.5) -- (-5.3,-11.5) -- (12.5,-11.5) -- (3.5,-2.5) -- (-3.5,-2.5);

\end{tikzpicture}}

\def\excobkuno{\begin{tikzpicture}[baseline={(0,-7)}]

\foreach \y in {3,...,15}
    \foreach \x in {0,..., \y }{
        \pgfmathsetmacro\cx{int(\x + 1)};
        \pgfmathsetmacro\cy{int(15 - \y + \x)};
        \pgfmathsetmacro\z{int(15 - \y - 1)};
        \ifthenelse{ \z > 2 }{ 
        \draw (-\y + 2*\x ,-\y) node {\huge $\alpha_{\cx,\cy}$};}{}
       }
\draw[blue] (-3.5,-2.5) -- (-12.5,-11.5) -- (-10.7,-11.5) -- (-9.7,-10.5) -- (-6.3,-10.5) -- (-5.3,-11.5) -- (12.5,-11.5) -- (3.5,-2.5) -- (-3.5,-2.5);

\end{tikzpicture}}

\def\excobkdos{\begin{tikzpicture}[baseline={(0,-10)}]

\foreach \y in {9,...,15}
    \foreach \x in {0,..., \y }{
        \pgfmathsetmacro\cx{int(\x + 1)};
        \pgfmathsetmacro\cy{int(15 - \y + \x)};
        \pgfmathsetmacro\z{int(15 - \y - 1)};
        \ifthenelse{ \z > 2 }{ 
        \draw (-\y + 2*\x ,-\y) node {\huge $\alpha_{\cx,\cy}$};}{}
       }
\draw[blue] (-9.5,-8) -- (-12.5,-11.5) -- (-10.7,-11.5) -- (-9.7,-10.5) -- (-6.3,-10.5) -- (-5.3,-11.5) -- (12.5,-11.5) -- (9.5,-8);

\draw[dashed,blue] (9.5,-8) -- (-9.5,-8);

\end{tikzpicture}}

\def\excobktres{\begin{tikzpicture}[baseline={(0,-10)}]

\foreach \y in {9,...,15}
    \foreach \x in {0,..., \y }{
        \pgfmathsetmacro\cx{int(\x + 1)};
        \pgfmathsetmacro\cy{int(15 - \y + \x)};
        \pgfmathsetmacro\z{int(15 - \y - 1)};
        \ifthenelse{ \z > 2 }{ 
        \draw (-\y + 2*\x ,-\y) node {\huge $\alpha_{\cx,\cy}$};}{}
       }
\draw[blue] (-9.5,-8) -- (-12.5,-11.5) -- (-8.7,-11.5) --  (-5.3,-11.5) -- (-4.7,-11.5) -- (-3.7,-10.5) -- (-0.3,-10.5) -- (0.7,-11.5) -- (12.7,-11.5) -- (9.5,-8);

\draw[dashed,blue] (9.5,-8) -- (-9.5,-8);

\end{tikzpicture}}

\def\excobkcuatro{\begin{tikzpicture}[baseline={(0,-10)}]

\foreach \y in {9,...,15}
    \foreach \x in {0,..., \y }{
        \pgfmathsetmacro\cx{int(\x + 1)};
        \pgfmathsetmacro\cy{int(15 - \y + \x)};
        \pgfmathsetmacro\z{int(15 - \y - 1)};
        \ifthenelse{ \z > 2 }{ 
        \draw (-\y + 2*\x ,-\y) node {\huge $\alpha_{\cx,\cy}$};}{}
       }
\draw[blue] (-9.5,-8) -- (-12.5,-11.5) -- (-8.7,-11.5) --  (-5.3,-11.5) -- (-2.3,-11.5) -- (1.7,-11.5) -- (2.3,-10.5) -- (5.7,-10.5) -- (6.3,-11.5) -- (7.7,-11.5) -- (10.3,-11.5) --  (12.7,-11.5) -- (9.5,-8);

\draw[dashed,blue] (9.5,-8) -- (-9.5,-8);

\end{tikzpicture}}

\def\excobkcinco{\begin{tikzpicture}[baseline={(0,-10)}]

\foreach \y in {9,...,15}
    \foreach \x in {0,..., \y }{
        \pgfmathsetmacro\cx{int(\x + 1)};
        \pgfmathsetmacro\cy{int(15 - \y + \x)};
        \pgfmathsetmacro\z{int(15 - \y - 1)};
        \ifthenelse{ \z > 2 }{ 
        \draw (-\y + 2*\x ,-\y) node {\huge $\alpha_{\cx,\cy}$};}{}
       }
\draw[blue] (-9.5,-8) -- (-12.5,-11.5) -- (-8.7,-11.5) --  (-5.3,-11.5) -- (-2.3,-11.5) -- (1.7,-11.5) -- (2.3,-10.5) -- (5.7,-10.5) -- (6.3,-11.5) -- (7.7,-11.5) -- (8.3,-10.5) -- (9.7,-10.5) -- (10.3,-11.5) --  (12.7,-11.5) -- (9.5,-8);

\draw[dashed,blue] (9.5,-8) -- (-9.5,-8);

\end{tikzpicture}}

\def\excobkseis{\begin{tikzpicture}[baseline={(0,-10)}]

\foreach \y in {9,...,15}
    \foreach \x in {0,..., \y }{
        \pgfmathsetmacro\cx{int(\x + 1)};
        \pgfmathsetmacro\cy{int(15 - \y + \x)};
        \pgfmathsetmacro\z{int(15 - \y - 1)};
        \ifthenelse{ \z > 2 }{ 
        \draw (-\y + 2*\x ,-\y) node {\huge $\alpha_{\cx,\cy}$};}{}
       }
\draw[blue] (-9.5,-8) -- (-12.5,-11.5) -- (-8.7,-11.5) --  (-5.3,-11.5) -- (3.7,-11.5) -- (4.3,-10.5) -- (5.7,-10.5) -- (6.3,-11.5) -- (7.7,-11.5) -- (8.3,-10.5) -- (9.7,-10.5) -- (10.3,-11.5) --  (12.7,-11.5) -- (9.5,-8);

\draw (12,-12) node {\huge $\alpha_{13,15}$};

\draw[dashed,blue] (9.5,-8) -- (-9.5,-8);

\end{tikzpicture}}

\def\exbonito{\begin{tikzpicture}[rotate=0]

  \foreach \y in {1,...,10}
    \foreach \x in {\y ,..., 10  }{
       \pic[fill=green!0] at (-1/2*\y +\x  ,\y) {square};
     }

 \foreach \y in {5,...,10}
    \foreach \x in {\y ,..., 10  }{
       \pic[fill=green!50] at (-1/2*\y +\x  ,\y) {square};
     }

 \foreach \x in {7,..., 10  }{
       \pic[fill=green!50] at (-1/2*4 +\x  ,4) {square};
     }
     
     \foreach \x in {1,..., 10 }{
\draw[] (\x ,1.5) node {\huge $\x$};
     }

\end{tikzpicture}}

\def\exbonitoB{\begin{tikzpicture}
  \foreach \y in {1,...,10}
    \foreach \x in {\y ,..., 10  }{
       \pic[fill=green!0] at (-1/2*\y +\x  ,\y) {square};
     }

 \foreach \x in {7,..., 9  }{
       \pic[fill=green!50] at (-1/2*4 +\x  ,4) {square};
     }

 \foreach \x in {8,..., 10  }{
       \pic[fill=green!50] at (-1/2*5 +\x  ,5) {square};
     }     

 \foreach \x in {9,..., 10  }{
       \pic[fill=green!50] at (-1/2*6 +\x  ,6) {square};
     }     

 \foreach \x in {10,..., 10  }{
       \pic[fill=green!50] at (-1/2*7 +\x  ,7) {square};
     }     

 \foreach \x in {5,..., 5  }{
       \pic[fill=green!50] at (-1/2*5 +\x  ,5) {square};
     }

 \foreach \x in {4,...,5  }{
       \pic[fill=red!50] at (-1/2*4 +\x  ,4) {square};
     }

 \foreach \x in {3,...,7  }{
       \pic[fill=red!50] at (-1/2*3 +\x  ,3) {square};
     }

 \foreach \x in {3,...,7  }{
       \pic[fill=red!50] at (-1/2*2 +\x  ,2) {square};
     }     
     
     \foreach \x in {1,..., 10 }{
\draw[] (\x ,1.5) node {\huge $\x$};
     }

\end{tikzpicture}}

\def\exLemmaBKA{\begin{tikzpicture}[rotate=0]

  \foreach \y in {1,...,10}
    \foreach \x in {\y ,..., 10  }{
       \pic[fill=green!0] at (-1/2*\y +\x  ,\y) {square};
     }

 \foreach \y in {5,...,10}
    \foreach \x in {\y ,..., 10  }{
       \pic[fill=green!50] at (-1/2*\y +\x  ,\y) {square};
     }

 \foreach \x in {8,..., 10  }{
       \pic[fill=green!50] at (-1/2*4 +\x  ,4) {square};
     }
     
     \foreach \x in {1,..., 10 }{
\draw[] (\x ,1.5) node {\huge $\x$};
     }

 \foreach \x in {4,...,4  }{
       \pic[fill=green!50] at (-1/2*4 +\x  ,4) {square};
     }     

\end{tikzpicture}}

\def\exLemmaBKB{\begin{tikzpicture}[rotate=0]

  \foreach \y in {1,...,10}
    \foreach \x in {\y ,..., 10  }{
       \pic[fill=green!0] at (-1/2*\y +\x  ,\y) {square};
     }

 \foreach \y in {5,...,10}
    \foreach \x in {\y ,..., 10  }{
       \pic[fill=green!50] at (-1/2*\y +\x  ,\y) {square};
     }

 \foreach \x in {9,..., 10  }{
       \pic[fill=green!50] at (-1/2*4 +\x  ,4) {square};
     }
     
     \foreach \x in {1,..., 10 }{
\draw[] (\x ,1.5) node {\huge $\x$};
     }

 \foreach \x in {4,...,5  }{
       \pic[fill=green!50] at (-1/2*4 +\x  ,4) {square};
     }     

\end{tikzpicture}}

\def\exLemmaBKC{\begin{tikzpicture}[rotate=0]

  \foreach \y in {1,...,10}
    \foreach \x in {\y ,..., 10  }{
       \pic[fill=green!0] at (-1/2*\y +\x  ,\y) {square};
     }

 \foreach \y in {5,...,10}
    \foreach \x in {\y ,..., 10  }{
       \pic[fill=green!50] at (-1/2*\y +\x  ,\y) {square};
     }

 \foreach \x in {10,..., 10  }{
       \pic[fill=green!50] at (-1/2*4 +\x  ,4) {square};
     }
     
     \foreach \x in {1,..., 10 }{
\draw[] (\x ,1.5) node {\huge $\x$};
     }

 \foreach \x in {4,...,6  }{
       \pic[fill=green!50] at (-1/2*4 +\x  ,4) {square};
     }     

\end{tikzpicture}}

\def\exLemmaBKD{\begin{tikzpicture}[rotate=0]

  \foreach \y in {1,...,10}
    \foreach \x in {\y ,..., 10  }{
       \pic[fill=green!0] at (-1/2*\y +\x  ,\y) {square};
     }

 \foreach \y in {5,...,10}
    \foreach \x in {\y ,..., 10  }{
       \pic[fill=green!50] at (-1/2*\y +\x  ,\y) {square};
     }

     \foreach \x in {1,..., 10 }{
\draw[] (\x ,1.5) node {\huge $\x$};
     }

 \foreach \x in {4,...,7  }{
       \pic[fill=green!50] at (-1/2*4 +\x  ,4) {square};
     }     

\end{tikzpicture}}

\newcommand{\dibu}[3]{
\pgfmathsetmacro\cx{int(#3) -int(#2) + 2};
\pgfmathsetmacro\cxu{int(\cx)-1};
\pgfmathsetmacro\cxd{int(#3)+ 1};
  \foreach \y in {1,...,#1}
    \foreach \x in {\y ,..., #1  }{
       \pic[fill=green!0] at (-1/2*\y +\x  ,\y) {square};
     }

 \foreach \y in {\cx,...,#1}
    \foreach \x in {\y ,..., #1  }{
       \pic[fill=green!50] at (-1/2*\y +\x  ,\y) {square};
     }

 \foreach \x in {\cxd,..., #1  }{
       \pic[fill=green!50] at (-1/2*\cxu  +\x  ,\cxu) {square};
     }
     
     \foreach \x in {1,..., #1 }{
\draw[] (\x ,1.5) node {\huge $\x$};
     }}

\newcommand{\agregof}[3]{
\pgfmathsetmacro\altx{int(#1)};
 \foreach \x in {#2,..., #3  }{
       \pic[fill=green!50] at (1/2*\altx  +\x-1  ,\altx) {square};
     }
}

\newcommand{\quitof}[3]{
\pgfmathsetmacro\altx{int(#1)};
 \foreach \x in {#2,..., #3  }{
       \pic[fill=white] at (1/2*\altx  +\x-1  ,\altx) {square};
     }
}

\newcommand{\quitoS}[3]{
\pgfmathsetmacro\altx{int(#1)};
\pgfmathsetmacro\xx{int(#2)};
 \foreach \x in {0,..., #3  }{
       \pic[fill=red] at (1/2*\altx  +\xx-1 -1/2*\x,\altx +\x) {square};
     }
}
\newcommand{\agregoA}[3]{
\pgfmathsetmacro\altx{int(#1)};
\pgfmathsetmacro\xx{int(#2)};
 \foreach \x in {0,..., #3  }{
       \pic[line width=4pt] at (1/2*\altx  +\xx-1 +1/2*\x,\altx +\x) {square};
     }
}

\newcommand{\agregoR}[3]{
\pgfmathsetmacro\altx{int(#1)};
 \foreach \x in {#2,..., #3  }{
       \pic[fill=red] at (1/2*\altx  +\x-1  ,\altx) {square};
     }
}

\newcommand{\peso}[9]{
 \foreach \x / \y in {1/1,2/0,3/0,4/0,5/0,6/0,7/0,8/#1,9/#2,10/#3,11/#4,12/#5,13/#6,14/#7,15/#8,16/#9}{
       \draw[] (\x ,0.5) node {\huge $\y$ };
     }
}

\numberwithin{equation}{section}
\numberwithin{theorem}{section}

\title{Positivity of Pre-Canonical Bases for Spherical Hecke Algebras}

\newcommand{\myauthor}[3]{%
  \author{#1}
  \address{#2}
  \email{#3}
}

\myauthor
  {David Plaza}
  {Instituto de Matem\'aticas, Universidad de Talca, Chile}
  {dplaza@utalca.cl}

\myauthor
  {Yamil Sagurie}
  {Instituto de Matem\'aticas, Universidad de Talca, Chile}
  {fsagurie@gmail.com}

\date{\today }

\begin{document}

\maketitle

\begin{abstract}
We introduce a new algorithm for computing Kostka–Foulkes polynomials.
It arises from a combinatorial procedure that computes the transition coefficients between successive pre-canonical bases for spherical Hecke algebras, introduced by Libedinsky, Patimo, and the first author, in order to interpolate between the standard and canonical bases of the spherical Hecke algebras.
Using this algorithm, we prove that the coefficients of the polynomials in the transition matrix from any pre-canonical basis to the next one has nonnegative coefficients. This establishes the positivity conjecture for pre-canonical bases and, moreover, yields a strictly stronger result that uncovers additional positivity phenomena beyond the original scope of the conjecture.
As a by-product, this method yields a combinatorial algorithm for computing Lascoux’s atomic decomposition of canonical basis elements.
%Last but not least
Finally, our results show that, after applying the Satake isomorphism and dualizing with respect to the Hall inner product, pre-canonical bases correspond to a new family of Schur-positive symmetric functions.
\end{abstract}

%\tableofcontents

\section{Introduction}

The computation of weight multiplicities in finite-dimensional simple representations of Lie algebras and algebraic groups is a classical problem in representation theory and algebraic combinatorics, going back to the Weyl character formula. Lusztig~\cite{lusztig1983singularities} introduced a $q$-analogue of these multiplicities, denoted by $K_{\lambda,\mu}(q)$. From a representation-theoretic point of view, the coefficients of $K_{\lambda,\mu}(q)$ record the dimensions of the graded pieces of the Brylinski--Kostant filtration on the $\mu$-weight space of the irreducible representation $L(\lambda)$~\cite{brylinski1989limits}.

In type~$A$, Lusztig's $q$-weight multiplicities are the classical Kostka--Foulkes polynomials. Lascoux and Sch\"utzenberger~\cite{lascoux1978conjecture} gave a celebrated combinatorial formula for these polynomials in terms of the \emph{charge} statistic on semistandard Young tableaux. More recently, Choi, Kim, and Lee~\cite{choilusztig} announced a combinatorial formula for $K_{\lambda,\mu}(q)$ in type~$C$, expressed through energy functions on Kirillov--Reshetikhin crystals, thereby extending the role played by charge in type~$A$.

Several other approaches to $q$-weight multiplicities are available, including Kostant's multiplicity formula~\cite{kostant1958formula}, Kazhdan--Lusztig polynomials~\cite{Lusztig1979}, Littelmann's path model~\cite{littelmann1995paths}, and Sahi's recursive algorithm~\cite{sahi2000new}. 
One of the purposes of this paper is to add a new combinatorial algorithm to this distinguished list.

%%%%%%%%%%%%%%%%%%%%%%%%%%%%%%%%%%%%%%%%%%%%%%%%%%%%%%%%%%%%%%%%%%%%%%%%
\subsection{Atomic decomposition}

Let $\Phi$ be an irreducible root system. 
We fix a system of positive roots $\Phi^+$ and denote by $\Delta$ the corresponding set of simple roots. 
Let $X$ be the corresponding weight lattice and let $X^+$ be the set of dominant weights.
Let $W_f$ and $W_a$ be the finite and affine Weyl groups attached to $\Phi$, respectively.

To each dominant weight $\lambda\in X^+$ we associate an element $\theta(\lambda)\in W_a$. Under the standard identification of elements of $W_a$ with alcoves, $\theta(\lambda)$ corresponds to the alcove obtained by translating by $\lambda$ the alcove associated with the longest element $w_0\in W_f$; see \Cref{def: theta lambda} for the precise definition.

Let $\mathcal{H}=\mathcal{H}(W_a)$ be the Hecke algebra of $W_a$. It is a $\mathbb{Z}[q^{\frac{1}{2}},q^{-\frac{1}{2}}]$-algebra with standard basis $\{\mathbf{H}_w\mid w\in W_a\}$ and canonical basis $\{\underline{\mathbf{H}}_w\mid w\in W_a\}$. For $\lambda\in X^+$ we write $\underline{\mathbf{H}}_\lambda=\underline{\mathbf{H}}_{\theta(\lambda)}$.

The spherical Hecke algebra, denoted by $\mathcal{H}^{\textbf{sph}}$, is a submodule of $\mathcal{H}$. As a $\mathbb{Z}[q^{\frac{1}{2}},q^{-\frac{1}{2}}]$-module, it is free with canonical basis $\{\underline{\mathbf{H}}_\lambda\mid \lambda\in X^+\}$. It also has a standard basis $\{\mathbf{H}_\lambda\mid \lambda\in X^+\}$. Here $\mathbf{H}_\lambda$ is not the standard Hecke basis element $\mathbf{H}_{\theta(\lambda)}$, but rather a sum of standard basis elements indexed by the double coset associated with $\theta(\lambda)$; see \cref{eq: defi standard basis} for details.

The transition coefficients between these two bases are Lusztig's $q$-weight multiplicities:
\begin{equation} \label{eq: intro A}
    \underline{\mathbf{H}}_{\lambda} =\sum_{\mu \in X^+}  K_{\lambda,\mu}(q) \mathbf{H}_\mu . 
\end{equation}

There is a third basis $\{\bfN_\lambda\mid \lambda\in X^+\}$ of $\mathcal{H}^{\textbf{sph}}$, which we call the \emph{atomic basis}. It is defined by
\begin{equation}  \label{eq: intro B}
\mathbf{N}_{\lambda}
= \sum_{\substack{\mu \in X^{+}\\ \mu \le \lambda}}
  q^{\operatorname{ht}(\lambda-\mu)}  \mathbf{H}_{\mu},
\end{equation}
where $\leq$ denotes the dominance order on $X$, and $\operatorname{ht}$ denotes the height of an element of $\mathbb{Z}\Phi$.

We may therefore write
\begin{equation} \label{eq: intro C}
    \underline{\mathbf{H}}_\lambda \,=\,\sum_{\mu\in X^{+}} a_{\mu , \lambda}(q^{\frac{1}{2}})  \bfN_\mu ,
\end{equation}
with $a_{\mu,\lambda}(q^{\frac{1}{2}})\in \mathbb{Z}[q^{\frac{1}{2}}]$. We call these polynomials \emph{atomic polynomials}. We say that $\underline{\mathbf{H}}_\lambda$ admits an atomic decomposition if
\[
    a_{\mu,\lambda}(q^{\frac{1}{2}})\in \mathbb{Z}_{\geq 0}[q^{\frac{1}{2}}]
\]
for all $\mu\in X^+$.

In type~$A$, the existence of an atomic decomposition follows from work of Lascoux~\cite{lascoux1989cyclic}, Shimozono~\cite{shimozono2001multi}, and Lecouvey--Lenart~\cite{lecouvey2021atomic}. More precisely, these sources prove the Satake counterpart of the atomic decomposition used here. The Satake transform is an isomorphism between the spherical Hecke algebra and the ring of symmetric functions; see, for instance, \cite{stembridgekostka}. Under this isomorphism, the canonical basis corresponds to Weyl characters, hence to Schur functions in type~$A$; the standard basis corresponds to Hall--Littlewood polynomials; and the atomic basis corresponds to the atoms introduced by Lascoux in~\cite{lascoux1989cyclic}.

Although these results imply, in principle, that the atomic polynomials can be computed, explicit computation remains difficult in practice. Our first main contribution is the following.

\begin{introtheorem}\label{Teo A}
    In type $A$, we construct a combinatorial algorithm for computing the atomic polynomials in \eqref{eq: intro C}. In particular, we give a new proof of the existence of an atomic decomposition in type $A$.
\end{introtheorem}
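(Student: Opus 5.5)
The plan is to route everything through the pre-canonical bases of Libedinsky, Patimo and the first author, which interpolate between the standard basis and the canonical basis of $\sph$. In type $A_{n}$ one has a chain of bases $\bfN^{1},\bfN^{2},\dots,\bfN^{n}$. Here $\bfN^{1}_\lambda=\mathbf{N}_\lambda$ is the atomic basis of \eqref{eq: intro B}. Each $\bfN^{k}_\lambda$ is defined by a formula of the same shape as \eqref{eq: intro B}, but the weights $\mu$ are restricted to $\lambda-\mu$ lying in the span of a prescribed set of positive roots. The last basis $\bfN^{n}$ coincides with the canonical basis (up to normalization). The strategy is to compute, for each $k$, the transition coefficients
\[
\bfN^{k+1}_\lambda=\sum_{\mu} c^{k}_{\mu,\lambda}(q^{\frac12})\,\bfN^{k}_\mu
\]
by an explicit combinatorial procedure, and then to compose these transitions. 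This gives
\[
a_{\mu,\lambda}=\sum_{\lambda=\nu_n,\nu_{n-1},\dots,\nu_1=\mu}\ \prod_{k} c^{k}_{\nu_k,\nu_{k+1}}.
\]
This sum is manifestly computable, and it is nonnegative as soon as each $c^{k}$ is.

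First, I will write $\bfN^{k+1}_\lambda-\bfN^{k}_\lambda$ directly in the $\mathbf{H}_\mu$ basis. The difference is a signed sum of monomials $q^{\hgt(\lambda-\mu)}\mathbf{H}_\mu$, indexed by those $\lambda-\mu$ that use the newly allowed roots (pictured as the triangular arrays of roots $\alpha_{i,j}$ drawn in the figures). Second, I will peel off leading terms greedily. I choose a $\mu$ maximal in dominance among those still appearing, subtract $q^{?}\bfN^{k}_\mu$ with the appropriate power, and recurse. The figures with shaded regions of the root triangle suggest that the support at each step is a staircase-shaped region of roots. I therefore expect this subtraction to act as a local move on such regions, in the style of the cobk pictures: a region $R$ is replaced by $R$ minus a hook, or shifted by a simple reflection $s_i$ acting on rows of the triangle. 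Third, I will prove that this procedure terminates and that every term it produces carries a plus sign. Concretely, the claim to establish is that the set of regions reachable from the initial region is closed under the moves and that each region is reached with coefficient a monomial $q^{h/2}$.

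The main obstacle is the positivity and cancellation analysis in the third step. A region can be reached by several different paths, and the subtraction could in principle create negative leftovers. My first attempt is an inclusion–exclusion argument on the root triangle. I would show that the sets $\{\mu:\lambda-\mu\in \mathbb{Z}_{\ge0}\text{-span of region }R\}$ for the allowed regions form a lattice closed under intersection. Then the Möbius inversion expressing $\bfN^{k+1}_\lambda$ in terms of the $\bfN^{k}_\mu$ should reduce to a sum without signs, because the region poset is a distributive lattice whose intervals are Boolean. A subtlety is that $\mu$ must be dominant: non-dominant weights arising in the expansion must be handled via the straightening relations for $\mathbf{H}_\mu$ (reflecting by $W_f$ with the dot action). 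I expect a sign-reversing involution, pairing $\mu$ with $s_i\cdot\mu$ (as in the figure showing the action of $s_6$ on $\alpha_{4,5}$ and $\alpha_{7,8}$), to cancel all negative contributions. Once each $c^{k}$ has nonnegative coefficients, composing the transitions yields both the algorithm and $a_{\mu,\lambda}\in\mathbb{Z}_{\ge0}[q^{\frac12}]$, which proves Theorem~\ref{Teo A}.
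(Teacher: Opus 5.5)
Your outer architecture is the right one: the paper also gets \Cref{Teo A} by composing the positive transitions $\mathcal{N}^{n+1}\to\mathcal{N}^{n}\to\cdots\to\mathcal{N}^{2}$ of \Cref{Teo C}, using $\bfN^{n+1}_\lambda=\underline{\mathbf{H}}_\lambda$ and the anti-atomic formula $\bfN_\lambda=\bfN^2_\lambda$ of \cite{LPP}. The engine you propose for each transition, however, rests on a wrong definition.

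The pre-canonical bases are not truncations of \eqref{eq: intro B} in the standard basis. They are defined through the canonical basis, $\bfN^i_\lambda=\sum_{I\subset\Phi^{\geq i}}(-q)^{|I|}\widetilde{\underline{\mathbf{H}}}_{\lambda-\Sigma_I}$, and the equality of the bottom one with the atomic basis is a theorem, not a definition. Your model is also internally inconsistent. Any formula $\sum_\mu q^{\hgt(\lambda-\mu)}\mathbf{H}_\mu$ over a restricted set of $\mu$ has monomial coefficients. The canonical basis does not: already in type $A_2$ the coefficient of $\mathbf{H}_0$ in $\underline{\mathbf{H}}_{\varpi_1+\varpi_2}$ is $K_{(2,1),(1,1,1)}(q)=q+q^2$. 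So your last basis cannot be the canonical one. Consequently your first step, writing $\bfN^{k+1}_\lambda-\bfN^k_\lambda$ as a signed sum of monomials $q^{\hgt(\lambda-\mu)}\mathbf{H}_\mu$, is false. With the correct definition, expanding in the $\mathbf{H}_\mu$ basis already requires the Kostka--Foulkes polynomials, which your algorithm was supposed to produce.

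The positivity step is then only a hope. Nothing supports the claim that the relevant regions form a distributive lattice with Boolean intervals, or that a sign-reversing involution cancels all negative terms. Moreover, the dot-action straightening belongs to $\widetilde{\underline{\mathbf{H}}}$, not to $\mathbf{H}_\mu$, which has no such rule. The paper in fact stresses that the naive expansion of $\bfM^{\succ\alpha_{i,j}}_\lambda$ in the elements $\bfM^{\succeq\alpha_{i,j}}_\mu$ is in general \emph{not} positive. That is why it proves the weaker \Cref{Teo D}, which allows the shifted sets $\Phi^{\succeq\alpha_{i-k,j-k}}$.

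The ingredients your proposal is missing are:
\begin{itemize}
\item the elements $\bfM^A_\lambda$ for arbitrary $A\subset\Phi^{\geq 2}$ and non-dominant $\lambda$, together with the rule $\bfM^A_\lambda=(-1)^{\ell(w)}\bfM^{w(A)}_{w\cdot\lambda}$ of \Cref{lem: weyl group over M elements};
\item the $P$- and $Q$-operators, which lead to the Second Inverse Decomposition (\Cref{prop: second version refinado});
\item the positive expansion of good pairs via admissible filling sequences (\Cref{prop: caso bueno});
\item the treatment of the bad pair through Condition~$\mathbf{Z}$, the commutation rules, and the induction of \Cref{lem: GranLemma}.
\end{itemize}
Without an argument of this kind, your proposal yields neither the algorithm nor the nonnegativity of $a_{\mu,\lambda}$.
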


Combining \eqref{eq: intro A}, \eqref{eq: intro B}, and \eqref{eq: intro C}, one sees that the atomic polynomials determine the Kostka--Foulkes polynomials. Thus \Cref{Teo A} implies the following consequence.

\begin{introtheorem} \label{Teo B}
     In type $A$, we give a new combinatorial algorithm for computing Kostka--Foulkes polynomials.
\end{introtheorem}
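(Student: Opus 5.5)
Theorem B follows from Theorem A by a triangular change of basis, so the plan is to make that deduction explicit and algorithmic. We substitute \eqref{eq: intro B} into \eqref{eq: intro C} and compare with \eqref{eq: intro A}. Since $\{\mathbf{H}_\mu\}$ is a basis of $\mathcal{H}^{\textbf{sph}}$, equating coefficients gives, for all $\lambda,\mu\in X^+$,
\begin{equation}
K_{\lambda,\mu}(q)\;=\;\sum_{\substack{\nu\in X^+\\ \mu\le\nu}} a_{\nu,\lambda}(q^{\frac12})\, q^{\operatorname{ht}(\nu-\mu)} .
\end{equation}
The sum is finite. The atomic polynomials satisfy $a_{\nu,\lambda}=0$ unless $\nu\le\lambda$, because both $\underline{\mathbf{H}}_\lambda$ and $\bfN_\lambda$ are unitriangular in the standard basis with respect to dominance. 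Moreover, there are only finitely many dominant $\nu$ with $\mu\le\nu\le\lambda$.

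The algorithm for Kostka--Foulkes polynomials then runs as follows.
\begin{enumerate}
\item Given $\lambda$, list the finite dominance interval $\{\nu\in X^+ : \nu\le\lambda\}$ in type $A$. For partitions of fixed size this is standard.
\item Run the combinatorial procedure of \Cref{Teo A} to obtain all $a_{\nu,\lambda}(q^{\frac12})$.
\item Compute the heights $\operatorname{ht}(\nu-\mu)$. In type $A$ this is $\sum_i\sum_{j\le i}(\nu_j-\mu_j)$, which is read off from partial sums.
\item Assemble $K_{\lambda,\mu}(q)$ using the displayed formula.
\end{enumerate}
Each step is finite and combinatorial, so the composite is a combinatorial algorithm.

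To claim novelty, I would note that the procedure does not pass through charge or tableaux. It goes through the transition coefficients between successive pre-canonical bases, which compose to give the atomic coefficients. Hence it is genuinely different from the Lascoux--Sch\"utzenberger formula.

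The main obstacle lies entirely in \Cref{Teo A}. One must show that the pre-canonical transition procedure terminates at the atomic basis, and that the coefficients it produces are exactly the $a_{\mu,\lambda}$. Given that, the deduction above is routine linear algebra. The only care needed is the triangularity check, so that the finite sum is justified. A small point to verify is that the $a_{\nu,\lambda}$ lie in $\mathbb{Z}[q^{\frac12}]$, so the resulting sum is a genuine polynomial in $q$. Comparing with \eqref{eq: intro A} forces the half-integer powers to cancel.
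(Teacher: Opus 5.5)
Your proposal is correct and follows the paper's own argument. The paper deduces \Cref{Teo B} in one line: substituting \eqref{eq: intro B} into \eqref{eq: intro C} and comparing with \eqref{eq: intro A} shows that $K_{\lambda,\mu}(q)=\sum_{\nu\geq\mu}a_{\nu,\lambda}(q^{\frac12})\,q^{\operatorname{ht}(\nu-\mu)}$, so the atomic polynomials computed in \Cref{Teo A} (by composing the transition matrices of \Cref{Teo C} from the canonical basis down to $\mathcal{N}^2=\{\bfN_\lambda\}$) determine the Kostka--Foulkes polynomials; your explicit formula, triangularity remark and height computation simply spell this out.
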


\subsection{Pre-canonical bases}

The main tools in the proof of \Cref{Teo A} are the pre-canonical bases for spherical Hecke algebras introduced by Libedinsky, Patimo, and the first author~\cite{LPP}.

For an integer $i\geq 2$, let $\Phi^{\geq i}$ be the set of positive roots of height at least $i$; that is,
\[
    \Phi^{\geq i}=\{\alpha\in\Phi^+\mid \operatorname{ht}(\alpha)\geq i\}.
\]
For a subset $I\subset \Phi^+$, set $\Sigma_I=\sum_{\alpha\in I}\alpha$.

A weight $\lambda$ is called singular if there exists a reflection $t\in W_f$ such that $t(\lambda+\rho)=\lambda+\rho$, where $\rho$ denotes the half-sum of the positive roots. Otherwise, $\lambda$ is called regular. If $\lambda\in X$ is regular, let $w_\lambda\in W_f$ be the unique element such that $w_\lambda\cdot \lambda\in X^+$, where the dot action is defined by
\[
    w\cdot \lambda=w(\lambda+\rho)-\rho.
\]
For $\lambda\in X$, define
\begin{equation}  \label{tilde de schur}
    \widetilde{\underline{\mathbf{H}}}_\lambda = \begin{cases}
        (-1)^{\ell (w_{\lambda})} \underline{\mathbf{H}}_{w_{\lambda}\cdot \lambda}, & \mbox{if } \lambda \mbox{ is regular;}\\
        0, & \mbox{if } \lambda \mbox{ is singular.}
    \end{cases}
\end{equation}

\begin{definition}\rm \label{defi pre-canonical}
For $i\geq 2$, the $i$-th pre-canonical basis $\mathcal{N}^{i}=\{\bfN_{\lambda }^i\mid \lambda\in X^{+}\}$ is defined by
\begin{equation}  \label{def: pre-canonical}
    \bfN_{\lambda}^{i} = \sum_{I\subset \Phi^{\geq i}} (-q)^{|I|} \widetilde{\underline{\mathbf{H}}}_{\lambda - \Sigma_I}.
\end{equation}  
\end{definition}

The motivation for this definition is the anti-atomic formula~\cite[Theorem 3.1]{LPP}, which asserts that $\bfN_\lambda=\bfN_\lambda^2$ for all $\lambda\in X^+$. Thus the second pre-canonical basis is precisely the atomic basis. This formula expresses atomic basis elements in terms of the canonical basis, and it holds in every affine type.

At the other end of the interpolation, if $h$ is the height of the highest root in $\Phi$, then $\bfN_\lambda^{m}=\underline{\mathbf{H}}_\lambda$ for all $m>h$ and all $\lambda\in X^+$. Hence the pre-canonical bases interpolate between the atomic basis and the canonical basis. %If one also takes \eqref{eq: intro B} into account, they may be viewed as interpolating from the standard basis to the canonical basis through the atomic basis.
Together with \eqref{eq: intro B}, this shows that the sequence of pre-canonical bases interpolates from the standard basis to the canonical basis, passing through the atomic basis at $i=2$.

Based on extensive computations, the authors of~\cite{LPP} proposed the following positivity conjecture.

\begin{introconjecture}   \label{Conj A}
    In type $A$, each element of $\mathcal{N}^{i+1}$ expands positively in the basis $\mathcal{N}^i$.
    
    Equivalently, if
    \begin{equation}
        \bfN_{\lambda}^{i+1} =\sum_{\mu\in X^+} a_{\mu , \lambda}^i (q)  \bfN_{\mu}^i,
    \end{equation}
    then $a_{\mu , \lambda}^i(q)\in\mathbb{Z}_{\geq 0}[q]$ for all $\lambda,\mu\in X^+$ and all $i\geq 2$.
\end{introconjecture}

Our second main contribution is the following theorem.

\begin{introtheorem} \label{Teo C}
    In type $A$, we give a combinatorial algorithm for computing the polynomials $a_{\mu , \lambda}^i(q)$. Using this algorithm, we prove that $a_{\mu , \lambda}^i(q)\in\mathbb{Z}_{\geq 0}[q]$. In particular, we prove \Cref{Conj A}.
\end{introtheorem}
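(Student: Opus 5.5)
The strategy is to compute $\bfN^{i+1}_\lambda$ in terms of the basis $\mathcal{N}^i$ through a direct manipulation of \eqref{def: pre-canonical}. Only the root set changes between the two bases: $\Phi^{\geq i}=\Phi^{\geq i+1}\sqcup R_i$, where $R_i=\{\alpha\in\Phi^+\mid \hgt(\alpha)=i\}$. In type $A_{n}$ we have $R_i=\{\alpha_{j,j+i-1}\}$, where $\alpha_{a,b}=\varepsilon_a-\varepsilon_{b+1}$. Splitting each subset $I\subset\Phi^{\geq i}$ as $I=J\sqcup K$ with $J\subset\Phi^{\geq i+1}$ and $K\subset R_i$ gives
\[
\bfN^{i}_{\lambda}=\sum_{K\subset R_i}(-q)^{|K|}\,\bfN^{i+1}_{\lambda-\Sigma_K},
\]
where $\bfN^{i+1}_\nu$ is defined for every $\nu\in X$ by the same formula, with $\widetilde{\underline{\mathbf{H}}}$ straightening non-dominant weights. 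Formally inverting this relation yields $\bfN^{i+1}_\lambda=\sum_{K}\dots$, an infinite alternating sum. The task is therefore to find a cancellation-free description of the resulting coefficients.

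\textbf{Step 1 (straightening).} We need a rule for $\bfN^{i}_\nu$ when $\nu$ is not dominant. The plan is to prove that for a simple reflection $s$ with $\langle\nu+\rho,\alpha_s^\vee\rangle\le 0$, the difference $\bfN^{i}_\nu+\bfN^{i}_{s\cdot\nu}$ is an explicit short combination. This should work because $s$ permutes $\Phi^{\geq i}$ except on a few roots of heights $i-1$ and $i$. These are the pairs $\alpha_{j,k}\leftrightarrow\alpha_{j+1,k}$ or $\alpha_{j,k}\leftrightarrow\alpha_{j,k-1}$, visible as the green strips in the paper's figures. Using the involution $I\mapsto s(I)$ on subsets avoiding the problematic roots, most terms cancel. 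What remains is controlled by the roots on the boundary row of the staircase.

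\textbf{Step 2 (algorithm).} Using Step 1, we express $\bfN^{i+1}_\lambda$ as $\bfN^i_\lambda$ plus a sum of $q^{m}\bfN^i_{\lambda-\Sigma_K}$, then straighten the non-dominant indices recursively. Each straightening step must either cancel or produce a term $q^{a}\bfN^i_\mu$ with $\mu$ dominant and strictly smaller in dominance order. The combinatorics is encoded by the ``staircase'' diagrams of positive roots drawn in the paper, with shapes of boxes removed row by row. The coefficients $a^i_{\mu,\lambda}(q)$ are then sums over admissible sequences of such moves.

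\textbf{Step 3 (positivity).} The main obstacle is showing that every surviving term carries a plus sign; the naive expansion has signs $(-1)^{|K|}(-1)^{\ell(w)}$. I would try to construct a sign-reversing involution on the set of sequences, pairing each negative term with a positive one. The pairing would toggle the leftmost root of height $i$ whose removal does not change the dominance pattern. The fixed points would be exactly the sequences in which each row removal is ``forced'', meaning it is needed to restore dominance, and for these the reflection sign compensates the factor $(-1)$ from $-q$. Proving that this involution is well defined, in particular that it commutes with the recursive straightening, is where I expect most of the work to lie.

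Specializing to $i=2$ and iterating from $m>h$ down to $2$ then gives \Cref{Teo A}. Composing with \eqref{eq: intro B} gives \Cref{Teo B}.
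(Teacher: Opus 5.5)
Your opening bookkeeping is correct: $\bfN^i_\lambda=\sum_{K\subset R_i}(-q)^{|K|}\bfN^{i+1}_{\lambda-\Sigma_K}$ holds, and the formal inverse $\prod_{\alpha\in R_i}(1-qL_\alpha)^{-1}$ is a series with nonnegative coefficients whose indices are mostly non-dominant, so all signs come from straightening. The deductions of Theorems A and B from Theorem C are also fine. The gap is that nothing in the proposal supplies the two ingredients that make up the theorem, and Step 1 as written does not close up.

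\textbf{Step 1.} Put $A=\Phi^{\geq i}$. By \Cref{lem: weyl group over M elements}, $\bfN^i_{s_t\cdot\nu}=-\bfM^{s_t(A)}_\nu$, so $\bfN^i_\nu+\bfN^i_{s_t\cdot\nu}=\bfM^{A}_\nu-\bfM^{s_t(A)}_\nu$. This is a short combination of $\bfM^{A\cap s_t(A)}$-elements, where $A\cap s_t(A)$ lacks the height-$i$ roots $\alpha_{t-i+1,t}$ and $\alpha_{t,t+i-1}$. Those are not elements of $\mathcal{N}^i$. Putting the missing roots back reintroduces exactly the infinite expansions with non-dominant indices you started from. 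So nothing in Steps 1--2 guarantees that the recursion terminates, or that surviving indices are dominant and $<\lambda$. You also never say what the ``admissible sequences of moves'' are, so no algorithm has actually been specified.

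Clean straightening is only available when the root set is $s_t$-stable, or differs from such a set in a controlled way while $\langle\nu,\alpha_t\rangle\in\{0,-1\}$. The paper builds everything on that: $T$-congruent sets, sliding roots via \Cref{lem: sacar una poner otra}, and the $P,Q,\mathbf{v},\mathbf{P}$ operators. It works through the intermediate, non-pre-canonical root sets $\Phi^{\succ\alpha_{i,j}}\subset\Phi^{\succeq\alpha_{i,j}}$, adding one root at a time.

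\textbf{Step 3.} This is where the statement actually lives, and your involution is not defined. What is a ``dominance pattern''? Why would toggling one root commute with straightening steps that move the weight by long roots $\alpha_{\varrho,j}$?

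The paper's positivity does not come from a sign-reversing involution, and the cancellations it needs are not local. After the Second Inverse Decomposition (\Cref{prop: second version refinado}), the good pairs expand positively via admissible filling sequences (\Cref{prop: caso bueno}). The bad pair $\bfM^{\succ\alpha_{i,j}}_{\mathbf{v}_{j,h}(\lambda)}-q^{e_2}\bfM^{\succ\alpha_{i,j}}_{\mathbf{P}_{i,h}\mathbf{v}_{j,h}(\lambda)}$ is different. Both terms must be expanded again, and their negative parts cancel against each other using the commutation rules of \Cref{lem: super-conmutation} and \Cref{conm. 2}. This needs Condition $\mathbf{Z}$ and an induction on $i$, on $s$ and on the dominance order (\Cref{lem: GranLemma}). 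The terms that cancel have weights differing by a whole $\mathbf{P}_{i,h}$, i.e.\ by many roots of height $h$, not by a single toggled root.

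The paper also notes that the natural one-root step from $\bfM^{\succ\alpha_{i,j}}$ to $\bfM^{\succeq\alpha_{i,j}}$ is not positive in general. Positivity (\Cref{Teo D}) appears only after allowing the shifted targets $\bfM^{\succeq\alpha_{i-k,j-k}}$, $0\le k\le i-1$, and then iterating. Your fixed-point claim, that ``the reflection sign compensates the factor $-1$'', is therefore an unsupported guess about precisely the hard part. As it stands the proposal is a programme, not a proof.
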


We remark that \Cref{Teo A} and \Cref{Teo B} follow readily from \Cref{Teo C}.

We now outline the proof of \Cref{Teo C}. Fix a root system $\Phi$ of type $A_n$, and let $\Delta=\{\alpha_1,\ldots,\alpha_n\}$ be the set of simple roots. For $1\leq i\leq j\leq n$, set
\begin{equation}
     \alpha_{i,j} =\sum_{k=i}^j \alpha_k \in \Phi^+.
\end{equation}
Thus $\operatorname{ht}(\alpha_{i,j})=j-i+1$. We order $\Phi^{\geq 2}$ by declaring that $\alpha_{i,j}\preceq \alpha_{a,b}$ if either
\begin{itemize}
    \item $\operatorname{ht}(\alpha_{i,j})<\operatorname{ht}(\alpha_{a,b})$, or
    \item $\operatorname{ht}(\alpha_{i,j})=\operatorname{ht}(\alpha_{a,b})$ and $i\leq a$.
\end{itemize}

Define
\[
    \Phi^{\succeq \alpha_{i,j}}=\{\alpha\in\Phi^{\geq 2}\mid \alpha\succeq \alpha_{i,j}\},
    \qquad
    \Phi^{\succ \alpha_{i,j}}=\{\alpha\in\Phi^{\geq 2}\mid \alpha\succ \alpha_{i,j}\}.
\]
We also set
\begin{equation}
    \bfM_{\lambda}^{\succeq \alpha_{i,j}}  =  \sum_{I\subset \Phi^{\succeq \alpha_{i,j} }} (-q)^{|I|} \widetilde{\underline{\mathbf{H}}}_{\lambda - \Sigma_I} 
    \qquad \mbox{and} \qquad 
    \bfM_{\lambda}^{\succ \alpha_{i,j}}  =  \sum_{I\subset \Phi^{\succ \alpha_{i,j} }} (-q)^{|I|} \widetilde{\underline{\mathbf{H}}}_{\lambda - \Sigma_I}.
\end{equation}

Ideally, one would like to express $\bfM_{\lambda}^{\succ \alpha_{i,j}}$ as a positive linear combination of the elements
\[
    \{\bfM_{\mu}^{\succeq \alpha_{i,j}}\mid \mu\in X^+\}.
\]
Iterating such an expression would prove \Cref{Teo C}, since
\begin{equation}
    \Phi^{\succ \alpha_{n-i+1,n}} =\Phi^{\geq i+1}
    \qquad \mbox{and} \qquad
    \Phi^{\succeq \alpha_{1,i}} = \Phi^{\geq i};
\end{equation}
hence
\begin{equation}\label{eq: M es N intro}
    \bfN_{\lambda}^{i+1} = \bfM_{\lambda}^{\succ \alpha_{n-i+1,n}} 
    \qquad \mbox{and} \qquad 
    \bfN_{\lambda}^i = \bfM_{\lambda}^{\succeq \alpha_{1,i}}, \mbox{ for all } 2\leq i\leq n.
\end{equation}

However, this direct strategy does not work: such a decomposition is difficult to obtain and, more importantly, it is not positive in general. The key point of the paper is to replace this unattainable decomposition by a more flexible positive statement.

\begin{introtheorem} \label{Teo D}
Let $\alpha_{i,j} \in \Phi^{\geq 2}$ and $\lambda \in X^+$. Then there exist polynomials $b_{\mu, \lambda }^k(q) \in \mathbb{Z}_{\geq 0}[q]$ such that
\begin{equation} \label{eq: intro descomposition}
    \bfM_{\lambda}^{\succ \alpha_{i,j} } = \sum_{k=0}^{i-1}  \sum_{\substack{\mu \in X^{+}\\ \mu \le \lambda}} b_{\mu, \lambda }^k(q) \bfM_{ \mu }^{\succeq \alpha_{i-k,j-k}}.
\end{equation}
Moreover, we give a combinatorial algorithm for computing the polynomials $b_{\mu,\lambda}^k(q)$.
\end{introtheorem}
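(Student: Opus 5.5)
Write $\beta=\alpha_{i,j}$ and $S=\Phi^{\succ\beta}$, so that $\Phi^{\succeq\beta}=S\sqcup\{\beta\}$. Splitting the subsets $I\subset\Phi^{\succeq\beta}$ according to whether they contain $\beta$ gives the basic identity
\begin{equation}
\bfM_{\lambda}^{\succeq \beta} = \bfM_{\lambda}^{\succ \beta} - q\,\bfM_{\lambda-\beta}^{\succ \beta}.
\end{equation}
Here $\bfM^{\succ\beta}_\nu$ is defined by the same formula for any $\nu\in X$, not only dominant $\nu$. Iterating gives the formal inversion
\begin{equation}
\bfM_{\lambda}^{\succ \beta}=\sum_{m\ge 0} q^m\,\bfM_{\lambda-m\beta}^{\succeq \beta}.
\end{equation}
Positivity fails here because $\lambda-m\beta$ leaves $X^+$. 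The plan is to resolve these nondominant terms using the straightening rule built into $\widetilde{\underline{\mathbf{H}}}$. Two facts drive this. First, $\widetilde{\underline{\mathbf{H}}}_{s\cdot\nu}=-\widetilde{\underline{\mathbf{H}}}_\nu$ for every simple reflection $s$. Second, a simple reflection $s_k$ permutes $\Phi^+\setminus\{\alpha_k\}$ while preserving heights, so it acts on the index set of $\bfM$.

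Step 1 (truncating the sum). I will stop the expansion at the first $m$ for which $\lambda-m\beta$ is nondominant. Since $\lambda-m\beta$ fails dominance only at the coordinates $i-1$ or $j$, I expect this to happen exactly when $\langle\lambda-m\beta,\alpha_{i-1}^\vee\rangle$ becomes $-1$, i.e.\ when the weight is singular for $s_{i-1}$; this gives the case $k=0$. If instead the pairing becomes $\le -2$, the weight is regular, and I apply $s_{i-1}$. The reflection $s_{i-1}$ sends $\alpha_{i,j}$ to $\alpha_{i-1,j}$ and $\alpha_{i-1,j-1}$ to $\alpha_{i,j-1}$. It also permutes the other roots of the same height, matching $\alpha_{a,b}$ with $\alpha_{a\pm1,b}$ only when $a$ or $b$ is near $i-1$; the figure \verb|\actionofsseisincincosiete| depicts exactly this kind of swap. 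The key lemma to prove is therefore that there exist a weight $\mu$ and an identity
\[
\bfM^{\succeq\alpha_{i,j}}_{\nu}=-\bfM^{\succeq\alpha_{i-1,j-1}}_{s_{i-1}\cdot\nu+\text{correction}}+(\text{terms with singular weights}).
\]
That is, reflecting the whole alternating sum converts the filtration "$\succeq\alpha_{i,j}$" into "$\succeq\alpha_{i-1,j-1}$", up to sets $I$ whose contributions cancel in pairs. I will prove it with a sign-reversing involution on the subsets $I$, pairing $I$ with $I\triangle\{\gamma,s_{i-1}\gamma\}$ whenever the weight $\lambda-\Sigma_I$ is $s_{i-1}$-singular, or the two terms cancel via the dot action.

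Step 2 (iteration). The terms produced at level $\alpha_{i-1,j-1}$ again have possibly nondominant weights. I will repeat the same inversion and reflection with $s_{i-2}$, and so on, which produces the filtrations $\succeq\alpha_{i-k,j-k}$ for $k=0,\dots,i-1$. The process must stop at $k=i-1$ because $\alpha_{1,j-i+1}$ has no further left shift. Termination and the condition $\mu\le\lambda$ follow because every step subtracts positive roots.

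Step 3 (positivity and the algorithm). I will track the signs throughout. Each reflection contributes a factor $-1$, and each negative coefficient from the inversion should come paired with it, so that every surviving coefficient is a monomial $q^m$ with a plus sign. The polynomials $b^k_{\mu,\lambda}$ are then generating functions of the surviving chains, which counts them by a combinatorial rule recording the steps $m$ and the reflection points. This sum is manifestly in $\mathbb{Z}_{\ge0}[q]$.

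The main obstacle is the lemma in Step 1: showing that the reflected alternating sum really reorganizes into an $\bfM^{\succeq\alpha_{i-1,j-1}}$ term with the correct sign, and that the leftover terms are singular rather than merely of lower order. My first attempt will be the explicit involution, treating separately the roots $\alpha_{a,b}$ with $a=i-1,i$ and with $b=i-2,i-1$.
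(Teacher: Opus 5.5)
\textbf{Step 1 is false as stated, and the rest of the plan depends on it.}

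First, the pairings of $\lambda-m\alpha_{i,j}$ that decrease are those with $\alpha_i$ and $\alpha_j$, not $\alpha_{i-1}$. This is because $\pint{\alpha_{i,j}}{\alpha_i}=\pint{\alpha_{i,j}}{\alpha_j}=1$, while the pairings with $\alpha_{i-1}$ and $\alpha_{j+1}$ go up. At the first non-dominant $m$ the offending pairing is therefore exactly $-1$.

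More seriously, a $-1$ at $i$ or $j$ does \emph{not} make $\bfM^{\succ\alpha_{i,j}}_\nu$ vanish. Lemma~\ref{lem: weyl group over M elements} gives vanishing only when the index set is stable under the reflection. Simple reflections do not preserve heights, since $\operatorname{ht}(s_k\alpha)=\operatorname{ht}(\alpha)-\pint{\alpha}{\alpha_k}$. Hence $\Phi^{\succ\alpha_{i,j}}$ is in general neither $s_i$- nor $s_j$-stable; see \eqref{eq: las que se salen}.

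Here is a concrete check. Take $n=3$, $\alpha_{1,2}$ and $\lambda=\varpi_1+\varpi_3$. The first non-dominant weight is $\nu=\lambda-\alpha_{1,2}=-\varpi_2+2\varpi_3=\alpha_3$, which is $s_2$-singular. The set $\{\alpha_{2,3},\alpha_{1,3},\alpha_3\}$ is $s_2$-stable, so $\bfM^{\succ\alpha_{1,2}}_\nu=q\,\bfM^{\succ\alpha_{1,2}}_{0}=q\,\underline{\mathbf{H}}_0\neq 0$. Therefore $\bfM^{\succ\alpha_{1,2}}_\lambda=\bfM^{\succeq\alpha_{1,2}}_\lambda+q^2\bfM^{\succeq\alpha_{1,2}}_0$, whereas your truncation returns only the first term.

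This is exactly the phenomenon the paper is built on. A $-1$ entry is not killed; it is transported. It moves leftwards by roots of height $h$ (Lemma~\ref{lem: left solving}, the $P$-operators) and rightwards by roots of height $h-1$ (the $Q$-operators). That transport is what yields the First and Second Inverse Decompositions.

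\textbf{The proposed key lemma cannot hold.} A single reflection cannot turn the filtration $\succeq\alpha_{i,j}$ into $\succeq\alpha_{i-1,j-1}$. One computes
\[
s_{i-1}(\Phi^{\succeq\alpha_{i,j}})=(\Phi^{\succeq\alpha_{i,j}}\setminus\{\alpha_{i-h-1,i-1}\})\cup\{\alpha_{i-h-1,i-2}\},
\]
which has the same size as $\Phi^{\succeq\alpha_{i,j}}$. But $\Phi^{\succeq\alpha_{i-1,j-1}}=\Phi^{\succeq\alpha_{i,j}}\cup\{\alpha_{i-1,j-1}\}$ has one more root. No involution on subsets repairs that mismatch.

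In the paper the shifted filtrations arise differently. After $m$ steps of $P$-transport, the root $\alpha_{i-m,j-m}$ appears in the index set of the good pair $X_m=\bfM^{\Phi^{\succ\alpha_{i,j}}\cup\{\alpha_{i-m,j-m}\}}_{v^m_{j,h}(\lambda)}$. The missing roots $\alpha_{i-m+1,j-m+1},\dots,\alpha_{i,j}$ are then added one at a time, which is the filling-sequence expansion.

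\textbf{Step 3 gives no mechanism for the sign pairing.} This is where the real difficulty lies. The Second Inverse Decomposition (Proposition~\ref{prop: second version refinado}) contains a bad pair $\bfM^{\succ\alpha_{i,j}}_{\mathbf{v}_{j,h}(\lambda)}-q^{e_2}\bfM^{\succ\alpha_{i,j}}_{\mathbf{P}_{i,h}\mathbf{v}_{j,h}(\lambda)}$ that is not termwise positive. Handling it requires the commutation rules and the separate induction of Lemma~\ref{lem: GranLemma}.

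Your termination argument also fails as stated. A dot-reflection at a pairing $\le -2$ adds a positive multiple of $\alpha_k$, so not every step subtracts positive roots.
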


Repeatedly applying \Cref{Teo D} yields the following strengthening of \Cref{Teo C}.

\begin{introtheorem}  \label{Teo E}
    Let $\alpha_{i,j}\in\Phi^{\geq 2}$, let $h=j-i+1$, and let $\lambda\in X^+$. Then $\bfM_{\lambda}^{\succ \alpha_{i,j}}$ can be written as a positive linear combination of elements of the $h$-th pre-canonical basis.
    
    In particular, $\bfN^{h+1}_\lambda=\bfM_{\lambda}^{\succ \alpha_{n-h+1,n}}$ can be written as a positive linear combination of the $h$-th pre-canonical basis, which is precisely the assertion of \Cref{Teo C}.
\end{introtheorem}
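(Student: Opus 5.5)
We deduce \Cref{Teo E} from \Cref{Teo D} by induction, iterating that decomposition along the order $\preceq$ restricted to roots of height $h$. The key observation is that \Cref{Teo D} replaces $\bfM^{\succ \alpha_{i,j}}_\lambda$ by elements $\bfM_\mu^{\succeq \alpha_{i-k,j-k}}$ that all have the same height $h$ but sit weakly lower in $\preceq$. Moreover, $\Phi^{\succeq \alpha_{i-k,j-k}} = \Phi^{\succ \alpha_{i-k-1,j-k-1}}$ whenever $i-k\geq 2$, since $\alpha_{i-k-1,j-k-1}$ is the immediate predecessor of $\alpha_{i-k,j-k}$ among the roots of height $h$. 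Hence each term on the right-hand side of \eqref{eq: intro descomposition} is either an element of the $h$-th pre-canonical basis or again an element of the form $\bfM^{\succ \beta}_\mu$, with $\beta$ a root of height $h$ strictly smaller than $\alpha_{i,j}$. The base case is when the term has $i-k=1$. Then $\Phi^{\succeq \alpha_{1,h}}=\Phi^{\geq h}$, and by \eqref{eq: M es N intro} we get $\bfM_\mu^{\succeq\alpha_{1,h}}=\bfN^h_\mu$.

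The induction runs on $i$, the starting index of $\alpha_{i,j}$ with $h$ fixed. The statement is: for every $\lambda\in X^+$, the element $\bfM^{\succ\alpha_{i,j}}_\lambda$ is a $\mathbb{Z}_{\geq 0}[q]$-combination of the $\bfN^h_\mu$ with $\mu\leq\lambda$.

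For $i=1$, \Cref{Teo D} gives only the term $k=0$, which is $\sum_\mu b^0_{\mu,\lambda}\bfN^h_\mu$, and this is positive.

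For $i>1$, apply \Cref{Teo D}. The $k=i-1$ terms are already of the form $\bfN^h_\mu$. For $k<i-1$ we rewrite
\[
\bfM_\mu^{\succeq\alpha_{i-k,j-k}}=\bfM_\mu^{\succ\alpha_{i-k-1,j-k-1}},
\]
and apply the induction hypothesis to the smaller index $i-k-1$. Products and sums of polynomials in $\mathbb{Z}_{\geq0}[q]$ stay in $\mathbb{Z}_{\geq 0}[q]$. Each expansion only involves finitely many dominant weights below $\lambda$, so all the sums are finite and the conclusion follows.

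The second assertion is the case $\alpha_{i,j}=\alpha_{n-h+1,n}$. Here $\bfM^{\succ\alpha_{n-h+1,n}}_\lambda=\bfN^{h+1}_\lambda$ by \eqref{eq: M es N intro}, and this recovers \Cref{Teo C}.

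The only delicate point is checking the equality of index sets $\Phi^{\succeq\alpha_{a,b}}=\Phi^{\succ\alpha_{a-1,b-1}}$ for $a\geq 2$. Between these two roots in $\preceq$ there is no other root of $\Phi^{\geq2}$: a root of the same height $h$ lying strictly between them would need starting index strictly between $a-1$ and $a$, which is impossible, and roots of other heights are ordered entirely before or after both. Once this is verified, the argument is formal. All of the real difficulty lies in \Cref{Teo D} itself, which we assume here.
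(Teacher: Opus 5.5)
Your proof is correct and matches the paper's, which deduces this theorem from \Cref{Teo D} by repeated application. Your induction on $i$ is exactly the bookkeeping behind that repetition. It rests on the identities $\Phi^{\succeq\alpha_{a,b}}=\Phi^{\succ\alpha_{a-1,b-1}}$ for $a\geq 2$, $\Phi^{\succeq\alpha_{1,h}}=\Phi^{\geq h}$ and $\Phi^{\succ\alpha_{n-h+1,n}}=\Phi^{\geq h+1}$, all of which you verified correctly.
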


%%%%%%%%%%%%%%%%%%%%%%%%%%%%%%%%%%%%%%%%%%%%%%%%%%%%%%%%%%%%%%%%%%%%%%%%
\subsection{An illustrative example}

We illustrate \Cref{Teo D} with a concrete example. The purpose of the example is not to present the full technical construction, but rather to show the kind of combinatorics that underlies the proof. The definitions of the operators used below are given later in \Cref{def. p and q,def: words over weights,def: v negrita,def: P negrita}.

Let $n=14$, let $\alpha_{i,j}=\alpha_{8,10}$, and let $h=j-i+1=3$. Consider the dominant weight
\begin{equation}
\lambda = [0,1,0,2,1,1,0,0,0,0,0,0,1,1]_{\varpi},
\end{equation}
where $\lambda=[\lambda_1,\ldots,\lambda_{14}]_{\varpi}$ means that $\lambda=\sum_{k=1}^{14}\lambda_k\varpi_k$, and $\{\varpi_k\}_{1\leq k\leq 14}$ are the fundamental weights.

Our goal is to express $\bfM_{\lambda}^{\succ \alpha_{8,10}}$ as a positive linear combination of terms in
\begin{equation}
 \mathcal{M}\coloneqq \{\bfM_{\mu}^{\succeq \alpha_{8-k,10-k}}\mid \mu\in X^+,\, \mu\leq\lambda, \mbox{ and } 1\leq k\leq 7\}.
\end{equation}

The starting point of our analysis is \Cref{prop: second version refinado}. 
We refer to the formula in that proposition as the \emph{Second Inverse Decomposition} (SID). 
This terminology is justified by the fact that the SID is a refinement of the so-called \emph{First Inverse Decomposition} (FID) in \Cref{prop: second version}, which yields a preliminary decomposition in which some of the weights involved are not necessarily dominant.

In the present example, the SID gives
\begin{equation} \label{eq:ejemplo-intro}
\begin{array}{rlll}
        & \quad\bfM_\lambda^{\succeq \alpha_{8,10}}  + & &  \\
        &   &  &  \\
   &  q^9 \bfM_{\mathbf{P}_{10,3}(\lambda)}^{\succ \alpha_{8,10}} + & & \mathbf{P}\mbox{-term} \\
   \bfM_\lambda^{\succ \alpha_{8,10}} =  & &  &    \\
     &q^{4}\!\left(\bfM_{v_{10,3}^1(\lambda)}^{\succ \alpha_{8,10}} 
- q\, \bfM_{v_{10,3}^1(\lambda)-\alpha_{7,9}}^{\succ \alpha_{8,10}}\right) + & & v^1\mbox{-term}  \\
& & &  \\
     & q^{8}\!\left(\bfM_{\mathbf{v}_{10,3}(\lambda)}^{\succ \alpha_{8,10}} 
- q^5\, \bfM_{\mathbf{P}_{8,3}(\mathbf{v}_{10,3}(\lambda))}^{\succ \alpha_{8,10}}\right).  & & \mathbf{v}\mbox{-term and } \mathbf{Pv}\mbox{-term}  \\
\end{array}
\end{equation}

The weights appearing in this formula are
\begin{equation}
\begin{array}{r}
\mathbf{P}_{10,3}(\lambda)= [1,1,1,1,0,0,0,0,0,0,1,0,1,1]_{\varpi},\\[3pt]
 v_{10,3}^1(\lambda)= [0,1,0,3,0,1,0,0,0,0,0,0,1,0]_{\varpi}, \\[3pt]
\mathbf{v}_{10,3}(\lambda)= [0,1,1,2,0,1,0,0,0,0,0,0,0,1]_{\varpi}, \\[3pt]
\mathbf{P}_{8,3}(\mathbf{v}_{10,3}(\lambda)) = [1, 1, 1, 1, 0, 0, 0, 0, 1, 0, 0, 0, 0, 1]_{\varpi}.
\end{array}
\end{equation}

We now explain how the weights and exponents in \eqref{eq:ejemplo-intro} arise. We identify dominant weights with integer partitions, and integer partitions with Young diagrams. We use partitions with at most $15$ parts. Thus, if $\mu=(\mu_1,\ldots,\mu_{15})$, then the corresponding dominant weight is
\begin{equation}
[\mu_1-\mu_2,\mu_2-\mu_3,\ldots,\mu_{14}-\mu_{15}]_{\varpi}.
\end{equation}

\begin{itemize}
    \item The term $\bfM_\lambda^{\succeq \alpha_{8,10}}$ always appears with coefficient $1$.

\item Consider the $\mathbf{P}$-term $q^9\bfM_{\mathbf{P}_{10,3}(\lambda)}^{\succ \alpha_{8,10}}$.
 
In the partition model, subtracting a root $\alpha_{i,j}$ amounts to moving one box from row $i$ to row $j+1$. The resulting diagram need not be a partition: the box may fail to be removable from row $i$, or its new position in row $j+1$ may fail to be addable. 

The construction of $\mathbf{P}_{10,3}(\lambda)$ is recursive. It is obtained as a composition of certain operators, which we denote by $P$ (in non-boldface), to distinguish them from the boldface objects $\mathbf{P}_{j,h}(\lambda)$.

We begin by considering the  root $\alpha_{8,10}$ of height $h=3$. If row $8$ has a removable box, we set
\begin{equation}
P_{10,3}(\lambda)=\lambda-\alpha_{8,10}.
\end{equation}
If not, we check row $8-h=5$; if row $5$ has a removable box, we set
\begin{equation}
P_{10,3}(\lambda)=\lambda-\alpha_{5,7}-\alpha_{8,10}.
\end{equation}
If this also fails, we continue with row $5-h=2$, and so on. In general, we look for a row $\varrho\equiv i\pmod h$ from which a box can be moved, and define
\begin{equation}
P_{j,h}(\lambda)=\lambda-\alpha_{\varrho,j}.
\end{equation}
If no such row exists, both $P_{j,h}(\lambda)$ and $\mathbf{P}_{j,h}(\lambda)$ are undefined.

In our example,
\begin{equation}
P_{10,3}(\lambda)=\lambda-\alpha_{5,7}-\alpha_{8,10}.
\end{equation}
We then check whether this weight is dominant. If it is, we set $\mathbf{P}_{10,3}(\lambda)=P_{10,3}(\lambda)$. If it is not, we repeat the same procedure starting from $\alpha_{7,9}=\alpha_{8-1,10-1}$; this gives the operator $P_{9,3}$. If $P_{9,3}P_{10,3}(\lambda)$ is defined but not dominant, we continue with the root $\alpha_{6,8}=\alpha_{8-2,10-2}$, and so on.

The process stops either when the resulting weight becomes dominant, in which case this weight is $\mathbf{P}_{10,3}(\lambda)$, or when the process terminates unsuccessfully, in which case $\mathbf{P}_{10,3}(\lambda)$ is undefined. This is illustrated in \Cref{fig:introA}.

Finally, passing from $\lambda$ to $\mathbf{P}_{10,3}(\lambda)$ requires subtracting nine roots of height $3$. This explains the coefficient $q^9$ in front of $\bfM_{\mathbf{P}_{10,3}(\lambda)}^{\succ \alpha_{8,10}}$ in \eqref{eq:ejemplo-intro}.
\begin{figure}[h]
    \centering

\Yboxdim{1cm}
\begin{tikzpicture}[scale=.21]
\tikzset{>={Stealth[length=1.4mm, width=1.1mm]}}

\Yfillopacity{0.3}
\Ylinethick{0.3pt}
\Ylinecolour{blue}
\Yfillcolour{cyan}

% --- Diagram 1 ---
\begin{scope}[local bounding box=Y0]
  \tyng(0cm,3cm,7^2,6^2,4,3,2^7,1)
  \draw[->,red,line width=0.4mm, line cap=round](2.5,-3.5)--(3.5,-3.5)%
    to node[auto,swap]{} (3.5,-6.5)--(2.3,-6.5);
  \draw[->,red,line width=0.4mm, line cap=round](4.5,-0.5)--(5.5,-0.5)%
    to node[auto,swap]{} (5.5,-3.5)--(4,-3.5);
\end{scope}

% --- Diagram 2 ---
\begin{scope}[xshift=9cm,local bounding box=Y1]
  \tyng(0cm,3cm,7^2,6^2,3^2,2^4,3,2^2,1)
  \draw[->,red,line width=0.4mm, line cap=round](2.5,-2.5)--(3.5,-2.5)%
    to node[auto,swap]{} (3.5,-5.5)--(2.3,-5.5);
  \draw[->,red,line width=0.4mm, line cap=round](6.5,0.5)--(7,0.5)%
    to node[auto,swap]{} (7,-2.5)--(4,-2.5);
\end{scope}

% --- Diagram 3 ---
\begin{scope}[xshift=18cm,local bounding box=Y2]
  \tyng(0cm,3cm,7^2,6,5,3^2,2^3,3^2,2^2,1)
  \draw[->,red,line width=0.4mm, line cap=round](3.5,-1.5)--(4,-1.5)%
    to node[auto,swap]{} (4,-4.5)--(2.8,-4.5);
\end{scope}

% --- Diagram 4 ---
\begin{scope}[xshift=27cm,local bounding box=Y3]
  \tyng(0cm,3cm,7^2,6,5,3,2^3,3^3,2^2,1)
  \draw[->,red,line width=0.4mm, line cap=round](3.5,-0.5)--(4,-0.5)%
    to node[auto,swap]{} (4,-3.5)--(2.5,-3.5);
\end{scope}

% --- Diagram 5 ---
\begin{scope}[xshift=36cm,local bounding box=Y4]
  \tyng(0cm,3cm,7^2,6,5,2^3,3^4,2^2,1)
  \draw[->,red,line width=0.4mm, line cap=round](5.5,0.5)--(6,0.5)%
    to node[auto,swap]{} (6,-2.5)--(2.8,-2.5);
\end{scope}

% --- Diagram 6 ---
\begin{scope}[xshift=45cm,local bounding box=Y5]
  \tyng(0cm,3cm,7^2,6,4,2^2,3^5,2^2,1)
  \draw[->,red,line width=0.4mm, line cap=round](6.5,1.5)--(7,1.5)%
    to node[auto,swap]{} (7,-1.5)--(2.8,-1.5);
\end{scope}

% --- Diagram 7 ---
\begin{scope}[xshift=54cm,local bounding box=Y6]
  \tyng(0cm,3cm,7^2,5,4,2,3^6,2^2,1)
  \draw[->,red,line width=0.4mm, line cap=round](7.5,2.5)--(8,2.5)%
    to node[auto,swap]{} (8,-0.5)--(2.8,-0.5);
\end{scope}

% --- Diagram 8 ---
\begin{scope}[xshift=63cm,local bounding box=Y7]
  \tyng(0cm,3cm,7,6,5,4,3^7,2^2,1)
\end{scope}

% Símbolos centrados entre cada par de tableaux
\foreach \A/\B/\Sym in {
  Y0/Y1/{$P_{10,3} $},
  Y1/Y2/{$P_{9,3} $},
  Y2/Y3/{$P_{8,3} $},
  Y3/Y4/{$P_{7,3} $},
  Y4/Y5/{$P_{6,3} $},
  Y5/Y6/{$P_{5,3} $},
  Y6/Y7/{$P_{4,3} $}
}{
  \node[font=\small,anchor=north,yshift=-3pt]
       at ($(\A.south)!0.5!(\B.south)$) {\Sym};
}

\end{tikzpicture}
\caption{The leftmost diagram represents the partition $\lambda$, while the rightmost one corresponds to $\mathbf{P}_{10,3}(\lambda)$. 
Each intermediate diagram is obtained from the previous one by applying the operator displayed between them.}
    \label{fig:introA}
\end{figure}

\item We now turn to the $v^1$-term $v^1_{10,3}(\lambda)$.

To obtain this weight, we first repeat the initial step of the previous construction, namely we form $P_{10,3}(\lambda)$. We then move the box in row $11$ by subtracting $\alpha_{11,12}$, a root of height $h-1=2$. If the moved box reaches an addable position, the process stops. Otherwise, we subtract $\alpha_{13,14}=\alpha_{11+2,12+2}$ and continue until either the box reaches an addable position or the resulting weight ceases to exist. This operator is denoted by $Q_{10,3}$, and we define
\begin{equation}
v^1_{10,3}(\lambda)=Q_{10,3}(P_{10,3}(\lambda)).
\end{equation}
The construction is illustrated in \Cref{fig:introB}. The superscript $1$ in $v^1_{10,3}$ indicates that we have used one $P$-operator and one $Q$-operator. The exponent $4$ records the four roots that must be subtracted from $\lambda$ to obtain $v^1_{10,3}(\lambda)$.
\begin{figure}[h]
    \centering
    \Yboxdim{1cm}
\begin{tikzpicture}[scale=.21]
\tikzset{>={Stealth[length=1.4mm, width=1.1mm]}}

\Yfillopacity{0.3}
\Ylinethick{0.3pt}
\Ylinecolour{blue}
\Yfillcolour{cyan}

% --- Diagram 1 ---
\begin{scope}[local bounding box=Y0]
  \tyng(0cm,3cm,7^2,6^2,4,3,2^7,1)
  \tikzset{>={Stealth[length=1.4mm, width=1.1mm]}}
  \draw[->,red,line width=0.4mm, line cap=round](2.5,-3.5)--(3.5,-3.5)%
    to node[auto,swap]{}%
    (3.5,-6.5)--(2.3,-6.5);
  \draw[->,red,line width=0.4mm, line cap=round](4.5,-0.5)--(5.5,-0.5)%
    to node[auto,swap]{}%
    (5.5,-3.5)--(4,-3.5);
\end{scope}

% --- Diagram 2 ---
\begin{scope}[xshift=10.5cm, local bounding box=Y1]
  \tyng(0cm,3cm,7^2,6^2,3^2,2^4,3,2^2,1)
  \draw[->,blue,line width=0.4mm, line cap=round](3.5,-6.5)--(4,-6.5)%
    to node[auto,swap]{}%
    (4,-8.5)--(2.5,-8.5);
  \draw[->,blue,line width=0.4mm, line cap=round](2.5,-8.5)--(2.5,-8.5)%
    to node[auto,swap]{}%
    (2.5,-10.5)--(0.5,-10.5);
\end{scope}

% --- Diagram 3 ---
\begin{scope}[xshift=21cm, local bounding box=Y2]
  \tyng(0cm,3cm,7^2,6^2,3^2,2^7,1^2)
\end{scope}

% Símbolos centrados entre cada par de tableaux
\foreach \A/\B/\Sym in {
  Y0/Y1/{$P_{10,3} $},
  Y1/Y2/{$Q_{10,3} $}
}{
  \node[font=\small,anchor=north,yshift=-3pt]
       at ($(\A.south)!0.5!(\B.south)$) {\Sym};
}

\end{tikzpicture}

    \caption{The leftmost diagram represents the partition $\lambda$, while the rightmost one corresponds to $v_{10,3}^1(\lambda)$.}
    \label{fig:introB}
\end{figure}

The weight $v_{10,3}^1(\lambda)-\alpha_{7,9}$ also appears in the third row of the right-hand side of \eqref{eq:ejemplo-intro}. This weight need not be dominant. It always occurs with an additional factor of $q$. Together with the external factor $q^4$, this reflects the fact that five roots must be subtracted from $\lambda$ to obtain $v_{10,3}^1(\lambda)-\alpha_{7,9}$.

\item We now consider the $\mathbf{v}$-term $\mathbf{v}_{10,3}(\lambda)$.

In this case, we repeat the first two steps in the construction of $\mathbf{P}_{10,3}(\lambda)$; in general, one repeats $h-1$ steps. Thus we begin with $P_{9,3}P_{10,3}(\lambda)$. We then apply $Q$-operators until the weight becomes dominant: in this example, we apply $Q_{9,3}$, then $Q_{10,3}$, then $Q_{11,3}$, and so on. The process is shown in \Cref{fig:introC}. As before, the exponent $q^8$ records the eight roots subtracted during the process.

\item Finally, consider the $\mathbf{Pv}$-term $\mathbf{P}_{8,3}(\mathbf{v}_{10,3}(\lambda))$.

Starting from $\mathbf{v}_{10,3}(\lambda)$, we proceed as in the construction of the $\mathbf{P}$-term, but now with the root $\alpha_{8,10}=\alpha_{i,j}$ replaced by $\alpha_{6,8}=\alpha_{i-h+1,j-h+1}$. This explains the occurrence of $\mathbf{P}_{8,3}(\mathbf{v}_{10,3}(\lambda))$ and the coefficient $q^{13}$.
\end{itemize}
\begin{figure}
    \centering
    \Yboxdim{1cm}
\begin{tikzpicture}[scale=.21]
\tikzset{>={Stealth[length=1.4mm, width=1.1mm]}}

\Yfillopacity{0.3}
\Ylinethick{0.3pt}
\Ylinecolour{blue}
\Yfillcolour{cyan}

% --- Diagram 1 ---
\begin{scope}[local bounding box=Y0]
  \tyng(0cm,3cm,7^2,6^2,4,3,2^7,1)
  \tikzset{>={Stealth[length=1.4mm, width=1.1mm]}}
  \draw[->,red,line width=0.4mm, line cap=round](2.5,-3.5)--(3.5,-3.5)%
    to node[auto,swap]{}%
    (3.5,-6.5)--(2.3,-6.5);
  \draw[->,red,line width=0.4mm, line cap=round](4.5,-0.5)--(5.5,-0.5)%
    to node[auto,swap]{}%
    (5.5,-3.5)--(4,-3.5);
\end{scope}

% --- Diagram 2 ---
\begin{scope}[xshift=10.5cm, local bounding box=Y1]
  \tyng(0cm,3cm,7^2,6^2,3^2,2^4,3,2^2,1)
  \draw[->,red,line width=0.4mm, line cap=round](2.5,-2.5)--(3.5,-2.5)%
    to node[auto,swap]{}%
    (3.5,-5.5)--(2.3,-5.5);
  \draw[->,red,line width=0.4mm, line cap=round](6.5,0.5)--(7,0.5)%
    to node[auto,swap]{}%
    (7,-2.5)--(4,-2.5);
\end{scope}

% --- Diagram 3 ---
\begin{scope}[xshift=22cm, local bounding box=Y2]
  \tyng(0cm,3cm,7^2,6,5,3^2,2^3,3^2,2^2,1)
  \draw[->,blue,line width=0.4mm, line cap=round](3.5,-5.5)--(4,-5.5)%
    to node[auto,swap]{}%
    (4,-7.5)--(2.5,-7.5);
\end{scope}

% --- Diagram 4 ---
\begin{scope}[xshift=33cm, local bounding box=Y3]
  \tyng(0cm,3cm,7^2,6,5,3^2,2^4,3^2,2^1,1)
  \draw[->,blue,line width=0.4mm, line cap=round](3.5,-6.5)--(4,-6.5)%
    to node[auto,swap]{}%
    (4,-8.5)--(2.5,-8.5);
\end{scope}

% --- Diagram 5 ---
\begin{scope}[xshift=44cm, local bounding box=Y4]
  \tyng(0cm,3cm,7^2,6,5,3^2,2^5,3^2,1)
  \draw[->,blue,line width=0.4mm, line cap=round](3.5,-7.5)--(4,-7.5)%
    to node[auto,swap]{}%
    (4,-9.5)--(1.5,-9.5);
\end{scope}

% --- Diagram 6 ---
\begin{scope}[xshift=55cm, local bounding box=Y5]
  \tyng(0cm,3cm,7^2,6,5,3^2,2^6,3,2)
  \draw[->,blue,line width=0.4mm, line cap=round](3.5,-8.5)--(4,-8.5)%
    to node[auto,swap]{}%
    (4,-10.5)--(0.5,-10.5);
\end{scope}

% --- Diagram 7 ---
\begin{scope}[xshift=66cm, local bounding box=Y6]
  \tyng(0cm,3cm,7^2,6,5,3^2,2^8,1)
\end{scope}

% Símbolos centrados entre cada par de tableaux
\foreach \A/\B/\Sym in {
  Y0/Y1/{$P_{10,3} $},
  Y1/Y2/{$P_{9,3} $},
  Y2/Y3/{$Q_{9,3} $},
  Y3/Y4/{$Q_{10,3} $},
  Y4/Y5/{$Q_{11,3} $},
  Y5/Y6/{$Q_{12,3} $}
}{
  \node[font=\small,anchor=north,yshift=-3pt]
       at ($(\A.south)!0.5!(\B.south)$) {\Sym};
}

\end{tikzpicture}

    \caption{The leftmost diagram represents the partition $\lambda$, while the rightmost one corresponds to $\mathbf{v}_{10,3}(\lambda) $.}
    \label{fig:introC}
\end{figure}

The existence of the weights above depends strongly on $\lambda$. A useful feature of the SID is that, whenever one of these weights is undefined, the corresponding $\bfM$-term is simply omitted and the formula remains valid. For example, if $\lambda$ is the zero weight or a fundamental weight, then none of the auxiliary weights appearing above is defined, and the SID reduces to
\[
    \bfM_{\lambda}^{\succ\alpha_{i,j}}=\bfM_{\lambda}^{\succeq\alpha_{i,j}}.
\]

We now explain how positivity is extracted from \eqref{eq:ejemplo-intro}.

\begin{itemize}
    \item The element $\bfM_{\lambda}^{\succeq\alpha_{8,10}}$ already belongs to $\mathcal{M}$.

    \item The $\mathbf{P}$-term $\bfM_{\mathbf{P}_{10,3}(\lambda)}^{\succ \alpha_{8,10}}$ is handled by induction on the dominance order, since $\mathbf{P}_{10,3}(\lambda)<\lambda$ by construction.

    \item The difference
    \[
        \bfM_{v_{10,3}^1(\lambda)}^{\succ \alpha_{8,10}}
        -q\,\bfM_{v_{10,3}^1(\lambda)-\alpha_{7,9}}^{\succ \alpha_{8,10}}
    \]
    is referred to as a \emph{good pair} in \Cref{section: positivity}.
    %is what we call a \emph{good pair} in \Cref{section: positivity}. 
    In general, the SID may produce $h-2$ good pairs. In this example there is only one, since $h-2=1$.

    The weights appearing in good pairs have the form $v_{j,h}^k(\lambda)$ and $v_{j,h}^k(\lambda)-\alpha_{i-k,j-k}$, with $1\leq k\leq h-2$. The operators $v_{j,h}^k$ are natural generalizations of the operator $v_{10,3}^1$ used above; see \Cref{def: words over weights}. By \Cref{prop: caso bueno}, each good pair admits a positive combinatorial expansion in terms of elements of the form $\bfM^{\succeq\alpha_{i-k,j-k}}$.

    In the present example, $k=1$, and we have
    \begin{equation} \label{eq:intro-expansion-good}
    \bfM_{v_{10,3}^1(\lambda)}^{\succ \alpha_{8,10}} 
    - q\, \bfM_{\,v_{10,3}^1(\lambda)-\alpha_{7,9}}^{\succ \alpha_{8,10}}
    = 
    \sum_{m=1}^{\infty} q^{e_m}\,
    \bfM_{(v^1_{10,3})^{m} (\lambda)}^{\succeq \alpha_{7,9}},
\end{equation}
where $e_m$ is the number of positive roots that must be subtracted from $v^1_{10,3}(\lambda)$ in order to obtain $(v^1_{10,3})^m(\lambda)$ and $(v_{10,3}^{1})^m$ denotes the $m$-fold composition of $v_{10,3}^1$.

With the convention that undefined weights contribute zero, the apparently infinite sum in \eqref{eq:intro-expansion-good} is finite: each application of $v^1_{10,3}$ strictly decreases the weight in the dominance order, and there are only finitely many dominant weights below a fixed dominant weight. In this example,
\begin{equation} \label{eq:intro-expansion-good-concrete}
    \bfM_{v_{10,3}^1(\lambda)}^{\succ \alpha_{8,10}} 
    - q\, \bfM_{\,v_{10,3}^1(\lambda)-\alpha_{7,9}}^{\succ \alpha_{8,10}}
    = 
    \bfM_{v^1_{10,3}(\lambda)}^{\succeq \alpha_{7,9}},
\end{equation}
since $(v_{10,3}^1)^m(\lambda)$ is undefined for all $m\geq 2$.

\item Finally, we consider the difference
\[
    \bfM_{\mathbf{v}_{10,3}(\lambda)}^{\succ \alpha_{8,10}}
    -q^5\,\bfM_{\mathbf{P}_{8,3}(\mathbf{v}_{10,3}(\lambda))}^{\succ \alpha_{8,10}}.
\]
%This is what we call a \emph{bad pair}.
This is referred to as a \emph{bad pair}.
In general, at most one bad pair appears in the SID. Unlike good pairs, bad pairs do not admit an immediate positive expansion, which explains the terminology.

The two weights appearing in a bad pair are decomposed further by applying the SID recursively.
An induction argument, together with commutation rules for the operators on weights, shows that the bad pair can nevertheless be rewritten as a positive linear combination of elements of the form $\bfM^{\succeq\alpha_{i-k,j-k}}$ for $1\leq k\leq i-1$. 
The required commutation rules are introduced in \Cref{section COM RUl}, and the induction is carried out in \Cref{section proof of positivity}.
\end{itemize}

%%%%%%%%%%%%%%%%%%%%%%%%%%%%%%%%%%%%%%%%%%%%%%%%%%%%%%%%%%%%%%%%%%%%%%
\subsection{Symmetric functions}
%%%%%%%%%%%%%%%%%%%%%%%%%%%%%%%%%%%%%%%%%%%%%%%%%%%%%%%%%%%%%%%%%%%%%%

The results of this paper can also be viewed through the language of symmetric functions. As mentioned above, the Satake isomorphism relates the spherical Hecke algebra to the algebra of symmetric functions. We now spell out the symmetric-function analogue of the pre-canonical bases.

As before, we identify dominant weights in type $A$ with partitions. Let $s_\lambda$ denote the Schur function indexed by a partition $\lambda$. We extend Schur functions to arbitrary elements of $\mathbb{Z}^{\ell}$ by the usual straightening rule: for $\gamma\in\mathbb{Z}^{\ell}$, set
\begin{equation}\label{eq:straighten}
  s_\gamma =
  \begin{cases}
    \operatorname{sgn}(\gamma+\rho)\,
    s_{\operatorname{sort}(\gamma+\rho)-\rho},
      & \text{if $\gamma+\rho$ has distinct nonnegative parts,} \\[4pt]
    0, & \text{otherwise.}
  \end{cases}
\end{equation}
Here $\rho=(\ell-1,\ell-2,\dots,0)$, $\operatorname{sort}(\beta)$ denotes the weakly decreasing rearrangement of $\beta$, and $\operatorname{sgn}(\beta)$ is the sign of the shortest permutation sending $\beta$ to $\operatorname{sort}(\beta)$.

This straightening rule is the Schur-function counterpart of \eqref{tilde de schur}. The analogue of \Cref{defi pre-canonical} is the following.

\begin{definition}\rm
The $k$-th pre-canonical Schur function associated with $\lambda$ is
\begin{equation}\label{eq:preSchur}
  s_{\lambda}^{k}
  =  \prod_{\alpha \in \Phi^{\ge k}} (1 - q\,L_{\alpha})\, s_{\lambda},
\end{equation}
where the lowering operator $L_\alpha$ acts on subscripts by
\[
    L_{\alpha}s_{\gamma}=s_{\gamma-(\epsilon_i-\epsilon_{j+1})}
    \qquad \mbox{if } \alpha=\alpha_{i,j}.
\]
\end{definition}

In \eqref{eq:preSchur}, the product is first expanded and then applied to $s_\lambda$. Thus
\begin{equation}\label{eq:preSchurA}
  s_{\lambda}^{k}
  = \sum_{I\subset \Phi^{\geq k}} (-q)^{|I|} s_{\lambda-\Sigma_I},  
\end{equation}
which is the image of \Cref{defi pre-canonical} under the heuristic replacement of canonical basis elements by Schur functions.

The formulation in terms of lowering operators highlights a parallel with Catalan symmetric functions. These are defined by
\begin{equation}
\mathbf{H}_{A,\lambda}
  = \prod_{\alpha\in A} (1 - q R_{\alpha})^{-1} s_{\lambda},
\end{equation}
where $A$ is a root ideal and the raising operator $R_\alpha$ acts by
\[
    R_{\alpha}s_\gamma=s_{\gamma+(\epsilon_i-\epsilon_{j+1})}
    \qquad \mbox{if } \alpha=\alpha_{i,j}.
\]
For our purposes, the relevant point is that each set $\Phi^{\geq k}$ is a root ideal.

Catalan symmetric functions were introduced independently in \cite{chen2010skew} and \cite{panyushev2010generalised}. They interpolate between modified Hall--Littlewood polynomials, when $A=\Phi^+$, and Schur functions, when $A=\varnothing$.

Similarly, after applying the Satake transform and then dualizing with respect to the Hall inner product, the standard and canonical bases of $\mathcal{H}^{\mathbf{sph}}$ correspond to modified Hall--Littlewood and Schur functions, respectively. It is natural to ask whether the pre-canonical Schur functions and the Catalan symmetric functions attached to the ideals $\Phi^{\geq k}$ are dual bases; that is, whether
\begin{equation}
    \langle s_{\lambda}^k , \mathbf{H}_{\Phi^{\ge k},\mu} \rangle = \delta_{\lambda,\mu}.
\end{equation}
This is not the case. For example, when $n=4$ and $k=2$,
\begin{equation}
    s^2_{(2,1,1)} 
      = s_{(2,1,1)} + (-q^{2}-q)\, s_{(1,1,1,1)},
\end{equation}
whereas
\begin{equation}
    \mathbf{H}_{\Phi^{\ge 2}, (1,1,1,1)} 
      = s_{(1,1,1,1)} + q\,s_{(2,1,1)} + q^2 s_{(2,2)}.
\end{equation}
Hence
\begin{equation}
  \langle s^2_{(2,1,1)} , \mathbf{H}_{\Phi^{\ge 2}, (1,1,1,1)} \rangle
  = -q^{2} \neq 0.
\end{equation}

Let $\{\tilde{s}_{\lambda}^{k}\}$ denote the basis dual to $\{s_{\lambda}^k\}$ with respect to the Hall inner product. By \Cref{Teo C}, the expansion of $\tilde{s}_{\lambda}^{k}$ in the basis $\{\tilde{s}_{\mu}^{k+1}\}$ has nonnegative coefficients. It follows, in particular, that $\tilde{s}_{\lambda}^{k}$ is Schur-positive for every $k$ and every $\lambda$.

Blasiak, Morse, and Pun~\cite{blasiak2024demazure} proved the Schur positivity of Catalan symmetric functions for arbitrary root ideals and partitions. Their results provide a powerful refinement of several earlier positivity phenomena. For instance, the Lascoux--Sch\"utzenberger charge formula for $K_{\lambda,\mu}(q)$ appears as a special case of their tableaux formula for Catalan symmetric functions; see \cite[Theorem~2.18]{blasiak2024demazure}.

The functions $\tilde{s}_{\lambda}^{k}$ are more rigid than Catalan symmetric functions, but they have a different advantage: they provide a \emph{global} positive interpolation between modified Hall--Littlewood polynomials and Schur functions. More precisely, an entire basis evolves step by step from the modified Hall--Littlewood basis to the Schur basis. Catalan symmetric functions, by contrast, interpolate one function at a time rather than through a distinguished sequence of full bases.

We also mention the related work~\cite{blasiak2019catalan}, where Blasiak, Morse, Pun, and Summers proved the positivity of the $k$-Schur functions $\mathfrak{s}^k_\lambda$ of Lapointe, Lascoux, and Morse~\cite{luc2003}. They also established the branching property for $k$-Schur functions: the expansion of a $k$-Schur function in the $(k+1)$-Schur basis is positive. Although this statement resembles the positivity satisfied by the dual pre-canonical bases, the two phenomena are different. For $k$-Schur functions, the parameter $k$ controls the allowed size of the first part of the partitions involved; in our setting, the parameter $k$ controls the root ideal used to define the $k$-th pre-canonical Schur function.

We close this discussion by pointing out that readers more familiar with symmetric functions than with Hecke algebras can read much of the paper by applying this dictionary: replace dominant weights by partitions and canonical basis elements by Schur functions.

\subsection{Organization of the paper and acknowledgments}

The paper is organized as follows.
In \Cref{sec: prelimnares}, we introduce the pre-canonical bases and establish the notation used throughout the paper.
In \Cref{section identities to prove first inverse decomposition}, we develop the identities leading to the First Inverse Decomposition, whose proof is given in \Cref{Section: descomposición inversa}.
Likewise, \Cref{sec: Towards the Second Inverse Decomposition} is devoted to developing the tools required for the proof of the Second Inverse Decomposition, which is carried out in \Cref{sec: second inverse decomp}.
Finally, in \Cref{section: positivity}, we prove \Cref{Teo D}, thereby establishing the positivity conjecture for pre-canonical bases. 

\medskip

The authors would like to warmly thank Damian de la Fuente and Nicolas Libedinsky for their comments and suggestions, which greatly improved the presentation of this paper.
The first author would also like to thank Cedric Lecouvey, Cristian Lenart, and Leonardo Patimo for many stimulating discussions related to the topics of this paper during the early stages of the project.

\medskip

David Plaza was partially supported by ANID--FONDECYT Regular Grant 1240199.
Yamil Sagurie was partially supported by ANID-beca doctorado nacional 21201961 and ANID--FONDECYT Regular Grant 1240199.

\section{Preliminaries}  \label{sec: prelimnares}
In this section we introduce the spherical Hecke algebra and their Pre-canonical bases.

\subsection{Affine Weyl groups}

From now on we fix a positive integer $n$.
Let $\Phi$ be a root system of type $A_n$. 
We use the following realization of $\Phi$. 
Let $V\subset \mathbb{R}^{n+1}$ be the subspace of vectors with coordinates adding up to zero. 
In this setting we set $\Phi=\{ \epsilon_i -\epsilon_j \mid 1\leq i\neq j \leq n+1\}$. 
The simple roots are $\Delta = \{ \alpha_i\coloneqq \epsilon_i-\epsilon_{i+1} \mid 1\leq i \leq n \}$ and the set of positive roots is given by $\Phi^+=\{   \alpha_{i,j} \coloneqq  \epsilon_i -\epsilon_{j+1} \mid 1\leq i\leq j \leq n  \}$. 
The fundamental weights $\{\varpi_1 , \ldots , \varpi_n\}$ are given by the rule $\pint{\varpi_i}{\alpha_j} = \delta_{i,j}$. 
The weight lattice is $X=\operatorname{span}_{\mathbb{Z}} \{\varpi_1 , \ldots , \varpi_n\}$ and the dominant weights are $X^+=\operatorname{span}_{\mathbb{N}} \{\varpi_1 , \ldots , \varpi_n\}$.

\begin{remark}\rm \label{remark producto interno}
  Let $1\leq i < j \leq n$ and $1\leq k \leq n$. Then, we have
  \begin{equation}
      \pint{\alpha_{i,j}}{\alpha_k} = \begin{cases}
          1, & \mbox{if } k=i \mbox{ or } k=j;\\
          -1, & \mbox{if } k=i-1 \mbox{ or } k=j+1; \\
          0, & \mbox{otherwise.}
       \end{cases}
  \end{equation}
\end{remark}

\begin{definition}\rm 
  We  define a total order $\preceq $ on  $\Phi^{\geq 2}$ as follows. 
  We write $\alpha_{i,j}\preceq \alpha_{k,r}$ if either $j-i < r-k$,  or $j-i = r-k$ and $i\leq k$.   
\end{definition}

Notice that the root $\alpha_{1,n}$ (resp. $\alpha_{1,2}$) is the maximal (resp. minimal) element for this order.
Given $a\preceq b\in \Phi^{\geq 2}$, we define the  closed interval $[a,b]$ to be  the set of elements $x \in \Phi^{\geq 2}$ such that $a\preceq x \preceq b$. 
Open and half-open intervals are defined in a similar fashion.
For $\alpha_{i,j}\in \Phi^{\geq 2}$ we define
\begin{equation}
    \Phi^{\succeq \alpha_{i,j}} =\{ \alpha \in \Phi^{\geq 2} \mid \alpha \succeq \alpha_{i,j}  \} \qquad \mbox{and} \qquad
     \Phi^{\succ \alpha_{i,j}} =\{ \alpha \in \Phi^{\geq 2} \mid \alpha \succ \alpha_{i,j}  \}.
\end{equation}

Henceforth we represent  $\Phi^+$ as an array of boxes in the plane where each box corresponds to a unique positive root.
We draw a ``pyramid'' of boxes with $n$ boxes at the bottom. 
The boxes on the bottom represent the simple roots and they are represented by the corresponding index. 
The other positive roots are obtained as follows. 
The box associated to $\alpha_{i,j}\in \Phi^{\geq 2}$ is the unique box $B$ such that the pyramid with apex $B$ has as corners of its base the boxes associated to the simple roots $\alpha_i$ and $\alpha_j$. 
Given $K\subseteq \Phi^+$ we color with green a box if its corresponding root belongs to $K$. 
In this setting, it is quite easy to compare to roots $\alpha, \beta \in \Phi^{\geq 2}$.
Namely, let $A$ and $B$ be the boxes associated to $\alpha$ and $\beta$, respectively.
Then, $\alpha \succ \beta$ if and only if either $A$ is located above $B$, or $A$ and $B$ are located at the same level and $A$ is on the right of $B$.
For instance, the array of boxes in  \Cref{fig: rootsA} corresponds to $\Phi^{\succ \alpha_{3,6}}=\Phi^{\succeq \alpha_{4,7}}$ for $n=10$.

\begin{figure}[H]
     \centering
     \begin{subfigure}[b]{0.45\textwidth}
         \centering
        {\scalebox{.4}{\exbonito}}
    \caption{ $\Phi^{\succ \alpha_{3,6}}=\Phi^{\succeq \alpha_{4,7}}$.  }
    \label{fig: rootsA}
     \end{subfigure}
     \hfill
     \begin{subfigure}[b]{0.45\textwidth}
         \centering
        {\scalebox{.4}{\exbonitoB}}
    \caption{$ K \sim_T \Phi^{\succ \alpha_{3,6}} $ for $T=\{4,5\}$.}
    \label{fig: rootsB}
     \end{subfigure}
        \caption{Roots in a pyramid for $n=10$.}
        \label{fig: ejemplos de roots}
\end{figure}

Let $\alpha\in \Phi^+$. We define $s_{\alpha}: V\rightarrow V$ by $s_\alpha(\lambda) = \lambda - \pint{\lambda}{\alpha}\alpha $. 
The (finite) Weyl group $W_f$ of $\Phi$ is the group generated by $\{s_{\alpha}\mid \alpha \in \Phi^+ \}$. 
Let $s_i = s_{\alpha_i}$ and $S_{f}=\{s_i \mid 1\leq i \leq n\}$. 
Then $W_f$ is a Coxeter system with generators $S_f$. 
As usual, we denote by $\ell $ and $\leq$ its length function and Bruhat order, respectively. 
Also, we denote by $w_0$ the longest element of  $W_f$. 
For $\alpha=\alpha_{i,j}\in \Phi^{+}$, the corresponding reflection $s_{\alpha_{i,j}}$ is given by
\begin{equation}\label{eq:reflection}
s_{\alpha_{i,j}} = s_i s_{i+1}\cdots s_{j-1}s_j s_{j-1}\cdots s_{i+1}s_i.
\end{equation}

Besides the natural action of $W_f$ on $V$ we have the \emph{dot} action that is defined as follows. 
Let $\rho $ be the half-sum of the positive roots. 
We have $\rho = \sum_{i=1}^n \varpi_i$. 
The dot action is given by $w\cdot \lambda = w(\lambda +\rho) -\rho$, for $w\in W_f$ and $\lambda \in V$. 

Let $m\in\mathbb{Z}$ and $\alpha\in\Phi^+$. 
Define the (affine) hyperplane
$H_{\alpha,m}:=\{\lambda\in V \mid \pint{\lambda}{\alpha}=m\}$,
and let $s_{\alpha,m}$ denote the reflection along $H_{\alpha,m}$.
In formulas, $s_{\alpha,m}(\lambda)=\lambda-(\pint{\lambda}{\alpha}-m)\,\alpha$.
The affine Weyl group $W_a$ is the group generated by $\{s_{\alpha,m } \mid \alpha\in \Phi^+, m\in \mathbb{Z}\}$. 

The connected components of the complement of the union of all hyperplanes
$H_{\alpha , m}$ are called \emph{alcoves}. 
We denote by $\mathcal{A}$ the set of all alcoves. 
We call 
$$\mathcal{A}_0 = \{ \lambda \in V \mid -1 < \pint{\lambda}{\alpha} < 0  \mbox{ for any } \alpha \in \Phi^+  \}$$ the fundamental alcove. 
The map $W_a \rightarrow \mathcal{A}$ given by $w\mapsto w(\mathcal{A}_0)$ is a bijection. 

The walls of $\mathcal{A}_0$ are supported in the hyperplanes $H_{\alpha , 0}$ for $\alpha \in \Delta$ and $H_{\beta , -1}$ for $\beta = \alpha_{1,n}$. 
We set $s_{0} = s_{\beta , -1}$, the reflection along $H_{\beta , -1}$ and $S_a = S_f \cup \{s_0\}$. 
Then, $W_a$ is a Coxeter group with generators $S_a$. 

\begin{definition}\rm \label{def: theta lambda}
    Let $\lambda \in X^+$. 
    The set $w_0(\mathcal{A}_0)+\lambda$ belongs to $\mathcal{A}$. 
    Then we define $\theta(\lambda) \in W_a$ to be the unique element in $W_a$ such that $\theta(\lambda) (\mathcal{A}_0) =  w_0(\mathcal{A}_0)+\lambda$. 
\end{definition}

\subsection{Spherical Hecke algebras}

Let $\mathcal{H} = \mathcal{H}(W_{a}, S_a)$  be the affine Hecke algebra of $(W_a,S_a)$. 
It is the $\mathbb{Z}[q^{\frac{1}{2}},q^{-\frac{1}{2}}]$-algebra  with generators $\{H_{s_i} \mid s_i \in S_a\}$ and relations 
$H_{s_i}^2 = (q^{-\frac{1}{2}} -q^{\frac{1}{2}})H_{s_i}+1$, $H_{s_i}H_{s_j}=H_{s_j}H_{s_i}$ if $s_i$ and $s_j$ commute, and   $H_{s_i}H_{s_j}H_{s_{i}} =H_{s_j}H_{s_i}H_{s_j}$ if $s_i$ and $s_j$ do not commute. 
The Hecke algebra $\mathcal{H}$ comes equipped with a  standard basis $\{\mathbf{H}_w \mid w\in W_a\}$ and a canonical basis $\{ \underline{\mathbf{H}}_{w} \mid w\in W_a \ \}$ 
(see \cite{Lusztig1979,Soergel1997KazhdanLusztigPA}).
The coefficients in the transition matrix between the canonical and standard bases of $\mathcal{H}$ are the Kazhdan-Lusztig polynomials that we denote by $h_{x,y}(q^{1/2})\in \mathbb{Z}[q^{1/2}]$ for $x, y \in W_a$. 
In formulas, we have
\begin{equation}
    \underline{\mathbf{H}}_y = \sum_{x\in W_a} h_{x,y}(q^{1/2}) \mathbf{H}_x. 
\end{equation}

\begin{definition}\rm
   For $\lambda \in X^+$ we set 
$\underline{\mathbf{H}}_{\lambda} = \underline{\mathbf{H}}_{\theta (\lambda)}$. 
The spherical Hecke algebra $\sph $ is defined as
\begin{equation}
    \sph = \operatorname{span}_{\mathbb{Z}[q^{\frac{1}{2}},q^{-\frac{1}{2}}]} \{ \underline{\mathbf{H}}_{\lambda} \mid \lambda \in X^{+}  \}. 
\end{equation} 
We refer to $ \{ \underline{\mathbf{H}}_{\lambda} \mid \lambda \in X^{+}  \}$ as the canonical basis of $\sph$. 
\end{definition}

\begin{remark}\rm
As its name suggests, $\sph$ is an algebra.
However, this is not a subalgebra of $\mathcal{H}$.
In this paper we use only the linear structure of $\sph$, so we do not recall the product. 
For the ring  structure, see \cite{LPP}.
\end{remark}

The spherical Hecke algebra also has a standard basis $\{ \mathbf{H}_{\lambda} \mid  \lambda \in X^+ \}$, where
\begin{equation}\label{eq: defi standard basis}
     \mathbf{H}_{\lambda} = \sum_{x \in D_{\lambda}} q^{-\frac{1}{2}(\ell(\theta(\lambda)) - \ell(x))} \mathbf{H}_{x},   
\end{equation}
 $D_\lambda= W_f  \,\theta (\lambda) \, W_\lambda$ and 
$W_{\lambda}$ is the maximal parabolic subgroup of $W_{a}$ generated by $S_a\setminus \{s_k\}$ and $0\leq k \leq n$ is the unique integer such that $\lambda \equiv -\varpi_k \mod \mathbb{Z}\Phi$ ($\varpi_0\coloneqq 0$ by convention). 

For the sake of completeness, we recall here the definition of the pre-canonical bases given in \Cref{defi pre-canonical}. 

\begin{definition}\rm
   For $i\geq 2 $ the $i$-th pre-canonical basis, $\mathcal{N}^{i}=\{ \bfN_{\lambda }^i  \mid  \lambda \in X^{+} \}$, is given by 
\begin{equation}  \label{def: pre-canonical}
    \bfN_{\lambda}^{i} = \sum_{I\subset \Phi^{\geq i}} (-q)^{|I|} \widetilde{\underline{\mathbf{H}}}_{\lambda - \Sigma_I}, 
\end{equation}  
As its name suggests, for each $i$, $\mathcal{N}^{i}$ is a basis for $\sph$.
\end{definition}

The main result of this paper is a proof of the positivity statement in \Cref{Conj A}. 
The key idea is that pre-canonical basis elements are too rigid for our purposes: they are defined only for dominant weights, and their construction is constrained to  sets of the form $\Phi^{\ge i}$ for some $i\ge 2$. 
We address these two limitations as follows.

\begin{definition}\rm
      For $A\subset \Phi$ and $\mu \in X$ we define
      \begin{equation}
          \bfM_{\mu}^{A}\coloneqq \sum\limits_{I\subset A} (-q)^{\labs I \rabs}\bfH_{\mu - \Sigma_{I}}
      \end{equation}
    In particular, we define $\bfM_{\lambda}^{\succeq \alpha_{i,j}}= \bfM_{\lambda}^{\Phi^{\succeq \alpha_{i,j}}}$ and  $\bfM_{\lambda}^{\succ \alpha_{i,j}}= \bfM_{\lambda}^{\Phi^{\succ \alpha_{i,j}}}$.
\end{definition}

The following lemma is the key to rewrite $\bfM$-elements.  It is proved in \cite[Proposition 4.3]{LPP}.

\begin{lemma}\rm\label{lem: weyl group over M elements}
Let $A\subset \Phi^+$ and $\lambda\in X$. Then, for all $w\in W_f$ we have 
\begin{equation}\label{eq: lemma act on M-elements}
  \bfM_{\lambda}^{A} = (-1)^{\ell(w)}\,\bfM_{w\cdot \lambda}^{\,w(A)}.
\end{equation}
In particular, if $w(A)=A$, $w\cdot \lambda=\lambda$, and $\ell(w)$ is odd, then $\bfM_{\lambda}^{A}=0$.
Moreover, if $w=s_{\alpha}$ for some $\alpha\in\Phi^{+}$ and $\pint{\lambda+\rho}{\alpha}=0$, then $\bfM_{\lambda}^{A}=0$. 
\end{lemma}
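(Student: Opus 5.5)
The plan is to reduce the whole statement to a single symmetry property of the elements $\widetilde{\underline{\mathbf{H}}}_\mu$ (written $\bfH_\mu$ in the definition of $\bfM^A_\mu$), and then reindex the sum defining $\bfM^A_\lambda$.

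\emph{Step 1: the key symmetry.} I claim that for every $\mu\in X$ and $w\in W_f$,
\[
\widetilde{\underline{\mathbf{H}}}_{w\cdot\mu}=(-1)^{\ell(w)}\widetilde{\underline{\mathbf{H}}}_{\mu}.
\]
Suppose first that $\mu$ is singular, say $t(\mu+\rho)=\mu+\rho$ for a reflection $t$. Then $wtw^{-1}$ is a reflection fixing $w(\mu+\rho)$, so $w\cdot\mu$ is singular too, and both sides vanish. Suppose now that $\mu$ is regular. Since $w_{\mu}\cdot\mu$ is dominant and $w_\mu w^{-1}\cdot(w\cdot\mu)=w_\mu\cdot\mu$, uniqueness gives $w_{w\cdot\mu}=w_\mu w^{-1}$. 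Both sides are then $\pm\underline{\mathbf{H}}_{w_\mu\cdot\mu}$, and the signs agree because $\ell(w_\mu w^{-1})\equiv \ell(w_\mu)+\ell(w)\pmod 2$, as the sign character is multiplicative.

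\emph{Step 2: reindexing.} For $I\subset A$, linearity of $w$ gives
\[
w\cdot(\lambda-\Sigma_I)=w(\lambda+\rho)-w(\Sigma_I)-\rho=w\cdot\lambda-\Sigma_{w(I)},
\]
where $w(I)\subset w(A)$ may contain negative roots; this causes no problem, since $\Sigma$ is just a sum. The map $I\mapsto w(I)$ is a cardinality-preserving bijection from subsets of $A$ to subsets of $w(A)$. Applying Step 1 termwise to $\bfM^A_\lambda=\sum_{I\subset A}(-q)^{|I|}\widetilde{\underline{\mathbf{H}}}_{\lambda-\Sigma_I}$ therefore yields $(-1)^{\ell(w)}\bfM^{w(A)}_{w\cdot\lambda}$, which is \eqref{eq: lemma act on M-elements}. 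This is where the extension of $\bfM^A_\mu$ to non-positive sets $A\subset\Phi$ is genuinely needed.

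\emph{Step 3: vanishing statements.} If $w(A)=A$, $w\cdot\lambda=\lambda$ and $\ell(w)$ is odd, the identity reads $\bfM^A_\lambda=-\bfM^A_\lambda$. Hence $2\bfM^A_\lambda=0$, and $\bfM^A_\lambda=0$ because $\sph$ is a free module over the torsion-free ring $\mathbb{Z}[q^{\frac12},q^{-\frac12}]$.

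For the last assertion, take $w=s_\alpha$, which has odd length. The hypothesis $\pint{\lambda+\rho}{\alpha}=0$ gives $s_\alpha\cdot\lambda=\lambda$. I expect the main obstacle to be the condition $s_\alpha(A)=A$, which does not hold for arbitrary $A$. Indeed, for $A=\{\alpha\}$ one gets $\bfM^A_\lambda=-q\,\widetilde{\underline{\mathbf{H}}}_{\lambda-\alpha}$, which need not vanish. So the last assertion must be read with the invariance $s_\alpha(A)=A$ carried over from the preceding sentence, and I would prove it in exactly that form, as a special case of Step 3. Should a version without that invariance be required, I would instead try to split $A=A'\sqcup A''$ with $A'$ being $s_\alpha$-stable and pair off subsets accordingly. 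I expect this to work only for special $A$, such as the root ideals $\Phi^{\succeq\alpha_{i,j}}$ and suitable $\alpha$, and I would check those cases separately.
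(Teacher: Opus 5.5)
Your proof is correct. The paper does not prove this lemma itself; it quotes it from [LPP, Proposition~4.3]. Your route is the natural direct proof: the sign rule $\widetilde{\underline{\mathbf{H}}}_{w\cdot\mu}=(-1)^{\ell(w)}\widetilde{\underline{\mathbf{H}}}_{\mu}$, which is essentially built into the definition of $\widetilde{\underline{\mathbf{H}}}$, followed by the cardinality-preserving reindexing $I\mapsto w(I)$ and a torsion-freeness argument. Your reading of the final assertion as still requiring $s_\alpha(A)=A$ is also right. This is how the paper invokes the lemma, for example in Corollary~\ref{coro puedo agregar}, where $s_r(A)=A$ is assumed. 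Your example $A=\{\alpha\}$ confirms that the invariance cannot be dropped: in type $A_1$ with $\lambda=-\varpi_1$ one gets $\bfM^{\{\alpha_1\}}_{\lambda}=q\,\underline{\mathbf{H}}_{\varpi_1}\neq 0$.
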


The above lemma has two easy corollaries that shows how we can modify the set indexing a $\bfM$-element, while keeping the same weight. 

\begin{corollary}\label{coro puedo agregar}
    Let $A\subset \Phi^+$ and $\lambda \in X$.  
    Let $ 1\leq r \leq n$ and $\beta \in \Phi^+$.
    Suppose that $s_r(A)=A$,  $\pint{\lambda}{\alpha_r}=0$ and $\pint{\beta}{\alpha_r} =1$ then 
    \begin{equation}
        \bfM_{\lambda}^{A} = \bfM_{\lambda}^{A\cup \{\beta\} }.
    \end{equation}
\end{corollary}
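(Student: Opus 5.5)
We split the sum defining $\bfM_{\lambda}^{A\cup\{\beta\}}$ according to whether $I$ contains $\beta$. We may assume $\beta\notin A$; otherwise there is nothing to prove. Then
\[
\bfM_{\lambda}^{A\cup\{\beta\}} = \sum_{I\subset A}(-q)^{|I|}\bfH_{\lambda-\Sigma_I} + \sum_{I\subset A}(-q)^{|I|+1}\bfH_{\lambda-\beta-\Sigma_I} = \bfM_{\lambda}^{A} - q\,\bfM_{\lambda-\beta}^{A}.
\]
It therefore suffices to show that $\bfM_{\lambda-\beta}^{A}=0$.

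For this we apply the vanishing criterion in \Cref{lem: weyl group over M elements}, taking $w=s_r=s_{\alpha_r}$ with $\alpha=\alpha_r$. By hypothesis $s_r(A)=A$, so it is enough to check that $\pint{\lambda-\beta+\rho}{\alpha_r}=0$. Since $\rho=\sum_i\varpi_i$, we have $\pint{\rho}{\alpha_r}=1$. Combining this with $\pint{\lambda}{\alpha_r}=0$ and $\pint{\beta}{\alpha_r}=1$ gives
\[
\pint{\lambda-\beta+\rho}{\alpha_r}=0-1+1=0.
\]
The \emph{Moreover} part of the lemma is stated for arbitrary $A$, but we also have $s_r(A)=A$ available, so either version applies. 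Hence $\bfM_{\lambda-\beta}^{A}=0$, and the corollary follows.

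Concretely, the vanishing comes from the identity $s_r\cdot(\lambda-\beta)=\lambda-\beta$, which holds because the pairing above vanishes. Equation~\eqref{eq: lemma act on M-elements} then gives $\bfM_{\lambda-\beta}^{A}=-\bfM_{\lambda-\beta}^{s_r(A)}=-\bfM_{\lambda-\beta}^{A}$, and since $\sph$ is a free module over a domain, this forces $\bfM_{\lambda-\beta}^{A}=0$.

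There is no real obstacle. The only point requiring care is the sign and normalization convention $\pint{\rho}{\alpha_r}=1$, which holds because the pairing satisfies $\pint{\varpi_i}{\alpha_j}=\delta_{i,j}$ and $\rho=\sum_i\varpi_i$.
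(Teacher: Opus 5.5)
Your proof is correct and is essentially the paper's argument: expand $\bfM_{\lambda}^{A\cup\{\beta\}}=\bfM_{\lambda}^{A}-q\,\bfM_{\lambda-\beta}^{A}$ and kill the second term with \Cref{lem: weyl group over M elements} at $s_r$, using $\pint{\lambda-\beta+\rho}{\alpha_r}=0$. Your extra remarks (the case $\beta\in A$, and deducing $\bfM_{\lambda-\beta}^{A}=0$ from $\bfM_{\lambda-\beta}^{A}=-\bfM_{\lambda-\beta}^{A}$ in a torsion-free module over a ring of characteristic $0$) are valid details that the paper leaves implicit.
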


\begin{proof}
By the definition of $\bfM$-elements we have $\bfM_{\lambda}^{A\cup \{\beta\} } = \bfM_{\lambda}^{A} -q \bfM_{\lambda -\beta }^{A}  $. 
Since $\pint{\lambda -\beta +\rho}{\alpha_r}=0$ we conclude by \Cref{lem: weyl group over M elements} that $\bfM_{\lambda -\beta }^{A}=0$, and the result follows. 
\end{proof}

\begin{corollary}\rm\label{lem: sacar una poner otra}
    Let $A\subseteq \Phi^{\geq 2} $ and $1\leq r \leq n$. Suppose that $A\setminus s_r(A)=\{\alpha \}$ and that $ \beta \notin A$. Furthermore, assume that $\langle  \alpha, \alpha_r \rangle = \langle   \beta , \alpha_r  \rangle =1$. If $\lambda \in X$ and $\langle \lambda , \alpha_r \rangle =0$ then 
    \begin{equation}
        \bfM_{\lambda}^A = \bfM_{\lambda}^{(A\setminus \{\alpha\}) \cup \{\beta\}  }. 
    \end{equation}
\end{corollary}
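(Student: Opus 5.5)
The plan is to reduce to \Cref{coro puedo agregar}, applied twice with the same auxiliary set. Put $B\coloneqq A\setminus\{\alpha\}$. Then $A=B\cup\{\alpha\}$ and $(A\setminus\{\alpha\})\cup\{\beta\}=B\cup\{\beta\}$. So it suffices to show
\[
\bfM_\lambda^{B\cup\{\alpha\}}=\bfM_\lambda^{B}=\bfM_\lambda^{B\cup\{\beta\}}.
\]
Each equality is an instance of \Cref{coro puedo agregar}, with the set $B$ in place of $A$ and with $\alpha$, respectively $\beta$, in the role of the added root. The hypotheses $\pint{\lambda}{\alpha_r}=0$ and $\pint{\alpha}{\alpha_r}=\pint{\beta}{\alpha_r}=1$ are given. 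The only thing left to check is $s_r(B)=B$.

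First, $s_r$ permutes $\Phi^+\setminus\{\alpha_r\}$. Since $A\subseteq\Phi^{\geq 2}$, we have $\alpha_r\notin A$, so $s_r(A)\subseteq\Phi^+$ and $|s_r(A)|=|A|$. The hypothesis $A\setminus s_r(A)=\{\alpha\}$ gives $\alpha\in A$. It also gives $\alpha\notin s_r(A)$, i.e. $s_r\alpha\notin A$.

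Next, applying the involution $s_r$ to $A\setminus s_r(A)=\{\alpha\}$ yields $s_r(A)\setminus A=\{s_r\alpha\}$. Hence $A\cap s_r(A)=B$ and $s_r(A)=B\cup\{s_r\alpha\}$. Removing $s_r\alpha=s_r(\alpha)$ from both sides gives $s_r(B)=s_r(A)\setminus\{s_r\alpha\}=B$, as required.

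With $s_r(B)=B$ established, \Cref{coro puedo agregar} applies to $(B,\lambda,r,\alpha)$ and to $(B,\lambda,r,\beta)$. Note that $\beta\notin A$ and $\alpha\notin B$, so both $B\cup\{\alpha\}$ and $B\cup\{\beta\}$ are genuine one-element enlargements of $B$.

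For the reader's benefit we can also unfold the mechanism. By definition $\bfM_\lambda^{B\cup\{\gamma\}}=\bfM_\lambda^{B}-q\,\bfM_{\lambda-\gamma}^{B}$ for $\gamma\in\{\alpha,\beta\}$. Moreover $\pint{\lambda-\gamma+\rho}{\alpha_r}=0-1+1=0$, so $s_r\cdot(\lambda-\gamma)=\lambda-\gamma$. \Cref{lem: weyl group over M elements} with $w=s_r$, which is legitimate because $s_r(B)=B$, then gives $\bfM^B_{\lambda-\gamma}=-\bfM^B_{\lambda-\gamma}$, hence $\bfM^B_{\lambda-\gamma}=0$.

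The only step with any content is the set-theoretic verification that $s_r(B)=B$. In particular one must note that $\alpha_r\notin A$, so that $s_r$ really acts on $A$ as a bijection inside $\Phi^+$. Everything else is a direct citation.
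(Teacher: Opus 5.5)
Your proof is correct and follows the paper's argument. Both pass through $\bfM^{A\setminus\{\alpha\}}_\lambda$: first remove $\alpha$ using the $s_r$-invariance of $A\setminus\{\alpha\}$ together with $\pint{\lambda-\alpha+\rho}{\alpha_r}=0$ (the paper cites \Cref{lem: weyl group over M elements} directly, you cite \Cref{coro puedo agregar}, which amounts to the same), then add $\beta$ via \Cref{coro puedo agregar}. The only difference is that you spell out the verification of $s_r(A\setminus\{\alpha\})=A\setminus\{\alpha\}$, which the paper simply asserts.
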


\begin{proof}
   We begin by noticing that $s_r(A\setminus \{ \alpha  \}) = A\setminus \{ \alpha \}$ and $\pint{\lambda -\alpha +\rho}{\alpha_r}=0$. Therefore, \Cref{lem: weyl group over M elements} implies that $\bfM_\lambda^A =\bfM_{\lambda}^{A\setminus \{\alpha\}}$. 
   On the other hand, by applying \Cref{coro puedo agregar} we obtain $\bfM_{\lambda}^{A\setminus \{\alpha \}} = \bfM_{\lambda}^{(A\setminus \{\alpha\}) \cup \{\beta\}  } $.
   By comparing the two expressions we get the desired equality. 
\end{proof}

% Henceforth, we will say that we apply \Cref{lem: sacar una poner otra} at position $r$ to the triple $(A,\alpha, \beta)$ if we are under the hypothesis of the lemma.  

\section{Towards the First Inverse Decomposition}
\label{section identities to prove first inverse decomposition}

In this section, we collect various results concerning $\bfM$-elements. Specifically, given $\bfM_{\lambda}^A$ where $A \subseteq \Phi^{\geq 2}$ and $\lambda \in X$, we present several straightening rules that enable us to rewrite this element by modifying either the set $A$ or the weight $\lambda$. At this point, many of these rules lack clear motivation, and their proofs involve delicate combinatorial arguments. Consequently, the initial reading might appear somewhat dense. If this is the case, we encourage the reader to skip the proofs in this section during their first pass and proceed directly to the next section where these results are applied. Nonetheless, it is important for readers to keep in mind that our ultimate goal is to rewrite elements of the form $\bfM_{\lambda}^A$, where $\lambda$ is non-dominant, in terms of elements $\bfM_{\mu}^B$ for some subset $B$ and dominant weights $\mu$.

The results in this section pave the way for the First Inverse Decomposition (proved in the next section) which expresses $\bfM_{\lambda}^{\succeq \alpha_{i,j}}$ as a linear combination of elements of the form $\bfM_{\mu}^{\succ \alpha_{i,j}}$.

\begin{definition}\rm\label{def: locally greater than alpha} 
    Let $T\subset [1,n]$ and $\alpha_{i,j} \in \Phi^{\geq 2}$. We say that  $K\subseteq \Phi^{\geq 2}$ is $T$-congruent  to  $\Phi^{\succ\alpha_{i,j}}$ (we write $K\sim_{T} \Phi^{\succ\alpha_{i,j}}$) if the following conditions are satisfied:

    \begin{enumerate}[(a)]
        \item  \label{condition a in T-congruent}$\displaystyle\bigcup_{t\in T}  \{  \alpha \in \Phi^{\succ \alpha_{i,j}} \mid s_{t} (\alpha )\neq \alpha  \} \subseteq K $. 
        \item \label{condition b in T-congruent} If $\displaystyle\alpha\in K\setminus  \Phi^{\succ \alpha_{i,j}}$ then $s_{t}(\alpha)=\alpha$ for all $t\in T$. 
    \end{enumerate} 
\end{definition}

\begin{example}\rm 
Let us illustrate \Cref{def: locally greater than alpha}. Let $ n=10 $, $ T=\{4, 5\} $, and $ K \subseteq \Phi^{\geq 2} $ such that $ K \sim_T \Phi^{\succ \alpha_{3,6}} $. \Cref{condition a in T-congruent} in \Cref{def: locally greater than alpha} requires that the roots associated with the green boxes in \Cref{fig: rootsB} belong to $ K $. On the other hand, \Cref{condition b in T-congruent} requires that the roots associated with the red boxes in \Cref{fig: rootsB} do not belong to $ K $. All the remaining roots may optionally belong to $K$.

\end{example}

   We stress that $\Phi^{\succ \alpha_{i,j}} \sim_T \Phi^{\succ \alpha_{i,j}}  $ for all $T\subset [1,n]$. The motivation behind Definition \ref{def: locally greater than alpha} is that we need to work with $\bfM$-elements with superscripts $K$ that behave like $\Phi^{\succ \alpha_{i,j}}$ under the action of $s_t$ for $t \in T$. The following lemma makes this statement precise. 
   
\begin{lemma}\rm\label{lem: las que se salen}
    Let $T\subset [1,n]$, $\alpha_{i,j} \in \Phi^{\geq 2}$ and $K\sim_{T} \Phi^{\succ\alpha_{i,j}}$. Then 
     $K\setminus s_t(K) = \Phi^{\succ \alpha_{i,j}} \setminus  s_t(\Phi^{\succ\alpha_{i,j}}) $, for all $t\in T$. 
\end{lemma}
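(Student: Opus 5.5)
Fix $t\in T$ and write $\Phi=\Phi^{\succ\alpha_{i,j}}$. The proof is a double inclusion that sorts roots by whether $s_t$ moves them, using the two conditions of \Cref{def: locally greater than alpha}. The basic observation is that $\alpha\in K\setminus s_t(K)$ means $\alpha\in K$ and $s_t(\alpha)\notin K$, since $s_t$ is an involution. The same description holds with $\Phi$ in place of $K$. In particular, any root in either difference is moved by $s_t$.

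For the inclusion $\Phi\setminus s_t(\Phi)\subseteq K\setminus s_t(K)$, take $\alpha\in\Phi$ with $s_t(\alpha)\notin\Phi$.
\begin{enumerate}[(i)]
\item Since $\alpha$ is moved by $s_t$ and lies in $\Phi$, condition (a) gives $\alpha\in K$.
\item Suppose, for contradiction, that $s_t(\alpha)\in K$. Because $s_t(\alpha)\notin\Phi$, we have $s_t(\alpha)\in K\setminus\Phi$.
\item Condition (b) then forces $s_t(s_t(\alpha))=s_t(\alpha)$, that is, $\alpha$ is fixed by $s_t$. This contradicts $s_t(\alpha)\neq\alpha$, so $s_t(\alpha)\notin K$.
\end{enumerate}

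For the reverse inclusion, take $\alpha\in K$ with $s_t(\alpha)\notin K$.
\begin{enumerate}[(i)]
\item Then $s_t(\alpha)\neq\alpha$.
\item If $\alpha\notin\Phi$, then $\alpha\in K\setminus\Phi$, and condition (b) says $\alpha$ is fixed by $s_t$, which is a contradiction. Hence $\alpha\in\Phi$.
\item If $s_t(\alpha)\in\Phi$, then $s_t(\alpha)$ is a root of $\Phi$ moved by $s_t$, so condition (a) gives $s_t(\alpha)\in K$, which is a contradiction. Hence $s_t(\alpha)\notin\Phi$.
\end{enumerate}

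One point needs care. The sets involved live inside $\Phi^{\geq 2}\subset\Phi^+$, but $s_t(\alpha)$ may be a negative root, namely when $\alpha=\alpha_t$. Simple roots are not in $\Phi^{\geq 2}$, so this case never arises here. Even so, the argument above only uses membership in $K$ and $\Phi$, so a non-positive $s_t(\alpha)$ would simply fail to lie in either set, and the reasoning still goes through.

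There is no serious obstacle. The only point to check is that "moved by $s_t$" is symmetric between $\alpha$ and $s_t(\alpha)$, which holds because $s_t$ is an involution.
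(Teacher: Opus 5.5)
Your double-inclusion argument is correct. It is exactly the routine unwinding of conditions (a) and (b) in the definition of $T$-congruence, together with the fact that $s_t$ is an involution, and the paper compresses it into the single remark that the lemma follows directly from that definition.
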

   
\begin{proof}
The result follows directly from Definition \ref{def: locally greater than alpha}. 
\end{proof}

For further reference, we write out the set $\Phi^{\succ \alpha_{i,j}} \setminus  s_t(\Phi^{\succ\alpha_{i,j}})$ explicitly. Let  $h=j-i+1$. Since $\hgt(s_t(\alpha_{a,b})) < \hgt(\alpha_{a,b})$ if and only if $a=t$ or $b=t$, we have

\begin{equation} \label{eq: las que se salen}
\Phi^{\succ \alpha_{i,j}} \setminus  s_t(\Phi^{\succ\alpha_{i,j}}) =  \left\{ \begin{array}{ll}
    \{  \alpha_{t-h, t} , \alpha_{t,t+h}\} , & \mbox{ if } t < i ; \\
      \{ \alpha_{t-h, t} \} , & \mbox{ if } t= i ; \\
  \{  \alpha_{ t-h,t}, \, \alpha_{t,t+h-1}     \} ,   & \mbox{ if } i < t \leq j ;\\
 \{  \alpha_{t-h, t}, \alpha_{t-h+1, t}, \alpha_{t,t+h-1}   \},    &  \mbox{ if } t=j+1; \\ 
     \{  \alpha_{t-h+1,t}, \, \alpha_{t,t+h-1}  \},    &   \mbox{ if } j+1< t.
 \end{array}  \right.   
\end{equation}  
In the above, if a root does not exist (for example, if $t-h<1$) it is neglected.

\medskip
In Lemmas~\ref{bk}, \ref{co: bk corollary}, \ref{lem: right solving}, \ref{Claim: free Q}, and~\ref{Claim: free Q2}
below, the set $Y$ is obtained from $K$ by adjoining a single root of the form
$\alpha_{r-h,r-1}$ (in Lemma~\ref{Claim: free Q2} the role of $r$ is played by $i+1$). The
hypotheses of these lemmas allow the case $r=h$: here the pair $(r-h,r-1)=(0,h-1)$ does not
index an actual positive root, since its first coordinate is not a valid row index. As a
convention, this root is simply neglected in that case, and we set $Y=K$.

The proofs of these five lemmas are essentially the same whether $r=h$ or $r\ge h+1$. For this reason, below we only give the proofs assuming $r\ge h+1$, the case $r=h$
being entirely analogous.

\begin{lemma}\rm\label{bk}
    Let  $K\subseteq\Phi^{\geq 2}$, $\alpha_{i,j}\in \Phi^{\geq 2}$ and $h=j-i+1$.  Let  $(r,p)$ be a pair of integers such that $i < r \leq p \leq j $, $r\geq h$ and $ p+h-1\leq n$. Suppose that $K\sim_{T} \Phi^{\succ\alpha_{i,j}}$  where $T=[r,p]$. If  $\lambda \in X$ satisfies   $\pint{\lambda}{\alpha_t} =0$  for all $t\in T$ then 
    \begin{equation} \label{eq: lem bk}
        \bfM_{\lambda}^{Y} = \bfM_{\lambda}^{Z},
    \end{equation}
    where $Y=K\cup \{\alpha_{r-h,r-1}\}$ and $Z=\left(Y\setminus [\alpha_{r,r+h-1},\alpha_{p,p+h-1}]\right) \cup [\alpha_{r-h+1,r},\alpha_{p-h+1,p}]$.
\end{lemma}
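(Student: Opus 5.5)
Since both $Y$ and $Z$ are modified one position at a time, I will argue by induction on $p-r$, swapping roots one at a time with \Cref{lem: sacar una poner otra}. At position $t$, the swap trades the root $\alpha_{t,t+h-1}$ (which starts at $t$, so $\pint{\alpha_{t,t+h-1}}{\alpha_t}=1$) for the root $\alpha_{t-h+1,t}$ (which ends at $t$, so also $\pint{\alpha_{t-h+1,t}}{\alpha_t}=1$). The heights agree, since both roots have height $h$. The hypothesis $\pint{\lambda}{\alpha_t}=0$ for $t\in T$ is exactly what \Cref{lem: sacar una poner otra} needs on the weight side.

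The first step is to understand $Y\setminus s_r(Y)$. Since $i<r\le j$, \eqref{eq: las que se salen} together with \Cref{lem: las que se salen} gives $K\setminus s_r(K)=\{\alpha_{r-h,r},\alpha_{r,r+h-1}\}$. Adjoining $\alpha_{r-h,r-1}$ to form $Y$ should cancel the offending root $\alpha_{r-h,r}$. Indeed, $s_r$ swaps $\alpha_{r-h,r-1}$ and $\alpha_{r-h,r}$, and $\alpha_{r-h,r}\in\Phi^{\succ\alpha_{i,j}}$ is moved by $s_r$, so condition (a) forces $\alpha_{r-h,r}\in K$. Hence $Y\setminus s_r(Y)=\{\alpha_{r,r+h-1}\}$.

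Next I check that $\beta=\alpha_{r-h+1,r}\notin Y$. This root has height $h$ and starts at $r-h+1<i$, so it precedes $\alpha_{i,j}$ and is not in $\Phi^{\succ\alpha_{i,j}}$. It is also moved by $s_r$. So condition (b) of $T$-congruence excludes it from $K$, and it is not the adjoined root either. Applying \Cref{lem: sacar una poner otra} at position $r$ then gives $\bfM_\lambda^Y=\bfM_\lambda^{Y_1}$, where $Y_1=(Y\setminus\{\alpha_{r,r+h-1}\})\cup\{\alpha_{r-h+1,r}\}$.

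For the inductive step I want to handle position $r+1$, then $r+2$, and so on up to $p$. Writing $Y_k$ for the set after swapping at positions $r,\dots,r+k-1$, I need $Y_k\setminus s_{r+k}(Y_k)=\{\alpha_{r+k,r+k+h-1}\}$ and $\alpha_{r+k-h+1,r+k}\notin Y_k$. By \Cref{lem: las que se salen} applied at $t=r+k$, the set $K\setminus s_{r+k}(K)$ is $\{\alpha_{r+k-h,r+k},\alpha_{r+k,r+k+h-1}\}$. The extra root $\alpha_{r+k-h,r+k}$ (height $h+1$) is now neutralized because the previous swap introduced $\alpha_{r+k-h,r+k-1}$, which $s_{r+k}$ maps to $\alpha_{r+k-h,r+k}$. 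I also have to check that none of the other modifications interfere. The adjoined root $\alpha_{r-h,r-1}$ is fixed by $s_{r+k}$ for $k\ge1$ (its endpoints are $r-h$ and $r-1$, and $r+k\notin\{r-h-1,r-h,r-1,r\}$), and the earlier-swapped roots are fixed as well. Finally, $\alpha_{r+k-h+1,r+k}\notin K$ by condition (b): it is excluded from $\Phi^{\succ\alpha_{i,j}}$ whenever $r+k-h+1<i$, which holds because $r+k\le p\le j$. Doing the last swap at $t=p$ yields exactly $Z$, since the removed roots are $[\alpha_{r,r+h-1},\alpha_{p,p+h-1}]$ and the added roots are $[\alpha_{r-h+1,r},\alpha_{p-h+1,p}]$ in the $\preceq$ order.

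The main obstacle is this bookkeeping for the intermediate sets $Y_k$. I need to verify that the roots $\alpha_{t,t+h-1}$ not yet removed and the roots $\alpha_{t-h+1,t}$ already added do not create extra elements of $Y_k\setminus s_{r+k}(Y_k)$. This requires a short case check using \Cref{remark producto interno}, which shows which of these roots can be moved by $s_{r+k}$, together with the bounds $r\ge h$ and $p+h-1\le n$, which guarantee that all the roots involved exist. I expect this check to be routine once the pattern at $t=r$ is established.
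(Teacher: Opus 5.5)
Your proposal is correct and is essentially the paper's proof. The paper also inducts on $p$, which is the same as swapping at positions $r,r+1,\dots,p$ in turn via \Cref{lem: sacar una poner otra}, and it absorbs the extra exiting root $\alpha_{t-h,t}$ with the previously inserted $\alpha_{t-h,t-1}$ exactly as you describe.

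Two small precisions. First, $t\le j$ only gives $t-h+1\le i$, not $<i$; when $t=j$ the inserted root is $\alpha_{i,j}$ itself, which still lies outside $\Phi^{\succ\alpha_{i,j}}$, so your exclusion argument still works. Second, the already-removed roots $\alpha_{r+m,r+m+h-1}$ are fixed by $s_{r+k}$ because $k\le p-r\le h-2$, which follows from $i<r\le p\le j$, not from the bounds $r\ge h$ and $p+h-1\le n$ that you cite.
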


\begin{proof}
We begin by noticing that  $\alpha_{r-h,r-1}\notin K$, $\alpha_{t,t+h-1}\in K$ and $\alpha_{t-h+1,t}\notin K$, for all $r\leq t \leq p$. 
Therefore, the roots added and removed in the definitions of $Y$ and $Z$ are indeed present or absent as required.
%Therefore, we do add and eliminate the roots that appear in the definition of sets $Y$ and $Z$. 

We fix $r$ and proceed by induction on $p $.
Suppose $p=r$.  Notice that in this case we have $T=\{r\}$ and $Z= (Y\setminus\{\alpha_{r,r+h-1} \}) \cup \{ \alpha_{r-h+1,r}\}$. 
By combining \Cref{lem: las que se salen} and \eqref{eq: las que se salen} we get   
\begin{equation} \label{eq: KminusK}
    K\setminus s_t(K)=\{ \alpha_{t-h,t}, \, \alpha_{t,t+h-1} \},
\end{equation}
for all $r\leq t \leq p$.

By applying \eqref{eq: KminusK}  for $t=r$ we get $Y\setminus s_r(Y) =\{ \alpha_{r,r+h-1} \}$. 
By hypothesis we have $\langle \lambda  , \alpha_r \rangle =0$. 
Then, we can apply \Cref{lem: sacar una poner otra} at position $r$ to the triple $(Y, \alpha_{r,r+h-1}, \alpha_{r-h+1,r})$ in order to obtain \eqref{eq: lem bk} for the case $p=r$. 
This completes the base of our induction.

We now suppose that $p>r $ and assume that the result holds for $p-1$. That is, we have  $\bfM_{\lambda}^{Y} = \bfM_{\lambda}^{Z^{\prime}}$ where
 \begin{equation}\label{eq: proof bk 2}
        Z^{\prime} = \left(Y\setminus [\alpha_{r,r+h-1},\alpha_{p-1,p+h-2}]\right) \cup [\alpha_{r-h+1,r},\alpha_{p-h,p-1}].
    \end{equation}

  On the other hand, by applying \eqref{eq: KminusK} for $t=p$  we get $K\setminus s_p(K) = \{  \alpha_{p-h,p}, \, \alpha_{p,p+h-1} \}$.
  Since we are assuming $p>r$, the definition of $Y$ yields $Y\setminus s_p(Y) = \{  \alpha_{p-h,p}, \, \alpha_{p,p+h-1} \}$.
  Furthermore, the sets $[\alpha_{r,r+h-1},\alpha_{p-1,p+h-2}]$ and $[\alpha_{r-h+1,r},\alpha_{p-h-1,p-2}]$ are $s_p$-invariant.
  As $s_p(\alpha_{p-h,p}) = \alpha_{p-h,p-1}\in Z' $, we can conclude that
  \begin{equation}\label{eq: Zprime}
      Z'\setminus s_p(Z') =\{ \alpha_{p,p+h-1}  \}.
  \end{equation}
 We stress  that $\langle \lambda , \alpha_p \rangle =0$ and 
 \begin{equation}
  Z= (Z'\setminus \{ \alpha_{p,p+h-1}  \}) \cup \{  \alpha_{p-h+1,p} \}. 
\end{equation}
Thus, we can apply \Cref{lem: sacar una poner otra} at position $p$ to the triple $(Z',\alpha_{p,p+h-1}, \alpha_{p-h+1,p})$ to obtain $\bfM_{\lambda}^{Z'} = \bfM_{\lambda}^{Z}$.
It follows that $\bfM_{\lambda}^{Y} = \bfM_{\lambda}^{Z}$. 
This completes our inductive step and the proof of the lemma.
\end{proof}

\begin{example}\rm

  We illustrate \Cref{bk} with an example. 
  Consider $ i = 4 $ and $ j = 7 $, so that $ h = 4 $. 
  Let $ K = \Phi^{\succ \alpha_{4,7}} $ and $ r = 5 $.
  In the notation of the lemma, the set $ Y $ is shown in \Cref{fig: ExaLemma3.15A}. 
  Note that the set $ Y $ does not depend on the value of $ p $. 
  We have three possible values for $ p $, namely $ p = 5, 6, 7 $. 
  For each choice of $ p $, the corresponding set $ Z $ is displayed in \Cref{fig: ExaLemma3.15B}, \Cref{fig: ExaLemma3.15C}, and \Cref{fig: ExaLemma3.15D}. 
  Then, \Cref{bk} asserts that for any $ \lambda \in X $ such that $ \langle \lambda, \alpha_t \rangle = 0 $ for each $ t = 5, 6, 7 $, we have
    \begin{equation}
        \bfM_\lambda^Y = \bfM_\lambda^{Z_5} =  \bfM_\lambda^{Z_6}  =  \bfM_\lambda^{Z_7} .
    \end{equation}

\begin{figure}[H]
     \centering
     \begin{subfigure}[b]{0.22\textwidth}
         \centering
        {\scalebox{.3}{\exLemmaBKA }}
    \caption{ $Y$  }
    \label{fig: ExaLemma3.15A}
     \end{subfigure}
     \hfill
     \begin{subfigure}[b]{0.22\textwidth}
         \centering
        {\scalebox{.3}{\exLemmaBKB}}
    \caption{$Z_5 $}
    \label{fig: ExaLemma3.15B}
     \end{subfigure}
          \hfill
     \begin{subfigure}[b]{0.22\textwidth}
         \centering
        {\scalebox{.3}{\exLemmaBKC}}
    \caption{ $Z_6 $}
    \label{fig: ExaLemma3.15C}
     \end{subfigure}
               \hfill
     \begin{subfigure}[b]{0.22\textwidth}
         \centering
        {\scalebox{.3}{\exLemmaBKD}}
    \caption{$ Z_7$}
    \label{fig: ExaLemma3.15D}
     \end{subfigure}
        \caption{An example of \Cref{bk}.}
        \label{fig: EJEMPLO Lemma 3.15}
\end{figure}
\end{example}

\begin{lemma}\rm\label{co: bk corollary}
    Let $K\subset\Phi^{\geq 2}$, $\alpha_{i,j}\in \Phi^{\geq 2}$ and $h=j-i+1$. Let $(r,p)$ be a pair of integers such that either $(r,p)=(i+1,j)$ or  $i+1 < r \leq p \leq j$.  Furthermore, assume that 
    $r\geq h$. 
    Let $$T=\bigcup\limits_{m=0}^{d} [r+m(h-1),p + m(h-1)], $$
    where $d$ is a non-negative integer such that $p+(d+1)(h-1) \leq n$. Suppose  that $K\sim_{T} \Phi^{\succ \alpha_{i,j}}$. If $\lambda\in X$ satisfies $\pint{\lambda}{\alpha_t} =0$ for all $t\in T$, then 
    \begin{equation} \label{eq: bk corollary}
        \bfM_{\lambda}^{Y} = \bfM_{\lambda}^{Y_{d}},
    \end{equation}
    where $Y=K\cup \{\alpha_{r-h,r-1}\}$ and
    $$Y_{d} = (Y\setminus [\alpha_{r+d(h-1),r+(d+1)(h-1)},\alpha_{p+d(h-1),p+(d+1)(h-1)}]) \cup [\alpha_{r-h+1,r},\alpha_{p-h+1,p}].$$
\end{lemma}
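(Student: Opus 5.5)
The plan is to induct on $d$; the base case $d=0$ is exactly \Cref{bk}. For $d=0$ we have $T=[r,p]$, and either $(r,p)=(i+1,j)$ or $i+1<r\le p\le j$. In both cases $i<r\le p\le j$, $r\ge h$ and $p+h-1\le n$, so \Cref{bk} gives $\bfM_\lambda^Y=\bfM_\lambda^{Y_0}$ directly. Note that $Y_0$ coincides with the set $Z$ of \Cref{bk}, since $\alpha_{r,r+h-1}=\alpha_{r+0(h-1),r+(h-1)}$.

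For the inductive step, assume $\bfM_\lambda^Y=\bfM_\lambda^{Y_{d-1}}$; this is legitimate because the hypotheses for $d$ imply those for $d-1$ with the smaller set $T$. Passing from $Y_{d-1}$ to $Y_d$ does not touch the added interval $[\alpha_{r-h+1,r},\alpha_{p-h+1,p}]$. It only shifts the removed interval of height-$h$ roots by $h-1$. Concretely, for each $t\in[r+d(h-1),p+d(h-1)]$ the root $\alpha_{t-h+1,t}$ must be put back and $\alpha_{t,t+h-1}$ must be removed. Both have height $h$, and $\alpha_{t-h+1,t}$ is exactly the root removed at the previous stage, since $t-h+1=t-(h-1)$ lies in block $d-1$.

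I would perform these exchanges one position at a time, mimicking the induction on $p$ in \Cref{bk}, by applying \Cref{lem: sacar una poner otra} at position $t$ to the triple $(Y',\alpha_{t,t+h-1},\alpha_{t-h+1,t})$. Here $Y'$ is the current intermediate set, in which some exchanges of block $d$ have already been done. The hypotheses to check are the following.
\begin{itemize}
\item $\pint{\lambda}{\alpha_t}=0$, which holds since $t\in T$.
\item $\pint{\alpha_{t,t+h-1}}{\alpha_t}=\pint{\alpha_{t-h+1,t}}{\alpha_t}=1$, which follows from \Cref{remark producto interno}.
\item $\alpha_{t-h+1,t}\notin Y'$, which holds because it was removed at stage $d-1$.
\item $Y'\setminus s_t(Y')=\{\alpha_{t,t+h-1}\}$, the only hypothesis needing real work.
\end{itemize}

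For the last condition, I would start from \Cref{lem: las que se salen}, which gives $K\setminus s_t(K)=\Phi^{\succ\alpha_{i,j}}\setminus s_t(\Phi^{\succ\alpha_{i,j}})$ for $t\in T$. The explicit list is \eqref{eq: las que se salen}; since blocks with $m\ge1$ may lie beyond $j$, the cases $t\le j$, $t=j+1$ and $t>j+1$ all occur. One then tracks how the modifications made so far (adjoining $\alpha_{r-h,r-1}$, the added interval, the earlier removals and re-insertions, and the exchanges already done in block $d$) change which roots leave under $s_t$. Every root other than $\alpha_{t,t+h-1}$ must either be absent from $Y'$ or have its $s_t$-image present.

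The key points for this check are as follows.
\begin{itemize}
\item For $t>j+1$, the leaving root $\alpha_{t-h+1,t}$ has already been removed.
\item For $t=j+1$, the extra root $\alpha_{t-h,t}$ has to be paired with $s_t(\alpha_{t-h,t})=\alpha_{t-h,t-1}$, which should lie in $Y'$.
\item The removed and added intervals at positions adjacent to $t$ are $s_t$-stable, exactly as in the proof of \Cref{bk}.
\end{itemize}
I would order the exchanges within block $d$ from $t=r+d(h-1)$ upward, so that at each step the previously exchanged positions form $s_t$-invariant intervals.

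I expect this case analysis of $Y'\setminus s_t(Y')$ to be the main obstacle, especially at the boundary of each block (the first and last values of $t$) and at the junction where block $d$ reaches $j+1$. The assumption $p+(d+1)(h-1)\le n$ guarantees that all roots $\alpha_{t,t+h-1}$ involved exist. If some leaving root is not cancelled, I would instead first use \Cref{coro puedo agregar} or \Cref{lem: weyl group over M elements} to adjust $Y'$ by an $s_t$-invariant piece, and then perform the exchange.
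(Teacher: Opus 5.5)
Your proof is correct. It differs from the paper's only in how the inductive step is organized.

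The paper does not redo the exchanges by hand. With $r'=r+d(h-1)$ and $p'=p+d(h-1)$, it introduces a shifted reference root $\alpha_{i',j'}$ with $i'=p+(d-1)(h-1)$, $j'=p'$ (again of height $h$), and the set $K'=Y_{d-1}\setminus\{\alpha_{r'-h,r'-1}\}$. In \Cref{claim prime} it proves $K'\sim_{[r',p']}\Phi^{\succ\alpha_{i',j'}}$ by checking both conditions of \Cref{def: locally greater than alpha}. It then applies \Cref{bk} verbatim to $(K',\alpha_{i',j'},r',p')$, whose output set is exactly $Y_d$.

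You instead inline the proof of \Cref{bk} on block $d$ and test invariance directly against the original root $\alpha_{i,j}$; only the lines $t=j+1$ and $t>j+1$ of \eqref{eq: las que se salen} intervene. Both arguments perform the same chain of applications of \Cref{lem: sacar una poner otra}. The paper's re-basing pays off later, since \Cref{claim prime} is reused in the proof of \Cref{lem: right solving} to run \Cref{bk} on truncated blocks. Your version is more self-contained and avoids checking a congruence relative to a new root.

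The check you defer is short, and no fallback through \Cref{coro puedo agregar} is needed. For $d\ge 1$ and $t\in[r',p']$ one has $t\ge r+h-1\ge j+1$, with $t=j+1$ only when $(r,p)=(i+1,j)$, $d=1$ and $t=r'$. The block also has length $p-r+1\le h-1$. Hence, with the exchanges done upward, only two roots moved by $s_t$ have different membership in $Y'$ and in $K$:
\begin{itemize}
\item $\alpha_{t-h+1,t}$, which is absent from $Y'$;
\item when $t=j+1$, $\alpha_{i,j}$, which is present in $Y'$ via the added interval but absent from $K$.
\end{itemize}
Every other candidate agrees with $K$:
\begin{itemize}
\item $\alpha_{t-h,t-1}$ was re-added at step $t-1$; for $t=r'>j+1$ it lies in $K$ and was never removed;
\item $\alpha_{t+1,t+h}$ is not yet removed;
\item no other removed or added root, nor $\alpha_{r-h,r-1}$, is moved by $s_t$.
\end{itemize}

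Now start from $K\setminus s_t(K)$ as given by \Cref{lem: las que se salen}. Deleting $\alpha_{t-h+1,t}$ creates no new leaving root, because $s_t(\alpha_{t-h+1,t})=\alpha_{t-h+1,t-1}\notin K$. Adding $\alpha_{i,j}$ cancels the leaving root $\alpha_{i,j+1}=s_t(\alpha_{i,j})$. So $Y'\setminus s_t(Y')=\{\alpha_{t,t+h-1}\}$ exactly.

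The same computation shows the upward order is essential. Going downward, when the neighbouring positions lie in the block, $\alpha_{t+1,t+h}$ would already be removed and $\alpha_{t-h,t-1}$ still absent. Then $\alpha_{t,t+h}$ and $\alpha_{t-h,t}$ would leave under $s_t$.
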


\begin{proof}
% We first treat the case when $i+1<r$. 
    We proceed by induction on $d$. The  case $d=0$ is covered by  \Cref{bk}. Let $d\geq 1$ and suppose  the result holds for $d-1$. Then,   we have  $\bfM_{\lambda}^{Y} = \bfM_{\lambda}^{Y_{d-1}}$.
\begin{claim} \label{claim prime}
Let $r'=r+d(h-1)$, $p'=p+d(h-1)$, $i'=p+(d-1)(h-1)$, $j'=p'$, $T'=[r',p']$ and $K^{\prime}= Y_{d-1}\setminus \{\alpha_{r'-h,r'-1}\}$. Then $K'\sim_{T'} \Phi^{\succ \alpha_{i',j'}}$. 
\end{claim}

\begin{proof}
Throughout the proof of the claim we fix $t\in [r',p']$. Suppose that $\alpha \in   \Phi^{\succ \alpha_{i',j'}}$ and  that $s_t(\alpha) \neq \alpha$.  Since $\alpha_{i,j} \prec \alpha_{i',j'}$ we have $\alpha \in \Phi^{\succ \alpha_{i,j}}$. As $[r',p']\subseteq T$ and $K\sim_T \Phi^{\succ\alpha_{i,j}}$ we conclude that $\alpha\in K$. We stress that 
\begin{equation}\label{eq: Y_{d-1}}
\begin{array}{rl} 
  Y_{d-1} =   & (Y\setminus [\alpha_{r'-h+1,r'},\alpha_{p'-h+1,p'}]) \cup [\alpha_{r-h+1,r},\alpha_{p-h+1,p}] \\
     = & (K\setminus [\alpha_{r'-h+1,r'},\alpha_{p'-h+1,p'}]) \cup [\alpha_{r-h,r-1},\alpha_{p-h+1,p}] .
\end{array}
\end{equation}
Since $[\alpha_{r'-h+1,r'},\alpha_{p'-h+1,p'}]\cap \Phi^{\succ \alpha_{i',j'}}  = \emptyset $ we conclude that $\alpha \in Y_{d-1}$.  As $K^{\prime}= Y_{d-1}\setminus \{\alpha_{r'-h,r'-1}\}$ and $\alpha_{r'-h,r'-1} \notin  \Phi^{\succ \alpha_{i',j'}}$ we get $\alpha \in K'$. This proves condition \ref{condition a in T-congruent} in \Cref{def: locally greater than alpha} for $K'$, $T'$ and  $\Phi^{\succ \alpha_{i',j'}}$. 

\smallskip
We now prove condition \ref{condition b in T-congruent}  in \Cref{def: locally greater than alpha} for $K'$, $T'$ and  $\Phi^{\succ \alpha_{i',j'}}$. Let $\alpha \in K' \setminus \Phi^{\succ \alpha_{i',j'}} $. Then $\alpha \in Y_{d-1}$ and $\alpha \neq \alpha_{r'-h,r'-1}$. We must show that $s_t(\alpha) = \alpha$.   Suppose that $\alpha \in [\alpha_{r-h,r-1},\alpha_{p-h+1,p}]$. If $d>1$ then $r+ (h-1) <r' \leq t$. But $p <r+ (h-1) $ holds independently of the value of $d$. We conclude that $p+1<t$  and therefore we have $s_t(\alpha) = \alpha$. Similarly, if  $d=1$ and $i+1<r $ then we have $p+1<t $. Thus in this case we still have $s_t(\alpha) =\alpha$. Finally, suppose that $d=1$, $r=i+1$ and $p=j$. In this case we can only guarantee $p+1\leq t$. Therefore, $s_t(\alpha) = \alpha$ with the exception of the case when $t=p+1=r'=j+1$ and $\alpha =\alpha_{p-h+1,p} =\alpha_{i,j} $. However, we have $ \alpha_{r'-h,r'-1} = \alpha_{i,j}$. Thus the above case must not be considered as $\alpha_{r'-h,r'-1} \notin K'$. Summing up, if $\alpha \in K' \setminus \Phi^{\succ \alpha_{i',j'}} $ and $\alpha \in [\alpha_{r-h,r-1},\alpha_{p-h+1,p}]$ we must have $s_t(\alpha) = \alpha$. 

\smallskip
By the previous paragraph and  \eqref{eq: Y_{d-1}} we can assume that $\alpha \in K\setminus [\alpha_{r'-h+1,r'},\alpha_{p'-h+1,p'}] $. If $\alpha \notin \Phi^{\succ \alpha_{i,j}}$ then $s_{t}(\alpha) =\alpha$ since $K\sim_T \Phi^{\succ \alpha_{i,j}}$. Therefore we can assume $ \alpha_{i,j} \prec \alpha$. As $\alpha\notin  \Phi^{\succ \alpha_{i',j'}}$ and $\alpha \notin [\alpha_{r'-h+1,r'},\alpha_{p'-h+1,p'}]$ we must have $\alpha \in [\alpha_{i+1,j+1}, \alpha_{r'-h,r'-1}]$ (we recall that $i'=p'-h+1$ and $j'=p'$). Furthermore, we can assume $\alpha$ belongs to the half-open interval $[\alpha_{i+1,j+1}, \alpha_{r'-h,r'-1})$ as $\alpha_{r'-h,r'-1}\notin K'$ by definition. Since $r' \leq t$ we have that in the interval  $[\alpha_{i+1,j+1}, \alpha_{r'-h,r'-1})$ all the roots are fixed by $s_t$. This finishes the verification of condition \ref{condition b in T-congruent} in \Cref{def: locally greater than alpha} and the proof of the claim. 
\end{proof}

Let us return to the proof of the lemma. Applying \Cref{bk} to $K'$, $\alpha_{i',j'}$, $r'$ and $p'$ yields $\bfM_{\lambda}^{Y_{d-1}} = \bfM_{\lambda}^{Z}$, where
\begin{equation}\label{eq: Y_d}
    Z= ( Y_{d-1}  \setminus [  \alpha_{r',r'+h-1},\alpha_{p',p'+h-1}  ] ) \cup [ \alpha_{r'-h+1,r'}, \alpha_{p'-h+1,p'} ].
\end{equation}
By combining \eqref{eq: Y_{d-1}} and \eqref{eq: Y_d} it is easy to see that $Z=Y_d$. Therefore, $ \bfM_{\lambda}^{Y} = \bfM_{\lambda}^{Y_{d-1}} = \bfM_{\lambda}^Z  $ as we wanted to show. 
\end{proof}

\begin{example}\rm
We illustrate \Cref{co: bk corollary} with an example. 
Consider $i = 4$ and $j = 7$, so that $h = 4$. 
Let $K = \Phi^{\succ \alpha_{4,7}}$. 
The sets involved in the two cases are displayed in \Cref{fig: EJEMPLO Lemma 3.7 1} and \cref{fig: EJEMPLO Lemma 3.7 2}, depending on the value of $r$. 
For this example, we choose $(r,p) = (6,7)$, which satisfies $i + 1 < r$, whereas the alternative case corresponds to $(r,p) = (5,7)$. 
Using the notation of the lemma, the set $Y$ is depicted in \Cref{fig: ExaLemma3.7A1} and \Cref{fig: ExaLemma3.7A2}. 
If we set $n = 13$, then $d$ has two possible values, namely $d = 0$ and $d = 1$. 
For each choice of $d$, the corresponding set $Y_{d}$ is shown in \Cref{fig: ExaLemma3.7B1} and \Cref{fig: ExaLemma3.7C1} for the first case, and in \Cref{fig: ExaLemma3.7B2} and \Cref{fig: ExaLemma3.7C2} for the second case. 
Finally, by \Cref{co: bk corollary}, for any $\lambda \in X$ such that $\langle \lambda, \alpha_t \rangle = 0$ for all $t \in \{6,7,9,10\}$ in case $i+1 < r$ or $\langle \lambda, \alpha_t \rangle = 0$ for all $t \in [5,10]$ in case $(i+1,j)$, it follows that
\begin{equation}
    \bfM_{\lambda}^{Y} = \bfM_{\lambda}^{Y_{0}} = \bfM_{\lambda}^{Y_{1}}.
\end{equation}
    \begin{figure}[H]
     \centering
     \begin{subfigure}[b]{0.22\textwidth}
         \centering
        {\scalebox{.25}{\begin{tikzpicture}
    \dibu{13}{4}{7}
    \agregof{4}{2}{2}
    \end{tikzpicture}}}
    \caption{ $Y$  }
    \label{fig: ExaLemma3.7A1}
     \end{subfigure}
     %\hfill
     \begin{subfigure}[b]{0.22\textwidth}
         \centering
        {\scalebox{.25}{\begin{tikzpicture}
    \dibu{13}{4}{7}
    \agregof{4}{2}{4}
    \quitof{4}{6}{7}
    \end{tikzpicture}}}
    \caption{$Y_{0}$}
    \label{fig: ExaLemma3.7B1}
     \end{subfigure}
          %\hfill
     \begin{subfigure}[b]{0.22\textwidth}
         \centering
        {\scalebox{.25}{\begin{tikzpicture}
    \dibu{13}{4}{7}
    \agregof{4}{2}{4}
    \quitof{4}{9}{10}
    \end{tikzpicture}}}
    \caption{ $Y_{1} $}
    \label{fig: ExaLemma3.7C1}
     \end{subfigure}
        \caption{Case $i+1 < r\leq p\leq j$ of \Cref{co: bk corollary}.}
        \label{fig: EJEMPLO Lemma 3.7 1}
\end{figure}
 \begin{figure}[H]
     \centering
     \begin{subfigure}[b]{0.22\textwidth}
         \centering
        {\scalebox{.25}{\begin{tikzpicture}
    \dibu{13}{4}{7}
    \agregof{4}{1}{1}
    \end{tikzpicture}}}
    \caption{ $Y$  }
    \label{fig: ExaLemma3.7A2}
     \end{subfigure}
     %\hfill
     \begin{subfigure}[b]{0.22\textwidth}
         \centering
        {\scalebox{.25}{\begin{tikzpicture}
    \dibu{13}{4}{7}
    \agregof{4}{1}{4}
    \quitof{4}{5}{7}
    \end{tikzpicture}}}
    \caption{$Y_{0} $}
    \label{fig: ExaLemma3.7B2}
     \end{subfigure}
          %\hfill
     \begin{subfigure}[b]{0.22\textwidth}
         \centering
        {\scalebox{.25}{\begin{tikzpicture}
    \dibu{13}{4}{7}
    \agregof{4}{1}{4}
    \quitof{4}{8}{10}
    \end{tikzpicture}}}
    \caption{ $Y_{1} $}
    \label{fig: ExaLemma3.7C2}
     \end{subfigure}
        \caption{Case $(r,p) = (i+1,j)$ of \Cref{co: bk corollary}.}
        \label{fig: EJEMPLO Lemma 3.7 2}
\end{figure}   
\end{example}

The following result is a slight variation of \Cref{co: bk corollary}.
The key distinction between both results lies in the $p'$-th coordinate of $\lambda$: in \Cref{co: bk corollary}, it is $0$, whereas in \Cref{lem: right solving}, it is $-1$.
Another difference is that \Cref{co: bk corollary} modifies the superscript of the corresponding $\bfM$-element while maintaining the same weight.
In contrast, \Cref{lem: right solving} preserves the superscript but changes the weight indexing the relevant $\bfM$-element.

\begin{lemma}\rm\label{lem: right solving}
Let $K \subset \Phi^{\geq 2}$ and $\alpha_{i,j} \in \Phi^{\geq 2}$ and $h = j - i + 1$.
Let $(r,p)$ be integers  such that  $i + 1 < r \leq p  \leq j$.
Additionally, assume that $r\geq h$.
Let $$T =  \bigcup_{m = 0}^{d} [r + m(h - 1), p + m(h - 1)],$$
where  $d$ is a non-negative integer such that $p'\coloneqq p + d(h - 1) \leq n$.
Let $\lambda \in X$ satisfying $\pint{\lambda}{\alpha_t} = 0$ for all $t \in T \setminus \{p'\}$, and $\pint{\lambda}{\alpha_{p'}} = -1$.
Then, if  $K \sim_{T} \Phi^{\succ \alpha_{i,j}}$ we have
\begin{equation}  \label{eq: M element with a -1 in p'}
    \bfM_{\lambda}^{Y} = 
\begin{cases} 
q \bfM_{\lambda - \alpha_{{p'} + 1, {p'} + h - 1}}^{Y}, & \text{if } {p'} + h - 1 \leq n; \\
0, & \text{otherwise},
\end{cases}
\end{equation}
where $Y = K \cup \{\alpha_{r - h, r - 1}\}$.
\end{lemma}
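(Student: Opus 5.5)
The plan is to reduce everything to the last block $[r',p']$, where $r'=r+d(h-1)$, and to exploit the fact that $\pint{\lambda}{\alpha_{p'}}=-1$ means $\pint{\lambda+\rho}{\alpha_{p'}}=0$, so $s_{p'}\cdot\lambda=\lambda$.

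\emph{Step 1: move the structure to the last block.} On the earlier blocks $\lambda$ vanishes on every simple root, so I would first apply \Cref{co: bk corollary} with $d-1$ in place of $d$. For $d=0$ nothing is done. The hypothesis $p+d(h-1)\le n$ there is exactly our $p'\le n$. This gives $\bfM_\lambda^Y=\bfM_\lambda^{Y_{d-1}}$. Next, exactly as in the proof of \Cref{claim prime}, the set $K'=Y_{d-1}\setminus\{\alpha_{r'-h,r'-1}\}$ is $T'$-congruent to $\Phi^{\succ\alpha_{i',j'}}$. Running the inductive step of the proof of \Cref{bk} at the positions $t=r',\dots,p'-1$ (where $\pint{\lambda}{\alpha_t}=0$) then yields $\bfM_\lambda^{Y}=\bfM_\lambda^{Z'}$. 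Here $Z'$ is obtained by replacing $\alpha_{t,t+h-1}$ by $\alpha_{t-h+1,t}$ for $r'\le t\le p'-1$, and, as in \eqref{eq: Zprime}, $Z'\setminus s_{p'}(Z')=\{\beta\}$ with $\beta=\alpha_{p',p'+h-1}$ (neglected if $p'+h-1>n$).

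\emph{Step 2: the reflection at $p'$.} Put $A=Z'\setminus\{\beta\}$, which is $s_{p'}$-stable. If $p'+h-1>n$ then $Z'=A$ is $s_{p'}$-stable, and \Cref{lem: weyl group over M elements} with $w=s_{p'}$ gives $\bfM_\lambda^{Z'}=0$, which is the second case. Otherwise, by definition $\bfM_\lambda^{Z'}=\bfM_\lambda^{A}-q\bfM_{\lambda-\beta}^{A}$, and the first term vanishes for the same reason. Using \Cref{remark producto interno}, $\pint{\lambda-\beta+\rho}{\alpha_{p'}}=-1$, so $s_{p'}\cdot(\lambda-\beta)=\lambda-\beta+\alpha_{p'}=\lambda-\alpha_{p'+1,p'+h-1}=:\mu$. \Cref{lem: weyl group over M elements} then gives $\bfM_{\lambda-\beta}^A=-\bfM_\mu^A$, and hence $\bfM_\lambda^{Z'}=q\bfM_\mu^A$.

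\emph{Step 3: restore the superscript.} Since $\pint{\alpha_{p'+1,p'+h-1}}{\alpha_{p'}}=-1$, we get $\pint{\mu}{\alpha_{p'}}=0$. No other $t\in T$ lies in $\{p'+1,p'+h-1,p'+h\}$, where the pairing with $\alpha_{p'+1,p'+h-1}$ could be nonzero, so $\mu$ vanishes on all of $T$. Then \Cref{coro puedo agregar} at position $p'$ (with $A$ $s_{p'}$-stable and $\pint{\beta}{\alpha_{p'}}=1$) gives $\bfM_\mu^A=\bfM_\mu^{Z'}$. Reversing the moves of Step 1, which is legitimate because those equalities are symmetric identities valid for any weight vanishing on the relevant positions, gives $\bfM_\mu^{Z'}=\bfM_\mu^{Y}$. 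This proves \eqref{eq: M element with a -1 in p'}.

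\emph{Main obstacle.} I expect the main difficulty to be the bookkeeping in Step 1. One must check that the partial application of \Cref{bk} only on $[r',p'-1]$ produces a set $Z'$ with $Z'\setminus s_{p'}(Z')=\{\alpha_{p',p'+h-1}\}$; in particular that $s_{p'}(\alpha_{p'-h,p'})=\alpha_{p'-h,p'-1}\in Z'$ and that no stray root of $K$ breaks $s_{p'}$-stability. This is where the hypothesis $i+1<r$ is used, since it rules out the exceptional case in \Cref{claim prime}. A second point to verify is that \Cref{co: bk corollary} can be invoked with only the weaker bound $p'\le n$ rather than $p+(d+1)(h-1)\le n$. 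I would handle both by tracing the proofs of \Cref{bk} and \Cref{claim prime} and noting that they only ever use roots up to index $p'+h-1$.
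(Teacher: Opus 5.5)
Your proposal is correct and is essentially the paper's proof. You reduce to the last block with \Cref{co: bk corollary} on the first $d$ blocks, then use \Cref{claim prime} and \Cref{bk} on $[r',p'-1]$ to reach a set $Z$ with $Z\setminus s_{p'}(Z)=\{\alpha_{p',p'+h-1}\}$. You exploit $s_{p'}\cdot\lambda=\lambda$, and run the same chain of equalities for $\mu=\lambda-\alpha_{p'+1,p'+h-1}$. The only difference at $p'$ is cosmetic. You delete $\beta$, use the dot action on $\lambda-\beta$, and re-insert $\beta$ by \Cref{coro puedo agregar}. The paper instead adjoins $s_{p'}(\beta)=\alpha_{p'+1,p'+h-1}$ to $Z$ and reads off $0=\bfM_\lambda^{Z}-q\bfM_{\mu}^{Z}$ directly.

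One correction concerns the overflow case $p'+h-1>n$, which the paper splits into its Cases 2 and 3 and you treat uniformly. The difficulty is not \Cref{co: bk corollary}: applied with $d-1$ it needs exactly $p'\le n$, as you noted in Step 1. The real issue is \Cref{bk} on $[r',p'-1]$, whose hypothesis $p+h-1\le n$ fails once $p'+h-2>n$. Your remark that only roots up to index $p'+h-1$ are used does not help, because the roots $\alpha_{t,t+h-1}$ with $t+h-1>n$ simply do not exist. The right observation is that at each such position $t$ the current set is already $s_t$-stable. So the exchange step degenerates into adjoining $\alpha_{t-h+1,t}$ via \Cref{coro puedo agregar}. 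This is exactly what the paper's sets $Z_k$ do, and it produces an $s_{p'}$-stable set, whence $\bfM_\lambda^Y=0$. With that justification your uniform treatment is valid.
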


\begin{proof}
    Let 
    \begin{equation}
        T_0=  \bigcup_{m = 0}^{d-1} [r + m(h - 1), p + m(h - 1)].
    \end{equation}
    Since $T_0\subset T$ we have $K\sim_{T_0} \Phi^{\succ \alpha_{i,j}}$.  
    It follows from \Cref{co: bk corollary} that $\bfM_{\lambda}^Y = \bfM_{\lambda}^{Y_{d-1}}$  , where $$Y_{d-1} = (Y\setminus [\alpha_{r'-h+1,r'},\alpha_{p'-h+1,p'}]) \cup [\alpha_{r-h+1,r},\alpha_{p-h+1,p}],$$ and $r'=r+d(h-1)$. 
    
    We split the proof into three cases based on whether $ r' + h - 1 $ and $ p' + h - 1 $ are less than, equal to, or greater than $ n $. We stress that $r'+h-1 \leq p'+h-1$.

\bigskip
   \noindent \textbf{Case 1. }  $p'+h-1\leq n$. 
 Let $\mu = \lambda -\alpha_{{p'}+1,{p'}+h-1}$. 
 We have  $\langle  \mu , \alpha_{t} \rangle =\langle \lambda ,\alpha_{t}  \rangle =0 $ for all $t\in T_0$.
Then, \Cref{co: bk corollary} implies that   $\bfM_{\mu}^Y = \bfM_{\mu}^{Y_{d-1}} $.
Let $ i' = p + (d-1)(h-1) $, $ j' = p' $, $ T_1 = [r', p'] $, and $ K' = Y_{d-1} \setminus \{ \alpha_{r'-h, r'-1} \} $. \Cref{claim prime} implies that $ K' \sim_{T_1} \Phi^{\succ \alpha_{i', j'}} $. We notice that we cannot apply \Cref{bk} to $ \lambda $ in this setting since $ \pint{\lambda}{\alpha_{p'}} = -1 $ and $ p' \in T_1 $. However, we can consider $ T_2 = T_1 \setminus \{ p' \} = [r', p'-1] $. Since $ T_2 \subset T_1 $, we have $ K' \sim_{T_2} \Phi^{\succ \alpha_{i', j'}} $. On the other hand, $  \pint{\mu}{\alpha_t} =\pint{\lambda}{\alpha_t} = 0 $ for all $ t \in T_2 $. We are now in a position to apply \Cref{bk}. Consequently, we obtain $ \bfM_{\lambda}^{Y_{d-1}} = \bfM_{\lambda}^{Z} $ and $ \bfM_{\mu}^{Y_{d-1}} = \bfM_{\mu}^{Z} $, where
\begin{equation}
    \begin{array}{rl}
        Z & = \left( Y_{d-1} \setminus  [\alpha_{r', r'+h-1}, \alpha_{p'-1, p'+h-2}]\right) \cup [\alpha_{r'-h+1, r'}, \alpha_{p'-h, p'-1}].  \\
         & = \left( Y \setminus \left( [\alpha_{r', r'+h-1}, \alpha_{p'-1, p'+h-2}] \cup \{ \alpha_{p'-h+1, p'} \} \right) \right) \cup [\alpha_{r-h+1, r}, \alpha_{p-h+1, p}].
    \end{array}
\end{equation}

This yields $ \bfM_{\lambda}^{Y} = \bfM_{\lambda}^{Z} $ and $ \bfM_{\mu}^{Y} = \bfM_{\mu}^{Z} $.

On the other hand, since $ K \sim_T \Phi^{\succ \alpha_{i, j}} $ and $ j < p' \in T $, we conclude via \Cref{lem: las que se salen} and \eqref{eq: las que se salen} that $ K \setminus s_{p'}(K) = \{ \alpha_{p'-h+1, p'}, \alpha_{p', p'+h-1} \} $. Furthermore, the set $ Z $ is obtained from $ K $ by adding and eliminating a number of roots. Among these roots, the only one that is not fixed by $ s_{p'} $ is $ \alpha_{p'-h+1, p'} $, and this root has been eliminated from $ K $ to obtain $ Z $. Thus, $ Z \setminus s_{p'}(Z) = \{ \alpha_{p', p'+h-1} \} $. Since $ s_{p'}(\alpha_{p', p'+h-1}) = \alpha_{p'+1, p'+h-1} $, the set $ Z \cup \{ \alpha_{p'+1, p'+h-1} \} $ is $ s_{p'} $-invariant. Therefore, the fact that  $\langle \lambda ,\alpha_{p'} \rangle = -1 $ and  \Cref{lem: weyl group over M elements} imply that
\begin{equation}
    0 = \bfM_{\lambda}^{Z \cup \{ \alpha_{p'+1, p'+h-1} \}} = \bfM_{\lambda}^{Z} - q \bfM_{\lambda - \alpha_{p'+1, p'+h-1}}^{Z}  = \bfM_{\lambda}^{Z} - q \bfM_{\mu}^{Z}.
\end{equation}

We conclude that $ \bfM_\lambda^Y = q \bfM_\mu^Y $, as desired.

\bigskip
\noindent \textbf{Case 2. }  $p'+h-1 > n$ and $r'+h-1 \leq n$. 

 Let $p''= n-h+1$ and $T_{3} = [r',p'']$. Also, consider  $i'$, $j'$,  $T_{1}$ and $K'$ as defined in \textbf{Case 1}. By \Cref{claim prime} we obtain  $K'  \sim_{T_{1}} \Phi^{\succ \alpha_{i',j'}}$. As $T_{3} \subset T_{1}$, we conclude that $K' \sim_{T_{3}} \Phi^{\succ \alpha_{i',j'}}$.  By applying \Cref{bk} with respect to $K'$, $\Phi^{\succ \alpha_{i',j'}}$, $(r', p'')$, and $\lambda$, we get $\bfM_{\lambda}^{Y_{d-1}} = \bfM_{\lambda}^{Z_1}$, where 
\begin{equation}
\begin{array}{rl}
    Z_1 =& \left(Y_{d-1} \setminus [\alpha_{r',r'+h-1}, \alpha_{p'',n}] \right) \cup [\alpha_{r'-h+1,r'}, \alpha_{p''-h+1,p''}] \\
     =& \left(Y \setminus \left([\alpha_{p''-h+2,p''+1}, \alpha_{p'-h+1,p'}] \cup [\alpha_{r',r'+h-1}, \alpha_{p'',n}] \right) \right) \cup [\alpha_{r-h+1,r}, \alpha_{p-h+1,p}].
\end{array}
\end{equation}
Thus,  $\bfM_{\lambda}^{Y} = \bfM_{\lambda}^{Z_1}$.

\bigskip
Let $f=p^{\prime} - p^{\prime \prime}$. We notice that $f\geq 1$.  For $2\leq k \leq f$ we inductively define
\begin{equation}
    Z_k= Z_{k-1} \cup \{  \alpha_{ p^{\prime \prime}+(k-1)-(h-1) , p^{\prime \prime}+(k-1)}  \}. 
\end{equation}
A precise inspection of the definition of set  $Z_k$ shows that it is $s_{p^{\prime \prime}+k}$-invariant. Thus \Cref{lem: weyl group over M elements} and $\pint{\lambda -\alpha_{ p^{\prime \prime}+(k-1)-(h-1) , p^{\prime \prime}+(k-1)}}{\alpha_{p^{\prime \prime}+k-1}}=-1$ imply
\begin{equation}
    \bfM_{\lambda}^{Z_k}  =   \bfM_{\lambda}^{Z_{k-1}}-q  \bfM_{\lambda -\alpha_{ p^{\prime \prime}+(k-1)-(h-1) , p^{\prime \prime}+(k-1)} }^{Z_{k-1}}   =  \bfM_{\lambda}^{Z_{k-1}}. 
\end{equation}

We conclude that $\bfM^{Y}_{\lambda} = \bfM^{Z_1}_{\lambda} = \bfM^{Z_2}_{\lambda} = \cdots = \bfM^{Z_f}_{\lambda}$. Since $Z_f$ is $s_{p'}$-invariant ($p'' + f = p'$) and $\langle \lambda, \alpha_{p'} \rangle = -1$, it follows from \Cref{lem: weyl group over M elements} that $\bfM^{Z_f}_{\lambda} = 0$. Therefore, $\bfM^{Y}_{\lambda} = 0$, which completes the proof in this case. 

\bigskip
\noindent \textbf{Case 3. }   $r'+h-1 > n$. 
Let $f=p'-r'$. We notice that $f\geq 0$.  We define $Z_0=Y_{d-1}$ and for $0<k\leq f$ we inductively define 
\begin{equation}
    Z_k = Z_{k-1} \cup \{  \alpha_{r'+(k-1)-(h-1), r'+(k-1)} \}.
\end{equation}
The set $Z_{k}$ is $s_{r'+k}$-invariant for all $0\leq k \leq f$. Thus,   $\pint{\lambda -\alpha_{r'+(k-1)-(h-1), r'+(k-1)}}{\alpha_{r'+(k-1)}}  =-1 $ and \Cref{lem: weyl group over M elements}  imply
\begin{equation}
    \bfM_{\lambda}^{Z_k}  =   \bfM_{\lambda}^{Z_{k-1}}-q  \bfM_{\lambda - \alpha_{r'+(k-1)-(h-1), r'+(k-1)}  }^{Z_{k-1}}   =  \bfM_{\lambda}^{Z_{k-1}}, 
\end{equation}
for all  $1\leq k\leq f$. As before we conclude that $\bfM^{Y}_{\lambda} = \bfM^{Z_0}_{\lambda} = \bfM^{Z_1}_{\lambda} = \cdots = \bfM^{Z_f}_{\lambda}$. Since $Z_f$ is $s_{p'}$-invariant ($r' + f = p'$) and $\langle \lambda, \alpha_{p'} \rangle = -1$, it follows from \Cref{lem: weyl group over M elements} that $\bfM^{Z_f}_{\lambda} = 0$. Therefore, $\bfM^{Y}_{\lambda} = 0$, which completes the proof in this case. 
\end{proof}

\begin{example}\rm
For the reader's convenience we reproduce some relevant steps of the proof of \Cref{lem: right solving} in an example. 
Consider $i = 5$ and $j = 9$, so that $h = 5$. 
Let $K = \Phi^{\succ \alpha_{5,9}}$,  $r = 7$, $p = 9$ and $d = 1$. 
In this case we have
$$  T=[7,9] \cup [11,13],  \qquad T_0=[7,9] \qquad \mbox{and} \qquad T_2=[11,12].  $$
Take $\lambda = -\varpi_{13}$. 
We stress that $\langle \lambda, \alpha_t \rangle =0$ for $t\in T\setminus\{13\}$ and $\langle \lambda , \alpha_{13}\rangle =-1$.

To determinate which case of the proof applies, we must specify the value of $n$.
%To decide in which case of the proof we are we need to specify the value of $n$.
We set $n=17$, so that we are in Case 1 of the proof. 
We set $\mu = \lambda-\alpha_{14,17}$. 
We have $Y = \Phi^{\succ \alpha_{5,9}} \cup \{\alpha_{2,6}\}$.
This set is illustrated in \Cref{fig: ExaLemma3.10A}. 
Then we apply \Cref{co: bk corollary} (or even \Cref{bk} since in this case $d=1$) to obtain the equalities $\bfM_\lambda^Y=\bfM_\lambda^{Y_0}$ and $\bfM_\mu^Y=\bfM_\mu^{Y_0}$, where $Y_0$ is illustrated in \Cref{fig: ExaLemma3.10B}. 
Graphically, the set $Y_0$ is obtained from  $Y$ by adding the three boxes to the right of the box associated to $\alpha_{2,6}$ and by eliminating the three boxes to the right of the box associated to $\alpha_{6,10}$.

\begin{figure}[H]
     \centering
     \begin{subfigure}[b]{0.22\textwidth}
         \centering
        {\scalebox{.2}{\begin{tikzpicture}
    \dibu{17}{5}{9}
    \agregof{5}{2}{2}
    \end{tikzpicture}}}
    \caption{ $Y$  }
    \label{fig: ExaLemma3.10A}
     \end{subfigure}
     \hfill
     \begin{subfigure}[b]{0.22\textwidth}
         \centering
        {\scalebox{.2}{\begin{tikzpicture}
    \dibu{17}{5}{9}
    \agregof{5}{2}{5}
    \quitof{5}{7}{9}
    \end{tikzpicture}}}
    \caption{ $Y_{0}$  }
    \label{fig: ExaLemma3.10B}
     \end{subfigure}
     \hfill
     \begin{subfigure}[b]{0.22\textwidth}
         \centering
        {\scalebox{.2}{\begin{tikzpicture}
    \dibu{17}{5}{9}
    \agregof{5}{2}{5}
    \quitof{5}{9}{9}
    \quitof{5}{11}{12}
    \end{tikzpicture}}}
    \caption{$Z$}
    \label{fig: ExaLemma3.10C}
     \end{subfigure}
     \hfill
     \begin{subfigure}[b]{0.22\textwidth}
         \centering
        {\scalebox{.2}{\begin{tikzpicture}
    \dibu{17}{5}{9}
    \agregof{5}{2}{5}
    \agregof{4}{14}{14}
    \quitof{5}{9}{9}
    \quitof{5}{11}{12}
    \end{tikzpicture}}}
    \caption{$Z\cup\{\alpha_{14,17}\}$}
    \label{fig: ExaLemma3.10D}
     \end{subfigure}
        \caption{Case 1 in the proof of \Cref{lem: right solving}.}
        \label{fig: EJEMPLO Lemma 3.10}
\end{figure}

We now apply \Cref{bk} to obtain the equalities  $\bfM_\lambda^{Y_0}=\bfM_\lambda^Z$ and $\bfM_\mu^{Y_0}=\bfM_\mu^Z$, where $Z$ is depicted in \Cref{fig: ExaLemma3.10C}. We notice that $Z$ is obtained from $Y_0$ by adding the two boxes to the right of $\alpha_{6,10}$ and by eliminating the two roots to the right of $\alpha_{10,14}$. We finally add the root $\alpha_{14,17}$ to the set $Z$. This set is depicted in \Cref{fig: ExaLemma3.10D}. The important thing here is that this set, $Z\cup \{\alpha_{14,17}\}$, is invariant under $s_{13}$. Thus we apply \Cref{lem: weyl group over M elements} to conclude that $\bfM_{\lambda}^{Z\cup \{\alpha_{14,17}\}}=0$. Therefore, $\bfM_\lambda^Y = q\bfM^Y_{\lambda-\alpha_{14,17}}$. 

\bigskip
To replicate the arguments from case 2, we select $n = 15$ to satisfy the conditions specific to this case. The set $Y = \Phi^{\succ \alpha_{5,9}} \cup \{\alpha_{2,6}\}$ is depicted in \Cref{fig: ExaLemma3.10Acase2} for this particular choice of $n$. 
By applying \Cref{co: bk corollary}, we deduce that $\bfM_{\lambda}^{Y} = \bfM_{\lambda}^{Y_{0}}$, similar to the previous case, where the set $Y_{0}$ is illustrated in \Cref{fig: ExaLemma3.10Bcase2}. 

\begin{figure}[H]
     \centering
     \begin{subfigure}[b]{0.22\textwidth}
         \centering
        {\scalebox{.2}{\begin{tikzpicture}
    \dibu{15}{5}{9}
    \agregof{5}{2}{2}
    \end{tikzpicture}}}
    \caption{ $Y$  }
    \label{fig: ExaLemma3.10Acase2}
     \end{subfigure}
     \hfill
     \begin{subfigure}[b]{0.22\textwidth}
         \centering
        {\scalebox{.2}{\begin{tikzpicture}
    \dibu{15}{5}{9}
    \agregof{5}{2}{5}
    \quitof{5}{7}{9}
    \end{tikzpicture}}}
    \caption{ $Y_{0}$  }
    \label{fig: ExaLemma3.10Bcase2}
     \end{subfigure}
     \hfill
     \begin{subfigure}[b]{0.22\textwidth}
         \centering
        {\scalebox{.2}{\begin{tikzpicture}
    \dibu{15}{5}{9}
    \agregof{5}{2}{5}
    \quitof{5}{8}{9}
    \quitof{5}{11}{11}
    \end{tikzpicture}}}
    \caption{$Z_{1}$}
    \label{fig: ExaLemma3.10Ccase2}
     \end{subfigure}
     \hfill
     \begin{subfigure}[b]{0.22\textwidth}
         \centering
        {\scalebox{.2}{\begin{tikzpicture}
    \dibu{15}{5}{9}
    \agregof{5}{2}{5}
    \quitof{5}{9}{9}
    \quitof{5}{11}{11}
    \end{tikzpicture}}}
    \caption{$Z_{2}$}
    \label{fig: ExaLemma3.10Dcase2}
     \end{subfigure}
        \caption{Case 2 in the proof of \Cref{lem: right solving}.}
        \label{fig: EJEMPLO Lemma 3.10case2}
\end{figure}

At this stage, it is not possible to directly apply \Cref{bk} with $T_{2}$, as it does not satisfy the condition $12 + h - 1 = 16 \leq n$. Instead, we select the largest subset of $T_{2}$ that meets the required criteria. In this instance, we choose $T_{3} = \{11\} \subset T_{2}$. Consequently, it follows that $\bfM_{\lambda}^{Y_{0}} = \bfM_{\lambda}^{Z_{1}}$, where the set $Z_{1}$ is represented in \Cref{fig: ExaLemma3.10Ccase2}. 
By the construction of $Z_{1}$ and our choice of $n$, we observe that it is $s_{12}$-invariant. Thus, applying \Cref{lem: weyl group over M elements}, we obtain $\bfM_{\lambda}^{Z_{1}} = \bfM_{\lambda}^{Z_{2}}$, where $Z_{2}$ is derived from $Z_{1}$ by adding the root $\alpha_{8,12}$. 
Furthermore, the construction of $Z_{2}$ ensures that it is $s_{13}$-invariant. Combined with the condition $\langle \lambda, \alpha_{13} \rangle = -1$, this allows us to conclude that $\bfM_{\lambda}^{Z_{2}} = 0$. Therefore $\bfM_{\lambda}^{Y} =0$. 

\bigskip
The arguments in case 3 are very similar to those in case 2, so we omit its example.
\end{example}

\Cref{lem: right solving} marks the first instance where our philosophy begins to emerge: attempting to eliminate negative coordinates in weights indexing $\bfM$-elements. Specifically, for $\lambda$ as defined in the lemma, we observe that $\pint{\lambda}{\alpha_{p'}} = -1$ and $\pint{\lambda - \alpha_{{p'} + 1, {p'} + h - 1}}{\alpha_{p'}} = 0$ (where $\lambda - \alpha_{{p'} + 1, {p'} + h - 1}$ is the weight appearing on the right-hand side of \eqref{eq: M element with a -1 in p'}). However, if $\pint{\lambda}{\alpha_{p'+h-1}} = 0$, this would introduce a $-1$ in the ${p'} + h - 1$ coordinate of $\lambda - \alpha_{{p'} + 1, {p'} + h - 1}$. The significance here is that this new $-1$ coordinate is positioned to the right of the original one, suggesting a potential resolution to this issue.

\begin{lemma}\rm\label{Claim: free Q}
        Let $K \subset \Phi^{\geq 2}$, $\alpha_{i,j} \in \Phi^{\geq 2}$ and  $h = j - i + 1$.
        Let $(r,p)$ be integers with $i + 1 < r \leq p \leq j$. 
        Let  $d$ be a positive integer such that $p+(d+1)(h-1)\leq n$. 
        Let 
        $$T = \bigcup_{m = 0}^{d} [r + m(h - 1), p + m(h - 1)].$$
        Let $0\leq k \leq d$ and set $p'=p+k(h-1)$. 
        Let $\lambda \in X$ satisfying $\pint{\lambda}{\alpha_t} = 0$ for all $t \in T \setminus \{p'\}$ and $\pint{\lambda}{\alpha_{p'}} = -1$. 
        Then, if $K \sim_{T} \Phi^{\succ \alpha_{i,j}}$, we have
    \begin{equation}
        \bfM_{\lambda}^{Y} = q^{d-k+1}\bfM_{\lambda-\alpha_{p'+1,p'+(d-k+1)(h-1)}}^{Y},
    \end{equation}
    where $Y = K \cup \{\alpha_{r-h, r-1}\}$.
    \end{lemma}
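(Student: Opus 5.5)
Induction on $d-k$, i.e. downward induction on $k$ from $k=d$, applying \Cref{lem: right solving} once per step and letting the $-1$ travel to the right in blocks of $h-1$.

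\emph{Base case.} Take $k=d$, so $p'=p+d(h-1)$. Restrict $T$ to
\[
T^{(d)}=\bigcup_{m=0}^{d}[r+m(h-1),p+m(h-1)],
\]
which is $T$ itself. We have $K\sim_T\Phi^{\succ\alpha_{i,j}}$, $\lambda$ vanishes on $T\setminus\{p'\}$, and $\pint{\lambda}{\alpha_{p'}}=-1$. Since $p'+h-1\le p+(d+1)(h-1)\le n$, the first branch of \Cref{lem: right solving} gives
\[
\bfM_\lambda^Y=q\,\bfM^Y_{\lambda-\alpha_{p'+1,p'+h-1}},
\]
which is the claim with exponent $d-k+1=1$.

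\emph{Inductive step, $k<d$.} Truncate $T$ to
\[
T_k=\bigcup_{m=0}^{k}[r+m(h-1),p+m(h-1)]\subseteq T .
\]
Condition (a) of \Cref{def: locally greater than alpha} is monotone under shrinking $T$, and (b) is trivially inherited, so $K\sim_{T_k}\Phi^{\succ\alpha_{i,j}}$. The hypotheses of \Cref{lem: right solving} hold with $d$ replaced by $k$, and $p'+h-1\le n$. Hence
\[
\bfM^Y_\lambda=q\,\bfM^Y_{\mu},\qquad \mu=\lambda-\alpha_{p'+1,p'+h-1}.
\]

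Next compute the pairings of $\mu$ against $\alpha_t$ for $t\in T$, using \Cref{remark producto interno}. The root $\alpha_{p'+1,p'+h-1}$ pairs to $-1$ with $\alpha_{p'}$ and with $\alpha_{p'+h}$, to $+1$ with $\alpha_{p'+1}$ and with $\alpha_{p'+h-1}$, and to $0$ otherwise. (If $h=2$ it is the simple root $\alpha_{p'+1}$, pairing to $2$ there; this case needs a separate check, which I expect to be routine.)

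So $\pint{\mu}{\alpha_{p'}}=0$ and $\pint{\mu}{\alpha_{p'+h-1}}=-1$, with $p'+h-1=p+(k+1)(h-1)$. For this to fit the induction hypothesis at $k+1$, the remaining changed pairings must fall outside $T$ or leave zeros intact.
\begin{itemize}
\item Position $p'+1$: it lies outside $T$, since $p<r+(h-1)$, so the next block starts after $p'+1$ unless $r=p+?$. This needs a careful check: the blocks $[r+m(h-1),p+m(h-1)]$ are separated because $p-r<h-1$, which follows from $i+1<r\le p\le j$, since $p-r\le j-i-2=h-3$. Thus $p'+1$ is at least $2$ short of the next block's start. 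Indeed the gap is $r+(k+1)(h-1)-(p'+1)=r-p+h-2\ge 1$, so $p'+1\notin T$.
\item Position $p'+h$: it lies beyond the end $p+(k+1)(h-1)$ of block $k+1$, so it is outside $T$ as well, since the next block starts at $r+(k+2)(h-1)>p'+h$.
\end{itemize}
Therefore $\mu$ vanishes on $T\setminus\{p+(k+1)(h-1)\}$ and pairs to $-1$ there. The induction hypothesis gives
\[
\bfM^Y_\mu=q^{d-k}\,\bfM^Y_{\mu-\alpha_{p'+h,\,p'+h-1+(d-k)(h-1)}}.
\]

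Finally, combine the two steps. The roots telescope:
\[
\alpha_{p'+1,p'+h-1}+\alpha_{p'+h,\,p'+(d-k+1)(h-1)}=\alpha_{p'+1,\,p'+(d-k+1)(h-1)},
\]
which yields $q^{d-k+1}$ and the stated weight. The main obstacle is this bookkeeping: verifying that the positions $p'+1$ and $p'+h$ avoid $T$, including the edge case $h=2$, where $p'+1=p'+h-1$, and confirming that $d$ may be replaced by $k$ in \Cref{lem: right solving} without disturbing the constraint $r\ge h$ that it requires.
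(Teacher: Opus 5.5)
Your proof is correct and follows the paper's argument exactly. It uses induction on $d-k$ with base case \Cref{lem: right solving}, applies \Cref{lem: right solving} in the inductive step to the truncated set $\bigcup_{m=0}^{k}[r+m(h-1),p+m(h-1)]$, then invokes the induction hypothesis at $k+1$ and telescopes the roots. Your check that $p'+1$ and $p'+h$ avoid $T$ is the verification the paper leaves implicit. The $h=2$ case you flag is vacuous, since $i+1<r\le p\le j$ forces $h\ge 3$.
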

    \begin{proof}
We fix $d$ and proceed by induction on $d-k$. 
The base case $d=k$ is \Cref{lem: right solving}. We now assume that  $0<d-k$ and that the result holds for all $k'$ such that $d-k'<d-k$.   We apply \Cref{lem: right solving} using the set 
         $$T' = \bigcup_{m = 0}^{k} [r + m(h - 1), p + m(h - 1)].$$
We obtain $\bfM_{\lambda}^{Y}=q\bfM_{\mu}^{Y}$, where $\mu=\lambda - \alpha_{p'+1,p'+(h-1)}$. Then, we apply our inductive hypothesis to  $k'=k+1$, $T$ and $\mu$ to obtain 
       \begin{equation}
           \bfM_{\mu}^{Y}=q^{d-k'+1}\bfM_{\mu-\alpha_{p'+h,p'+h-1+(d-k'+1)(h -1)}}^{Y} =q^{d-k}\bfM_{\lambda-\alpha_{p'+1,p'+(d-k+1)(h -1)}}^{Y} .
       \end{equation}
Therefore, $\bfM_{\lambda}^{Y} = q^{d-k+1}\bfM_{\lambda-\alpha_{p'+1,p'+(d-k+1)(h -1)}}^{Y}$ as we wanted to show. 
   \end{proof} 

\begin{example}\rm
    We proceed to illustrate \Cref{Claim: free Q} with an example. Consider $i = 3$, $j = 6$, $(r, p) = (5, 6)$, and $k = 1$.  
Additionally, let $K = \Phi^{\succ \alpha_{3,6}}$.  
The value of $d$ must be chosen to be at least $k$, with its upper limit determined by the value of $n$. Suppose $n=18$, and let us illustrate, the different results obtained for various values of $d$.  
Under this setting, $d = 1, 2, 3$. Thus, for $T = \{5,6, 8,9,11,12,14,15\}$ and $\lambda \in X$ satisfying $\langle \lambda, \alpha_{t} \rangle = 0$ for $t \in T\setminus \{9\}$ and $\langle \lambda, \alpha_{9} \rangle = -1$, we have, by \Cref{Claim: free Q},  
\begin{equation}
    \bfM_{\lambda}^{Y} = q\bfM_{\lambda - \alpha_{9,12}}^{Y} = q^2\bfM_{\lambda - \alpha_{9,15}}^{Y} = q^3\bfM_{\lambda - \alpha_{9,18}}^{Y}.
\end{equation}

\end{example}

The following lemma represents a minor variation of \Cref{Claim: free Q}. In the notation of this lemma, we set $r=i+1$ and adjust the set $T$ to allow the application of \Cref{co: bk corollary} in this context. The proof follows a similar approach to that of \Cref{Claim: free Q}; therefore, we omit it for brevity.

\begin{lemma}\rm\label{Claim: free Q2}
Let $K \subset \Phi^{\geq 2}$, $\alpha_{i,j} \in \Phi^{\geq 2}$, and set $h:=j-i+1$.
Let $p$ be an integer with $i+1 \le p \le j$, and assume $i\geq h$.
Let $d$ be a positive integer such that $p+d(h-1)\le n$, and define
\begin{equation}
T = [\,i+1+d(h-1),\, p+d(h-1)\,] \;\cup\;
\bigcup_{m=0}^{d-1} [\,i+1+m(h-1),\, j+m(h-1)\,].
\end{equation}
Let $0\leq k\leq d$ and set $p':=p+k(h-1)$.
Let $\lambda \in X$ satisfy
\begin{equation}
\langle \lambda,\alpha_{p'}\rangle=-1
\quad\text{and}\quad
\langle \lambda,\alpha_t\rangle=0 \ \text{for all } t\in T\setminus\{p'\}.
\end{equation}
If $k<d$, additionally assume $\langle \lambda,\alpha_{p'+1}\rangle=1$.
Suppose $K \sim_T \Phi^{\succ \alpha_{i,j}}$.
Then
\begin{equation}
\bfM_{\lambda}^{Y} =
\begin{cases}
q^{\,d-k+1}\,
\bfM_{\lambda - \alpha_{\,p'+1,\, p'+(d-k+1)(h-1)}}^{Y}, &
\text{if } p+(d+1)(h-1)\le n,\\[4pt]
0, & \text{otherwise},
\end{cases}
\end{equation}
where $Y = K \cup \{\alpha_{\,i-h+1,\, i}\}$.
\end{lemma}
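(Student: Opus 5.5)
We follow the same scheme as in the proof of \Cref{Claim: free Q}: fix $d$ and argue by induction on $d-k$, where the induction step is one application of a ``right solving'' statement adapted to $r=i+1$. So the first task is the analogue of \Cref{lem: right solving} for $r=i+1$. Here $T_0=\bigcup_{m=0}^{k-1}[i+1+m(h-1),j+m(h-1)]$ is made of full blocks, which is exactly the case $(r,p)=(i+1,j)$ covered by \Cref{co: bk corollary}. Note that $Y=K\cup\{\alpha_{i-h+1,i}\}$ is the set $K\cup\{\alpha_{r-h,r-1}\}$ with $r=i+1$, and $r\ge h$ holds because $i\ge h$.

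Applying \Cref{co: bk corollary} with this $T_0$ gives $\bfM_\lambda^Y=\bfM_\lambda^{Y_{k-1}}$, and the same identity for $\mu=\lambda-\alpha_{p'+1,p'+h-1}$. This is legitimate because $\mu$ differs from $\lambda$ only at the coordinates $p'$, $p'+1$, $p'+h-1$ and $p'+h$, none of which lies in $T_0$.

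Next, \Cref{claim prime} produces $K'\sim_{T_1}\Phi^{\succ\alpha_{i',j'}}$ with $T_1=[r',p']$ and $r'=i+1+k(h-1)$. We then apply \Cref{bk} on $T_2=[r',p'-1]$; if $p'=r'$ this set is empty and the step is skipped. As in Case 1 of \Cref{lem: right solving}, this yields a set $Z$ with $Z\setminus s_{p'}(Z)=\{\alpha_{p',p'+h-1}\}$. By \Cref{lem: weyl group over M elements}, using $\langle\lambda,\alpha_{p'}\rangle=-1$, we get $0=\bfM_\lambda^{Z}-q\bfM_\mu^{Z}$, hence $\bfM^Y_\lambda=q\bfM^Y_\mu$.

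We then apply the induction hypothesis to $\mu$ with $k+1$ in place of $k$, so the new position is $p'+h-1$. For this we must check that $\mu$ satisfies the hypotheses of the lemma for $k+1$:
\begin{itemize}
\item[(i)] $\langle\mu,\alpha_{p'+h-1}\rangle=-1$. This holds because $p'+h-1\in T$, so $\langle\lambda,\alpha_{p'+h-1}\rangle=0$, and subtracting $\alpha_{p'+1,p'+h-1}$ lowers this pairing by $1$.
\item[(ii)] $\langle\mu,\alpha_{p'}\rangle=0$. This holds since $\langle\alpha_{p'+1,p'+h-1},\alpha_{p'}\rangle=-1$ and $\langle\lambda,\alpha_{p'}\rangle=-1$.
\item[(iii)] $\langle\mu,\alpha_{p'+1}\rangle=0$, which needs $p'+1\in T$; this is the point where the additional hypothesis $\langle\lambda,\alpha_{p'+1}\rangle=1$ is used.
\item[(iv)] When $k+1<d$, the required value $\langle\mu,\alpha_{p'+h}\rangle=1$. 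This comes from $\langle\lambda,\alpha_{p'+h}\rangle=0$ together with $\langle\alpha_{p'+1,p'+h-1},\alpha_{p'+h}\rangle=-1$.
\end{itemize}
Combining the exponents of $q$ and concatenating the roots subtracted gives the stated formula $q^{d-k+1}\bfM^Y_{\lambda-\alpha_{p'+1,p'+(d-k+1)(h-1)}}$.

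For the base case $k=d$ we run the argument of \Cref{lem: right solving} with its Cases 1, 2 and 3 according to the size of $p'+h-1$ relative to $n$. In the regime $p+(d+1)(h-1)\le n$, Case 1 gives a single factor $q$. Otherwise, the chain of sets $Z_1\subset\cdots\subset Z_f$, each one $s$-invariant at the next position, forces $\bfM_\lambda^Y=0$. When the vanishing occurs at the base of the induction, it propagates up through the induction, giving the ``otherwise'' line of the statement.

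The main obstacle is the bookkeeping in the last block $[i+1+d(h-1),p+d(h-1)]$, which is partial when $p<j$. We must check that \Cref{claim prime} still gives $K'\sim\Phi^{\succ\alpha_{i',j'}}$ there. We must also check that the boundary root $\alpha_{i,j}=\alpha_{r'-h,r'-1}$ for $d=1$, the special case singled out in that claim, does not break the invariance $Z\setminus s_{p'}(Z)=\{\alpha_{p',p'+h-1}\}$. I would verify this first on the pyramid pictures for small $h$, and then write the general argument.
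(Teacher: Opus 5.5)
Your scheme matches the one the paper intends; its proof is omitted with a pointer to \Cref{Claim: free Q}. You argue by induction on $d-k$, and each step is an $r=i+1$ version of \Cref{lem: right solving} resting on the full-block case $(r,p)=(i+1,j)$ of \Cref{co: bk corollary}. The induction step is fine: for $k<d$, the hypothesis of \Cref{co: bk corollary} on the $k$ full blocks is $j+k(h-1)\le n$, and this follows from $j+(d-1)(h-1)=i+d(h-1)<p+d(h-1)\le n$.

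The gap is in the base case $k=d$ when $p+(d+1)(h-1)>n$. There $T_0$ is the union of the $d$ full blocks, i.e.\ $[i+1,\,j+(d-1)(h-1)]$. Applying \Cref{co: bk corollary} to it (or \Cref{bk} if $d=1$) requires $j+d(h-1)\le n$. When $p<j$, this is not implied by $p+d(h-1)\le n$.

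For example, take $n=6$, $(i,j)=(3,5)$, $h=3$, $p=4$, $d=k=1$. Then $T=[4,6]$ and $p'=6$, and \Cref{bk} on $T_0=[4,5]$ would have to delete $\alpha_{5,7}$, which does not exist. So the Case 3 analogue of \Cref{lem: right solving} cannot be rerun as is when $j+d(h-1)>n$. In that lemma the preliminary step on the earlier blocks is always legitimate, because all blocks end at the same offset $p$. Here the earlier blocks are longer than the last one.

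The repair is to truncate already inside the last full block:
\begin{enumerate}
\item Apply \Cref{co: bk corollary} to the first $d-1$ full blocks only (do nothing if $d=1$).
\item Apply \Cref{bk} on $[i+1+(d-1)(h-1),\,n-h+1]$.
\item Adjoin height-$h$ roots one at a time as in Case 2, until the set is $s_{p'}$-invariant; then $\pint{\lambda}{\alpha_{p'}}=-1$ kills it. At step $t$ you adjoin $\alpha_{t-h,t-1}$. This is free because the current set is $s_{t-1}$-invariant and $\pint{\lambda}{\alpha_{t-1}}=0$.
\end{enumerate}
The chain now crosses the block boundary at $t=i+1+d(h-1)$. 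There the adjoined root is $\alpha_{i+(d-1)(h-1),\,j+(d-1)(h-1)}$, which equals $\alpha_{i,j}\notin\Phi^{\succ\alpha_{i,j}}$ when $d=1$. You must also check that $\alpha_{t-h+1,t}$ was already deleted by the truncated \Cref{bk}; it was, since $t-h+1=i+1+(d-1)(h-1)$. In the example, \Cref{bk} on $[4,4]$ trades $\alpha_{4,6}$ for $\alpha_{2,4}$ and gives an $s_5$-invariant set. Adjoining $\alpha_{3,5}$ then makes it $s_6$-invariant, so $\bfM^Y_\lambda=0$.

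Two smaller points. First, as literally stated the lemma is vacuous for $k<d$. Since $j+m(h-1)=i+(m+1)(h-1)$, consecutive blocks abut and $T=[i+1,\,p+d(h-1)]$. Hence $p'+1\in T\setminus\{p'\}$, which forces $\pint{\lambda}{\alpha_{p'+1}}=0$ and contradicts the extra hypothesis. Your check (iii) silently uses the intended hypothesis, namely vanishing on $T\setminus\{p',p'+1\}$; you should say so explicitly. Under the literal reading, everything rests on the base case, which is exactly where the gap above sits.

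Second, $h=2$ is allowed (then $p=j$). In that case $\alpha_{p'+1,p'+h-1}=\alpha_{p'+1}$ is simple and $p'+h-1=p'+1$, so your justifications of (i) and (iii) do not apply. Instead $\pint{\mu}{\alpha_{p'+1}}=1-2=-1$, which is exactly the new $-1$ required for $k+1$, and (iii) is not needed.
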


\begin{lemma}\rm\label{lem: left solving}
Let $K \subset \Phi^{\geq 2}$, $\alpha_{i,j} \in \Phi^{\geq 2}$ and $h = j - i + 1$. 
Let $p$ be an integer such that $i -h +1\leq p \leq i$ and 
$$T =  \bigcup_{m = 0}^{d} [p-mh,i-mh],$$
where $d$ is a positive integer such that $p'\coloneqq p - dh \geq 1$.
Let $\lambda \in X$ satisfying $\pint{\lambda}{\alpha_t} = 0$ for all $t \in T \setminus \{p'\}$, and $\pint{\lambda}{\alpha_{p'}} = -1$. Then, if  $K \sim_{T} \Phi^{\succ \alpha_{i,j}}$ we have
\begin{equation}\label{eq: left solving}
    \bfM_{\lambda}^{K} = 
\begin{cases} 
q \bfM_{\lambda - \alpha_{{p'} -h, {p'} - 1}}^{K}, & \text{if } {p'} - h \geq 1; \\
0, & \text{otherwise}.
\end{cases}
\end{equation}
\end{lemma}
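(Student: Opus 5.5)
The plan is to mirror the proof of \Cref{lem: right solving}, moving leftwards through the blocks $[p-mh,i-mh]$ instead of rightwards. Set $\beta=\alpha_{p'-h,p'-1}$ when $p'-h\ge 1$. Two pairing facts drive the argument. First, $\pint{\lambda+\rho}{\alpha_{p'}}=0$, so $s_{p'}\cdot\lambda=\lambda$. Second, $\pint{\beta}{\alpha_{p'}}=-1$, so $\pint{\lambda-\beta}{\alpha_{p'}}=0$. The index $p'-1$ lies in no block of $T$, because $p'$ is the left end of the leftmost block. Also $\beta$ is orthogonal to every other $\alpha_t$ with $t\in T$, since these $t$ lie at distance at least $h-1$ to the right of $p'$. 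Hence $\mu:=\lambda-\beta$ satisfies $\pint{\mu}{\alpha_t}=0$ for all $t\in T$. So $\mu$ obeys the hypotheses needed for a left analogue of \Cref{bk} and \Cref{co: bk corollary} on the full set $T$, while $\lambda$ obeys them on $T\setminus\{p'\}$.

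Step 1 is a left version of \Cref{bk}, proved by induction along a block exactly as there. By \Cref{lem: las que se salen} and \eqref{eq: las que se salen}, for $t\le i$ in $T$ we have $K\setminus s_t(K)=\{\alpha_{t-h,t},\alpha_{t,t+h}\}$ when $t<i$, and $\{\alpha_{i-h,i}\}$ when $t=i$. I start the block $[p,i]$ at $t=i$. There \Cref{lem: sacar una poner otra} lets me trade $\alpha_{i-h,i}$ for $\alpha_{i,i+h-1}=\alpha_{i,j}$, which is not in $K$ by condition (b) of \Cref{def: locally greater than alpha}. I then proceed to $t=i-1,\dots,p$. At each step the previously inserted root makes one of the two outgoing roots of height $h+1$ have its $s_t$-image already present, so exactly one root leaves, and \Cref{lem: sacar una poner otra} exchanges it for a root of height $h$ ending at $t$.

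I then iterate block by block to $m=1,\dots,d$, as in \Cref{co: bk corollary}. This requires a claim analogous to \Cref{claim prime}: after processing block $m-1$, the modified set is $T'$-congruent to a shifted $\Phi^{\succ\alpha_{i',j'}}$ for the next block. The transformations at $t\ne p'$ use only zero pairings, so they apply simultaneously to $\mathbf M_\lambda$ and $\mathbf M_\mu$. The outcome is a set $Z$ with $\mathbf M^K_\lambda=\mathbf M^Z_\lambda$ and $\mathbf M^K_\mu=\mathbf M^Z_\mu$. I arrange $Z$ so that $Z\setminus s_{p'}(Z)=\{\alpha_{p'-h,p'}\}$ and $s_{p'}(\alpha_{p'-h,p'})=\beta\notin Z$, which makes $Z\cup\{\beta\}$ $s_{p'}$-invariant. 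To reach this I stop the last block before performing the exchange at $t=p'$, just as \Cref{lem: right solving} uses $T_2=T_1\setminus\{p'\}$.

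Step 2 concludes. If $p'-h\ge1$, \Cref{lem: weyl group over M elements} gives
\[
0=\mathbf M^{Z\cup\{\beta\}}_\lambda=\mathbf M^Z_\lambda-q\,\mathbf M^Z_{\lambda-\beta},
\]
hence $\mathbf M_\lambda^K=q\,\mathbf M^K_{\lambda-\alpha_{p'-h,p'-1}}$. If $p'-h<1$, the outgoing root $\alpha_{p'-h,p'}$ does not exist. Then $Z$ itself is $s_{p'}$-invariant, possibly after adding harmless roots one at a time as in Cases 2 and 3 of \Cref{lem: right solving}, each addition justified by a vanishing $\mathbf M$-term. Since $s_{p'}\cdot\lambda=\lambda$ and $\ell(s_{p'})$ is odd, $\mathbf M_\lambda^K=0$.

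The main obstacle is the bookkeeping in Step 1. Each $t<i$ has two outgoing roots, so I must check at every step that the earlier exchanges have already absorbed one of them. I must also verify the congruence claim across block transitions, especially the boundary between $t=p-mh$ and $t=i-(m+1)h$, where the blocks may nearly touch when $p=i-h+1$. I would first check this on a small example, say $h=3$, $d=1$, to fix the exact form of $Z$, and then write the induction.
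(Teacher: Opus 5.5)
Your Step 2 is fine. Transporting both $\bfM_\lambda$ and $\bfM_{\lambda-\beta}$ to a set $Z$ with $Z\cup\{\beta\}$ $s_{p'}$-invariant is equivalent to the paper's reformulation $\bfM^{K\cup\{\beta\}}_\lambda=0$.

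The gap is Step 1. It carries the entire content of the lemma, it is only sketched, and as stated it does not work.

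\emph{Choice of inserted root within a block.} For $\alpha_{t-1,t+h-1}$ to stop leaving under $s_{t-1}$, the root inserted at step $t$ must be $s_{t-1}(\alpha_{t-1,t+h-1})=\alpha_{t,t+h-1}$, a root \emph{starting} at $t$, as $\alpha_{i,j}$ does at $t=i$. If you insert $\alpha_{t-h+1,t}$ instead, then as soon as the block contains $i-2$, both $\alpha_{i-2-h,i-2}$ and $\alpha_{i-2,i+h-2}$ leave at $t=i-2$.

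\emph{Choice of inserted root in later blocks.} In a block $m\ge 1$, the root $\alpha_{t,t+h}$ was already deleted while treating block $m-1$. So there you must re-insert $\alpha_{t,t+h}$; these are the paper's $\beta_m$. A fresh height-$h$ root would itself leave at the next step.

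With these two choices and $p\ge i-h+2$, your scheme does close. It produces
\[
Z=\bigl(K\setminus(\{\alpha_{t-h,t}: p'<t\le i-dh\}\cup\{\alpha_{p',p'+h}\})\bigr)\cup\{\alpha_{s,s+h-1}: p\le s\le i\},
\]
and indeed $Z\setminus s_{p'}(Z)=\{\alpha_{p'-h,p'}\}$.

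The real obstruction is the case you flagged, $p=i-h+1$, where $T=[p',i]$ is an interval. Here the root $\alpha_{i-h+1,i}$ inserted at the last step of block $0$ is sent by $s_{i-h}$ to $\alpha_{i-h,i}$, which was deleted at the first step. So at $t=i-h$ both $\alpha_{i-2h,i-h}$ and $\alpha_{i-h+1,i}$ leave, the latter with $\pint{\alpha_{i-h+1,i}}{\alpha_{i-h}}=-1$. Hence \Cref{lem: sacar una poner otra} cannot be applied.

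For example, take $K=\Phi^{\succ\alpha_{10,12}}$, $p=8$, $d=1$. At $t=7$ both $\alpha_{4,7}$ and $\alpha_{8,10}$ leave. If you insert $\alpha_{6,8}$ at $t=8$ instead, then $\alpha_{3,6}$ and $\alpha_{6,8}$ both leave at $t=6\neq p'$.

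This case is genuinely needed: \Cref{lem: first blackP} (Case A, $D=1$) applies the lemma with $p=i-h+1$.

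The paper takes a different route. It exchanges only at the right endpoints $i-mh$, along $\beta_m=\alpha_{i-mh,i-(m-1)h}$, and handles the remaining positions by induction on $i-p$, replacing $\alpha_{i,j}$ by $\alpha_{i-1,j-1}$. For $p\ge i-h+2$ this avoids your within-block bookkeeping. At $p=i-h+1$, however, its asserted congruence $K_{d+1}\setminus\{\beta\}\sim_{T'}\Phi^{\succ\alpha_{i-1,j-1}}$ also fails whenever $\beta_{d+1}$ exists. The reason is that $\beta_{d+1}=\alpha_{i-(d+1)h,i-dh}$ lies in $\Phi^{\succ\alpha_{i-1,j-1}}$, has been removed, and is moved by $s_{i-dh+1}\in T'$. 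So the contiguous case needs an additional idea in either approach. As it stands, your proposal does not prove the lemma.
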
 

\begin{proof}
 We first treat the case  $p'-h\geq 1$.  In this case $\alpha_{p'-h,p'-1}$ exists. Furthermore, since $K\sim_T \Phi^{\succ\alpha_{i,j}}$ and $p'\in T$ we have $\alpha_{p'-h,p'-1}\not \in K$. Therefore, 
 \begin{equation}
     \bfM_{\lambda}^{K\cup \{  \alpha_{p'-h,p'-1}   \} } = \bfM_{\lambda }^K- q\bfM_{\lambda - \alpha_{p'-h,p'-1}}^{K}. 
 \end{equation}
Thus it suffices to show that  $\bfM_{\lambda}^{K\cup \{  \alpha_{p'-h,p'-1}   \} }=0$. Set $K_0 =K\cup \{  \alpha_{p'-h,p'-1}   \} $  and
\begin{equation}
\beta_m = \begin{cases}
   \alpha_{i,j}, & \mbox{if } m=0; \\
   \alpha_{i-mh,i-(m-1)h},    &  \mbox{if } 1\leq m \leq d+1.  
\end{cases}
\end{equation}
 We also define $K_m=(K_{m-1}\setminus \{\beta_m\}) \cup \{ \beta_{m-1} \}$ for $1\leq m \leq d+1$.  
 By combining the fact that $K \sim_{T} \Phi^{\succ\alpha_{i,j}}$ together with \Cref{lem: las que se salen} and \eqref{eq: las que se salen} we conclude that $K_m \setminus s_{i-mh}(K_m)  =\{  \beta_{m+1} \}$ for $0\leq m < d$.   Therefore, a successive application of \Cref{lem: sacar una poner otra} for $m=0,1,\ldots , d-1$  at position $i-mh$ to  the triple $(K_m,\beta_{m+1},\beta_m )$ yields 
\begin{equation}  \label{eq: K0 equal to Kd}
    \bfM_{\lambda}^{K_0} =\bfM_{\lambda}^{K_d}.  
\end{equation}

We proceed by induction on $i-p$. 
If $p=i$ then  $p'=i-dh$ and  $K_d=  s_{p'}(K_d)$.  Since $\pint{\lambda}{\alpha_{p'}}=-1$ it follows by \Cref{lem: weyl group over M elements} that $\bfM_{\lambda}^{K_d}=0$, which proves the lemma when $p=i$.

 We now assume that $p<i$.
 In this case, $K_d\setminus s_{i-dh}(K_d)= \{ \beta_{d+1} \}$. 
 Then another application of \Cref{lem: sacar una poner otra} at position $i-dh$ to the triple $(K_d,\beta_{d+1}, \beta_d)$ yields 
\begin{equation}  \label{eq: K0 equal to Kd}
    \bfM_{\lambda}^{K_0} =\bfM_{\lambda}^{K_d}= \bfM_{\lambda}^{K_{d+1}}.  
\end{equation}
We stress that $K_{d+1} = (K_0 \setminus \{ \beta_{d+1} \}) \cup \{\beta_0\}  = (K \setminus \{ \alpha_{i-(d+1)h,i-dh} \}) \cup \{\alpha_{p'-h,p'-1},\alpha_{i,j}\}$. 
Using this description and the fact that $K\sim_T \Phi^{\succ\alpha_{i,j}}$ it is easy to see  that $K_{d+1}\setminus \{ \alpha_{p'-h,p'-1} \} \sim_{T'} \Phi^{\succ\alpha_{i-1,j-1}}$, where 
$$T' =  \bigcup_{m = 0}^{d} [p-mh,i-1-mh].$$
Since $(i-1)-p < i-p$ we can apply our inductive hypothesis to conclude that $\bfM_\lambda^{K_{d+1}} =0$. This finishes the proof in the case $p'-h\geq 1$. 

The case $p'-h<1$ is handled similarly.
Indeed, we repeat the same argument but with $K$ playing the role of  $K_0=K\cup \{  \alpha_{p'-h,p'-1}   \}  $, as the root $\alpha_{p'-h,p'-1} $ does not exist in this case. For the sake of brevity we leave the details to the reader. 
\end{proof}

\begin{lemma}\rm\label{Claim: free P}
Let $K \subset \Phi^{\geq 2}$, $\alpha_{i,j} \in \Phi^{\geq 2}$ and $h = j - i + 1$. Let $p$ be an integer such that $i -h+1 < p \leq i$. Let $d$ be a positive integer such that $p-(d+1)h\geq 1$. Let 
    \begin{equation}
        T=\bigcup_{m=0}^{d}[p-mh,i-mh].
    \end{equation}
    Let $1\leq k \leq d$ and set $p'= p-kh$. 
    Let $\lambda\in X$ satisfying $\pint{\lambda}{\alpha_{t}}=0$ for all $t\in T\setminus\{p'\}$ and $\pint{\lambda}{\alpha_{p'}}=-1$. Then, if $K \sim_{T} \Phi^{\succ \alpha_{i,j}}$, we have
    \begin{equation}
        \bfM_{\lambda}^{K} = q^{d-k+1}\bfM_{\lambda - \alpha_{p'-(d-k+1)h,p'-1}}^{K}.
    \end{equation}
\end{lemma}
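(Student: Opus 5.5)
We argue exactly as in the proof of \Cref{Claim: free Q}, with \Cref{lem: left solving} playing the role of \Cref{lem: right solving}. We fix $d$ and proceed by induction on $d-k$.

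For the base case $k=d$ we have $p'=p-dh$. Since $p-(d+1)h\geq 1$, we get $p'-h\geq 1$. We apply \Cref{lem: left solving} with the given $T$, $d$ and $\lambda$. Its hypotheses are precisely ours: $i-h+1\leq p\leq i$, $p'\geq 1$, $K\sim_T\Phi^{\succ\alpha_{i,j}}$, and the conditions on $\pint{\lambda}{\alpha_t}$ for $t\in T$. It gives $\bfM_\lambda^K=q\,\bfM^K_{\lambda-\alpha_{p'-h,p'-1}}$, which is the claimed formula for $d-k+1=1$.

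For the inductive step, suppose $k<d$. First we apply \Cref{lem: left solving} with $d$ replaced by $k$, that is, with the set
\begin{equation}
T'=\bigcup_{m=0}^{k}[p-mh,i-mh]\subseteq T .
\end{equation}
We have $K\sim_{T'}\Phi^{\succ\alpha_{i,j}}$ because $T'\subseteq T$, and $\lambda$ satisfies the required conditions on $T'$. Here $p'=p-kh$ is the last point of $T'$, and $p'-h\geq p-(d+1)h+h\geq 1$. So we obtain $\bfM^K_\lambda=q\,\bfM^K_\mu$ with $\mu=\lambda-\alpha_{p'-h,p'-1}$.

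Next we check that $\mu$ satisfies the hypotheses of the statement for $k'=k+1$, with the same $T$, where $p''=p'-h$.
\begin{itemize}
\item By \Cref{remark producto interno}, $\pint{\alpha_{p'-h,p'-1}}{\alpha_t}$ is nonzero only for $t\in\{p'-h-1,\,p'-h,\,p'-1,\,p'\}$.
\item At $t=p'$ we get $\pint{\mu}{\alpha_{p'}}=-1+1=0$.
\item At $t=p'-h=p''$ we get $\pint{\mu}{\alpha_{p''}}=0-1=-1$, since $p''\in T$.
\item At $t=p'-1$: because $p>i-h+1$, the intervals $[p-mh,i-mh]$ have length less than $h$, so there is a gap between consecutive blocks of $T$. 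Hence $p'-1\notin T$, and the change of $\pint{\cdot}{\alpha_{p'-1}}$ causes no problem.
\item At $t=p'-h-1$: similarly $p'-h-1=p-(k+1)h-1\notin T$.
\end{itemize}
So the only coordinates of $T$ that change are $p'$ and $p''$, and they change as needed. This gap verification is the one point needing care, and it is exactly where the strict inequality $i-h+1<p$ is used.

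By induction, $\bfM^K_\mu=q^{d-k}\bfM^K_{\mu-\alpha_{p''-(d-k)h,\,p''-1}}$. Finally,
\begin{equation}
\alpha_{p'-h,p'-1}+\alpha_{p'-h-(d-k)h,\,p'-h-1}=\alpha_{p'-(d-k+1)h,\,p'-1},
\end{equation}
which gives $\bfM^K_\lambda=q^{d-k+1}\bfM^K_{\lambda-\alpha_{p'-(d-k+1)h,p'-1}}$, as required.
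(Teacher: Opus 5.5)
Your proof is correct and follows the paper's argument exactly: induction on $d-k$, with the base case and the first half of the inductive step both supplied by \Cref{lem: left solving} (applied with $d$ replaced by $k$), followed by the inductive hypothesis for $k+1$ and the concatenation of the two subtracted roots. Your explicit check that $\mu=\lambda-\alpha_{p'-h,p'-1}$ again satisfies the hypotheses is a detail the paper leaves implicit, and you handle it correctly. In particular, you verify that $p'-1$ and $p'-h-1$ fall in the gaps of $T$, which is exactly where the strict inequality $p>i-h+1$ is needed.
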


\begin{proof}
    For a fixed $d$, we proceed by induction on $d - k$.  
The base case $d = k$ corresponds to \Cref{lem: left solving}. Now, suppose that $d - k > 0$ and that the result holds for all $k'$ such that $d - k' < d - k$.  
We apply \Cref{lem: left solving} to $K$, $\lambda$, $\alpha_{i,j}$, $p$, and $k$, obtaining $\bfM_{\lambda}^{K} = q\bfM_{\mu}^{K}$, where $\mu = \lambda - \alpha_{p' - h, p' - 1}$.  
Next, using the inductive hypothesis on $K$, $\mu$, $\alpha_{i,j}$, $k' = k + 1$, and $d$, we have  
\begin{equation}
    \bfM_{\mu}^{K} = q^{d - k' + 1} \bfM_{\mu - \alpha_{p' - h - (d - k' + 1)h, p' - h - 1}}^{K} = q^{d - k} \bfM_{\lambda - \alpha_{p' - (d - k + 1)h, p' - 1}}^{K}.
\end{equation}  
Thus, we conclude that $\bfM_{\lambda}^{K} = q^{d - k + 1} \bfM_{\lambda - \alpha_{p' - (d - k + 1)h, p' - 1}}^{K}$, as required.
    
\end{proof}

\begin{definition}\rm \label{def: admissible}
Let $Z\subseteq \Phi^{\geq 2}$ and $I = (i_{1},i_{2},\ldots , i_{p})$ be a sequence of positive integers such that $1\leq i_{1}< i_{2} < \cdots < i_{p}\leq n$. Let 
\begin{equation}
    I_\alpha = \bigcup\limits_{a=1}^{p-1} \{\alpha_{i_{a},i_{a+1}}\}.
\end{equation}
For $1\leq b\leq p$, we say that $I$ is $(Z,i_{b})$-admissible if the following conditions hold.
\begin{enumerate}[(a)]
\item \label{def: admissible uno} $I_\alpha\subseteq Z$.
\item \label{def: admissible dos} For $1\leq a \leq p-1$ if $i_{a+1}-i_a=1$ then we have $a=p-1$ and $b\neq p$. 
\item  \label{def: admissible tres} $s_{i_{q}}(Z\setminus I_{\alpha}) = Z\setminus I_{\alpha}$ for all $2\leq q \leq p-1$ and $q=b$.
\end{enumerate}
\end{definition}

\begin{remark}
    Notice that the condition  $q=b$ in \Cref{def: admissible}\ref{def: admissible tres} is redundant unless $b=1$ or $b=p$.
\end{remark}

\begin{lemma}\rm\label{lem: desc admissible set.}
    Let $Z \subseteq \Phi^{\geq 2}$ and $I=(i_{1},i_{2},\ldots i_{p})$ be a sequence of positive integers such that $I$ is $(Z,b)$-admissible for some $1\leq b \leq p$. Let $\lambda \in X$ such that $\langle \lambda,\alpha_{i_{q}}\rangle = 0$ for all $2\leq q \leq p-1$ with $q\neq b$, and $\langle\lambda,\alpha_{i_b}\rangle = -1$. Then, we have
    \begin{equation} \label{eq: lema admisible set}
        \bfM_{\lambda}^{Z} = \left\{\begin{array}{ll}
            q^{b-1}\bfM_{\lambda - \alpha_{i_{1},i_{b}-1}}^{Z} + q^{p-b}\bfM_{\lambda - \alpha_{i_{b}+1,i_{p}}}^{Z} - q^{p-1}\bfM_{\lambda - \alpha_{i_{1},i_{b}-1} - \alpha_{i_{b}+1,i_{p}}}^{Z},  & \text{if } 2\leq b \leq p-1; \\
            & \\
            q^{p - 1 }\bfM_{\lambda - \alpha_{i_{1},i_{p}-1}}^{Z}, & \text{if } b=p;\\
            & \\
            q^{p - 1 }\bfM_{\lambda - \alpha_{i_{1}+1,i_{p}}}^{Z}, & \text{if } b=1.\\
        \end{array}\right.
    \end{equation}
\end{lemma}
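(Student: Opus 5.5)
The plan is to peel off the roots of $I_\alpha$ one at a time, starting next to the position $i_b$ and moving outward. At each step we use the \emph{defining recursion}
\[
\bfM_{\mu}^{A\cup\{\beta\}}=\bfM_{\mu}^{A}-q\,\bfM_{\mu-\beta}^{A}
\]
together with \Cref{lem: weyl group over M elements}: whenever a set $A$ is $s_{t}$-invariant and $\pint{\mu}{\alpha_t}=-1$, we have $\bfM^{A}_{\mu}=0$. Condition \ref{def: admissible tres} of \Cref{def: admissible} supplies exactly these invariances for $Z\setminus I_\alpha$ at the positions $i_q$ with $2\le q\le p-1$ and at $q=b$. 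The hypotheses on $\lambda$ supply the values $0$ at interior positions and $-1$ at $i_b$. I would first treat the boundary cases $b=p$ and $b=1$, which are mirror images of each other, and then derive the middle case from them.

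\emph{Case $b=p$.} I would argue by induction on $p$, in the style of \Cref{lem: left solving} and \Cref{Claim: free P}. Put $\beta=\alpha_{i_{p-1},i_p}\in Z$, which lies in $Z$ by condition \ref{def: admissible uno}. Writing $Z=(Z\setminus\{\beta\})\cup\{\beta\}$ gives
\[
\bfM^{Z}_\lambda=\bfM^{Z\setminus\{\beta\}}_\lambda-q\,\bfM^{Z\setminus\{\beta\}}_{\lambda-\beta}.
\]
For the first term I would use $\pint{\lambda}{\alpha_{i_p}}=-1$. With \Cref{remark producto interno}, the gap condition \ref{def: admissible dos}, and the invariance of $Z\setminus I_\alpha$ under $s_{i_p}$, this should force the term to vanish, either directly or after a swap via \Cref{lem: sacar una poner otra}. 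The second term has weight $\lambda-\beta$, whose pairing with $\alpha_{i_{p-1}}$ becomes $-1$. So the "$-1$" has moved one step to the left, to $i_{p-1}$. The truncated sequence $(i_1,\dots,i_{p-1})$ is then admissible for the modified set with the new $b=p-1$, and induction gives $q^{p-2}$ times the element at the shifted weight. The roots removed along the way telescope to $\alpha_{i_1,i_p-1}$, giving the claimed factor $q^{p-1}$. The base case $p=1$ or $p=2$ is a single application of \Cref{lem: weyl group over M elements}. The case $b=1$ is the same argument run rightward, and it produces $\lambda-\alpha_{i_1+1,i_p}$.

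\emph{Case $2\le b\le p-1$.} Here the $-1$ at $i_b$ can be resolved to the left or to the right. I would split $I$ into a left part $(i_1,\dots,i_b)$ and a right part $(i_b,\dots,i_p)$. The two root families $\{\alpha_{i_a,i_{a+1}}\}_{a<b}$ and $\{\alpha_{i_a,i_{a+1}}\}_{a\ge b}$ interact only through the single coordinate $i_b$. Expanding first along the right root $\alpha_{i_b,i_{b+1}}$ gives two terms. In the term where the pairing at $i_b$ is still $-1$, I would run the $b=p$ argument on the left part and get $q^{b-1}\bfM_{\lambda-\alpha_{i_1,i_b-1}}$. In the term where the right root has been subtracted, I would run the $b=1$ argument on the right part. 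That term, however, also carries the left correction, and this is what produces the inclusion--exclusion shape
\[
q^{b-1}(\cdot)+q^{p-b}(\cdot)-q^{p-1}(\cdot).
\]
The product term arises because subtracting both partial roots uses $b-1$ and $p-b$ roots respectively, and these exponents add to $p-1$.

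The main obstacle will be the bookkeeping of pairings and invariances at each peeling step. I need to check that after adding or removing one root of $I_\alpha$ the modified set is still invariant under the next reflection $s_{i_{q\pm1}}$, which uses condition \ref{def: admissible tres}, and that the intermediate weights have pairing exactly $0$ or $-1$ where required. The delicate spot is the exceptional adjacent pair $i_p=i_{p-1}+1$ allowed by condition \ref{def: admissible dos}. There \Cref{remark producto interno} gives pairings of $\pm1$ from two overlapping roots, and the requirement $b\neq p$ is exactly what rules out a cancellation failing. I would verify that case by a direct computation first, since it is where the induction is most likely to break.
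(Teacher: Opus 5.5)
Your boundary cases can be completed, but the bookkeeping as written is off. For $b=p$ no adjacency is allowed, so $Z\setminus\{\beta\}$ is $s_{i_p}$-stable and the first term vanishes directly. The induction then yields $\bfM^Z_\lambda=-q^{p-1}\bfM^{Z\setminus\{\beta\}}_{\lambda-\alpha_{i_1,i_p}}$, because the removed roots telescope to $\alpha_{i_1,i_p}$, not to $\alpha_{i_1,i_p-1}$. You then need the dot action of $s_{i_p}$ (the pairing there is $-2$) to reach $q^{p-1}\bfM^{Z\setminus\{\beta\}}_{\lambda-\alpha_{i_1,i_p-1}}$. Finally you must check that the superscript can be restored to $Z$. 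That holds here, since $\lambda-\alpha_{i_1,i_p-1}-\beta$ has pairing $-1$ at $i_p$.

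The genuine gap is the middle case $2\le b\le p-1$. There this return to the superscript $Z$ is no longer free, and your sketch never addresses it. Write $\gamma=\alpha_{i_b,i_{b+1}}$, $\nu_1=\lambda-\alpha_{i_1,i_b-1}$, $\nu_2=\lambda-\alpha_{i_b+1,i_p}$ and $\nu_3=\nu_1-\alpha_{i_b+1,i_p}$.

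Your left-part step produces $q^{b-1}\bfM^{Z\setminus\{\gamma\}}_{\nu_1}$. However, $\bfM^{Z\setminus\{\gamma\}}_{\nu_1}=\bfM^{Z}_{\nu_1}+q\,\bfM^{Z\setminus\{\gamma\}}_{\nu_1-\gamma}$, and the last term is nonzero in general. For $p=3$, $b=2$ it equals $q\,\bfM^{Z\setminus I_\alpha}_{\nu_1-\alpha_{i_1,i_3}}$.

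Your right-part step also does not dispose of $\bfM^{Z\setminus\{\gamma\}}_{\lambda-\gamma}$. The pairing at $i_b$ is $-2$, and $Z\setminus\{\gamma\}$ still contains $\alpha_{i_{b-1},i_b}$, so it is not $s_{i_b}$-stable and you cannot reflect at $i_b$. You are forced to strip the left roots as well. That leaves $\bfM^{Z\setminus I_\alpha}$-elements at $\nu_2$ and $\nu_3$, and converting these back to superscript $Z$ creates further correction terms.

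The real content of the lemma is that all these corrections cancel. Your remark that the ``left correction'' produces the inclusion--exclusion shape does not establish this. The paper obtains it by working with superscript $Z\setminus I_\alpha$ throughout. Expanding over all $J\subseteq I_\alpha$, only $J=I_\alpha$ and the left and right halves of $I_\alpha$ survive, giving
\[
\bfM^Z_\lambda=q^{b-1}\bfM^{Z\setminus I_\alpha}_{\nu_1}+q^{p-b}\bfM^{Z\setminus I_\alpha}_{\nu_2}-q^{p-1}\bfM^{Z\setminus I_\alpha}_{\nu_3}.
\]
The same expansion applied to the three target weights gives
\[
\bfM^Z_{\nu_k}=\bfM^{Z\setminus I_\alpha}_{\nu_k}-q^{p-1}\bfM^{Z\setminus I_\alpha}_{\nu_k-\alpha_{i_1,i_p}}\quad (k=1,2)
\]
and
\[
\bfM^Z_{\nu_3}=\bfM^{Z\setminus I_\alpha}_{\nu_3}-q^{b-1}\bfM^{Z\setminus I_\alpha}_{\nu_1-\alpha_{i_1,i_p}}-q^{p-b}\bfM^{Z\setminus I_\alpha}_{\nu_2-\alpha_{i_1,i_p}}.
\]
In the combination $q^{b-1}\bfM^Z_{\nu_1}+q^{p-b}\bfM^Z_{\nu_2}-q^{p-1}\bfM^Z_{\nu_3}$ every correction term then cancels. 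Your one-root-at-a-time peeling can be pushed through, but only by carrying out an equivalent cancellation computation, which is exactly what your middle case is missing.
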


\begin{proof}
By definition of $\bfM$-elements we have
 \begin{equation}\label{eq: defM admissible 1}
        \bfM_{\lambda}^{Z} = \sum_{J\subset Y} (-q)^{\mid J\mid } \bfM_{\lambda - \Sigma_{J}}^{Z\setminus Y} . 
    \end{equation}

We first treat the case  $2\leq b \leq p-1$. Set $Y = I_{\alpha}$.    
We will study each one of the terms occurring in  the sum of the right-hand side of \eqref{eq: defM admissible 1}. We claim that

\begin{equation}\label{eq: sum proof admisssible}
  \bfM_{\lambda - \Sigma_{J}}^{Z\setminus Y}  = \left\{ 
  \begin{array}{ll}
     (-1)^{p-2}\bfM_{\lambda - \alpha_{i_{1},i_{b}-1} -\alpha_{i_{b} +1 , i_{p}}}^{Z\setminus Y},   & \mbox{if }  J=Y; \\
       &   \\
    (-1)^{b-1}\bfM_{\lambda - \alpha_{i_{1},i_{b}-1}}^{Z\setminus Y},   & \mbox{if }  J = \bigcup\limits_{a=1}^{b-1} \{\alpha_{i_{a},i_{a+1}}\};  \\
    &  \\
    (-1)^{p-b}\bfM_{\lambda - \alpha_{i_{b}+1,i_{p}}}^{Z\setminus Y} , & \mbox{if } J = \bigcup\limits_{a=b}^{p-1} \{\alpha_{i_{a},i_{a+1}}\};\\
    & \\
    0,& \mbox{otherwise.}
  \end{array}
  \right.   
\end{equation}

    % For $2\leq a \leq p-1 $ with $a\neq b$, since $\lambda_{i_{a}} = 0$ thus 

    % \begin{equation}\label{eq: }
    % \begin{array}{rl}
    %     s_{i_a}\cdot(\lambda-\alpha_{i_{a-1},i_a} - \alpha_{i_a,i_{a+1}}) =& \lambda - \alpha_{i_{a-1},i_{a+1}} \\
    %     s_{i_a}\cdot(\lambda-\alpha_{i_{a-1},i_a}) =& \lambda - \alpha_{i_{a-1},i_a}\\
    %     s_{i_a}\cdot(\lambda-\alpha_{i_a,i_{a+1}}) =& \lambda - \alpha_{i_a,i_{a+1}}
    % \end{array}
    % \end{equation}

    % if $a=b$ hence
    % \begin{equation}
    %     \begin{array}{rl}
    %         s_{i_b}\cdot(\lambda-\alpha_{i_{b-1},i_b}-\alpha_{i_b,i_{b+1}}) =& \lambda-\alpha_{i_{b-1},i_b-1}-\alpha_{i_b+1,i_{b+1}} \\
    %         s_{i_b}\cdot(\lambda-\alpha_{i_{b-1},i_b}) =& \lambda - \alpha_{i_{b-1},i_b-1}\\
    %         s_{i_b}\cdot(\lambda-\alpha_{i_b,i_{b+1}}) =& \lambda - \alpha_{i_{b} +1,i_{b+1}}\\
    %         s_{i_b}\cdot(\lambda) =& \lambda
    %     \end{array}.
    % \end{equation}

    % Consequently, 
    If $J=Y$ then by applying  \Cref{lem: weyl group over M elements} for $w=s_{i_{2}}s_{i_{3}}\cdots s_{i_{p-1}}$ we get

    \begin{equation}\label{eq: proof admissible 1}
        \bfM_{\lambda - \sum_{Y}}^{Z\setminus Y} = (-1)^{p-2}\bfM_{w\cdot(\lambda - \sum_{Y})}^{w(Z\setminus Y)} = (-1)^{p-2}\bfM_{w\cdot(\lambda - \sum_{Y})}^{Z\setminus Y},
    \end{equation}
    where the last equality follows by the fact that $I$ is a  $(Z,b)$-admissible set. 
    
    We now compute  $w\cdot(\lambda - \Sigma_{Y})$. We notice that for $2\leq j\leq p-1$ we have
    \begin{equation}
       \left\langle \lambda - \Sigma_Y,\alpha_{i_{j}} \right\rangle  = \left\{
       \begin{array}{cc}
            -2, & \mbox{if }j\neq b; \\
            -3, & \mbox{if }j=b. 
       \end{array} \right.
    \end{equation}
    Therefore, 
    \begin{equation}
        s_{i_j}\cdot (\lambda - \textstyle\Sigma_Y) =     \left\{  \begin{array}{ll}
          \lambda - \Sigma_{Y} + \alpha_{i_j},   & \mbox{if } j\neq b; \\
         \lambda - \Sigma_{Y} + 2\alpha_{i_{b}},    & \mbox{if } j=b. 
        \end{array}  \right.
    \end{equation}
On the other hand, by \Cref{def: admissible dos} of \Cref{def: admissible} we have $s_{i_{j}} (\alpha_{i_{k}}) = \alpha_{i_{k}}$ for $j\neq k$. Then, we obtain 

\begin{equation}\label{eq: proof admissible 2}
\displaystyle
 w\cdot (\lambda - \Sigma_{Y}) = \lambda -\Sigma_{Y}+\alpha_{i_b} +\sum_{j=2}^{p-1} \alpha_{i_j} =   \lambda - \alpha_{i_{1},i_{b}-1} -\alpha_{i_{b} +1 , i_{p}}.    
\end{equation}

By combining \eqref{eq: proof admissible 1} and  \eqref{eq: proof admissible 2} we obtain the first row of \eqref{eq: sum proof admisssible}.

For $J = \bigcup\limits_{a=1}^{b-1} \{\alpha_{i_{a},i_{a+1}}\}$ we apply \Cref{lem: weyl group over M elements} for $w=s_{i_{2}}\cdots s_{i_{b}}$ in order to obtain
    \begin{equation}
         \bfM_{\lambda - \Sigma_{J}}^{Z\setminus Y} =(-1)^{b-1}\bfM_{w\cdot(\lambda - \Sigma_{J})}^{w(Z\setminus Y)}= (-1)^{b-1}\bfM_{w\cdot(\lambda - \Sigma_{J})}^{Z\setminus Y}.
    \end{equation}
In this case notice that $\langle\lambda -\Sigma_J,\alpha_{i_j}\rangle =-2$ for all $2\leq j \leq b$. Thus the second row in \eqref{eq: sum proof admisssible} follow from the equality
    \begin{equation}
        w\cdot(\lambda - \Sigma_{J}) = \lambda -\Sigma_J +\sum_{j=2}^b\alpha_{i_j} = \lambda -\alpha_{i_1,i_b-1}. 
    \end{equation}
The case  $J = \bigcup\limits_{a=b}^{p-1} \{\alpha_{i_{a},i_{a+1}}\}$  is dealt with similarity. Indeed, we apply  \Cref{lem: weyl group over M elements} for 
 $w=s_{i_{b}}s_{i_{b+1}}\cdots s_{i_{p-1}}$ and compute  
\begin{equation}
w\cdot (\lambda-{\Sigma_{J}})  = \lambda + \sum_{j=b}^{p-1} (-\alpha_{i_{j},i_{j+1}} + \alpha_{i_{j}}) = \lambda - \alpha_{i_{b}+1,i_{p}},
\end{equation}
which gives the third row in \eqref{eq: sum proof admisssible}.

It remains to show the last row of  \eqref{eq: sum proof admisssible}. Let $J\subset Y$ be such that $\alpha_{i_{j-1},i_{j}}\in J$ but $\alpha_{i_{j},i_{j+1}} \not \in J$ for some $j\neq b$. We have $\langle\lambda - \Sigma_{J},\alpha_{i_{j}}\rangle = -1$.  Thus \Cref{lem: weyl group over M elements} and $s_{i_{j}}(Z\setminus Y) = Z\setminus Y$ implies 
    \begin{equation}
        \bfM_{\lambda-\Sigma_{J}}^{Z\setminus Y} = 0.
    \end{equation}
   Finally, if  $J=\emptyset$ then $\langle\lambda,\alpha_{i_{b}}\rangle = -1$, so $\bfM_{\lambda}^{Z\setminus Y} = 0$ by \Cref{lem: weyl group over M elements}. This finishes the proof of \eqref{eq: sum proof admisssible}. 

By combining \eqref{eq: defM admissible 1} and \eqref{eq: sum proof admisssible} we obtain 

\begin{equation} \label{eq: proof admissible uno}
\bfM_{\lambda }^{Z}  = 
q^{b-1}\bfM_{\lambda - \alpha_{i_{1},i_{b}-1}}^{Z\setminus Y}+
    q^{p-b}\bfM_{\lambda - \alpha_{i_{b}+1,i_{p}}}^{Z\setminus Y} -
     q^{p-1}\bfM_{\lambda - \alpha_{i_{1},i_{b}-1} -\alpha_{i_{b} +1 , i_{p}}}^{Z\setminus Y}.
\end{equation}

 Although   \eqref{eq: proof admissible uno} looks like very similar to the first row of \eqref{eq: lema admisible set}, we still have a big discrepancy, namely,  the superindex of the $\bfM$-elements. 
 To deal with this we expand $\bfM^Z_\mu$ in terms of $\bfM^{Z\setminus Y}$-elements for $\mu$ being each one of the three subindexes occurring in \eqref{eq: proof admissible uno}.

\begin{claim} \label{Claim admissible}
The following equalities hold.
\begin{align}
\hspace{-10pt} \label{claim1} \bfM_{\lambda - \alpha_{i_{1},i_{b}-1}}^{Z}   & = \bfM_{\lambda - \alpha_{i_{1},i_{b}-1}}^{Z\setminus Y} - q^{p-1} \bfM_{\lambda - \alpha_{i_{1},i_{b}-1}-\alpha_{i_{1},i_{p}}}^{Z\setminus Y}. \\[5pt]
\hspace{-10pt} \label{claim2} \bfM_{\lambda - \alpha_{i_{b}+1,i_{p}}}^{Z}    &  =  \bfM_{\lambda - \alpha_{i_{b}+1,i_{p}}}^{Z\setminus Y}-
 q^{p-1}\bfM_{\lambda - \alpha_{i_{b}+1,i_{p}} - \alpha_{i_{1},i_{p}}}^{Z\setminus Y}. \\[5pt]
\hspace{-10pt} \label{claim3}\bfM_{\lambda - \alpha_{i_{1},i_{b}-1} - \alpha_{i_{b}+1,i_{p}}}^{Z} & = \bfM_{\lambda - \alpha_{i_{1},i_{b}-1} - \alpha_{i_{b}+1,i_{p}}}^{Z\setminus Y} - q^{b-1}\bfM_{\lambda - \alpha_{i_{1},i_{b}-1} - \alpha_{i_{1},i_{p}}}^{Z\setminus Y} -q^{p-b}\bfM_{\lambda - \alpha_{i_{1},i_{p}} - \alpha_{i_{b}+1,i_{p}}}^{Z\setminus Y}.
\end{align}
\end{claim}

\begin{proof}[Proof of \Cref{Claim admissible}.]
We first prove \eqref{claim1}. We begin by noticing that 
\begin{equation}
    \bfM_{\lambda - \alpha_{i_{1},i_{b}-1}}^{Z}  = \sum_{J\subseteq Y} (-q)^{|J|} \bfM^{Z\setminus Y}_{\lambda - \alpha_{i_{1},i_{b}-1} -\Sigma_J}.
\end{equation}  
Thus,  \eqref{claim1} follows from 
  \begin{equation}\label{eq: sum proof admissible 2}
        \bfM_{\lambda - \alpha_{i_{1},i_{b}-1}-\Sigma_{J}}^{Z\setminus Y} = \left\{ \begin{array}{ll}
            \bfM_{\lambda - \alpha_{i_{1},i_{b}-1}}^{Z\setminus Y}, & \text{if } J=\emptyset; \\[5pt]
            (-1)^{p-2}\bfM_{\lambda - \alpha_{i_{1},i_{b}-1}-\alpha_{i_{1},i_{p}}}^{Z\setminus Y}, & \text{if } J = Y ;\\[5pt]
            0,&\text{otherwise}.
        \end{array}\right.
    \end{equation}

If $J=\emptyset$ there is nothing to prove. For $J=Y$ we notice that $\langle\lambda - \alpha_{i_{1},i_{b}-1}-\Sigma_{Y},\alpha_{i_{j}}\rangle = -2$ for all $2\leq j\leq p-1$. Let $w=s_{i_2}\cdots s_{i_{p-1}}$. Arguing as before, we obtain
\begin{equation}
      w\cdot(\lambda - \alpha_{i_{1},i_{b}-1} -{\Sigma_{Y}}) =\lambda - \alpha_{i_{1},i_{b}-1} -{\Sigma_{Y}} +\sum_{j=2}^{p-1} \alpha_{i_j} =
       \lambda - \alpha_{i_{1},i_{b}-1} -\alpha_{i_{1},i_{p}}.
\end{equation}
Therefore, by applying \Cref{lem: weyl group over M elements} for the element $w$ we get
\begin{equation}
    \bfM^{Z\setminus Y}_{\lambda - \alpha_{i_{1},i_{b}-1}-\Sigma_{Y}}  = (-1)^{p-2} \bfM^{w(Z\setminus Y)}_{ w\cdot(\lambda - \alpha_{i_{1},i_{b}-1} -{\Sigma_{Y}})} = (-1)^{p-2} \bfM^{Z\setminus Y}_{\lambda - \alpha_{i_{1},i_{b}-1} -\alpha_{i_{1},i_{p}}}.  
\end{equation}
Now if $J\neq \emptyset $ and $J\neq Y$ there is an index $2\leq j \leq p-1$ such that $\alpha_{i_{j-1},i_j}\in J$ and $\alpha_{i_j,i_{j+1}}\notin J$. It follows that 
\begin{equation}
    \langle\lambda - \alpha_{i_{1},i_{b}-1}-\Sigma_{J},\alpha_{i_j}\rangle =-1.
\end{equation}
Therefore, by applying  \Cref{lem: weyl group over M elements} to the element $s_{i_j}$ we conclude that $\bfM^{Z\setminus Y}_{\lambda - \alpha_{i_{1},i_{b}-1}-\Sigma_{J}}=0$. This finishes the proof of \eqref{eq: sum proof admissible 2} and \eqref{claim1}.

The proofs of \eqref{claim2} and \eqref{claim3} follow by the identities

  \begin{equation}\label{eq: sum proof admissible 3}
        \bfM_{\lambda - \alpha_{i_{b}+1,i_{p}}-{\Sigma_{J}}}^{Z\setminus Y} = \left\{ \begin{array}{ll}
            \bfM_{\lambda - \alpha_{i_{b}+1,i_{p}}}^{Z\setminus Y}, &\text{if }J=\emptyset ; \\[5pt]
            (-1)^{p-2}\bfM_{\lambda - \alpha_{i_{b}+1,i_{p}} - \alpha_{i_{1},i_{p}}}^{Z\setminus Y}, &\text{if } J=Y ;\\[5pt]
            0, &\text{otherwise; }
        \end{array}\right.
    \end{equation}

and  

 \begin{equation}\label{eq: sum proof admissible 4}
        \bfM_{\lambda - \alpha_{i_{1},i_{b}-1} - \alpha_{i_{b}+1,i_{p}} - {\Sigma_{J}}}^{Z\setminus Y} = \left\{\begin{array}{ll}
            \bfM_{\lambda - \alpha_{i_{1},i_{b}-1} - \alpha_{i_{b}+1,i_{p}}}^{Z\setminus Y}, & \text{if } J=\emptyset ;\\[5pt]
            (-1)^{b-2}\bfM_{\lambda - \alpha_{i_{1},i_{b}-1} - \alpha_{i_{1},i_{p}}}^{Z\setminus Y}, & \text{if } J=\bigcup\limits_{a=1}^{b-1}\{\alpha_{i_{a},i_{a+1}}\}; \\[10pt]
            (-1)^{p-b-1}\bfM_{\lambda - \alpha_{i_{1},i_{p}} - \alpha_{i_{b}+1,i_{p}}}^{Z\setminus Y}, & \text{if } J=\bigcup\limits_{a=b}^{p-1}\{\alpha_{i_{a},i_{a+1}}\}; \\[10pt]
            0, &\text{otherwise.}
        \end{array}\right. 
    \end{equation}

Both \eqref{eq: sum proof admissible 3} and \eqref{eq: sum proof admissible 4} are proved using computations similar to the ones used in the proof of \eqref{eq: sum proof admissible 2}. For the sake of brevity we omit the details. This completes the proof of \Cref{Claim admissible}. 
\end{proof}

Using \Cref{Claim admissible} it is now straightforward to see that \eqref{eq: proof admissible uno} is equivalent to the first row of \eqref{eq: lema admisible set}. 
This finishes the proof of this case.

\medskip
We now prove the case $b=p$.    \Cref{eq: defM admissible 1} still holds but the role of \eqref{eq: sum proof admisssible} is played by the following
\begin{equation}\label{eq: sum proof admissible 5}
    \bfM_{\lambda-\Sigma_{J}}^{Z\setminus Y}=\left\{\begin{array}{lr}
        (-1)^{p-1}\bfM_{\lambda - \alpha_{i_{1},i_{p}-1}}^{Z\setminus Y}, &\text{if } J=Y;\\[5pt]
        0, & \text{otherwise.}
    \end{array}\right.
\end{equation}
Let us prove this equality.  If $J=Y$, consider $w=s_{i_{2}}\cdots s_{i_{p}}$ and that in this case $\langle \lambda -\Sigma_{Y},\alpha_{i_j}\rangle=-2$ for all $2\leq j \leq p$, we can compute
\begin{equation}
    w\cdot (\lambda-\Sigma_{Y}) = \lambda - \alpha_{i_{1},i_{p}-1}.
\end{equation}
This calculation give us by \Cref{lem: weyl group over M elements} the first row of \eqref{eq: sum proof admissible 5}.

We now assume that $J\neq Y$.
If $\alpha_{i_{p-1},i_{p}}\not\in J$, then $\langle\lambda-\Sigma_{J},\alpha_{i_{p}}\rangle =-1$. 
Thus, $\bfM_{\lambda -\Sigma_J}^{Z\setminus Y}=0$ by \Cref{lem: weyl group over M elements}. 
Then we can assume that $\alpha_{i_{p-1},i_{p}}\in J$. 
Since $J\neq Y$ there is an index $2\leq j <p$ such that $\alpha_{i_{j-1},i_{j}} \not\in J$ but $\alpha_{i_{j},i_{j+1}}\in J$. 
In this situation we have $\langle \lambda-\Sigma_{J},\alpha_{i_{j}}\rangle =-1$. 
Once again \Cref{lem: weyl group over M elements} implies that $\bfM_{\lambda -\Sigma_J}^{Z\setminus Y}=0$. 
This proves the second row of \eqref{eq: sum proof admissible 5}. 
A combination of \eqref{eq: defM admissible 1} and \eqref{eq: sum proof admissible 5} yields 
\begin{equation}\label{eq: admissible b=p 1}
    \bfM_{\lambda}^Z   =  q^{p-1}\bfM_{\lambda-\alpha_{i_{1},i_{p}-1}}^{Z\setminus Y}. 
\end{equation}

On the other hand, we claim that
\begin{equation}\label{eq: admissible b=p 2}
    \bfM_{\lambda - \alpha_{i_{1},i_{p}-1}-\Sigma_{J}}^{Z\setminus Y} = \left\{\begin{array}{lr}
        \bfM_{\lambda - \alpha_{i_{1},i_{p}-1}}^{Z\setminus Y}, &\text{if } J=\emptyset;  \\
        0, &\text{otherwise}.
    \end{array}\right.
\end{equation}
We notice that $\pint{\lambda - \alpha_{i_{1},i_{p}-1}}{\alpha_{i_j}}=0$ for all $2\leq j \leq p $. If $J=\emptyset$, there is nothing to prove. 
If $J\neq \emptyset$ then there exist some index $2\leq j\leq p$ such that $\langle\lambda - \alpha_{i_{1},i_{p}-1} - \Sigma_{J},\alpha_{i_{j}}\rangle = -1$. 
By \Cref{lem: weyl group over M elements}  it follows that $\bfM_{\lambda - \alpha_{i_{1},i_{p}-1} - \Sigma_{J}}^{Z\setminus Y} = 0$.
This proves \eqref{eq: admissible b=p 2}. 

\medskip
By applying \eqref{eq: defM admissible 1} to the weight $ \lambda - \alpha_{i_{1},i_{p}-1}$ and \eqref{eq: admissible b=p 2} we obtain 
\begin{equation}\label{eq: admissible b=p 3}
    \bfM_{\lambda - \alpha_{i_{1},i_{p}-1}}^Z  =    \bfM_{\lambda - \alpha_{i_{1},i_{p}-1}}^{Z\setminus Y}.
\end{equation}
Finally, the case $b=p$ in   \eqref{eq: lema admisible set} follows by combining \eqref{eq: admissible b=p 1} and \eqref{eq: admissible b=p 3}. 

\medskip
Case $b=1$ is similar to case $b=p$, which we shall omit for brevity.
\end{proof}

\section{First inverse decomposition} \label{Section: descomposición inversa}

Let $\lambda\in X^+$. Our goal in this section is to express $\bfM_{\lambda}^{\succeq \alpha_{i,j}}$ as a linear combination of terms of the form $\bfM_{\mu}^{\succ \alpha_{i,j}}$. 
Since $\Phi^{\succeq \alpha_{i,j}}=\Phi^{\succ \alpha_{i,j}}\cup\{\alpha_{i,j}\}$, we have
\begin{equation}
   \bfM_{\lambda}^{\succeq \alpha_{i,j}}
   \;=\;
   \bfM_{\lambda}^{\succ \alpha_{i,j}}
   \;-\;
   q\,\bfM_{\lambda-\alpha_{i,j}}^{\succ \alpha_{i,j}}.
\end{equation}
If $\lambda-\alpha_{i,j}\in X^+$ then this equality already gives the desired decomposition. 
Otherwise, a refinement is required for proving the positivity conjecture.
In \Cref{prop: second version} we provide an initial decomposition, and a sharper form is established in \Cref{sec: second inverse decomp} (see \Cref{prop: second version refinado}). 
The latter is the version best suited for proving the positivity conjecture.

\subsection{$P$ and $Q$ operators}
In this section, we introduce the notation needed to express the first of the aforementioned decompositions. The main tools are the $P$ and $Q$ operators, which act on the set of weights $X$.

\begin{definition}\rm 
Let $\lambda\in X$ and $h $ be an  integer greater than or equal to $2$. 
We say that 
\begin{enumerate}[(a)]
    \item  $\lambda$ is $L$-dominant at position $r$ and height $h$ if $\langle \lambda , \alpha_k \rangle \geq 0$ for all  $1\leq k\leq r$ with $k\equiv r \mod{h}$. 
    \item  $\lambda$ is $R$-dominant at position $r$ and height $h$ if $\langle \lambda , \alpha_k \rangle \geq 0$ for all $r \leq k \leq n$ with $k\equiv r \mod{h-1}$.
    \item $\lambda$ is $L$-dominant at position $r$  if $\langle \lambda , \alpha_k \rangle \geq 0$ for all  $1\leq k\leq r$. 
    \item $\lambda$ is $R$-dominant at position $r$ if $\langle \lambda , \alpha_k \rangle \geq 0$ for all $r \leq k \leq n$.
\end{enumerate}
We denote by $X_{r,h}^+(L) $ (resp. $X_{r,h}^+(R)$) the set of all weights $L$-dominant (resp. $R$-dominant) at position $r$ and height $h$.
Similarly, we denote by $X_{r}^+(L) $ (resp. $X_{r}^+(R)$) the set of all weights $L$-dominant (resp. $R$-dominant) at position $r$.
\end{definition}

\begin{definition}\rm \label{def. p and q}
Let $\lambda \in X$, $1\leq i<j \leq n$ and $h=j-i+1$. 
We define
\begin{equation}
\begin{array}{ll}
   \varrho = \varrho_{j,h}(\lambda )  = &   \displaystyle  \max \{   1\leq k \leq i \mid k\equiv i \mod h \quad \mbox{ and } \quad \lambda -\alpha_{k,j} \in X_{i,h}^+(L)     \}, \\[3pt]
 \vartheta=   \vartheta_{j,h}(\lambda)  = &   \displaystyle \min \{ j< k \leq n \mid k\equiv j \mod (h -1) \quad \mbox{ and } \quad  \lambda -\alpha_{j+1,k } \in X_{j,h}^+(R)     \}.
\end{array}
\end{equation}

\begin{enumerate}[(a)]
    \item If $\varrho$ exists we define $\OP{j,h}(\lambda)= \lambda-\alpha_{\varrho,j}$. Otherwise, we say $\OP{j,h}(\lambda)$ is not defined. 
    
    We call $\varrho$ the integer associated to $\OP{j,h}(\lambda)$.
    \item If $\vartheta$ exists we define $\OQ{j,h}(\lambda) =\lambda-\alpha_{j+1,\vartheta}$. Otherwise, we say $\OQ{j,h}(\lambda)$ is not defined. 
    
    We call $\vartheta $ the integer associated to $\OQ{j,h}(\lambda)$.
\end{enumerate}
In order to simplify notation, if $h$ is clear for the context, we just write
$ \OP{j}(\lambda) =\OP{j,h}(\lambda)  $ and $\OQ{j}(\lambda)=\OQ{j,h}(\lambda) $.
\end{definition}

\begin{example}\rm
Let $n=9$, $\lambda=[1,1,0,0,0,0,0,1,1]$, $j=7$ and  $h=3$. 
Then, $\varrho =2$, $\vartheta = 9$, 
    $P_{7,3}(\lambda) = \lambda -\alpha_{2,7} = [2,0,0,0,0,0,-1,2,1] $ and  $Q_{7,3}(\lambda) = \lambda - \alpha_{8,9}= [1,1,0,0,0,0,1,0,0]$.
\end{example}

\begin{definition}\rm\label{def: words over weights} 
    Let $\lambda \in X$, $1\leq j \leq n $ and $h\geq 2$. 
    For $k\geq 1$ we define
    \begin{equation}
    \begin{array}{rl} 
        P^k_{j,h} (\lambda) = &\OP{j-k+1,h}\OP{j-k+2,h}\cdots \OP{j-1,h}\OP{j,h}(\lambda).\\ [3pt]
        Q^{k}_{j,h} (\lambda) =& \OQ{j,h}\OQ{j-1,h}\cdots \OQ{j-k+1,h}(\lambda).\\[3pt]
        Q^{-k}_{j,h} (\lambda) =& \OQ{j+k-1,h}\OQ{j+k-2,h}\cdots \OQ{j+1,h}\OQ{j,h}(\lambda).\\[3pt]
        v^k_{j,h}(\lambda) =&  Q^{k}_{j,h} P^k_{j,h} (\lambda).
        % (v_{k}P^{m})_{j,h}(\lambda) =& (v_{k})_{(j-m,h)}(\OP{}^{m})_{(j,h)}(\lambda).\\[2pt]
        % (P^{m}v_{k})_{j,h}(\lambda) =& (\OP{}^{m})_{(j-k,h)}(v_{k})_{(j,h)}(\lambda).
    \end{array}
    \end{equation}
  By convention, we define all these operators for $k=0$ simply as $\lambda$. 
\end{definition}
\begin{remark}\rm
    Observe that in \Cref{def: words over weights}, the definitions of the operators $Q^{k}_{j,h}$ and $Q^{-k}_{j,h}$ are equivalent up to a shift in the index. More precisely, one has
    \begin{equation}
        Q^{k}_{j,h}(\lambda) = Q^{-k}_{j-k+1,h}(\lambda).
    \end{equation}
    Throughout the text, we will freely use both notations depending on convenience of reading or context.
\end{remark}

We stress that the operators defined before correspond to a sequence of compositions of $P$ and $Q$ operators. Throughout these sequence it may well be the case that one of the elements involved in the composition is not defined. In this case, we simply say that the whole operator is not defined in a given weight. 

\begin{example}\rm
Let $(i,j)=(5,8)$ and $h = 4$. Consider the weight $\lambda = [2,0,1,2,0,0,0,0,0,0,1,0,1,0]_{\varpi}$. 
Applying our operators, we obtain:
\begin{equation}
\begin{aligned}
    P_{8,4}^2(\lambda) &= [1, 0, 2, 1, 0, 0, -1, 0, 1, 0, 1, 0, 1, 0]_{\varpi}, \\[5pt]
    v_{8,4}^2(\lambda) = Q_{8,4}^2P_{8,4}^2(\lambda) &= [1, 0, 2, 1, 0, 0, 0, 0, 0, 0, 0, 1, 0, 1]_{\varpi}, \\[5pt]
    P_{6,4}^2v_{8,4}^2(\lambda) &= [2, 0, 1, 1, -1, 0, 1, 0, 0, 0, 0, 1, 0, 1]_{\varpi}.
\end{aligned}
\end{equation}
This allows us to conclude that the operators $P_{8,4}^2$, $v_{8,4}^2$, and $P_{6,4}^2v_{8,4}^2$ are all defined at $\lambda$.

\medskip
On the other hand, if we change the initial weight to $\lambda = [2,0,1,2,2,0,0,0,0,0,0,0,1,0]_{\varpi}$, we compute:
\begin{equation}
    P_{8,4}^2(\lambda) = [2, 0, 2, 2, 1, 0, -1, 0, 1, 0, 0, 0, 1, 0]_{\varpi}.
\end{equation}
Thus, $P_{8,4}^2(\lambda)$ is defined. However, notice that $\pint{P_{8,4}^2(\lambda)}{\alpha_{11}} = \pint{P_{8,4}^2(\lambda)}{\alpha_{14}} = 0$. This implies that $Q_{7,4}(P_{8,4}^2(\lambda))$ is undefined, and consequently, neither $v_{8,4}^2(\lambda)$ nor $P_{6,4}^2v_{8,4}^2(\lambda)$ are defined.
\end{example}

The following lemma shows certain relationship between the integers associated to the different $P$ operators that occur in the composed operator $P^k$.

\begin{lemma} \label{lem: desigualdad de varrhos}
    Let $\lambda \in X^+_{j}(L)$, $\alpha_{i,j}\in \Phi^{\geq 2}$ and $h=j-i+1$. 
    For $r\geq 0$ we set $\lambda_{r} = P^r_{j,h}   (\lambda) $ and  $\varrho_{r}=\varrho_{j-r+1,h}(\lambda_{r-1})   $.
    Suppose that  $\lambda_{k}$ is defined for some  $k\geq 1$ and that  $\varrho_{k}\geq 2$. 
    Then, $\lambda_{k+1}$ is defined and $\varrho_{k+1} \geq \varrho_{k} - 1$.
    In particular, if $\lambda_{k}$ is defined but $\lambda_{k+1}$ is not then $\varrho_k=1$.   
\end{lemma}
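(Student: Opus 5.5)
The plan is to exhibit $k'=\varrho_k-1$ as an admissible candidate in the set defining $\varrho_{k+1}=\varrho_{j-k,h}(\lambda_k)$. This simultaneously shows that $P_{j-k,h}(\lambda_k)$, and hence $\lambda_{k+1}$, is defined, and, by maximality, that $\varrho_{k+1}\ge\varrho_k-1$. The ``in particular'' statement is then the contrapositive: if $\varrho_k\ge 2$ then $\lambda_{k+1}$ exists.

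\emph{Setup.} At step $k+1$ the operator is $P_{j-k,h}$. It has right end $j-k$ and left index $i'=j-k-h+1$. The previous step used the left index $i'+1$, so $\varrho_k\equiv i'+1 \pmod h$ and $\varrho_k\le i'+1$. Hence $k'=\varrho_k-1$ satisfies
\[
k'\equiv i' \pmod h,\qquad 1\le k'\le i',
\]
where $k'\ge 1$ uses the hypothesis $\varrho_k\ge2$. It remains to check that $\lambda_k-\alpha_{\varrho_k-1,j-k}\in X^+_{i',h}(L)$.

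\emph{Tracking coordinates.} I would write
\[
\lambda_k-\alpha_{\varrho_k-1,j-k}=\lambda_{k-1}-\alpha_{\varrho_k,j-k+1}-\alpha_{\varrho_k-1,j-k}.
\]
By \Cref{remark producto interno}, subtracting $\alpha_{a,b}$ (with $a<b$) changes the coordinates at $a-1,a,b,b+1$ by $+1,-1,-1,+1$. Both right ends $j-k$ and $j-k+1$ exceed $i'$, since $h\ge2$, so they are irrelevant for $X^+_{i',h}(L)$. On the left, the coordinates change as follows:
\begin{itemize}
\item position $\varrho_k-2$ gains $1$;
\item position $\varrho_k-1$ receives $+1$ from the first root and $-1$ from the second, so it is unchanged;
\item position $\varrho_k$ loses $1$.
\end{itemize}
Positions $\varrho_k-2$ and $\varrho_k$ are not $\equiv i'\pmod h$. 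Consequently the coordinates in the residue class of $i'$ at positions $m\le i'$ are those of $\lambda_{k-1}$. Only $m\le\varrho_k-1$ matter here, because the class positions in $(\varrho_k-1,i']$ differ from $i'+1$'s class shifted down, and these are controlled as explained below.

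\emph{The needed invariant.} The claim is therefore reduced to proving, by induction on $r$, an invariant of the form
\[
(\ast_r)\qquad \langle\lambda_{r-1},\alpha_m\rangle\ge 0\quad\text{for all } m\le j-r+1 \text{ with } m\not\equiv j-r+2-h \pmod h \text{ lying strictly between consecutive ``scars''.}
\]
More concretely, the aim is: $\lambda_{r}$ is nonnegative at every position $m\le \varrho_r-1$, and at every position $m\le j-r$ outside the finitely many positions $\varrho_1,\dots,\varrho_r$ where a $-1$ may have been created. The base case is $\lambda_0=\lambda\in X^+_j(L)$. Each step subtracts one root, which only lowers position $\varrho_r$ and the right end $j-r+1$. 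The right end $j-r+1$ drops out of the range $m\le j-r$ at the next step, so the induction removes it automatically.

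\emph{Main obstacle.} The hard point is to show that none of the scar positions $\varrho_s$ with $s<k$ lies in the residue class of $i'$ below $\varrho_k$. For this I would use the inequality being proved, applied at earlier steps: $\varrho_{s+1}\ge\varrho_s-1$. Together with the residue shift by $-1$ at each step, this should force every earlier $\varrho_s\equiv i'+(k-s+1)\pmod h$ with $s<k$ either to sit at or above $\varrho_k-1$, or to lie in a different class. When $k-s+1$ is a multiple of $h$ this needs care. I would treat it by noting that if $\varrho_s\le\varrho_k-1$ in the same class, then the maximality in the definition of $\varrho_s$ already gave $\lambda_{s}\in X^+_{i_s,h}(L)$ at that position. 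That would repair the sign. This is the step I expect to require the most bookkeeping.
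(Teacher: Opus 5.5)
\textbf{There is a genuine gap.} Your strategy matches the paper's: show that $\varrho_k-1$ is an admissible candidate for $\varrho_{k+1}=\varrho_{j-k,h}(\lambda_k)$. Your residue and coordinate bookkeeping is also correct; in particular, the $+1$ from $\alpha_{\varrho_k,j-k+1}$ cancels the $-1$ from $\alpha_{\varrho_k-1,j-k}$ at position $\varrho_k-1$. The gap is the nonnegativity invariant you then need. You never establish it, and the route you sketch for it does not work.

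You need $\langle\lambda_{k-1},\alpha_m\rangle\ge 0$ for \emph{every} $m\le i-k$ with $m\equiv i-k\pmod h$. Your remark that ``only $m\le\varrho_k-1$ matter'' is wrong: the class positions in $(\varrho_k-1,i-k]$ are equally part of the condition $X^+_{i-k,h}(L)$. You also treat the earlier $\varrho_s$ as ``scars'' that may carry a $-1$, and try to keep them out of the relevant class using $\varrho_{s+1}\ge\varrho_s-1$. Iterating that inequality only gives the upper bound $\varrho_s\le\varrho_k+(k-s)$. That bound neither moves $\varrho_s$ to another residue class nor keeps it out of $[1,i-k]$, so your ``main obstacle'' is left open.

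Your last sentence contains the right idea, but it applies unconditionally, and once you use it there are no scars on the left at all. By definition, $\varrho_r$ belongs to the set it maximizes over. Hence $\lambda_r=\lambda_{r-1}-\alpha_{\varrho_r,j-r+1}\in X^+_{i-r+1,h}(L)$. Since $\varrho_r\le i-r+1$ and $\varrho_r\equiv i-r+1\pmod h$, this gives $\langle\lambda_r,\alpha_{\varrho_r}\rangle\ge0$ directly.

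So each subtraction can create a negative coordinate only at the right end $j-r+1$, which lies outside $[1,j-r]$. Induction from $\lambda_0=\lambda\in X^+_j(L)$ then gives $\lambda_r\in X^+_{j-r}(L)$ for every defined $\lambda_r$. This is exactly the fact $\lambda_{k-1}\in X^+_{j-k+1}(L)$ that the paper invokes.

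With it the proof closes immediately:
\begin{itemize}
\item $\langle\lambda_k,\alpha_{\varrho_k-1}\rangle=\langle\lambda_{k-1},\alpha_{\varrho_k-1}\rangle+1\ge1$, so $\lambda_k-\alpha_{\varrho_k-1,j-k}$ is nonnegative at $\varrho_k-1$.
\item At every other class position $m\le i-k$, subtracting $\alpha_{\varrho_k-1,j-k}$ changes nothing, because $\varrho_k-2\not\equiv i-k$ and $j-k>i-k$. So the coordinate there is $\langle\lambda_k,\alpha_m\rangle\ge0$, since $\lambda_k\in X^+_{j-k}(L)$.
\end{itemize}
No further bookkeeping about earlier $\varrho_s$ is required.
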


\begin{proof}
By definition $\varrho_k\equiv i-k +1  \mod h $. 
It follows that $\varrho_k -1\equiv i-k   \mod h $.
Since $\lambda \in X_j^+(L) $  and $\lambda_{k-1} $ is defined we have $\lambda_{k-1} \in X^{+}_{j-k+1}(L)$. 
In particular, $\pint{\lambda_{k-1}}{\alpha_{\varrho_k-1}}\geq 0$. It follows that
\begin{equation}
    \pint{\lambda_{k}}{\alpha_{\varrho_k-1}} = \pint{\lambda_{k-1}-\alpha_{\varrho_k,j-k+1}}{\alpha_{\varrho_k-1}} = \pint{\lambda_{k-1}}{\alpha_{\varrho_k-1}} -\pint{\alpha_{\varrho_k,j-k+1}}{\alpha_{\varrho_k-1}} = \pint{\lambda_{k-1}}{\alpha_{\varrho_k-1}} +1 \geq 1. 
\end{equation}
Thus  $\lambda_k - \alpha_{\varrho_k-1, j-k+1}\in X^+_{j-k,h}(L) $. 
We conclude that  $\lambda_{k+1}$ is defined and $\varrho_{k+1} \geq \varrho_{k} - 1$.
\end{proof}

\begin{corollary} \label{lem: ceros por Ps}
 Let $\lambda \in X^+_{j}(L)$ and $\alpha_{i,j}\in \Phi^{\geq 2}$ and $h=j-i+1$.
 For $r\geq 0$ we set $\lambda_{r} = P^r_{j,h}   (\lambda) $ and  $\varrho_{r}=\varrho_{j-r+1,h}(\lambda_{r-1})  $.
    Suppose that  $\lambda_{k}$ is defined for some  $1\leq k \leq h$. 
    Then, we have
    $\pint{\lambda_{k-1}}{\alpha_{t}} = 0$,
    for all $\displaystyle  t\in U_d\coloneqq \bigcup_{m=0}^{d} [i-k+1-mh,i-mh]$, where $d$ is the unique integer such that $\varrho_k = i-k+1 -(d+1)h$.
\end{corollary}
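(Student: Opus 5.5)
We argue by induction on $k$. At each step we combine two sources of zeros: those inherited from the previous step, and new ones forced by the maximality of $\varrho_k$.

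First we fix the setting. Write $j_k = j-k+1$, so that $\lambda_k=\OP{j_k,h}(\lambda_{k-1})$ and $\lambda_k=\lambda_{k-1}-\alpha_{\varrho_k,j_k}$. The root in play has first index $i_k=i-k+1$. By \Cref{def. p and q}, $\varrho_k$ is the largest $k'\equiv i_k \pmod h$ with $1\le k'\le i_k$ and $\lambda_{k-1}-\alpha_{k',j_k}\in X^+_{i_k,h}(L)$.

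We also need $\lambda_{k-1}\in X^+_{j_k}(L)$. This is exactly the fact used in the proof of \Cref{lem: desigualdad de varrhos}. If it needs justification, it follows inductively from \Cref{remark producto interno}: subtracting $\alpha_{\varrho,j_{k-1}}$ lowers coordinate $\varrho$, raises coordinate $\varrho-1$, lowers coordinate $j_{k-1}$, and raises coordinate $j_{k-1}+1$. The $L$-dominance at height $h$ ensures coordinate $\varrho$ stays $\ge 0$, and coordinate $j_{k-1}=j_k+1$ lies outside $[1,j_k]$.

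\textbf{New zeros.} For $0\le m\le d$, the candidate $k'=i_k-mh$ is strictly larger than $\varrho_k=i_k-(d+1)h$, so it fails. Consider the effect of subtracting $\alpha_{k',j_k}$ on the coordinates $\le i_k$ congruent to $i_k$ modulo $h$. Only coordinate $k'$ changes, decreasing by $1$: coordinate $k'-1$ is not congruent since $h\ge 2$, and $j_k>i_k$. All those coordinates of $\lambda_{k-1}$ are $\ge 0$. Hence failure forces $\pint{\lambda_{k-1}}{\alpha_{i_k-mh}}=0$ for $0\le m\le d$. This already settles the base case $k=1$, where each interval $[i-mh,i-mh]$ is a single point.

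\textbf{Inherited zeros.} Let $k\ge 2$ and let $d'$ be the integer attached to $\varrho_{k-1}=i_{k-1}-(d'+1)h$. By \Cref{lem: desigualdad de varrhos}, $\varrho_k\ge\varrho_{k-1}-1$, i.e.
\[
i_k-(d+1)h\ \ge\ i_k-(d'+1)h ,
\]
so $d'\ge d$. By the inductive hypothesis, $\lambda_{k-2}$ vanishes on $[i_k+1-mh,\,i-mh]$ for all $0\le m\le d'$, in particular for $m\le d$.

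Passing from $\lambda_{k-2}$ to $\lambda_{k-1}=\lambda_{k-2}-\alpha_{\varrho_{k-1},j_{k-1}}$ changes only the coordinates $\varrho_{k-1}-1$, $\varrho_{k-1}$, $j_{k-1}$ and $j_{k-1}+1$.
\begin{itemize}
\item The last two exceed $i$, so they lie outside every interval.
\item The coordinate $\varrho_{k-1}=i_k+1-(d'+1)h$ and the coordinate $\varrho_{k-1}-1$ are both smaller than $i_k+1-dh$, the left endpoint of the $m=d$ interval, because $d'\ge d$. They therefore lie below all intervals with $m\le d$.
\end{itemize}
Here the hypothesis $k\le h$ ensures the intervals $[i_k-mh,i-mh]$ have length $k\le h$, so they are disjoint and correctly ordered. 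Hence the zeros on $[i_k+1-mh,i-mh]$ persist in $\lambda_{k-1}$. Together with the new zeros at $i_k-mh$, this gives vanishing on all of $U_d$.

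\textbf{Expected obstacle.} The delicate point is the index bookkeeping in the inherited step. One must check that the coordinates touched by the previous $P$-step never land inside $U_d$, which is where $d'\ge d$ from \Cref{lem: desigualdad de varrhos} is needed. One must also check the boundary cases where $\varrho_{k-1}-1=0$ or an interval reaches position $1$; these are harmless but must be stated.
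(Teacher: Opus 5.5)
Your argument is correct, but it is organized differently from the paper's proof.

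The paper proves the corollary by a single, non-inductive contradiction. It takes $t\in U_d$ with $\pint{\lambda_{k-1}}{\alpha_t}>0$. Maximality of $\varrho_k$ excludes $t\equiv i-k+1 \pmod h$, so $t\equiv i-s$ for some $0\le s<k-1$. Maximality of $\varrho_{s+1}$ then gives $t\le\varrho_{s+1}$. Telescoping \Cref{lem: desigualdad de varrhos} gives $\varrho_{s+1}\le\varrho_k+(k-s-1)<\varrho_k+h\le t$, a contradiction.

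You instead induct on $k$. You use maximality of $\varrho_k$ only to produce the new zeros at the points $i_k-mh$. You use the one-step inequality $\varrho_k\ge\varrho_{k-1}-1$ only to get $d'\ge d$, which shows that the coordinates moved by the last $P$-step lie below $U_d$.

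The paper's route is shorter. However, it applies maximality of $\varrho_{s+1}$, which is a statement about $\lambda_s$, while positivity at $t$ is only known for $\lambda_{k-1}$. Positivity of $\lambda_{k-1}$ at $t$ implies positivity of $\lambda_s$ at $t$ unless $t=\varrho_r-1$ for some $s<r<k$. That case has to be dispatched separately: there $t<\varrho_r\le\varrho_k+(k-r)<\varrho_k+h$. Your coordinate-by-coordinate bookkeeping handles this case automatically.

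Two minor remarks:
\begin{itemize}
\item When $\varrho_{k-1}=1$, the lemma's hypothesis $\varrho\ge 2$ fails. The inequality $\varrho_k\ge\varrho_{k-1}-1$ still holds trivially in that case, but say so.
\item The place where $k\le h$ is genuinely needed is $j_{k-1}=j-k+2>i$, which keeps the right-hand coordinates out of $U_d$. It is not needed for disjointness of the intervals.
\end{itemize}
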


\begin{proof}
If $d=-1$ then $U_d=\emptyset$ and the statement is empty, so assume $d\geq 0$.

We argue by contradiction. Suppose that there exists $t\in U_d$ with 
$\pint{\lambda_{k-1}}{\alpha_t}\neq 0$.
Since $U_d\subseteq [\varrho_k+h,i]$, we have
\begin{equation} \label{eq: t is in  between}
    \varrho_k+h \le t \le i.
\end{equation}

Because $\lambda\in X_j^+(L)$, we have $\lambda_{k-1} \in X_{j-k+1}^+(L)$.  
Moreover, since $k\le h$, it follows that $t\le i\le j-k+1$, and hence
$\pint{\lambda_{k-1}}{\alpha_t}>0$.
The inequality $\varrho_k+h \le t$ and the maximality of $\varrho_k$ imply that 
$t\not\equiv i-k+1 \pmod{h}$.  
Since $t\in U_d$, there exists $0\le s<k-1$ such that 
$t\equiv i-s \pmod{h}$.
By maximality of $\varrho_{s+1}$ we have $t\le \varrho_{s+1}$.
Applying \Cref{lem: desigualdad de varrhos} repeatedly gives
\begin{equation}
    \varrho_{s+1} \le \varrho_k + (k-s-1) < \varrho_k + h.
\end{equation}
Therefore,
    $t < \varrho_k + h$,
contradicting \eqref{eq: t is in  between}.
This contradiction proves the lemma.
\end{proof}

Both \Cref{lem: desigualdad de varrhos} and \Cref{lem: ceros por Ps} have their $Q$-counterpart that we now enunciate. 
In both cases we omit the proof as it is almost identical to the one of its $P$-counterpart.

\begin{lemma} \label{lem: desigualdad de varthetas}
    Let $\mu \in X^+$ and $\alpha_{i,j}\in \Phi^{\geq 2}$ and $h=j-i+1$. 
    For $r\geq 0$ we set $\mu_{r} = Q^{r}_{i,h}(\mu)$ and $\vartheta_{r}=\vartheta_{i+r,h}(\mu_{r-1})$.
    % Let $\vartheta_{r}$ be the integer associated to $Q_{i+r,h}(\mu_{r-1})$.
    Suppose that $\mu_{k}$ is defined for some integer $k\geq 1$ and $\vartheta_{k} \leq n-1$.
    Then, $\mu_{k+1}$ is defined and $\vartheta_{k+1} \leq \vartheta_{k} + 1$.
% \begin{equation}\label{eq: desigualdad de los varthetas}
%     \vartheta_{k+1} \leq \vartheta_{k} + 1.
% \end{equation}
In particular, if $\mu_{k}$ is defined but $\mu_{k+1}$ is not then $\vartheta_k =n$. 
\end{lemma}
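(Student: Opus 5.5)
The plan is to mirror the proof of \Cref{lem: desigualdad de varrhos}, exchanging left and right and replacing the modulus $h$ by $h-1$. By \Cref{def. p and q}, the $k$-th step of the composite operator is $Q_{i+k-1+\cdot,h}$ applied to $\mu_{k-1}$. It subtracts a root $\alpha_{a_k,\vartheta_k}$, where $a_k$ is the row just after the current index and $\vartheta_k$ is congruent to that index modulo $h-1$. The $(k+1)$-st step shifts the index by one. So the natural candidate for $\vartheta_{k+1}$ is $\vartheta_k+1$, which lies in the correct congruence class modulo $h-1$. It is a legitimate index precisely because $\vartheta_k\le n-1$.

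\textbf{Step 1.} I will use the pairing rule of \Cref{remark producto interno}. For any $t$ congruent to the new index modulo $h-1$, lying to the right of it, I will compute $\pint{\mu_k-\alpha_{a_{k+1},\vartheta_k+1}}{\alpha_t}$. Subtracting $\alpha_{a,b}$ changes only the coordinates at $a-1$, $a$, $b$ and $b+1$. The right-end change at $b+1=\vartheta_k+2$ is harmless: just as in the $P$-case, the root $\alpha_{a_k,\vartheta_k}$ raises the coordinate at $\vartheta_k+1$ by one. Precisely,
\[
\pint{\mu_k}{\alpha_{\vartheta_k+1}}=\pint{\mu_{k-1}}{\alpha_{\vartheta_k+1}}+1\ge 1,
\]
so after subtracting $\alpha_{a_{k+1},\vartheta_k+1}$ the coordinate at $\vartheta_k+1$ stays $\ge 0$.

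\textbf{Step 2.} The nonnegativity of $\pint{\mu_{k-1}}{\alpha_{\vartheta_k+1}}$ has to be justified. For this I will first establish, by induction on $r$, that $\mu_{r}$ remains $R$-dominant at every position to the right of the current one. This mirrors the fact, used in the $P$-case, that $\lambda_{k-1}\in X^+_{j-k+1}(L)$. The base case is $\mu\in X^+$. The inductive step holds because each $Q$-step alters coordinates only near its two endpoints, and the $R$-dominance condition defining $\vartheta$ controls the coordinate at the right end.

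\textbf{Step 3.} With Steps 1 and 2 in hand, the candidate satisfies the defining condition of $\vartheta_{k+1}$. Minimality of $\vartheta_{k+1}$ then gives $\mu_{k+1}$ defined and $\vartheta_{k+1}\le\vartheta_k+1$. The final assertion is the contrapositive: if $\mu_{k+1}$ is undefined, then $\vartheta_k\le n-1$ fails, so $\vartheta_k=n$.

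The main obstacle is the left endpoint. Subtracting $\alpha_{a,b}$ lowers the coordinate at $a-1$, and this position may lie in the congruence class being tested. In the $P$-case the corresponding boundary effect fell outside the class modulo $h$, because $j-i=h-1$. Here I must check carefully that the coordinate at the left boundary is either excluded by the indexing of the composite $Q^r$, or was raised by one in the previous step. I will first try to show the latter by tracking how consecutive $Q$'s overlap: the root of step $k$ starts exactly one row after the left end of the root of step $k+1$, so their contributions at the shared boundary cancel. If that cancellation does not suffice, I will restrict to the relevant range of positions in $X^+_{\cdot,h}(R)$.
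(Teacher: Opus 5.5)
Your route is the one the paper intends; it omits the proof as a mirror of \Cref{lem: desigualdad de varrhos}. You take $\vartheta_k+1$ as the candidate, verify the defining condition of $\vartheta_{k+1}$, and conclude by minimality. Step~1 is correct. Step~2 is correct once stated precisely as $\mu_r\in X^+_{i+r+2}(R)$, where $\mu_r=Q_{i+r,h}(\mu_{r-1})$. The base case is $\mu_0=\mu\in X^+$, and the root $\alpha_{i+r+1,\vartheta_r}$ lowers only the coordinates at $i+r+1$ and $\vartheta_r$, the latter being $\geq 1$ by definition of $\vartheta_r$.

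The gap is the step you flag as the main obstacle. This is exactly where the $Q$-case is not a literal mirror, and your signs there are reversed. By \Cref{remark producto interno}, subtracting $\alpha_{a,b}$ \emph{raises} the coordinate at $a-1$ and lowers the one at $a$. So the new root $\alpha_{i+k+2,\vartheta_k+1}$ raises the coordinate at $i+k+1$. The previous root $\alpha_{i+k+1,\vartheta_k}$ starts at $i+k+1$, one row \emph{before} the new one, and it \emph{lowered} that coordinate; it was not ``raised in the previous step''. Your fallbacks are also unavailable: the position cannot be excluded and the range cannot be restricted, since $X^+_{i+k+1,h}(R)$ contains $t=i+k+1$ itself. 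The two effects do cancel, and the missing check is
\[
\langle \mu_k-\alpha_{i+k+2,\vartheta_k+1},\alpha_{i+k+1}\rangle=\langle\mu_{k-1},\alpha_{i+k+1}\rangle-1+1\geq 0,
\]
which holds because $\mu_{k-1}\in X^+_{i+k+1}(R)$ by Step~2. Combined with your computation at $t=\vartheta_k+1$ and Step~2 at the remaining positions of the class, this closes the argument.

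This accounting uses $h\geq 3$ essentially. Only then is $\vartheta_k\geq i+k+h-1\geq i+k+2$, so the roots involved are not simple and have the pairings used above. Only then does the lowered coordinate $i+k+2$ fall outside the class of $i+k+1$ modulo $h-1$. For $h=2$ every position lies in the class, and the inequality genuinely fails. Take $n=6$, $(i,j)=(1,2)$ and $\mu=[0,0,2,0,0,1]_{\varpi}$. Then $\vartheta_1=3\leq n-1$ and $\mu_1=[0,1,0,1,0,1]_{\varpi}$, but $\vartheta_2=6>\vartheta_1+1$. You should therefore make the assumption $h\geq 3$ explicit.
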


\begin{corollary} \label{lem: ceros por Qs}  
% \David{Revisar si el caso $k=h$ funciona aqu\'i y en caso de ser as\'i, incluirlo aunque no se use mas adelante (por completitud)}
    Let $\alpha_{i,j}\in \Phi^{\geq 2}$ and $h=j-i+1$. 
    Let $r$ be an integer such that $i+1\leq r\leq j$.
    Let $\mu \in X^+_{r+1}(R)$.
    For $x\geq 0$, we set $\mu_{x} = Q^{-x}_{r,h}(\mu)$.
    % Set $\mu_{0} = \lambda$ and define $\mu_{r}\coloneqq Q_{p+r-1,h}(\mu_{r-1})$ for $r\geq 1$.
    Suppose $\mu_k$ is defined for some $1\leq k\leq h$.
    Then, we have $\pint{\mu_{k-1}}{\alpha_{t}} = 0$, for all $t\in U_d$, where
    \begin{equation}\label{eq: Ud de zeros en Q}
     U_{d} =   \bigcup_{m=1}^{d} [r + m(h-1), r + k -1 + m(h-1)]
    \end{equation}
  and  $d$ is the unique integer satisfying $\vartheta_{r+k-1,h}(\mu_{k-1}) = r+k-1 + (d+1)(h-1)$.  
\end{corollary}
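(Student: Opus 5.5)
We argue as in the proof of \Cref{lem: ceros por Ps}, with left and right exchanged, $h$ replaced by $h-1$, and \Cref{lem: desigualdad de varthetas} used in place of \Cref{lem: desigualdad de varrhos}.

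\emph{Notation and easy case.} For $1\le x\le k$ set $\vartheta_x=\vartheta_{r+x-1,h}(\mu_{x-1})$. This is the integer associated to the step $\mu_x=\OQ{r+x-1,h}(\mu_{x-1})$. By definition $\vartheta_x\equiv r+x-1 \pmod{h-1}$, and it is the \emph{smallest} admissible value. If $d\le 0$ then $U_d=\emptyset$ and there is nothing to prove, so assume $d\ge 1$. We argue by contradiction: suppose some $t\in U_d$ has $\pint{\mu_{k-1}}{\alpha_t}\neq 0$.

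\emph{Locating $t$.} From the shape of $U_d$ and the formula $\vartheta_k=r+k-1+(d+1)(h-1)$ we get
\begin{equation}
    r+(h-1)\le t\le r+k-1+d(h-1)=\vartheta_k-(h-1).
\end{equation}
Next we need the analogue of ``$\lambda_{k-1}\in X^+_{j-k+1}(L)$''. Since $\mu\in X^+_{r+1}(R)$, we claim that $\mu_{k-1}$ still has nonnegative pairing with every $\alpha_t$ for $t$ in the range above. Each step $\OQ{a,h}$ subtracts $\alpha_{a+1,\vartheta}$. This raises the coordinates $a+1$ and $\vartheta$ and lowers the coordinates $a$ and $\vartheta+1$. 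The choice of $\vartheta$ guarantees $R$-dominance at height $h$ from position $a$ onward. Tracking these changes inductively, and using $t\ge r+h-1\ge r+k-1$, gives the claim. Combined with the assumption, this yields $\pint{\mu_{k-1}}{\alpha_t}>0$.

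\emph{Contradiction via minimality.}
\begin{itemize}
    \item First, $t\not\equiv r+k-1\pmod{h-1}$. Otherwise $t$, which lies strictly between $r+k-1$ and $\vartheta_k$, would itself satisfy $\mu_{k-1}-\alpha_{r+k,t}\in X^+_{r+k-1,h}(R)$, contradicting the minimality of $\vartheta_k$.
    \item Since $t$ lies in a block $[r+m(h-1),r+k-1+m(h-1)]$, we therefore have $t\equiv r+s\pmod{h-1}$ for some $0\le s<k-1$.
    \item Minimality of $\vartheta_{s+1}$, together with the same positivity at $t$, forces $t\ge\vartheta_{s+1}$.
    \item Applying \Cref{lem: desigualdad de varthetas} repeatedly (in its version starting at position $r$ from an $R$-dominant weight, whose proof is identical) gives $\vartheta_k\le\vartheta_{s+1}+(k-s-1)$. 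Hence
\begin{equation}
    t\ge \vartheta_{s+1}\ge \vartheta_k-(k-s-1)>\vartheta_k-(h-1)\ge t,
\end{equation}
    using $k-s-1<k\le h$. This is a contradiction.
\end{itemize}

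\emph{Expected obstacle.} The delicate step is the positivity claim for $\mu_{k-1}$ at $t$, together with the applicability of \Cref{lem: desigualdad de varthetas}. That lemma is stated for $\mu\in X^+$ starting at $i$, whereas here we start at $r$ with only $\mu\in X^+_{r+1}(R)$. So one must check that the coordinates lowered by earlier $Q$-steps, namely $r+x-1$ and $\vartheta_x+1$, never fall in the relevant positions in a way that breaks the argument. We would verify this by induction on $x$, using that each $\vartheta_x\ge r+x-1+(h-1)$.
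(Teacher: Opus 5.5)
Your strategy (mirror the proof of \Cref{lem: ceros por Ps} with period $h-1$ in place of $h$, using minimality of the $\vartheta$'s and \Cref{lem: desigualdad de varthetas}) is what the paper intends; it omits the proof as ``almost identical''. As written, however, the proposal has a sign error and an off-by-one. Both fail exactly at $k=h$, and there the statement is actually false.

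First, the sign error. Since $\pint{\alpha_{a+1,\vartheta}}{\alpha_{a+1}}=\pint{\alpha_{a+1,\vartheta}}{\alpha_{\vartheta}}=1$ (\Cref{remark producto interno}), the step $\OQ{a,h}$ \emph{lowers} the pairings at $a+1$ and $\vartheta$ and \emph{raises} those at $a$ and $\vartheta+1$. You have this backwards, also in your last paragraph. The correct invariant is $\mu_x\in X^+_{r+x+1}(R)$: the lowered coordinate $r+x$ lies just below this range, and $\vartheta_x\equiv r+x-1$ stays $\geq 0$ by the defining property of $\vartheta_x$. Meanwhile $\pint{\mu_{k-1}}{\alpha_{r+k-1}}$ can equal $-1$. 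So positivity at $t$ needs $t\geq r+k$. Your bound $t\geq r+h-1\geq r+k-1$ gives this only when $k\leq h-1$.

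Second, the off-by-one. Your final chain needs $k-s-1<h-1$, because the period is now $h-1$, not $k-s-1<h$; this fails for $k=h$, $s=0$.

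A counterexample at $k=h$: take $n=8$, $\alpha_{1,3}$ (so $h=3$), $r=2$, $k=3$, and $\mu=-\varpi_2+\varpi_6\in X^+_3(R)$.
\begin{itemize}
\item One computes $\vartheta_1=6$, $\vartheta_2=7$, $\vartheta_3=8$, so $\mu_3$ is defined.
\item Hence $d=1$ and $U_1=[4,6]$.
\item But $\mu_2=-\varpi_4+\varpi_8$ has $\pint{\mu_2}{\alpha_4}=-1$.
\end{itemize}

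Your minimality step also needs $h\geq 3$. For $h=2$ the congruence modulo $h-1$ is vacuous, so the lowered coordinate $r+k$ is itself among the checked positions. For example, $n=4$, $\alpha_{1,2}$, $r=2$, $k=1$, $\mu=\varpi_3+\varpi_4$ gives $\vartheta_1=4$, $d=1$ and $U_1=\{3\}$, yet $\pint{\mu}{\alpha_3}=1$.

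What you can and should prove is the case $h\geq 3$, $1\leq k\leq h-1$. This is the only case the paper uses:
\begin{itemize}
\item In \Cref{lem: pre completar v}, Case A, one has $k=x\leq j-r+1\leq h-2$.
\item In the good-pair lemma one has $h\geq 3$ and at most $h-2$ $Q$-steps.
\end{itemize}

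In this range your argument goes through once the signs are corrected, modulo one detail you gloss over (as does the paper's $P$-proof). Positivity at $t$ is established for $\mu_{k-1}$, but minimality of $\vartheta_{s+1}$ concerns $\mu_s$. This is fine, for the following reasons:
\begin{itemize}
\item We have $\mu_{k-1}=\mu_s-\sum_{x=s+1}^{k-1}\alpha_{r+x,\vartheta_x}$.
\item Since $t>r+k-1$, these roots affect the pairing at $t$ only through $\vartheta_x$ or $\vartheta_x+1$.
\item The residues of these positions are $r+x-1$ and $r+x$. They agree with that of $t$ ($\equiv r+s$) only for $\vartheta_{s+1}$, because $k-1-s\leq h-2$.
\item So either $t=\vartheta_{s+1}$, or $\pint{\mu_s}{\alpha_t}=\pint{\mu_{k-1}}{\alpha_t}\geq 1$.
\item In the latter case $t$ is admissible for $\vartheta_{s+1}$. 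This also uses $\pint{\mu_s}{\alpha_{r+s}}\geq -1$, which follows from the existence of $\vartheta_{s+1}$.
\end{itemize}

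The version of \Cref{lem: desigualdad de varthetas} you need, starting at $r$ from $\mu\in X^+_{r+1}(R)$, follows from the same invariant $\mu_x\in X^+_{r+x+1}(R)$.
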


The following lemma provides the integers associated to the $Q$ operators involved in the  operator $v^{h-1}_{j,h}$.

\begin{lemma}\label{lem: thetas para el caso h-1}
    Let $\lambda \in X^+$ and $\alpha_{i,j}\in \Phi^{\geq 2}$ and $h=j-i+1$. 
    Suppose $v^{h-1}_{j,h}(\lambda)$ is defined.
    We define
    \begin{equation}\label{eq: varthetas en h-1}
        \vartheta_{r}= \left\{  \begin{array}{ll}
          \vartheta_{i+1,h}(P^{h-1}_{j,h}(\lambda)),   & \mbox{if } r=1;  \\[3pt]
        \vartheta_{i+r,h}(Q^{r-1}_{i+r-1,h}(P^{h-1}_{j,h}(\lambda))),     &  \mbox{if } 2\leq r \leq h-1.
        \end{array}  \right.
    \end{equation}
    Then,  $\vartheta_{r}=j+r$, for $1\leq r\leq h-1 $.
\end{lemma}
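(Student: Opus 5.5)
The plan is a direct bookkeeping argument. I will track the simple coordinates $\pint{\cdot}{\alpha_t}$ of the successive weights
\[
\nu_0=P^{h-1}_{j,h}(\lambda),\qquad \nu_r=Q^{r}_{i+r,h}(\nu_0)=Q_{i+r,h}(\nu_{r-1}) .
\]
The goal is to show by induction on $r$ that at each step the minimum in the definition of $\vartheta_{i+r,h}(\nu_{r-1})$ is attained at $k=j+r$. Throughout I use \Cref{remark producto interno}: subtracting $\alpha_{a,b}$ with $a<b$ lowers the coordinates $a$ and $b$ by $1$ and raises the coordinates $a-1$ and $b+1$ by $1$. The case $h=2$ is trivial to include, since then only $r=1$ occurs.

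\emph{Step 1: the weight $\nu_0$.} We have $P^{h-1}_{j,h}=\OP{i+1,h}\cdots\OP{j,h}$, so the operator $\OP{j-s,h}$ subtracts $\alpha_{\varrho,j-s}$ with $\varrho\equiv i-s \pmod h$ and $\varrho\le i-s<j-s$. Its effect is $-1$ at $j-s$, $+1$ at $j-s+1$, and further changes only at positions $\varrho-1,\varrho\le i$. Telescoping over $s=0,\dots,h-2$ gives the following description of $\nu_0$. It agrees with $\lambda$ at all positions $t\ge i+2$ except $t=j+1$, where $\pint{\nu_0}{\alpha_{j+1}}=\pint{\lambda}{\alpha_{j+1}}+1$. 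At position $i+1$ it equals $\pint{\lambda}{\alpha_{i+1}}-1$. All other changes occur at positions $\le i$.

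\emph{Step 2: inductive invariant.} For $1\le r\le h-1$ I claim that $\nu_{r-1}$ satisfies three conditions:
\begin{itemize}
\item[(i)] it agrees with $\lambda$ at every position $t\ge i+1$ other than $i+r$ and $j+r$;
\item[(ii)] its coordinate at $i+r$ equals $\pint{\lambda}{\alpha_{i+r}}-1$;
\item[(iii)] its coordinate at $j+r$ equals $\pint{\lambda}{\alpha_{j+r}}+1\ge1$.
\end{itemize}
The case $r=1$ is Step 1. Because $r\le h-1$, we have $i+r\le j-1<j+1\le j+r$, so the two special positions never collide.

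Now compute $\vartheta_{i+r,h}(\nu_{r-1})$. The candidates are the $k\equiv i+r \pmod{h-1}$ with $k>i+r$, and the smallest of these is $i+r+(h-1)=j+r$. Subtracting $\alpha_{i+r+1,j+r}$ acts as follows: it raises the coordinate at $i+r$ back to $\pint{\lambda}{\alpha_{i+r}}\ge0$, lowers the coordinate at $j+r$ to $\pint{\lambda}{\alpha_{j+r}}\ge0$, lowers the coordinate at $i+r+1$ by $1$, and raises the coordinate at $j+r+1$ by $1$. $R$-dominance at position $i+r$ and height $h$ only tests the positions $i+r,\ j+r,\ j+r+(h-1),\dots$. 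At each of these the new weight is $\ge0$, by dominance of $\lambda$ and the unchanged coordinates beyond $j+r$ (the $+1$ at $j+r+1$ only helps). Hence $\vartheta_{i+r,h}(\nu_{r-1})=j+r$.

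Two side points need care here. First, if $j+r>n$ there would be no admissible $k$ at all, contradicting the hypothesis that $v^{h-1}_{j,h}(\lambda)$ is defined; so $j+r\le n$. Second, $\nu_r=\nu_{r-1}-\alpha_{i+r+1,j+r}$ must satisfy the invariant for $r+1$. Indeed, positions $i+r$ and $j+r$ are restored to $\lambda$'s values, position $i+r+1$ now equals $\pint{\lambda}{\alpha_{i+r+1}}-1$, and position $j+r+1$ equals $\pint{\lambda}{\alpha_{j+r+1}}+1$. This closes the induction.

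The main obstacle is Step 1. There one must check carefully that the left-hand perturbations produced by the $P$-operators, at the positions $\varrho-1$ and $\varrho$, never reach positions $\ge i+1$. This uses $\varrho\le i-s$, together with the edge case $\varrho=1$, where the coordinate $\varrho-1$ does not exist. One must also check that the telescoping in the middle range $i+2\le t\le j$ really cancels exactly. Once $\nu_0$ is described correctly, the $Q$-steps are a routine shift-by-one induction.
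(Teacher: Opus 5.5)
Your coordinate bookkeeping is correct for $h\ge 3$. However, your claim that the case $h=2$ is ``trivial to include'' is false.

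When $h=2$, the root you subtract at step $r=1$ is $\alpha_{i+2,j+1}=\alpha_{j+1}$, which is a simple root. The rule you quote from \Cref{remark producto interno} holds only for $\alpha_{a,b}$ with $a<b$. For a simple root, the $(j+1)$-coordinate drops by $2$, not $1$, and becomes $\langle\lambda,\alpha_{j+1}\rangle-1$.

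In fact the statement itself fails for $h=2$. Take $n=4$, $\alpha_{i,j}=\alpha_{1,2}$ and $\lambda=\varpi_1+\varpi_4$. Then $P_{2,2}(\lambda)=\lambda-\alpha_{1,2}=[0,-1,1,1]_{\varpi}$. Subtracting $\alpha_3$ gives $[0,0,-1,2]_{\varpi}\notin X^+_{2,2}(R)$, while subtracting $\alpha_{3,4}$ gives $0$. So $v^1_{2,2}(\lambda)=0$ is defined, but $\vartheta_1=4\neq j+1$.

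The paper's proof makes the same assumption silently. Its deduction of $\vartheta_1=j+1$ from $\langle P^{h-1}_{j,h}(\lambda),\alpha_{j+1}\rangle\ge 1$ needs the subtracted root to be non-simple. So the right fix is to add the hypothesis $h\ge 3$, not to claim the case $h=2$. Your propagation step also uses $i+r+1\neq j+r$ without saying so, which again is exactly $h\ge 3$.

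There is also a small slip: $r\le h-1$ gives $i+r\le j$, not $i+r\le j-1$. This is harmless, because $j<j+1\le j+r$ still keeps the two special positions apart.

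For $h\ge 3$ your route differs from the paper's. For $r=1$ the paper argues as you do in substance, using $\langle P^{h-1}_{j,h}(\lambda),\alpha_{j+1}\rangle=\langle\lambda,\alpha_{j+1}\rangle+1\ge 1$. For $r\ge 2$ it does not recompute coordinates. Instead it squeezes $\vartheta_r$ between the lower bound $j+r\le\vartheta_r$, coming from the congruence condition, and the upper bound $\vartheta_r\le\vartheta_{r-1}+1$ from \Cref{lem: desigualdad de varthetas}, and then inducts.

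Your explicit invariant replaces that lemma with a direct check that $k=j+r$ passes the $R$-dominance test. This has two advantages:
\begin{enumerate}
\item It is self-contained. \Cref{lem: desigualdad de varthetas} is stated without proof and for a dominant starting weight. But $P^{h-1}_{j,h}(\lambda)$ has $(i+1)$-coordinate $\langle\lambda,\alpha_{i+1}\rangle-1$, so applying that lemma here needs an adaptation you avoid.
\item It gives the explicit shape of every intermediate weight, in particular $v^{h-1}_{j,h}(\lambda)$. This is exactly what the paper uses later in \eqref{eq:S6B}--\eqref{eq:S6C}.
\end{enumerate}
The paper's version is shorter once the auxiliary lemma is granted.
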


\begin{proof}
    We begin by noticing that 
    \begin{equation}
        P^{h-1}_{j,h}(\lambda) = \lambda - \sum_{r=1}^{h-1}\alpha_{\varrho_{r},j-r+1},
    \end{equation}
    where $\varrho_{r} = \varrho_{j-r+1,h}(P^{r-1}_{j,h}(\lambda))$,  for $1\leq r\leq h-1$.
    Using this, we can compute the following: 
    \begin{equation}\label{eq: calculo ph-1}
        \pint{P^{h-1}_{j,h}(\lambda)}{\alpha_{j+1}} = \pint{\lambda}{\alpha_{j+1}} - \sum_{r=1}^{h-1} \pint{\alpha_{\varrho_{r},j-r+1}}{\alpha_{j+1}} = \pint{\lambda}{\alpha_{j+1}} + 1 \geq 1,
    \end{equation}
    where the last inequality follows since $\lambda \in X^+$.

   We recall that $\vartheta_1$ is, by definition, the minimal integer satisfying $P^{h-1}_{j,h}(\lambda)-\alpha_{i+1,\vartheta_1} \in X_{i+1,h}^+(R)$ and $\vartheta_1\equiv i+1 \mod h-1$.
   As $j+1 = i+1 + h-1$, \eqref{eq: calculo ph-1} implies that   $\vartheta_{1} = j+1$, which is our claim for the case $r=1$.

On the other hand,  \Cref{lem: desigualdad de varthetas} yields $\vartheta_{r} \leq \vartheta_{r-1}+1$ for all $2\leq r\leq h-1$.
Furthermore, by \cref{eq: varthetas en h-1},    $ i+r+h-1  \leq \vartheta_r$. 
By combining these two inequalities with the definition of $h$, we obtain 
\begin{equation}
    j+r\leq \vartheta_r\leq \vartheta_{r-1}+1. 
\end{equation}
Finally, an inductive argument with the equality $\vartheta_1=j+1$ yields $\vartheta_r=j+r$, as we wanted to show. 
\end{proof}

We now define the \textbf{degree} of the action of a $P$ or $Q$ operator on a weight $\lambda$.

\begin{definition}\rm  \label{def: degree of P and Q}
Let $\lambda \in X$, $1\leq j \leq n$ and $h \geq 2$. 
\begin{itemize}
    \item If $P_{j,h}(\lambda)$ is defined, then $\D{P,j,h}(\lambda)=d+1$, where $d$ is the unique integer satisfying  $\varrho_{j,h}(\lambda) = i - dh$.
    \item If $Q_{j,h}(\lambda)$ is defined, then $\D{Q,j,h}(\lambda)=d$, where $d$ is the unique integer satisfying $\vartheta_{j,h}(\lambda) = j + d(h-1)$.
\end{itemize}
We adopt the convention that \(\D{P,j,h}(\lambda)\) (resp. \(\D{Q,j,h}(\lambda)\)) is defined only when \(P_{j,h}(\lambda)\) (resp. \(Q_{j,h}(\lambda)\)) is.

\smallskip
\noindent
We refer to $\D{P,j,h}(\lambda)$ (resp. $\D{Q,j,h}(\lambda)$) as the degree of the operator $P_{j,h}$ (resp. $Q_{j,h}$) evaluated at $\lambda$.
\end{definition}

\begin{remark}\rm
Roughly speaking, $\D{P,j,h}(\lambda)$ measures the cost of passing from $\lambda$ to $P_{j,h}(\lambda)$ in the following sense. 
Recall that
$P_{j,h}(\lambda)=\lambda-\alpha_{\varrho,j}$,
where $\varrho =\varrho_{j,h}(\lambda)$. 
Thus, obtaining $P_{j,h}(\lambda)$ from $\lambda$ amounts to subtracting $\alpha_{\varrho,j}$. 
We can write 
\begin{equation} \label{descomponer raiz larga en suma de height h}
    \alpha_{\varrho ,j}=\sum_{k=0}^{d}\alpha_{i-kh,\,j-kh},
\end{equation}
with  $d$ as in \Cref{def: degree of P and Q}. 
Consequently, the degree corresponds to the number of summands in \cref{descomponer raiz larga en suma de height h}; equivalently, it quantifies how many roots of height $h$  must be subtracted from $\lambda $ to reach $P_{j,h}(\lambda)$.
Similar considerations hold for $\D{Q,j,h}$, with the role of $h$ replaced by $h-1$. 

Lastly, we have a notational remark. 
To simplify notation one might be tempted to write the degree as $D(P_{j,h}(\lambda))$ rather than $D(P_{j,h})(\lambda)$. 
We avoid this, because it would misleadingly suggest that we are assigning a degree to the weight $P_{j,h}(\lambda)$ itself, rather than to the ``passage''  $\lambda \to P_{j,h}(\lambda)$ encoded by $\D{P,j,h}(\lambda)$.
\end{remark}

We can extend the definition of degree for any composition of $P$ and $Q$ operators. 

\begin{definition} \rm
    Let $\lambda \in X$. 
    Let $Y=(Y_1,\ldots ,Y_{\ell})$ be a sequence of $P$ and $Q$ operators. 
    Suppose that $Y_{\ell} \cdots Y_1(\lambda)$ is defined.
    We set $\lambda_0=\lambda$ and $\lambda_k =Y_k\cdots Y_{1}(\lambda)$, for $1\leq k < \ell$.
    Then, we define the degree of the operator $Y$ evaluated at $\lambda$ is defined as
    \begin{equation}
        D(Y)(\lambda) = \sum_{k=1}^{\ell} D(Y_k)(\lambda_{k-1}). 
    \end{equation}
\end{definition}

We are now in position to state the decomposition mentioned at the beginning of this section.

\begin{proposition}\rm\label{prop: second version}
    Let $\lambda\in X^{+}$,  $\alpha_{i,j}\in \Phi^{\geq 2}$ and $h=j-i+1$ .  Let $k$ be an integer with $1\leq k <h$ satisfying  $\langle \lambda , \alpha_r \rangle  =0$, for all $j-k+1\leq r \leq j$. Then,  we have    
    \begin{equation}\label{eq: second version}
     \!  \bfM_{\lambda}^{\succeq \alpha_{i,j}}\! \!  = \! \bfM_{\lambda}^{\succ \alpha_{i,j}} \!  - q^{D(P^{k}_{j,h})(\lambda)+1} \bfM_{P^k_{j,h}(\lambda) - \alpha_{i-k,j-k}}^{\succ \alpha_{i,j}}\! \!  -  \displaystyle\sum_{r=1}^{k} q^{D(v^{r}_{j,h})(\lambda)} \! \left( \bfM_{v^{r}_{j,h}(\lambda)}^{\succ \alpha_{i,j}}\!  -\!  q\bfM_{v^{r}_{j,h}(\lambda)-\alpha_{i-r,j-r}}^{\succ \alpha_{i,j}}\right).
    \end{equation}
\end{proposition}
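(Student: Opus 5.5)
We plan to argue by induction on $k$, with the starting point
\[
\bfM_{\lambda}^{\succeq \alpha_{i,j}}=\bfM_{\lambda}^{\succ \alpha_{i,j}}-q\,\bfM_{\lambda-\alpha_{i,j}}^{\succ \alpha_{i,j}} .
\]
The whole difficulty is the non-dominant term $\bfM_{\lambda-\alpha_{i,j}}^{\succ\alpha_{i,j}}$. Since $\pint{\lambda}{\alpha_j}=0$, the weight $\lambda-\alpha_{i,j}$ has coordinate $-1$ at position $j$. It may also have a $-1$ at position $i-1$ when $\pint{\lambda}{\alpha_{i-1}}=0$; this is where the $P$-operator enters.

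The first step is to move the left defect. Suppose $\pint{\lambda}{\alpha_t}=0$ on the blocks $[i-mh,i-mh]$ that precede $\varrho=\varrho_{j,h}(\lambda)$; \Cref{lem: ceros por Ps} guarantees these zeros. Then \Cref{Claim: free P}, or rather its variant for a $-1$ produced at $i-1$, lets us replace $\lambda-\alpha_{i,j}$ by $P_{j,h}(\lambda)=\lambda-\alpha_{\varrho,j}$ at the cost of $q^{D(P_{j,h})(\lambda)-1}$. We take $K=\Phi^{\succ\alpha_{i,j}}$, which is $T$-congruent to itself, so all hypotheses are automatic. If $\varrho$ does not exist, the same lemma returns $0$ and the $P$-terms are simply omitted. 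This gives
\[
q\,\bfM_{\lambda-\alpha_{i,j}}^{\succ\alpha_{i,j}}=q^{D(P_{j,h})(\lambda)}\,\bfM_{P_{j,h}(\lambda)}^{\succ\alpha_{i,j}} .
\]

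The second step treats the right defect of $\mu=P_{j,h}(\lambda)$, namely $\pint{\mu}{\alpha_j}=-1$. We apply \Cref{lem: desc admissible set.} with $b$ the position of $j$, choosing an admissible sequence running from $i-1$ through $j$ to $j+1$. This splits $\bfM_\mu$ into three pieces. The first is a term in which $\alpha_{j+1,\cdot}$ is subtracted; iterating \Cref{Claim: free Q} (the $R$-direction solving) converts it into $q^{D(Q_{j,h})(\mu)}\bfM_{Q_{j,h}P_{j,h}(\lambda)}=q^{D(v^1_{j,h})(\lambda)}\bfM_{v^1_{j,h}(\lambda)}$, after also using \Cref{lem: ceros por Qs}. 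The second is a term in which the defect has been pushed left to position $j-1$; it has the form $\bfM_{\mu-\alpha_{i-1,j-1}}$ plus a correction. The third is the cross term $\bfM_{v^1_{j,h}(\lambda)-\alpha_{i-1,j-1}}$ with one extra factor of $q$. The signs should match \eqref{eq: second version}: the pair $\bfM_{v^1}-q\bfM_{v^1-\alpha_{i-1,j-1}}$ appears with a minus sign, and a residual term $-q^{D(P_{j,h})(\lambda)+1}\bfM_{P_{j,h}(\lambda)-\alpha_{i-1,j-1}}$ remains. When $k=1$ this residual term is exactly the $P^1$-term, which finishes the base case.

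For the inductive step, we use the hypothesis $\pint{\lambda}{\alpha_{j-1}}=0$ together with the fact that $P_{j,h}$ raised coordinate $j-1$ by one. Under these conditions the residual weight $P_{j,h}(\lambda)-\alpha_{i-1,j-1}$ has the same shape relative to the shifted root $\alpha_{i-1,j-1}$ as $\lambda-\alpha_{i,j}$ had relative to $\alpha_{i,j}$. We therefore repeat the two steps with $(i,j)$ replaced by $(i-1,j-1)$, but keep the superscript $\Phi^{\succ\alpha_{i,j}}$. At the $r$-th iteration this produces $P_{j-r+1,h}$ and then the string $Q_{j-r+1}\cdots Q_j$, i.e.\ $v^r_{j,h}(\lambda)$. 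The degrees add up as $D(v^r_{j,h})(\lambda)$, since each application of the solving lemmas contributes one power of $q$ per subtracted root of height $h$ or $h-1$.

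The main obstacle is bookkeeping the superscript. The solving lemmas are stated for $Y=K\cup\{\alpha_{r-h,r-1}\}$ with $K\sim_T\Phi^{\succ\alpha_{i,j}}$, whereas at iteration $r$ we need to work relative to the shifted root $\alpha_{i-r,j-r}$ while the target superscript stays $\Phi^{\succ\alpha_{i,j}}$. My first approach is to use \Cref{co: bk corollary} to pass between $\Phi^{\succ\alpha_{i,j}}$ and a set $Y_d$ that is congruent to $\Phi^{\succ\alpha_{i-r,j-r}}$ on the relevant $T$. That passage needs the vanishing coordinates supplied by \Cref{lem: ceros por Ps} and \Cref{lem: ceros por Qs}. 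After solving, we convert back. The condition $k<h$ should be precisely what keeps the $Q$-blocks from overlapping the $P$-blocks, so that the congruence conditions hold.
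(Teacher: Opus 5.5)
Your overall architecture matches the paper's. You induct on $k$ starting from $\bfM_{\lambda}^{\succeq\alpha_{i,j}}=\bfM_{\lambda}^{\succ\alpha_{i,j}}-q\bfM_{\lambda-\alpha_{i,j}}^{\succ\alpha_{i,j}}$. A left step turns $\lambda_s-\alpha_{i-s,j-s}$ into $\lambda_{s+1}=P^{s+1}_{j,h}(\lambda)$, and a right step produces $v^{s}_{j,h}(\lambda)$ and the pair. However, the inductive step, which you yourself flag as the main obstacle, is set up in a way that fails, and you are missing the idea that makes it trivial.

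The failure: working ``relative to the shifted root $\alpha_{i-1,j-1}$'' while keeping the superscript $\Phi^{\succ\alpha_{i,j}}$ breaks the congruence hypothesis $K\sim_T\Phi^{\succ\alpha_{i-1,j-1}}$ of the solving lemmas. For the left step, for instance, $i-1\in T$. The root $\alpha_{i,j}$ lies in $\Phi^{\succ\alpha_{i-1,j-1}}$ and is moved by $s_{i-1}$, but it is not in $K=\Phi^{\succ\alpha_{i,j}}$.

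The missing idea: the paper never shifts the reference root. It keeps $\alpha_{i,j}$ and $K=\Phi^{\succ\alpha_{i,j}}$ fixed and moves only the position parameters.
\begin{itemize}
\item The left step at stage $s$ is \Cref{lem: free P} with $p=p'=i-s$. This is allowed since $s\le h-2$. The zeros on $\bigcup_m[i-s-mh,i-mh]$ come from \Cref{lem: ceros por Ps} with $k=s+1$.
\item The right step at stage $s$ starts from the tautology
\[
\bfM^{\succ\alpha_{i,j}}_{\lambda_s}=\bfM^{Y_s}_{\lambda_s}+q\,\bfM^{\succ\alpha_{i,j}}_{\lambda_s-\alpha_{i-s,j-s}},\qquad Y_s=\Phi^{\succ\alpha_{i,j}}\cup\{\alpha_{i-s,j-s}\}.
\]
Here $Y_s$ is literally the set $K\cup\{\alpha_{r-h,r-1}\}$ of \Cref{lem: right solving} and \Cref{Claim: free Q}, with $r=j-s+1\in[i+1,j]$.
\item Iterating these lemmas along $Q_{j-s+1},\dots,Q_j$ is \Cref{lem: pre completar v}. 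It gives $\bfM^{Y_s}_{\lambda_s}=q^{D(Q^{-s}_{j-s+1,h})(\lambda_s)}\bfM^{Y_s}_{v^s_{j,h}(\lambda)}$, or $0$. Expanding $\bfM^{Y_s}_{v^s_{j,h}(\lambda)}$ gives exactly the good pair.
\end{itemize}
So no superscript bookkeeping is needed. \Cref{co: bk corollary} only enters inside the proofs of those lemmas, and $k<h$ is exactly what keeps $p=i-s$ and $r=j-s+1$ in their admissible ranges.

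Your Step 2 via \Cref{lem: desc admissible set.} is a detour that does not work term by term.
\begin{itemize}
\item When $j+h-1\le n$, the admissible sequence must be $(i-1,j,j+h-1)$, since $\alpha_{j,j+h-1}$ has to be removed to get $s_j$-invariance. For $h\ge 3$ a sequence ending at $j+1$ is not admissible, because $\alpha_{j,j+1}\notin\Phi^{\succ\alpha_{i,j}}$.
\item Of the three resulting terms, the one with $\alpha_{j+1,j+h-1}$ subtracted cannot be fed to \Cref{Claim: free Q} on its own, because that lemma only handles superscripts of the form $K\cup\{\alpha_{r-h,r-1}\}$. It must first be recombined with the cross term into $q\,\bfM^{Y_1}_{\mu-\alpha_{j+1,j+h-1}}$. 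That is just the first move of right solving applied to $\bfM^{Y_1}_\mu$.
\end{itemize}

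When $k=h-1$, the last right step has $r=i+1$; this includes the whole case $h=2$. That lies outside the range $i+1<r$ of \Cref{Claim: free Q}, so you need \Cref{Claim: free Q2}. Its extra input comes from $\pint{P^{h-1}_{j,h}(\lambda)}{\alpha_{j+1}}=\pint{\lambda}{\alpha_{j+1}}+1\ge 1$, which forces $\vartheta_r=j+r$ (\Cref{lem: thetas para el caso h-1}).

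There are also two computational slips.
\begin{itemize}
\item Subtracting $\alpha_{i,j}$ lowers coordinate $i$ and raises coordinate $i-1$. So the left defect sits at $i$ (when $\pint{\lambda}{\alpha_i}=0$), not at $i-1$.
\item $P_{j,h}$ does not change coordinate $j-1$. The ``same shape'' of $P_{j,h}(\lambda)-\alpha_{i-1,j-1}$ holds for a different reason. Since $\pint{P_{j,h}(\lambda)}{\alpha_{j-1}}=\pint{\lambda}{\alpha_{j-1}}=0$, subtracting $\alpha_{i-1,j-1}$ creates the new $-1$ at $j-1$. It also cancels the $-1$ of $P_{j,h}(\lambda)$ at $j$.
\end{itemize}
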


% \David{No s\'e si este convencion la hicimos antes}
\begin{remark}\rm\label{remark operator not defined}
       We emphasize that if any $P$-operator or $v^r$-operator is not defined, the corresponding $\bfM$-element is set  equal to zero. 
\end{remark}

%%%%%%%%%%%%%%%%%%%%%%%%%%%%%%%%%%%%%%%%%%%%%%%%%%%%%%%%%%%%%%%%%%%%%%%%%%%%%%%%%%%%%%%
\subsection{Identities involving $P$ and $Q$ operators}

In this section, building on the results of \Cref{section identities to prove first inverse decomposition}, we develop the tools required to prove \Cref{prop: second version}. 
That proof is presented at  the end of this section.

\begin{lemma}\rm \label{lem: free Q1} 
    Let $K \subset \Phi^{\geq 2}$, $\alpha_{i,j} \in \Phi^{\geq 2}$, and  $h = j - i + 1$. 
    Let $(r,p)$ be integers such that $i + 1 < r \leq p \leq j$  and $h+1 \leq r$. 
    Let $k\geq 0$ be  such that $p'\coloneqq p+k(h-1) \leq n$.
    Let  $Y = K \cup \{\alpha_{r-h, r-1}\}$ and $\lambda \in X^+_{p'+h-1,h}(R)$. 
\begin{enumerate}[(a)]
    \item   \label{item a} Suppose that $\OQ{p',h}(\lambda)$ is defined and let  $\vartheta \coloneqq \vartheta_{p',h}(\lambda)$ be the integer associated to $\OQ{p',h}(\lambda)$. 
    
    We define $d$ to be the unique integer satisfying $\vartheta = p +(d+1)(h-1)$.
    \item  \label{item b} Suppose that  $\OQ{p',h}(\lambda)$ is not defined.
    
    We define $d$ to be  the largest  integer such that $p + d(h - 1) \leq n$. 
\end{enumerate}
    In both cases we define
    $$T = \bigcup_{m = 0}^{d} [r + m(h - 1), p + m(h - 1)],$$
    and assume that $\pint{\lambda}{\alpha_t} = 0$ for all $t \in T \setminus \{p'\}$ and $\pint{\lambda}{\alpha_{p'}} = -1$. 
    Then, if $K \sim_{T} \Phi^{\succ \alpha_{i,j}}$, we have
    \begin{equation}
        \bfM_{\lambda}^{Y} = \left\{ \begin{array}{ll}
         q^{\D{Q,p',h}(\lambda)}\bfM_{Q_{p',h}(\lambda)}^{Y},    & \mbox{if } \OQ{p',h}(\lambda) \mbox{ is defined;}    \\
          0,    & \mbox{if } \OQ{p',h}(\lambda) \mbox{ is not defined.} 
        \end{array} \right.
    \end{equation}
\end{lemma}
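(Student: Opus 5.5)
The plan is to reduce both cases to results already proved, namely \Cref{lem: right solving} and \Cref{Claim: free Q}. The only real work is to translate the definitions of $\vartheta$ and $\D{Q,p',h}$ into the index bookkeeping those lemmas require. Note that every $t\in T$ is a position at which $\langle\lambda,\alpha_t\rangle=0$, except $t=p'$ where it is $-1$. In particular, $\langle\lambda,\alpha_{p+m(h-1)}\rangle=0$ for all $0\le m\le d$ with $m\neq k$. The hypothesis $h+1\le r$ gives $r\ge h$, so both cited lemmas are applicable as far as that requirement goes.

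\emph{Case \ref{item a}.} Here $\vartheta=p+(d+1)(h-1)=p'+(d-k+1)(h-1)$. Since $\vartheta>p'$ we get $d\ge k$, and since $\vartheta\le n$ we get $p+(d+1)(h-1)\le n$. By \Cref{def: degree of P and Q}, $\D{Q,p',h}(\lambda)=d-k+1$ and $Q_{p',h}(\lambda)=\lambda-\alpha_{p'+1,\,p'+(d-k+1)(h-1)}$.
\begin{itemize}
\item If $d\ge1$, \Cref{Claim: free Q} applied with this $T$, $k$ and $d$ gives exactly
\[
\bfM_\lambda^Y=q^{d-k+1}\bfM^Y_{\lambda-\alpha_{p'+1,p'+(d-k+1)(h-1)}},
\]
which is the claim.
\item If $d=0$, then $k=0$. \Cref{lem: right solving} with $d=0$ (its first case, since $p+h-1=\vartheta\le n$) gives $\bfM^Y_\lambda=q\,\bfM^Y_{\lambda-\alpha_{p+1,p+h-1}}$, again the claim.
\end{itemize}

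\emph{Case \ref{item b}.} First I extract what non-definedness of $Q_{p',h}(\lambda)$ means. For $k'\equiv p'\pmod{h-1}$ with $p'<k'\le n$, consider $\lambda-\alpha_{p'+1,k'}$.
\begin{itemize}
\item Its pairing with $\alpha_{p'}$ is $0$.
\item Its pairing with $\alpha_{k'}$ is $\langle\lambda,\alpha_{k'}\rangle-1$.
\item The other relevant positions are unchanged or increased.
\end{itemize}
Since $\lambda\in X^+_{p'+h-1,h}(R)$, the first $k'$ with $\langle\lambda,\alpha_{k'}\rangle\ge1$ would make $Q$ defined. Hence $\langle\lambda,\alpha_{p'+m(h-1)}\rangle=0$ for all $m\ge1$ with $p'+m(h-1)\le n$, which is consistent with $T$. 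Also $k\le d$ because $p'\le n$.

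I then prove $\bfM^Y_\lambda=0$ by induction on $d-k$.
\begin{itemize}
\item For $k=d$, apply \Cref{lem: right solving} with $T$ itself. Since $p+(d+1)(h-1)>n$ by maximality of $d$, we land in the ``otherwise'' case, so $\bfM^Y_\lambda=0$.
\item For $k<d$, apply \Cref{lem: right solving} with $T_k=\bigcup_{m=0}^{k}[r+m(h-1),p+m(h-1)]\subseteq T$, so $K\sim_{T_k}\Phi^{\succ\alpha_{i,j}}$. Since $p'+h-1\le p+d(h-1)\le n$, this gives $\bfM^Y_\lambda=q\,\bfM^Y_\mu$ with $\mu=\lambda-\alpha_{p'+1,p'+h-1}$.
\end{itemize}
The weight $\mu$ has pairing $0$ at $p'$ and $-1$ at $p'+h-1$, and is unchanged on the rest of $T$ (the roots touched lie outside the intervals of $T$). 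It still satisfies the vanishing established above at later positions $\equiv p'\pmod{h-1}$. So the induction hypothesis for $k+1$ applies to $\mu$ and gives $\bfM^Y_\mu=0$.

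The main obstacle is this bookkeeping in case \ref{item b}: checking that $\mu$ satisfies all hypotheses of the inductive step, including $\langle\mu,\alpha_t\rangle=0$ on $T\setminus\{p'+h-1\}$, the relevant $R$-dominance, and that $Q_{p'+h-1,h}(\mu)$ is still undefined. This needs care at the endpoints of the intervals of $T$, where $\alpha_{p'+1,p'+h-1}$ changes pairings at $p'$ and $p'+h$. If that check becomes awkward, I would instead avoid re-invoking the lemma and iterate \Cref{lem: right solving} directly on the explicit chain of weights $\lambda-\alpha_{p'+1,p'+m(h-1)}$, exactly as in the proof of \Cref{Claim: free Q}.
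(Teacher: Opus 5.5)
Your proof is correct and follows the paper's argument. Case (a) is a direct application of \Cref{Claim: free Q}, and your separate treatment of $d=0$ via \Cref{lem: right solving} is a harmless extra precaution. In case (b) the paper packages your one-interval-at-a-time iteration into a single application of \Cref{Claim: free Q} with $d-1$, followed by the vanishing case of \Cref{lem: right solving}.

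The bookkeeping you flag does go through. Since $p-r\le h-3$, the positions $p'+1$ and $p'+h$ lie outside $T$. The zeros at the later right endpoints $p+m(h-1)$ are already hypotheses on $T$, so $Q_{p'+h-1,h}(\mu)$ is automatically undefined.
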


\begin{proof}
We first address case \ref{item a}.  
That is, we assume that $\OQ{p',h}(\lambda)$ is defined. 
By definition, $\OQ{p',h}(\lambda) = \lambda - \alpha_{p'+1,\vartheta}$, and $\lambda - \alpha_{p'+1,\vartheta}\in X^{+}_{p',h}$
% is $R$-dominant at position $p'$ and height $h$. 
Moreover, by the definition of $\D{Q,p',h}(\lambda)$, we obtain 
\begin{equation}
    \D{Q,p',h}(\lambda) = \frac{\vartheta -p'}{h-1} = d - k + 1.
\end{equation}  
Applying \Cref{Claim: free Q} to $K$, $\alpha_{i,j}$, $(r,p,k)$ and $d$, it follows that 
\begin{equation}
    \bfM_{\lambda}^Y = q^{d-k+1} \bfM_{\lambda - \alpha_{p'+1,\vartheta}}^{Y} = q^{\D{Q,p',h}(\lambda)}\bfM_{Q_{p',h}(\lambda)}^{Y},
\end{equation}

We now treat case \ref{item b}.  
Thus, we assume that $\OQ{p',h}(\lambda)$ is not defined. 
By definition, $k \leq d$. If $k = d$, then the maximality of $d$ and the second case of \eqref{eq: M element with a -1 in p'} in \Cref{lem: right solving} imply that $\bfM_{\lambda}^{Y} = 0$.

Thus, we may assume that $k < d$.  
In this situation, we apply \Cref{Claim: free Q} to $K$, $\lambda$, $k$, and $d - 1$, yielding $\bfM_{\lambda}^{Y} = q^{d-k}\bfM_{\mu}^{Y}$, where $\mu = \lambda - \alpha_{p'+1,p'+(d-k)(h-1)}$ and $p'+(d-k)(h-1) = p+d(h-1)$ .  
Observe that via a simple calculation we have that  $\pint{\mu}{\alpha_{p+d(h-1)}} = -1$ and $\pint{\mu}{\alpha_{t}} = 0$ for $t \in T\setminus \{p + d(h-1)\}$.

By the maximality of $d$ and the second case of  \Cref{lem: right solving} applied to $K$, $\alpha_{i,j}$, $(r,p)$, $\mu$ and $T$, we obtain $\bfM_{\mu}^{Y} = 0$. 
Therefore, $\bfM_{\lambda}^Y = 0$, as required.
\end{proof}

\begin{lemma}\rm \label{lem: free Q2}
Let $K \subset \Phi^{\geq 2}$, $\alpha_{i,j} \in \Phi^{\geq 2}$ and let $h = j - i + 1$.
Suppose that $1 \leq i+1 - h$.
Let $p$ be an integer such that $i + 1 \leq p \leq j$. 
Let $k\geq 0$ be such that $p'\coloneqq p+k(h-1) \leq n$. 
Let $Y = K \cup \{\alpha_{i-h+1, i}\}$ and  $\lambda \in X_{p'+h-1,h}^+(R)$.

\begin{enumerate}[(a)]
    \item  \label{item a Q} Suppose that $\OQ{p',h}(\lambda)$ is defined. 
    Let $\vartheta\coloneqq \vartheta_{p',h}(\lambda)$ be the integer associated to $\OQ{p',h}(\lambda)$.  
    
    We define $d$ to be the unique integer satisfying $\vartheta = p+ (d+1)(h-1)$.
    
    \item  \label{item b Q} Suppose that $\OQ{p',h}(\lambda)$ is not defined.
    
    We define $d$ to be the largest integer such that $p + d(h - 1) \leq n$.
\end{enumerate}

In both cases we define
\begin{equation}
    T = \left( \bigcup_{m = 0}^{d-1} [i+1 + m(h - 1), j + m(h - 1)] \right) \cup [i+1+d(h-1),p+d(h-1)] 
\end{equation}
and assume that $\pint{\lambda}{\alpha_t} = 0$ for all $t \in T \setminus \{p'\}$, $\pint{\lambda}{\alpha_{p'}} = -1$.

Furthermore, if $k<d$ we also assume that $\pint{\lambda}{\alpha_{p'+1}} =1$.
If  $K \sim_{T} \Phi^{\succ \alpha_{i,j}}$  then we have
\begin{equation}
        \bfM_{\lambda}^{Y} = \left\{ \begin{array}{ll}
         q^{\D{Q,p',h}(\lambda)}\bfM_{Q_{p',h}(\lambda)}^{Y},    & \mbox{if } \OQ{p',h}(\lambda) \mbox{ is defined;}    \\
          0,    & \mbox{if } \OQ{p',h}(\lambda) \mbox{ is not defined.} 
        \end{array} \right.
\end{equation}
\end{lemma}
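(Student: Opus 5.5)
The plan is to mirror the proof of \Cref{lem: free Q1}, with \Cref{Claim: free Q2} playing the role that \Cref{Claim: free Q} and \Cref{lem: right solving} play there. The set $T$ in the statement is exactly the set appearing in \Cref{Claim: free Q2}, and $Y=K\cup\{\alpha_{i-h+1,i}\}$ is the set used there. The hypothesis $1\leq i+1-h$ is the condition $i\geq h$ of \Cref{Claim: free Q2}. The extra hypothesis $\pint{\lambda}{\alpha_{p'+1}}=1$ when $k<d$ is precisely the additional assumption required in that lemma. So the work is bookkeeping: identify the exponent with the degree, and handle the undefined case by truncating $d$.

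\textbf{Case (a): $\OQ{p',h}(\lambda)$ is defined.} Then $\OQ{p',h}(\lambda)=\lambda-\alpha_{p'+1,\vartheta}$ with $\vartheta=p+(d+1)(h-1)\le n$. By \Cref{def: degree of P and Q},
\[
\D{Q,p',h}(\lambda)=\frac{\vartheta-p'}{h-1}=d-k+1 .
\]
Since $\vartheta\le n$, we are in the first alternative of \Cref{Claim: free Q2}, namely $p+(d+1)(h-1)\le n$. That lemma, applied to $K,\alpha_{i,j},p,d,k$, gives
\[
\bfM^Y_\lambda=q^{d-k+1}\bfM^Y_{\lambda-\alpha_{p'+1,p'+(d-k+1)(h-1)}},
\]
and $p'+(d-k+1)(h-1)=\vartheta$. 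One check is needed: \Cref{Claim: free Q2} requires $d$ positive and $k\le d$. Since $\vartheta>p'$, we get $d-k+1\ge1$. If $d=0$ then $k=0$, and the statement reduces to the single-block case. That case should follow from \Cref{lem: right solving}-type reasoning with $(r,p)=(i+1,p)$; I expect the $d=0$ instance of \Cref{Claim: free Q2}'s proof to cover it verbatim.

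\textbf{Case (b): $\OQ{p',h}(\lambda)$ is not defined.} Here $d$ is maximal with $p+d(h-1)\le n$, so $p+(d+1)(h-1)>n$.
\begin{itemize}
\item If $k=d$, the second alternative of \Cref{Claim: free Q2} gives $\bfM^Y_\lambda=0$ directly.
\item If $k<d$, first apply \Cref{Claim: free Q2} with $d-1$ in place of $d$. This is allowed because the corresponding $T$ is contained in the given $T$, so $K$ is still $T$-congruent, and $p+d(h-1)\le n$. It yields $\bfM^Y_\lambda=q^{d-k}\bfM^Y_\mu$ with $\mu=\lambda-\alpha_{p'+1,p+d(h-1)}$.
\end{itemize}

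Next I would verify, using \Cref{remark producto interno}, that $\mu$ satisfies the hypotheses of \Cref{Claim: free Q2} for the full $T$ with $k$ replaced by $d$. That is, I need $\pint{\mu}{\alpha_{p+d(h-1)}}=-1$ and $\pint{\mu}{\alpha_t}=0$ for all other $t\in T$. The vanishing at intermediate positions holds because subtracting $\alpha_{p'+1,p+d(h-1)}$ only alters the coordinates $p'$, $p'+1$, $p+d(h-1)$ and $p+d(h-1)+1$. The hypothesis $\pint{\lambda}{\alpha_{p'+1}}=1$ makes the new coordinate at $p'+1$ equal to $0$, and the coordinate at $p'$ becomes $0$ as well. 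At $p+d(h-1)$ the coordinate drops from $0$ to $-1$. No extra hypothesis at $p'+1$ is needed now, since $k=d$ for $\mu$. The second alternative of \Cref{Claim: free Q2} then gives $\bfM^Y_\mu=0$, hence $\bfM^Y_\lambda=0$.

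\textbf{Main obstacle.} The delicate point is this coordinate check in case (b). The block structure of $T$ is irregular: the last block $[i+1+d(h-1),p+d(h-1)]$ is truncated. I must make sure $p'+1$ lies in $T$ exactly when the hypothesis $\pint{\lambda}{\alpha_{p'+1}}=1$ is needed, so that this coordinate does not break the zero pattern. I must also confirm that $\lambda\in X^+_{p'+h-1,h}(R)$ is consistent with how $d$ was defined via $\vartheta$ in case (a). I would handle this by writing $\langle\mu,\alpha_t\rangle$ explicitly for $t\in\{p',p'+1,p+d(h-1)\}$ and checking each against the definition of $T$.
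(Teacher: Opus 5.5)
Your proposal is correct and is essentially the paper's own proof. The paper proves this lemma by mimicking the proof of \Cref{lem: free Q1} with \Cref{Claim: free Q2} in place of \Cref{Claim: free Q}, with the vanishing alternative of \Cref{Claim: free Q2} doing the job of the second case of \Cref{lem: right solving}, which is exactly what you do in both cases.

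The coordinate check you flag as the main obstacle is harmless. Consecutive blocks of $T$ are adjacent, so $T=[i+1,p+d(h-1)]$ and your computation of $\pint{\mu}{\alpha_t}$ is valid as written. In fact $p'+1\in T\setminus\{p'\}$ whenever $k<d$, so the literal hypothesis $\pint{\lambda}{\alpha_{p'+1}}=1$ contradicts the vanishing on $T\setminus\{p'\}$, and that subcase is vacuous as stated.
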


\begin{proof}
   The result follows by mimicking the proof of  \Cref{lem: free Q1},  using \Cref{Claim: free Q2} instead of \Cref{Claim: free Q}.
\end{proof}

\begin{lemma}\rm\label{lem: free P}  
Let $K \subset \Phi^{\geq 2}$, $\alpha_{i,j} \in \Phi^{\geq 2}$ and $h = j - i + 1$. 
Let $p$ be an integer such that $i -h +1< p \leq i$. 
Let $k\geq 0$ be such that $p'\coloneqq p-kh\geq 1$. 
Let $\lambda\in X_{p'-h,h}^+(L)$.
\begin{enumerate}[(a)]
    \item \label{item a P} Suppose that $P_{p'-1,h}(\lambda)$ is defined and let $\varrho= \varrho_{p'-1,h}(\lambda)$ be the integer associated to $P_{p'-1,h}(\lambda)$.
    
    We define $d$ to be the unique integer satisfying $\varrho = p-(d+1)h$.
 \item \label{item b P} Suppose that $P_{p'-1,h}(\lambda)$ is not defined.
    
     We define $d$ to be the largest integer such that $p-dh\geq 1$.
 \end{enumerate}
In both cases, we define $\displaystyle T=\bigcup_{m=0}^{d}[p-mh,i-mh]$
and assume that $\pint{\lambda}{\alpha_{t}}=0$ for all $t\in T\setminus\{p'\}$, $\pint{\lambda}{\alpha_{p'}}=-1$. 

If  $K \sim_{T} \Phi^{\succ \alpha_{i,j}}$ then we have
   \begin{equation}
        \bfM_{\lambda}^{K} = \left\{ 
        \begin{array}{ll}
         q^{\D{P,p'-1,h}(\lambda)}\bfM_{P_{p'-1,h}(\lambda)}^{K},    &  \mbox{if } P_{p'-1,h}(\lambda)  \mbox{ is defined;} \\
           0,  & \mbox{if } P_{p'-1,h}(\lambda)  \mbox{ is  not defined.}
        \end{array}
        \right.
   \end{equation}
\end{lemma}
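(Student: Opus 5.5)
This is the $P$-analogue of \Cref{lem: free Q1}. I will reduce it to \Cref{Claim: free P} and \Cref{lem: left solving}, mirroring the proof of \Cref{lem: free Q1} with rows moving leftward by steps of $h$ instead of rightward by steps of $h-1$. The plan treats cases \ref{item a P} and \ref{item b P} separately.

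\textbf{Case \ref{item a P}.} Here $P_{p'-1,h}(\lambda)$ is defined. By \Cref{def. p and q} applied with the root of height $h$ ending at $p'-1$, we have $P_{p'-1,h}(\lambda)=\lambda-\alpha_{\varrho,p'-1}$ with $\varrho\equiv p'-h \pmod h$. Since $\varrho=p-(d+1)h$ and $p'=p-kh$, \Cref{def: degree of P and Q} gives $\D{P,p'-1,h}(\lambda)=d-k+1$.

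If $k\le d$, then $\varrho=p'-(d-k+1)h$ and $\alpha_{\varrho,p'-1}=\alpha_{p'-(d-k+1)h,\,p'-1}$. The hypotheses of \Cref{Claim: free P} then hold with this $d$, this $T$ and this $k$:
\begin{itemize}
\item the vanishing and $-1$ conditions on $T$ are assumed;
\item $p-(d+1)h=\varrho\ge 1$;
\item $K\sim_T\Phi^{\succ\alpha_{i,j}}$ is assumed.
\end{itemize}
\Cref{Claim: free P} therefore yields $\bfM^K_\lambda=q^{d-k+1}\bfM^K_{P_{p'-1,h}(\lambda)}$, which is the claim.

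I must also rule out $k>d$. If $k=d+1$, then $\varrho=p'$, which is impossible since $\varrho\le p'-h$. The main obstacle is making the bookkeeping consistent at $k=0$, because \Cref{Claim: free P} requires $k\ge1$. When $k=0$ we have $p'=p$, and I would apply \Cref{lem: left solving} with $d=0$ once to get $q\,\bfM^K_{\lambda-\alpha_{p-h,p-1}}$. The new weight has $-1$ at $p-h$ and zeros elsewhere on $T$, using $\lambda\in X^+_{p'-h,h}(L)$ only to see that the process stops exactly at $\varrho$. I would then continue with \Cref{Claim: free P} at $k'=1$. This is the same telescoping as in the proof of \Cref{Claim: free P}.

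I also need to check that the $L$-dominance in the definition of $\varrho$ matches the stopping condition. After each application of \Cref{lem: left solving}, the weight has $\langle\cdot,\alpha_{p'-mh}\rangle$ equal to $-1$ at the current row and equal to $\langle\lambda,\alpha_\cdot\rangle\ge0$ further left. Maximality of $\varrho$ means we stop at the first row where subtracting gives an $L$-dominant weight. That is exactly the position beyond $T$, namely $p-(d+1)h$, where the hypothesis ``$\pint{\lambda}{\alpha_t}=0$ on $T$'' ends.

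\textbf{Case \ref{item b P}.} Here $P_{p'-1,h}(\lambda)$ is undefined and $d$ is maximal with $p-dh\ge1$.
\begin{itemize}
\item If $k=d$, the second case of \eqref{eq: left solving} in \Cref{lem: left solving} applies, since $p'-h=p-(d+1)h<1$. Hence $\bfM^K_\lambda=0$.
\item If $k<d$, apply \Cref{Claim: free P} with $d-1$ in place of $d$. This is legitimate because $p-dh\ge1$, and it gives $\bfM^K_\lambda=q^{d-k}\bfM^K_\mu$ with $\mu=\lambda-\alpha_{p-dh,\,p'-1}$.
\end{itemize}
A direct check using \Cref{remark producto interno} shows $\langle\mu,\alpha_{p-dh}\rangle=-1$ and that $\mu$ vanishes on the rest of $T$. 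Then \Cref{lem: left solving}, with the same $T$ and with $p-(d+1)h<1$, gives $\bfM^K_\mu=0$. Hence $\bfM^K_\lambda=0$, as required.
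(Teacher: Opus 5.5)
Your proof is correct and follows the paper's route. Case (a) is a direct application of \Cref{Claim: free P} with $D(P_{p'-1,h})(\lambda)=d-k+1$; case (b) splits into $k=d$ (second case of \Cref{lem: left solving}) and $k<d$ (\Cref{Claim: free P} with $d-1$, then \Cref{lem: left solving} applied to $\mu=\lambda-\alpha_{p-dh,p'-1}$).

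Your extra care with $k=0$, which \Cref{Claim: free P} formally excludes, fills a detail the paper glosses over. Note that your fix uses \Cref{lem: left solving} with $d=0$, whose proof goes through unchanged; the paper's own case $k=d=0$ relies on the same thing. The same patch is also needed for $k=0<d$ in case (b).
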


\begin{proof}
We first address case \ref{item a P}.  
Assume that $\OP{p'-1,h}(\lambda)$ is defined. 
By definition, $\OP{p'-1,h}(\lambda) = \lambda - \alpha_{\varrho,p'-1}$, and $\lambda - \alpha_{\varrho,p'-1} \in X_{p'-h+1,h}(L)$
% is $L$-dominant at position $p'$ and height $h$. 
Moreover, by the definition of $d$, we deduce that $\D{P,p'-1,h}(\lambda) = d - k + 1$.  
Applying \Cref{Claim: free P}, it follows that 
\begin{equation}
    \bfM_{\lambda}^K = q^{d-k+1} \bfM_{\lambda - \alpha_{\varrho,p'-1}}^{K}.
\end{equation}
% \David{NO hay $Y$ aqu\'i!! No deber\'ia ser $K$. Revisar en toda la demo.}

\medskip
We now consider case \ref{item b P}.  
Assume that $\OP{p'-1,h}(\lambda)$ is not defined. 
By definition, $k \leq d$. 
If $k = d$ then the maximality of $d$ and the second case of \eqref{eq: left solving} in \Cref{lem: left solving} applied to $K$, $\alpha_{i,j}$, $p$, $\lambda$ and $d$ imply that $\bfM_{\lambda}^{K} = 0$.
Thus, we may assume that $k < d$.  
In this setting we apply \Cref{Claim: free P} to $K$, $\alpha_{i,j}$, $\lambda$, $k$, and $d - 1$, yielding $\bfM_{\lambda}^{K} = q^{d-k}\bfM_{\mu}^{K}$,
% \David{Creo que aca es el mismo error que en el caso (b) del lemma 4.17. Revisar!!!} 
where $\mu = \lambda - \alpha_{p'-(d-k)h,p'-1}$ and $p'-(d-k)h = p-dh$.  
Observe that via a simple calculation we have that $\pint{\mu}{\alpha_{p-dh}} = -1$ and $\pint{\mu}{\alpha_{t}} = 0$ for $t \in T\setminus \{p - d(h-1)\}$.
By maximality of $d$ and the second case of \Cref{lem: left solving} applied to $K$, $\alpha_{i,j}$, $p$ $\mu$, $d$ and $T$ it follows that
\begin{equation}
    \bfM_{\mu}^{K} = 0.
\end{equation}
Therefore, $\bfM_{\lambda}^K = 0$, as required.
\end{proof}

\begin{lemma}\rm\label{lem: pre completar v}
    Let $\alpha_{i,j} \in \Phi^{\geq 2}$ and $h=j-i+1$.
    Let $r$ and $x$ be integers such that $i+1 \leq r \leq j$, $h+1\leq r$ and $1\leq x \leq j-r+1$. 
    Let  $\lambda\in X^+_{r+1}(R)$ be such that $\pint{\lambda}{\alpha_{t}} =0$ for $t\in [r+1,j]$, $\pint{\lambda}{\alpha_{r}} =-1$ and $\pint{\lambda}{\alpha_{j+1}} \geq 1$. 
    If  $Y= \Phi^{\succ \alpha_{i,j}}\cup \{\alpha_{r-h,r-1}\}$ then
        \begin{equation}\label{eq: pre completar v}
            \bfM_{\lambda}^{Y} = \left\{\begin{array}{ll}
                q^{D(Q^{-x}_{r,h})(\lambda)}\bfM_{Q^{-x}_{r,h}(\lambda)}^{Y}, &\mbox{if } Q^{-x}_{r,h}(\lambda) \mbox{ is defined}; \\
                0, & \mbox{if } Q^{-x}_{r,h}(\lambda)\mbox{ is not defined}.
            \end{array}\right.
        \end{equation}
\end{lemma}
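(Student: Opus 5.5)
We argue by induction on $x$, applying \Cref{lem: free Q1} once at each step. Set $\mu_0=\lambda$ and $\mu_s=Q^{-s}_{r,h}(\lambda)=Q_{r+s-1,h}(\mu_{s-1})$. We will show, for $s=0,1,\ldots,x$, that either $\bfM_\lambda^Y=0$ (and $Q^{-s}_{r,h}(\lambda)$ is undefined), or $\bfM_\lambda^Y=q^{D(Q^{-s}_{r,h})(\lambda)}\bfM_{\mu_s}^Y$. Since degrees add along compositions, the inductive step amounts to proving
\[
\bfM^Y_{\mu_{s-1}}=q^{\D{Q,r+s-1,h}(\mu_{s-1})}\bfM^Y_{\mu_s},
\]
or $\bfM^Y_{\mu_{s-1}}=0$ when $Q_{r+s-1,h}(\mu_{s-1})$ is undefined.

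For the step from $s-1$ to $s$ we apply \Cref{lem: free Q1} with $K=\Phi^{\succ\alpha_{i,j}}$, the pair $(r,p)$ with $p=r+s-1$ (so $p\le j$ because $x\le j-r+1$), and $k=0$, so that $p'=p$. The hypotheses $i+1<r$ and $h+1\le r$ have to be checked; the case $r=i+1$ would need \Cref{lem: free Q2} instead, and there the extra assumption $\pint{\mu}{\alpha_{p'+1}}=1$ is delicate. Two hypotheses are automatic. First, $K\sim_T\Phi^{\succ\alpha_{i,j}}$ holds trivially for every $T$. Second, $\mu_{s-1}\in X^+_{p+h-1,h}(R)$ is inherited from $\lambda\in X^+_{r+1}(R)$, since each earlier $Q$ only subtracted roots $\alpha_{a+1,\vartheta}$ whose effect on coordinates $\ge p+h-1$ is controlled by the $R$-dominance built into the definition of $\vartheta$.

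The real content is the vanishing pattern. We need
\[
\pint{\mu_{s-1}}{\alpha_p}=-1
\qquad\text{and}\qquad
\pint{\mu_{s-1}}{\alpha_t}=0 \ \text{ for } t\in T\setminus\{p\},
\qquad
T=\bigcup_{m=0}^{d}[r+m(h-1),\,r+s-1+m(h-1)].
\]
For $m=0$, both facts follow by tracking coordinates. Initially $\lambda$ has $-1$ at $r$ and zeros on $[r+1,j]$. Subtracting $\alpha_{r+a,\vartheta_a}$ raises coordinate $r+a-1$ by $1$, which moves the $-1$ one step to the right and leaves a $0$ behind. It also lowers the coordinate at $\vartheta_a$ by $1$, but $\vartheta_a\ge r+a+h-1>j$, so this does not touch $[r,j]$.

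For $m\ge1$, the zeros on the blocks come from \Cref{lem: ceros por Qs} applied to $\mu$ with $k=s$. It gives $\pint{\mu_{s-1}}{\alpha_t}=0$ on exactly the set $U_d$ of that corollary, with $d$ determined by $\vartheta_{r+s-1,h}(\mu_{s-1})$. This agrees with case \ref{item a} of \Cref{lem: free Q1}, once we check that the two normalisations of $d$ match. When $Q_{r+s-1,h}(\mu_{s-1})$ is undefined, we are in case \ref{item b}, and we must argue directly from $R$-dominance and the minimality in the definition of $\vartheta$ that the same zeros persist up to the largest admissible $d$.

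Once the hypotheses are in place, \Cref{lem: free Q1} gives the step, and the induction closes. The main obstacle is the $m\ge1$ vanishing together with the exact indexing of $d$, particularly in the undefined case. The assumption $\pint{\lambda}{\alpha_{j+1}}\ge1$ should be what guarantees that at the first step the candidate $\vartheta=r+h-1$ is either admissible or else forces genuine zeros. I would handle this by mimicking the proof of \Cref{lem: thetas para el caso h-1} and \Cref{lem: desigualdad de varthetas}, which give $\vartheta_{s}\le\vartheta_{s-1}+1$.
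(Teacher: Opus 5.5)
For $r>i+1$ your argument is the paper's Case~A. You induct on $x$ and apply \Cref{lem: free Q1} once per step with $K=\Phi^{\succ\alpha_{i,j}}$ and $p=p'=r+s-1$. The zeros on the first block come from tracking coordinates, and those on the higher blocks from \Cref{lem: ceros por Qs}; its $d$ is literally the $d$ of case (a) of \Cref{lem: free Q1}, so nothing needs reconciling.

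One correction there: $\mu_{s-1}\in X^+_{p+h-1,h}(R)$ does not follow from the $R$-dominance in the definition of the earlier $\vartheta_a$, which concerns the residue class of $r+a-1$. It follows because the only coordinates lowered so far are $r+a\le p$ and $\vartheta_a\equiv r+a-1\not\equiv p\pmod{h-1}$, since $1\le s-a\le h-3$.

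The genuine gap is the case $r=i+1$. The statement allows it whenever $i\ge h$, and you flag it but leave it open. It cannot be dropped: \Cref{lem: v existe o no existe} applies the lemma with $r=j-s+1$, and $s=h-1$ gives exactly $r=i+1$, which is the case producing $v^{h-1}_{j,h}$ and hence the $\mathbf{v}$-operator. Nor can \Cref{lem: free Q1} be stretched to cover it. A $Q$-step of degree $\ge 2$ from $r=i+1$ brings in the block starting at $(i+1)+(h-1)=j+1$. By \eqref{eq: las que se salen}, three roots leave $\Phi^{\succ\alpha_{i,j}}$ under $s_{j+1}$ rather than two, which is why \Cref{lem: right solving}, and hence \Cref{lem: free Q1}, require $r>i+1$.

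You have also misplaced the role of the hypothesis $\pint{\lambda}{\alpha_{j+1}}\ge 1$. When $r>i+1$ it is never used: the first candidate $r+h-1$ for $\vartheta_1$ is already $\ge j+2$, and $j+1$ lies in none of the residue classes involved. It is exactly the ingredient that settles $r=i+1$, as follows.

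\begin{itemize}
\item The first candidate for $\vartheta_1=\vartheta_{i+1,h}(\lambda)$ is $j+1$, and this hypothesis makes it admissible, as in the proof of \Cref{lem: thetas para el caso h-1}.
\item By definition $\vartheta_m\ge (i+m)+(h-1)=j+m$, and as in \Cref{lem: desigualdad de varthetas} we have $\vartheta_m\le\vartheta_{m-1}+1$. Together these give $\vartheta_m=j+m$ for all $1\le m\le x$.
\item So every step subtracts the single height-$(h-1)$ root $\alpha_{i+m+1,j+m}$ and has degree $1$. The $-1$ simply walks from $i+1$ to $i+m$, leaving zeros on $[i+1,i+m-1]$.
\item The relevant set is therefore $T=[i+1,i+m]\subseteq[i+1,j]$, which never reaches $j+1$.
\item Each step is then the degree-one case of \Cref{Claim: free Q2}, equivalently \Cref{lem: free Q2} with $\vartheta=p+h-1$. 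There $k=d$, so the extra hypothesis $\pint{\mu}{\alpha_{p'+1}}=1$ that you found delicate never enters.
\item If some $\mu_m$ with $m\ge 2$ is undefined, then $\vartheta_{m-1}=n$, i.e.\ $j+m=n+1$, and the step lands in the vanishing branch ($p+h-1>n$).
\end{itemize}

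This is the paper's Case~B, and it is the piece your proposal is missing.
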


\begin{proof}
We only prove the statement when $Q^{-x}_{r,h}(\lambda)$ is defined, the other case being analogous. 

For $0\leq m \leq x$ we define $\mu_{m} = Q^{-m}_{r,h}(\lambda)$.
For $1\leq m \leq x$ we define $\vartheta_m=\vartheta_{r+m-1,h}(\mu_{m-1})$ and $d_m$ to be the unique integer satisfying $\vartheta_m =r+m-1+(d_m+1)(h-1) $.  

We split the proof in two cases. 

\begin{description}
    \item[Case A]  $r>i+1 $. 

    We proceed by induction on $x$. 
    If $x=1$ then $Q^{-x}_{r,h}(\lambda)=Q_{r,h}(\lambda)$. 
    Thus, the result reduces to \Cref{lem: free Q1} applied to 
    $K=\Phi^{\succ \alpha_{i,j}}$ and $p'=p=r$.
    Indeed, using the notation in that lemma we have 
    \begin{equation}
     T= \bigcup_{m=0}^{d_1}        \{ r+m(h-1)   \}.   
    \end{equation}
     Since $\lambda \in X^{+}_{r+1}(R)$,  the minimality of $\vartheta_1$ implies  $\pint{\lambda}{\alpha_{t}}=0$ for all $t\in T\setminus \{r\}$. 
     Thus we are under the hypothesis of \Cref{lem: free Q1}, which allows us to conclude.

We now assume that $x\geq 2$ and suppose that \eqref{eq: pre completar v} holds for $x-1$. 
This is, 
   \begin{equation}\label{eq: hp completar v hasta p}
     \displaystyle   \bfM_{\lambda }^{Y} = q^{D(Q^{-(x-1)}_{r,h})(\lambda)}\bfM_{\mu_{x-1}}^{Y}.
    \end{equation}
Let 
 \begin{equation}
        T  = \bigcup_{m=0}^{d_x}[r+m(h-1),r+x-1+m(h-1)].
\end{equation}

By definition,  we have  $\displaystyle \mu_{x-1} = \lambda - \sum_{m=1}^{x-1}\alpha_{r+m,\vartheta_{m}}$.
A direct computation, using \Cref{remark producto interno}, together with \Cref{lem: ceros por Qs} applied to $\mu_{x}$, 
imply that   $\pint{\mu_{x-1}}{\alpha_{r+x-1}} = -1$ and $\pint{\mu_{x-1}}{\alpha_{t}} = 0$, for all $t\in T\setminus\{r+x-1\}$.
Then, we can apply \Cref{lem: free Q1} to $K=\Phi^{\succ \alpha_{i,j}}$, $r$, $p=r+x-1=p'$ and $\mu_{x-1}$ to get
    \begin{equation}\label{eq: completar v hasta p}
        \bfM_{\mu_{x-1}}^{Y} = q^{D(Q_{r+x-1,h})(\mu_{x-1})}\bfM_{Q_{r+x-1,h}(\mu_{x-1})}^{Y}  = q^{D(Q_{r+x-1,h})(\mu_{x-1})}\bfM_{\mu_x}^{Y}.
    \end{equation}
    We conclude by combining \eqref{eq: hp completar v hasta p}, \eqref{eq: completar v hasta p} and  $D(Q^{-x}_{r,h})(\lambda) = D(Q^{-(x-1)}_{r,h})(\lambda) + D(Q_{r+x-1,h})(\mu_{x-1})$.

\item[Case B]  $r=i+1 $. 

Using the assumption $\pint{\lambda}{\alpha_{j+1}}\ge 1$ and an inductive argument (similar to the one in the proof of \Cref{lem: thetas para el caso h-1}), we obtain $\vartheta_m=j+m$ for all $1\le m\le x$.
With this in hand, the result follows by the same inductive method as in the proof of \textbf{Case A}, replacing \Cref{lem: free Q1} with \Cref{Claim: free Q2}.
\end{description}
\end{proof}

\begin{corollary}\rm \label{lem: v existe o no existe}
    Let $\lambda\in X^{+}$,  $\alpha_{i,j}\in \Phi^{\geq 2}$ and $h=j-i+1$ .  
    Let $s$ be an integer with $1 \leq s \leq h-1$ satisfying  $\langle \lambda , \alpha_t \rangle  = 0$, for all $j-s+1\leq t \leq j$. 
    Set $\lambda_{s} = P^s_{j,h}(\lambda)$.
   If $\lambda_{s}$ is defined then we have
    \begin{equation}\label{eq: v existe o no existe}
        \hspace{-10pt}\bfM_{\lambda_{s}}^{\succ \alpha_{i,j}} = \left\{\begin{array}{ll}
        q\bfM_{\lambda_{s}-\alpha_{i-s,j-s}}^{\succ \alpha_{i,j}} + q^{D(Q^{-s}_{j-s+1,h})(\lambda_{s})}\left(\bfM_{v^s_{j,h}(\lambda)}^{\succ \alpha_{i,j}} - q\bfM_{v^s_{j,h}(\lambda)-\alpha_{i-s,j-s}}^{\succ \alpha_{i,j}}\right), & \mbox{if } v^s_{j,h}(\lambda) \mbox{ is defined};\\
        q\bfM_{\lambda_{s}-\alpha_{i-s,j-s}}^{\succ\alpha_{i,j}},  & \mbox{otherwise}.
        \end{array}\right.
    \end{equation}    
\end{corollary}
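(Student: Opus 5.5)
The plan is to expand $\bfM_{\lambda_s}^{\succ\alpha_{i,j}}$ by adjoining the root $\alpha_{i-s,j-s}$ to the superscript, and then to evaluate the resulting element with \Cref{lem: pre completar v}. Since $\alpha_{i-s,j-s}\notin\Phi^{\succ\alpha_{i,j}}$, with $Y=\Phi^{\succ\alpha_{i,j}}\cup\{\alpha_{i-s,j-s}\}$ the definition gives
\begin{equation}
\bfM_{\lambda_s}^{Y}=\bfM_{\lambda_s}^{\succ\alpha_{i,j}}-q\,\bfM_{\lambda_s-\alpha_{i-s,j-s}}^{\succ\alpha_{i,j}}.
\end{equation}
The statement therefore reduces to showing two things:
\begin{itemize}
\item $\bfM_{\lambda_s}^{Y}=0$ when $v^s_{j,h}(\lambda)$ is undefined;
\item $\bfM_{\lambda_s}^{Y}=q^{D(Q^{-s}_{j-s+1,h})(\lambda_s)}\,\bfM_{v^s_{j,h}(\lambda)}^{Y}$ otherwise.
\end{itemize}
In the second case we expand $\bfM_{v^s_{j,h}(\lambda)}^{Y}$ by the same rule, which produces exactly the bracket in \eqref{eq: v existe o no existe}. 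Note that $v^s_{j,h}(\lambda)=Q^{s}_{j,h}P^s_{j,h}(\lambda)=Q^{-s}_{j-s+1,h}(\lambda_s)$.

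We apply \Cref{lem: pre completar v} with $r=j-s+1$ and $x=s=j-r+1$. Then $r-h=i-s$ and $r-1=j-s$, so the adjoined root is the right one. We must check the hypotheses on $\lambda_s$. We write $\lambda_s=\lambda-\sum_{m=1}^{s}\alpha_{\varrho_m,j-m+1}$ and pair with simple roots $\alpha_t$ for $t\ge r$ using \Cref{remark producto interno}.
\begin{itemize}
\item For $t=j-s+1=r$: the summand with $m=s$ contributes $-1$ at its right end, and the summand with $m=s-1$ contributes $+1$ at $j-s+2$ but does not touch $r$. Together with $\pint{\lambda}{\alpha_r}=0$ this should give $\pint{\lambda_s}{\alpha_r}=-1$.
\item For $t\in[r+1,j]$: the right ends $j-m+1$ and the neighbouring $+1$ contributions $j-m+2$ cancel in pairs, so the pairing stays $0$. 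Here we use $\pint{\lambda}{\alpha_t}=0$.
\item For $t=j+1$: the pairing is $\pint{\lambda}{\alpha_{j+1}}+1\ge1$, as in \eqref{eq: calculo ph-1}.
\item For $t>j+1$: the pairings are unchanged, so $\lambda_s\in X^+_{r+1}(R)$ because $\lambda$ is dominant.
\end{itemize}
Left endpoints $\varrho_m\le i$ do not interfere, since $\varrho_m\le i<r$. The only delicate spot is $\varrho_m-1$, which is also $<r$.

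The remaining hypothesis of \Cref{lem: pre completar v} is $h+1\le r$, i.e.\ $i-s\ge1$. If $i-s<1$, the root $\alpha_{i-s,j-s}$ does not exist. In that case we follow the paper's convention ($Y=K$, as for the case $r=h$ in the earlier lemmas). We argue that the lemma still applies in that degenerate form, or that $\lambda_s-\alpha_{i-s,j-s}$ is simply dropped. I expect this boundary bookkeeping, and matching the $r=i+1$ (Case B, $s=h-1$) situation, to be the main obstacle. For Case B I would invoke \Cref{lem: thetas para el caso h-1} to control the $\vartheta$'s.

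Finally, the degree bookkeeping is immediate from the definition of $D$ for composite operators. The "not defined" branch of \Cref{lem: pre completar v} yields the second line of \eqref{eq: v existe o no existe}.
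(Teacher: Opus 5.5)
Your proposal is correct and is essentially the paper's own proof: adjoin $\alpha_{i-s,j-s}$ to form $Y$, check that $\lambda_s$ satisfies the hypotheses of \Cref{lem: pre completar v} with $r=j-s+1$ and $x=s$, apply that lemma, and expand back using $\bfM_{\mu}^{Y}=\bfM_{\mu}^{\succ\alpha_{i,j}}-q\,\bfM_{\mu-\alpha_{i-s,j-s}}^{\succ\alpha_{i,j}}$. A few minor remarks. The case $r=i+1$ needs no separate treatment, because \Cref{lem: pre completar v} handles it internally once you have checked $\pint{\lambda_s}{\alpha_{j+1}}\geq 1$. The paper also passes over the boundary case $s=i$ without comment. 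Finally, at $t=j-s+2$ it is the $m=s$ summand that contributes $+1$ and the $m=s-1$ summand that contributes $-1$; this does not affect your conclusion.
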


\begin{proof}
By definition of $\lambda_s$ and the hypothesis on $\lambda$  we have   $\pint{\lambda_{s}}{\alpha_{j-s+1}} = -1$ and $\pint{\lambda_{s}}{\alpha_{t}} = 0$ for all $t\in [j-s+2,j]$. 
Furthermore, since $\lambda \in X^+$ it follows that $\lambda_{s}\in X^+_{j-s+2}(R)$.

We recall that
\begin{equation}
  v^{s}_{j,h}(\lambda) = Q^s_{j,h}P^{s}_{j,h}(\lambda) = Q^s_{j,h}(\lambda_s)= Q^{-s}_{j-s+1,h}(\lambda_s ).   
\end{equation}

In particular,  $Q^{-s}_{j-s+1,h}(\lambda_s )$ is defined if and only if   $v^{s}_{j,h}(\lambda)$ is. 

Let  $Y = \Phi^{\succ \alpha_{i,j}}   \cup \{ \alpha_{i-s,j-s} \}$.
By applying \Cref{lem: pre completar v} to $\alpha_{i,j}$, $(r,p) = (j-s+1,s)$ and $\lambda_{s}$, it follows that
\begin{equation}\label{eq: 4.21 pre completar v}
            \bfM_{\lambda_s}^{Y} = \left\{\begin{array}{ll}
               q^{D(Q^{-s}_{j-s+1,h})(\lambda_{s})}\bfM_{v^s_{j,h}(\lambda )}^{Y}, &\mbox{if } v^s_{j,h}(\lambda) \mbox{ is defined}; \\
                0, & \mbox{if } v^s_{j,h}(\lambda) \mbox{ is not  defined}.
            \end{array}\right.
        \end{equation}
Then, the corollary follows by expanding according to the rule $\bfM_{\mu}^{Y} = \bfM_{\mu}^{\succ \alpha_{i,j}} - q \bfM_{\mu -\alpha_{i-s,j-s}}^{\succ \alpha_{i,j}}$
\end{proof}

\begin{lemma}\rm \label{lem: los 3 casos de second version}
    Let $\lambda\in X^{+}$,  $\alpha_{i,j}\in \Phi^{\geq 2}$ and $h=j-i+1$ .  
    Let $s$ be an integer with $0 \leq s < h-1$ satisfying  $\langle \lambda , \alpha_t \rangle  = 0$, for all $j-s\leq t \leq j$. 
    Let $\lambda_{s} = P^{s}_{j,h}(\lambda)$ and $\lambda_{s+1}=P^{s+1}_{j,h}(\lambda)$.
    We also define 
    \begin{equation}
       u=\D{P,j-s,h}(\lambda_{s})+1 \mbox{ and } v = D(Q^{-s}_{j-s+1,h})(\lambda_s) + \D{P,j-s,h}(\lambda_{s}).\footnote{If the right-hand side is defined. } 
    \end{equation}
    If $\lambda_s$ is defined then 
\begin{equation}  \label{eq: casicasi First}
    q\bfM_{\lambda_{s} - \alpha_{i-s,j-s}}^{\succ \alpha_{i,j}} =   \begin{cases}
      q^{u}\bfM_{\lambda_{s+1}-\alpha_{i-(s+1),j-(s+1)}}^{\succ \alpha_{i,j}} + q^{v}\left(\bfM_{v^{s+1}_{j,h}(\lambda)}^{\succ \alpha_{i,j}} - q\bfM_{v^{s+1}_{j,h}(\lambda)-\alpha_{i-(s+1),j-(s+1)}}^{\succ \alpha_{i,j}}\right)  &  \\
     q^{u}\bfM_{\lambda_{s+1}-\alpha_{i-(s+1),j-(s+1)}}^{\succ\alpha_{i,j}}   &   \\
     0, &  
    \end{cases}
\end{equation}
where the three cases correspond to:  $v^{s+1}_{j,h}(\lambda)$ is defined,  $\lambda_{s+1}$ is defined but $v^{s+1}_{j,h}(\lambda)$ is not, and $\lambda_{s+1}$  is not defined, respectively.
\end{lemma}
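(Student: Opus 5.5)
Set $\mu=\lambda_s-\alpha_{i-s,j-s}$. The plan is to first trade $\mu$ for $\lambda_{s+1}$ by resolving the $-1$ that this subtraction creates on the left, and then to invoke \Cref{lem: v existe o no existe} with $s+1$ in place of $s$.

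\textbf{Step 1: locate the $-1$.} Since $\lambda\in X^+$ and $\pint{\lambda}{\alpha_t}=0$ for $j-s\le t\le j$, the weight $\lambda_s$ has the pattern established in the proof of \Cref{lem: v existe o no existe}. Subtracting $\alpha_{i-s,j-s}$ then gives $\pint{\mu}{\alpha_{i-s-1}}$ increased by one and $\pint{\mu}{\alpha_{j-s+1}}$ increased by one. We also get $\pint{\mu}{\alpha_{i-s}}=\pint{\lambda_s}{\alpha_{i-s}}-1$ and $\pint{\mu}{\alpha_{j-s}}=\pint{\lambda_s}{\alpha_{j-s}}-1=-1$. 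By \Cref{lem: ceros por Ps} the entries of $\lambda_s$ on the blocks $[i-s-mh,\,i-mh]$ are zero up to the integer $\varrho_{s}$, so $\mu$ carries a $-1$ at $p'=i-s$. If $\pint{\lambda_s}{\alpha_{i-s}}>0$, then there is no $-1$ at $i-s$, and $\mu$ would already equal $\lambda_{s+1}$ after checking $\varrho_{j-s,h}(\lambda_s)=i-s$; this is the degree-one case $u=2$.

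\textbf{Step 2: left solving.} With $K=\Phi^{\succ\alpha_{i,j}}$ and $p=i-s$, verify the hypotheses of \Cref{lem: free P}. Take $T=\bigcup_{m=0}^{d}[p-mh,\,i-mh]$, with $d$ determined by $\varrho_{j-s,h}(\lambda_s)$. We need $K\sim_T\Phi^{\succ\alpha_{i,j}}$, which is trivial for this $K$, and we need the vanishing of $\pint{\mu}{\alpha_t}$ on $T\setminus\{p'\}$, which is \Cref{lem: ceros por Ps}. The lemma then gives
\[
\bfM^{\succ\alpha_{i,j}}_{\mu}=q^{D(P_{j-s-1\ldots})}\bfM^{\succ\alpha_{i,j}}_{P(\mu)}
\]
when the relevant $P$ operator is defined, and $0$ otherwise. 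The remaining work is bookkeeping to show two things. First, that $P(\mu)=\lambda_{s+1}-\alpha_{i-s-1,j-s-1}$, equivalently that $\lambda_{s+1}=\lambda_s-\alpha_{\varrho,j-s}$ with $\alpha_{\varrho,j-s}$ decomposing as in \eqref{descomponer raiz larga en suma de height h}. Second, that definedness matches, so that $P(\mu)$ is defined exactly when $\lambda_{s+1}$ is. Counting roots gives the exponent $1+(\D{P,j-s,h}(\lambda_s)-1)+\ldots=u$. This yields
\[
q\bfM_{\mu}=q^{u}\bfM_{\lambda_{s+1}-\alpha_{i-s-1,j-s-1}}
\]
in cases 1 and 2, and $0$ in case 3. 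I expect this alignment of $\varrho$, indices and degree to be the main obstacle. I would first try to identify $\mu-\alpha_{\varrho,i-s-1}$ directly with $\lambda_s-\alpha_{\varrho,j-s}-\alpha_{i-s-1,j-s-1}$, since $\alpha_{\varrho,i-s-1}+\alpha_{i-s,j-s}=\alpha_{\varrho,j-s}$.

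\textbf{Step 3: the $v$-term.} The extra terms in case 1 do not come from this step. They appear only after we rewrite $\bfM_{\lambda_{s+1}}$-type expressions, so the statement presumably also requires expanding $q\bfM_{\lambda_{s}-\alpha}$ through a $Y$-set. In that case I would instead apply \Cref{lem: v existe o no existe} with index $s+1$ to $\lambda_{s+1}$. It gives
\[
\bfM_{\lambda_{s+1}}=q\bfM_{\lambda_{s+1}-\alpha_{i-s-1,j-s-1}}+q^{D(Q)}(\ldots),
\]
and this is where the $v^{s+1}$-term with exponent $v$ enters. The two contributions are matched by multiplying by $q^{\D{P,j-s,h}(\lambda_s)}$, and the trichotomy of cases follows from whether $v^{s+1}_{j,h}(\lambda)$ and $\lambda_{s+1}$ are defined.
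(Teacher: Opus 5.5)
Your skeleton matches the paper's proof. It treats the easy case $\pint{\lambda_s}{\alpha_{i-s}}\ge 1$ separately. It then applies \Cref{lem: free P} with $K=\Phi^{\succ\alpha_{i,j}}$, $p=p'=i-s$ and the zeros from \Cref{lem: ceros por Ps} at $k=s+1$, and finishes with \Cref{lem: v existe o no existe} at $s+1$.

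\textbf{Step~2 gives the wrong output of \Cref{lem: free P}.} Assume $\pint{\lambda_s}{\alpha_{i-s}}=0$. Then the maxima defining $\varrho_{i-s-1,h}(\mu)$ and $\varrho:=\varrho_{j-s,h}(\lambda_s)$ coincide:
\begin{itemize}
\item the candidate $k=i-s$ fails for $\lambda_s$;
\item the extra test at position $i-s$ is automatic;
\item $\mu-\alpha_{k,i-s-1}=\lambda_s-\alpha_{k,j-s}$ for every other candidate $k$.
\end{itemize}
By the identity $\alpha_{\varrho,i-s-1}+\alpha_{i-s,j-s}=\alpha_{\varrho,j-s}$ that you quote yourself, $P_{i-s-1,h}(\mu)=\lambda_s-\alpha_{\varrho,j-s}=\lambda_{s+1}$. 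There is no further $-\alpha_{i-s-1,j-s-1}$. Likewise $D(P_{i-s-1,h})(\mu)+1=D(P_{j-s,h})(\lambda_s)=u-1$. So Step~2 actually gives
\[
q\,\bfM^{\succ\alpha_{i,j}}_{\lambda_s-\alpha_{i-s,j-s}}=q^{u-1}\,\bfM^{\succ\alpha_{i,j}}_{\lambda_{s+1}},
\]
not $q^{u}\bfM^{\succ\alpha_{i,j}}_{\lambda_{s+1}-\alpha_{i-s-1,j-s-1}}$. This is why your Step~2 seemed to leave no room for the $v^{s+1}$-term.

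Step~3 is therefore not an alternative to be used ``instead''; it is the necessary second half. Apply \Cref{lem: v existe o no existe} with $s+1$ to $\bfM^{\succ\alpha_{i,j}}_{\lambda_{s+1}}$; this is legitimate since $\lambda$ vanishes on $[j-s,j]$ and $s+1\le h-1$. Multiplying by $q^{u-1}$ gives cases~1 and~2 together, with the bracket carrying exponent $u-1+D(Q^{-(s+1)}_{j-s,h})(\lambda_{s+1})$. The same expansion is also what finishes your Step~1 case. There $u=2$ and you only have $q\bfM^{\succ\alpha_{i,j}}_{\mu}=q\bfM^{\succ\alpha_{i,j}}_{\lambda_{s+1}}$.

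\textbf{Case~3 is not covered by your hypothesis check.} Your choice of $d$ is read off from $\varrho_{j-s,h}(\lambda_s)$, and you use \Cref{lem: ceros por Ps} at $k=s+1$. Both presuppose that $\lambda_{s+1}$ is defined. For \Cref{lem: free P}(b) you need $\pint{\mu}{\alpha_t}=0$ on $\bigcup_{m=0}^{d}[i-s-mh,\,i-mh]\setminus\{i-s\}$, with $d$ maximal such that $i-s-dh\ge 1$. This has to be obtained another way, as the paper does:
\begin{itemize}
\item Since $P_{j-s,h}(\lambda_s)$ is undefined and $\lambda_s\in X^+_{j-s}(L)$, we get $\pint{\lambda_s}{\alpha_t}=0$ for every $t\le i-s$ with $t\equiv i-s\pmod h$.
\item For $s\ge 1$, \Cref{lem: desigualdad de varrhos} gives $\varrho_{j-s+1,h}(\lambda_{s-1})=1$. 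Then \Cref{lem: ceros por Ps} at $k=s$, together with $\lambda_s=\lambda_{s-1}-\alpha_{1,j-s+1}$, gives the zeros on the blocks $[i-s+1-mh,\,i-mh]$.
\end{itemize}
Only then does \Cref{lem: free P}(b) give $\bfM^{\succ\alpha_{i,j}}_{\mu}=0$.
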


\begin{remark}\rm
    Since $v^{s+1}_{j,h}(\lambda)= Q^{s+1}_{j,h}P^{s+1}_{j,h}(\lambda) =Q^{s+1}_{j,h}(\lambda_{s+1})$, it follows that if $\lambda_{s+1}$ is not defined then $v^{s+1}_{j,h}(\lambda )$ is not defined. 
\end{remark}

\begin{proof}
    Suppose $\lambda_{s+1} = P_{j-s,h}(\lambda_{s})$ is defined. 
    We begin by noticing that if $\pint{\lambda_{s}}{\alpha_{i-s}} \geq 1$, then by definition of $P$-operator, it follows that $\lambda_{s+1}= \lambda_{s} - \alpha_{i-s,j-s}$.
    Therefore, $\bfM_{\lambda_{s} - \alpha_{i-s,j-s}}^{\succ \alpha_{i,j}} = \bfM_{\lambda_{s+1}}^{\succ \alpha_{i,j}}$ and the result follows by \Cref{lem: v existe o no existe}. 
    
    Thus, we assume that $\pint{\lambda_{s}}{\alpha_{i-s}} = 0$.
    By \Cref{lem: ceros por Ps} applied to $\lambda\in X^{+}$, $\alpha_{i,j}$ and $k=s+1<h$ we get
    \begin{equation}
        \pint{\lambda_{s}}{\alpha_{t}} = 0 \quad \mbox{for all}\; t\in T=\bigcup_{m=0}^{d} [i-s-mh,i-mh],
    \end{equation}
    where $d$ is the unique integer satisfying $\varrho_{j-s,h}(\lambda_{s}) = i-s-(d+1)h$. 
    It follows from \Cref{remark producto interno} that
    \begin{equation}  \label{eq: lemma casicasi 0 y-1}
        \pint{\lambda_{s} - \alpha_{i-s,j-s}}{\alpha_{t}}  = \begin{cases}
            0, & \mbox{if } t \in T \setminus \{i-s\};\\
           -1, & \mbox{if } t=i-s.
        \end{cases}
    \end{equation}
    Hence, we can apply \Cref{lem: free P} to $K=\Phi^{\succ \alpha_{i,j}}$, $\alpha_{i,j}$, $p=p'=i-s$, $T$ and $\lambda_{s}-\alpha_{i-s,j-s}$, to get
    \begin{equation}\label{eq: p existe en second version 1}
        q\bfM_{\lambda_{s} - \alpha_{i-s,j-s}}^{\succ \alpha_{i,j}} = q^{\D{P,i-s-1,h}(\lambda_{s} - \alpha_{i-s,j-s})+1}\bfM_{P_{i-s-1,h}(\lambda_{s}-\alpha_{i-s,j-s})}^{\succ \alpha_{i,j}}
        = q^{\D{P,j-s,h}(\lambda_{s})}\bfM_{\lambda_{s+1}}^{\succ \alpha_{i,j}},
    \end{equation}
    where the second equality holds since 
    \begin{equation}
        P_{i-s-1,h}(\lambda_{s}-\alpha_{i-s,j-s}) = P_{j-s,h}(\lambda_{s})\quad \mbox{ and } \quad \D{P,i-s-1,h}(\lambda_{s} - \alpha_{i-s,j-s})+1 = \D{P,j-s,h}(\lambda_{s}).
    \end{equation}
   Thus, using  \Cref{lem: v existe o no existe} for $s+1$ to expand the right-hand side of  \eqref{eq: p existe en second version 1} we obtain the first two cases of \cref{eq: casicasi First}. 

   We now  suppose that $\lambda_{s+1}$ is not defined.  
    By \Cref{lem: ceros por Ps} applied to $\lambda$, $\alpha_{i,j}$ and $k=s$ it follows that $\pint{\lambda_{s-1}}{\alpha_{t}} = 0$ for all $t\in T$, where
    \begin{equation}
        T=\bigcup_{m=0}^{d}[i-s+1-mh,i-mh],
    \end{equation}
    and $d$ is the unique integer satisfying $\varrho_{s} = \varrho_{j-s+1,h}(\lambda_{s-1}) = i-s+1-(d+1)h$.
    On the other hand, since $\lambda_{s+1}$ is not defined, it follows by \Cref{lem: desigualdad de varrhos} that $\varrho_{s} = 1$.
    In other words, $\lambda_{s} = \lambda_{s-1} -\alpha_{1,j-s+1}$.
   By \Cref{remark producto interno} we conclude that $\pint{\lambda_{s}}{\alpha_{t}}=0$ for all $t\in T$.
    Moreover, given that $\lambda_{s+1}$ is not defined, by the definition of the  $P$ operator we have  $\pint{\lambda_{s}}{\alpha_{t}} =0$ for all $t\in \{i-s-mh\mid 0\leq m\leq d\}$.
    Thus, we conclude that $\pint{\lambda_{s}}{\alpha_{t}} =0$ for all $t\in T'$, where
    \begin{equation}
        T'=\bigcup_{m=0}^{d} [i-s-mh,i-mh].
    \end{equation} 
 Using \Cref{remark producto interno} we obtain \eqref{eq: lemma casicasi 0 y-1} for all $t\in T'$. 
 As before, we can apply \Cref{lem: free P} to $K=\Phi^{\succ \alpha_{i,j}}$, $\alpha_{i,j}$, $p=p'=i-s$, $T'$ and $\lambda_{s}-\alpha_{i-s,j-s}$, to get $\bfM_{\lambda_{s}-\alpha_{i-s,j-s}}^{\succ \alpha_{i,j}}=0$, thus proving the third case in \Cref{eq: casicasi First}. 
\end{proof}

We are now in position to prove the First Inverse Decomposition given in \Cref{prop: second version}.

\begin{myproof}[Proof of \Cref{prop: second version}]
We begin by emphasizing that in \cref{eq: second version} the $\bfM$-elements indexed by undefined weights are set equal to $0$ by convention.
    Let $1\leq k< h$ satisfy the hypothesis of the \Cref{prop: second version}, i.e. $\pint{\lambda}{\alpha_r}=0$ for all $j-k+1\leq r \leq j$.  
    We  proceed  by  induction on $k$.
    Consider first the case $k = 1$.
   % Notice that in this case we must have $r=j$. 
    By the definition of $\bfM$-elements, we have
    \begin{equation}\label{eq: second version -1}
    \bfM_{\lambda}^{\succeq \alpha_{i,j}} = \bfM_{\lambda}^{\succ \alpha_{i,j}} - q\bfM_{\lambda-\alpha_{i,j}}^{\succ \alpha_{i,j}}.
    \end{equation}
  By applying  \Cref{lem: los 3 casos de second version} for $s=0$ to rewrite $q\bfM_{\lambda-\alpha_{i,j}}^{\succ \alpha_{i,j}}$ we obtain \eqref{eq: second version} for $k=1$,  as we wanted.
 
We now assume that \cref{eq: second version} holds for some $1\leq k < h-1$.
Let $\lambda_{s} = P^s_{j,h}(\lambda)$ for $1\leq s\leq k+1$. 
We also let $\mathcal{RH}_k$ and $\mathcal{RH}_{k+1}$ be the right-hand side of \eqref{eq: second version} for $k$ and $k+1$, respectively. 
We must show that $\bfM_{\lambda}^{\succeq \alpha_{i,j}}=\mathcal{RH}_{k+1}$. 
By our inductive hypothesis, this amounts to show that $\mathcal{RH}_k=\mathcal{RH}_{k+1}$. 
This turns out to be  equivalent to

\begin{equation} \label{eq: turns out to be equivalent}
  q \bfM_{\lambda_{k} - \alpha_{i-k,j-k}}^{\succ \alpha_{i,j}} = q^{u} \bfM_{\lambda_{k+1} - \alpha_{i-(k+1),j-(k+1)}}^{\succ \alpha_{i,j}} 
  + q^{ v   }  
  \left( \bfM_{v^{k+1}_{j,h}(\lambda)}^{\succ \alpha_{i,j}} - q \bfM_{v^{k+1}_{j,h}(\lambda) - \alpha_{i-(k+1),j-(k+1)}}^{\succ \alpha_{i,j}} \right),
\end{equation}
where $u=D(P_{j-k})(\lambda_k) + 1$ and $v=D(Q^{k+1}_{j,h})(\lambda_{k+1}) +D(P_{j-k,h} )(\lambda_k)  $.

Due to the conventions mentioned at the beginning of this proof,  \cref{eq: turns out to be equivalent} follows by \Cref{lem: los 3 casos de second version}. 
This completes the induction and the proof of the First Inverse Decomposition.
\end{myproof}

\section{Towards the Second Inverse Decomposition}\label{sec: Towards the Second Inverse Decomposition}

In this section we develop the tools needed for refining the decomposition  in \Cref{prop: second version}. 

We begin by introducing some notation. 

\begin{definition} \rm
Let $u \geq 1$, $h \geq 2$ and $(v,x)$ be a pair of non-negative integers. 
For $\lambda \in X$ we define
\begin{equation}
 R_u(\lambda) = -\pint{\lambda}{\alpha_u},
    \qquad R_{u,v}(\lambda) = -\pint{\lambda}{\alpha_{u,u+v}},
    \qquad  R_{u,v,x}(\lambda) = \displaystyle \sum_{m=0}^{x} R_{u,v+hm}(\lambda).   
\end{equation}
We stress that $R_{u,0}(\lambda) = R_u(\lambda)$,  $R_{u,v,0}(\lambda) = R_{u,v}(\lambda)$ and $R_{u,0,0}(\lambda) = R_u(\lambda)$. 
\end{definition}

\begin{definition}\rm \label{def: Trapecio}
    Let $h\geq 2$ and $A \subset \Phi^{\geq h}$. 
    We say that $A$ is a trapezoid at height $h$ if $A$ satisfies the following conditions:
    \begin{enumerate}
        \item  If $\alpha, \beta \in A$ are roots of the same height and $\alpha \prec \beta$, then every root $\gamma$ with $\alpha \prec \gamma \prec \beta$ also belongs to $A$.
        
        \item  If $\alpha_{i,j} \in A$ and $\operatorname{ht}(\alpha_{i,j}) > h$, then the adjacent roots $\alpha_{i,j-1}$ and $\alpha_{i+1,j}$ also lie in $A$.
    \end{enumerate}
    The name  trapezoid reflects the fact that the set of squares associated with the roots in $A$ forms a shape resembling a trapezoid, as illustrated in \Cref{fig: tpz T}.
\end{definition}

   \begin{figure}[H]
        \centering
        \begin{subfigure}[t]{0.3\textwidth}
        \scalebox{0.3}{\begin{tikzpicture}
        \clip (-1,0) rectangle (28,10);
            \dibu{15}{1}{14}
            \agregof{3}{3}{11}
            \agregof{4}{3}{10}
            \agregof{5}{3}{9}
            \agregof{6}{3}{8}
            \agregof{7}{3}{7}
        \end{tikzpicture}}
        \caption{Trapezoid at height 3.}
        \label{fig: tpz T}
        \end{subfigure}
        \hspace{1cm}
        \begin{subfigure}[t]{0.5\textwidth}
        \scalebox{0.3}{\begin{tikzpicture}
        \clip (-1,0) rectangle (28,10);
            \dibu{27}{23}{25}
            \quitof{4}{15}{20}
            \quitof{5}{15}{19}
            \quitof{6}{15}{18}
            \quitof{7}{15}{17}
            \quitof{8}{15}{16}
            \agregof{3}{4}{14}
        \end{tikzpicture}}
        \caption{Set  $Y_{7,4}=Y_{7,4}(\alpha_{23,25})$}
        \label{fig: ejemplo y43}
        \end{subfigure}
        \caption{Examples of sets in \Cref{def: Trapecio} and \Cref{def: SSS and YYY}. }
    \end{figure}

\begin{definition}\rm\label{def: SSS}
Let  $k\geq 0$ and $\alpha_{i,j} \in \Phi^+$. We define 
\begin{equation}
    S_k(\alpha_{i,j}) = \{   \alpha_{i-m,j}  \mid 0 \leq m \leq k     \}.
\end{equation}
\end{definition}

\begin{lemma}\rm\label{lem: second string lemma} 
Let   $Y \subset \Phi^{+}$, $\alpha_{i,j}\in \Phi^{\geq 2}$ and $k\geq 0$.
Assume that $Y \setminus S_{k}(\alpha_{i,j})$ is $s_t$-invariant for every $  i-k\leq  t \leq i$.
Let $\lambda \in X$ be such that $R_t(\lambda)=0$ for $i-k \leq t < i$ and that $R_i(\lambda)=r$ for some $0 \leq r \leq k+1$. Then we have
\begin{equation}
\bfM_{\lambda}^{Y} = q^{r}\bfM_{\lambda - r\alpha_{i+1,j}}^{Y\setminus S_{k}(\alpha_{i,j})}.
\end{equation}
\end{lemma}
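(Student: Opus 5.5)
We argue by induction on $k$, peeling off the root $\alpha_{i-k,j}$ of $S_k(\alpha_{i,j})$ with the smallest first index. Set $Z=Y\setminus S_k(\alpha_{i,j})$. We assume $S_k(\alpha_{i,j})\subseteq Y$; otherwise we first reduce to $Y\cup S_k(\alpha_{i,j})$, or adapt the argument below to the roots actually present. The basic tools are \Cref{remark producto interno} and \Cref{lem: weyl group over M elements}. Note that $\pint{\alpha_{i-m,j}}{\alpha_t}$ equals $1$ for $t=i-m$, equals $-1$ for $t=i-m-1$, and vanishes for all other $t\in[i-k-1,i]$, because $j>i$. Also, $Z$ is $s_t$-invariant for every $t\in[i-k,i]$. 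So whenever a weight $\mu$ has $\pint{\mu}{\alpha_t}=-1$ for some such $t$, we get $\bfM^Z_\mu=0$.

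\textbf{Base case $k=0$.} Here $S_0(\alpha_{i,j})=\{\alpha_{i,j}\}$ and $r\in\{0,1\}$, and we write $\bfM^Y_\lambda=\bfM^Z_\lambda-q\bfM^Z_{\lambda-\alpha_{i,j}}$.
\begin{itemize}
\item If $r=0$, then $\pint{\lambda-\alpha_{i,j}}{\alpha_i}=-1$, so the second term vanishes.
\item If $r=1$, the first term vanishes. For the second term, $\pint{\lambda-\alpha_{i,j}}{\alpha_i}=-2$, so $s_i\cdot(\lambda-\alpha_{i,j})=\lambda-\alpha_{i+1,j}$. Applying \Cref{lem: weyl group over M elements} with $w=s_i$ gives $-q\bfM^Z_{\lambda-\alpha_{i,j}}=q\bfM^Z_{\lambda-\alpha_{i+1,j}}$.
\end{itemize}

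\textbf{Inductive step, $k\ge1$ and $r\le k$.} Put $Y'=Y\setminus\{\alpha_{i-k,j}\}$, so $Y'\setminus S_{k-1}(\alpha_{i,j})=Z$, which is invariant under $s_t$ for $t\in[i-k+1,i]$. Expand
\[
\bfM^Y_\lambda=\bfM^{Y'}_\lambda-q\,\bfM^{Y'}_{\lambda-\alpha_{i-k,j}}.
\]
Subtracting $\alpha_{i-k,j}$ changes only the pairings at $\alpha_{i-k-1}$ and $\alpha_{i-k}$, and both lie outside the window $[i-k+1,i]$ used by the $k-1$ case. Hence the inductive hypothesis applies to both weights, giving
\[
\bfM^Y_\lambda=q^r\bfM^Z_{\lambda-r\alpha_{i+1,j}}-q^{r+1}\bfM^Z_{\lambda-r\alpha_{i+1,j}-\alpha_{i-k,j}}.
\]
For the last weight, $\pint{\lambda-r\alpha_{i+1,j}-\alpha_{i-k,j}}{\alpha_{i-k}}=0+0-1=-1$; this uses $R_{i-k}(\lambda)=0$ (valid since $k\ge1$) and $i-k\neq i,i+1$. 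Since $Z$ is $s_{i-k}$-invariant, that term vanishes, which completes this case.

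\textbf{Boundary case $r=k+1$ (expected main obstacle).} The induction hypothesis does not cover $r=k+1$ at level $k-1$, so here I would compute directly. Expand $\bfM^Y_\lambda=\sum_{J\subseteq S_k(\alpha_{i,j})}(-q)^{|J|}\bfM^Z_{\lambda-\Sigma_J}$. For $t\in[i-k,i-1]$ we have $\pint{\lambda-\Sigma_J}{\alpha_t}=[\alpha_{t+1,j}\in J]-[\alpha_{t,j}\in J]$, and this equals $-1$ exactly when $\alpha_{t,j}\in J$ but $\alpha_{t+1,j}\notin J$. Killing those terms, only the ``initial segments'' $J_a=\{\alpha_{i-m,j}:0\le m<a\}$ survive.

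For $J_a$, the pairings along $\alpha_{i-a+1},\dots,\alpha_i$ are $-1$ at $\alpha_{i-a+1}$ (when $a\ge1$) and $-(k+1)-1$ at $\alpha_i$, and $0$ at $\alpha_{i-k},\dots,\alpha_{i-a}$. I would then apply an explicit product of reflections $w$, namely $s_i$ followed by $s_{i-1}$, and so on. The goal is to show that $\bfM^Z_{\lambda-\Sigma_{J_a}}$ vanishes for $a\le k$, since some pairing with $\rho$ added becomes $0$ after a partial product. For $a=k+1$ the term should be carried, with sign $(-1)^{k+1}$, to $\lambda-(k+1)\alpha_{i+1,j}$.

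This dot-action bookkeeping, with the large negative pairing at $\alpha_i$ being propagated leftwards, is the delicate step. If the direct computation becomes messy, my fallback is to run the induction by peeling $\alpha_{i,j}$ first, using the $k=0$ case with $r=1$ to reduce $R_i$ by one, and then apply induction on the shifted window $[i-k,i-1]$.
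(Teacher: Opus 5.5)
Your base case and your inductive step for $r\le k$ are correct. Subtracting $\alpha_{i-k,j}$ does not disturb the window $[i-k+1,i]$, and the leftover term dies because its pairing with $\alpha_{i-k}$ is $-1$ and $Z$ is $s_{i-k}$-invariant.

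\textbf{The gap: the induction never handles $r\ge 2$.} The step at level $k$ with a given $r$ calls level $k-1$ with the same $r$, so every $r\ge 2$ bottoms out in the boundary case $r=k+1$ at level $k=r-1$. As written, the proposal proves the lemma only for $r\le 1$. The boundary case, which you leave as a sketch, carries all the remaining content.

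\textbf{The sketch also misstates the pairings.} For $J_a=S_{a-1}(\alpha_{i,j})$ with $1\le a\le k$, one has $\pint{\lambda-\Sigma_{J_a}}{\alpha_t}=0$ for $t\in[i-k,i-1]\setminus\{i-a\}$, $+1$ at $t=i-a$, and $-(k+2)$ at $t=i$. There is no $-1$ at $\alpha_{i-a+1}$; if there were, $J_a$ would already have been discarded in your previous step. Your fallback does not help either. The $k=0$ case only allows $R_i\le 1$, and $\lambda-\alpha_{i,j}$ has $R_{i-1}=-1$, which is outside the hypotheses of the shifted lemma.

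\textbf{The missing idea is a single non-simple reflection.} With the correct pairings, $\pint{\lambda-\Sigma_{J_a}+\rho}{\alpha_{i-k,i}}=0$ for every $0\le a\le k$, including $J_0=\emptyset$. Since $s_{\alpha_{i-k,i}}$ lies in the subgroup generated by $s_{i-k},\dots,s_i$, it preserves $Z$. The last assertion of \Cref{lem: weyl group over M elements} then kills all these terms. For $a=k+1$, apply $w=s_{i-k}\cdots s_{i-1}s_i$, of length $k+1$, and check that $\Sigma_{S_k(\alpha_{i,j})}-\sum_{m=0}^{k}(k+1-m)\alpha_{i-m}=(k+1)\alpha_{i+1,j}$.

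This is exactly the paper's proof, which runs the computation for every $r\ge 1$ at once. There only $I=S_{r-1}(\alpha_{i,j})$ survives: the longer initial segments are killed by $s_{\alpha_{i-r,i}}$ and the shorter ones by $s_{\alpha_{i-r+1,i}}$. Once you have this computation, your induction on $k$ is superfluous.

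Finally, the hypothesis $S_k(\alpha_{i,j})\subseteq Y$ cannot be reduced away, contrary to your opening remark. For $Y=Z$ and $r=1$, the left-hand side vanishes while the right-hand side need not. The paper uses this inclusion implicitly.
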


\begin{proof}
Let $S= S_{k}(\alpha_{i,j})$.
By definition of $\bfM$-elements, we have
\begin{equation}\label{eq: proof second string lemma 0}
\bfM_{\lambda}^{Y} = \sum\limits_{I \subset S} (-q)^{|I|} \bfM_{\lambda - \Sigma_I}^{Y \setminus  S}.
\end{equation}

We first treat the case $r=0$. 
Let $\emptyset \neq I \subset S$. 
Let $m_0 =\min \{ 0\leq m \leq k \mid \alpha_{i-m,j}\in I  \}$.
By the minimality of $m_0$ we have $R_{i-m_0}(\lambda - \Sigma_I)=1$. 
As $Y\setminus S$ is $s_{i-m_0}$-invariant, \Cref{lem: weyl group over M elements} yields  $\bfM_{\lambda - \Sigma_I}^{Y \setminus S}=0$.
Therefore, the only term that contributes to the sum in   \eqref{eq: proof second string lemma 0}  is the one corresponding to  $I=\emptyset$. 
Thus,  $\bfM_{\lambda}^{Y} = \bfM_{\lambda }^{Y\setminus S}$, which is the claim of the lemma for $r=0$.

We now treat the case $r \geq 1$. 
Arguing as in the previous paragraph, it is easy to see that $\bfM_{\lambda - \Sigma_I}^{Y \setminus S}=0$ unless $I=S_u(\alpha_{i,j})$ for some $0 \leq u \leq k$. 
We claim that in fact $u = r-1$.
Indeed, if $u>r-1$ then $\pint{\lambda-\Sigma_I+\rho}{\alpha_{i-r,i}}=0$. 
Therefore, $s_{\alpha_{i-r,i}}\cdot ( \lambda-\Sigma_I )= \lambda-\Sigma_I $, and \Cref{lem: weyl group over M elements} implies that $\bfM_{\lambda - \Sigma_I}^{Y \setminus S} = 0$.
On the other hand, if $u<r-1$ then $\pint{\lambda-\Sigma_I+\rho}{\alpha_{i-r+1,i}}=0$, and once again, \Cref{lem: weyl group over M elements} would force $\bfM_{\lambda - \Sigma_I}^{Y \setminus S} = 0$.
All in all, we have shown that the only term that contributes to the sum in \eqref{eq: proof second string lemma 0} is the one associated to $J\coloneqq S_{r-1}(\alpha_{i,j})$.

It follows that
\begin{equation}
    \bfM_{\lambda}^{Y} = (-q)^r\bfM_{\lambda - \Sigma_{J}}^{Y \setminus S} = (-q)^r(-1)^{r} \bfM_{\lambda - \Sigma_{J} +\mu}^{Y \setminus S}
\end{equation}
where $\displaystyle \mu = \sum_{m=0}^{r-1} (r-m) \alpha_{i-m}$. 
We stress that to obtain the last equality we have applied \Cref{lem: weyl group over M elements} for $w=s_{i-(r-1)} \cdots   s_{i-1}s_i$.
Finally, the result follows by noticing that $   \Sigma_{J} - \mu  =  r\alpha_{i+1,j}$. 
\end{proof}

\begin{corollary} \label{coro: push pyramid to the right}
    Let $\alpha_{i,j} \in \Phi^{\geq 2}$ and set $h = j - i + 1$. 
    Let $A \subset \Phi^{\succ \alpha_{i,j}}$ be a trapezoid at height $h$. 
    Denote by $\alpha_{x,y}$ and $\alpha_{u,v}$ the roots corresponding to the top-left and bottom-right vertices of  $A$, respectively.
    Let $B$ be the trapezoid obtained by pushing $A$ one unit to the right. 
   Let $h' = \operatorname{ht}(\alpha_{x,y})$ and notice that $h=\operatorname{ht}(\alpha_{u,v})$. 
    Let $\lambda \in X$ be such that $R_{t}(\lambda) = 0$ for $y-(h'-h) \leq t < y$ and $R_{y}(\lambda) = r$ for some $0 \leq r \leq h' - h +1$.
   If  $y=u+1$ then 
    \begin{equation}
        \mathbf{M}^{ \Phi^{\succ \alpha_{i,j}} \setminus A}_\lambda
        =
        q^{r} \,
        \mathbf{M}^{\Phi^{\succ \alpha_{i,j}} \setminus B}_{\lambda - r \alpha_{y+1,y +(h-1)}}.
    \end{equation} 
\end{corollary}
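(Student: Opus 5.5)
The identity will come from two applications of the string technique of \Cref{lem: second string lemma}. The first application uses the lemma as stated and moves the right edge of the trapezoid. The second uses a mirrored version with $r=0$ and moves the left edge.

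First I will describe $A\setminus B$ and $B\setminus A$. By condition (2) of \Cref{def: Trapecio}, the left edge of $A$ is the column $\{\alpha_{x,b}\mid x+h-1\le b\le y\}$, with fixed first index. The right edge is $\{\alpha_{a,v}\mid v-h'+1\le a\le u\}$, with fixed last index. Pushing $A$ one unit to the right gives
\[
A\setminus B=\{\alpha_{x,b}\mid y-(h'-h)\le b\le y\},\qquad B\setminus A=\{\alpha_{a,v+1}\mid v-h'+2\le a\le u+1\}.
\]
Since $y=u+1$ and $v+1=u+h=y+h-1$, we get $B\setminus A=S_{h'-h}(\alpha_{y,y+h-1})$. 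These roots have height at least $h$ and lie to the right of $A$, so they belong to $\Phi^{\succ\alpha_{i,j}}\setminus A$. Setting $C=\Phi^{\succ\alpha_{i,j}}\setminus(A\cup B)$, this gives
\[
\Phi^{\succ\alpha_{i,j}}\setminus A=C\sqcup S_{h'-h}(\alpha_{y,y+h-1}),\qquad \Phi^{\succ\alpha_{i,j}}\setminus B=C\sqcup (A\setminus B).
\]

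Step one applies \Cref{lem: second string lemma} to $Y=\Phi^{\succ\alpha_{i,j}}\setminus A$, with the root $\alpha_{y,y+h-1}$ and $k=h'-h$. The hypotheses on $\lambda$ are exactly those in the statement, namely $R_t(\lambda)=0$ for $y-k\le t<y$ and $R_y(\lambda)=r\le k+1$. The lemma yields
\[
\bfM^{\Phi^{\succ\alpha_{i,j}}\setminus A}_\lambda=q^r\,\bfM^{C}_{\mu},\qquad \mu=\lambda-r\alpha_{y+1,y+h-1}.
\]
This requires $C$ to be $s_t$-invariant for every $t\in[y-k,y]$, which I address below.

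Step two shows that $\bfM^{C}_\mu=\bfM^{C\sqcup(A\setminus B)}_\mu$. By \Cref{remark producto interno}, $R_y(\mu)=r-r=0$, and $R_t(\mu)=R_t(\lambda)=0$ for $y-k\le t<y$. So $\mu$ is orthogonal to $\alpha_t$ for every $t\in[y-k,y]$. Expand
\[
\bfM^{C\sqcup(A\setminus B)}_\mu=\sum_{I\subseteq A\setminus B}(-q)^{|I|}\,\bfM^C_{\mu-\Sigma_I}.
\]
For $I\neq\emptyset$, let $b$ be the smallest index with $\alpha_{x,b}\in I$. Then $\langle\alpha_{x,b},\alpha_b\rangle=1$. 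For $b'>b$ we have $x<b<b'$, so $\langle\alpha_{x,b'},\alpha_b\rangle=0$. Hence $\langle\mu-\Sigma_I+\rho,\alpha_b\rangle=0$, and if $C$ is $s_b$-invariant then \Cref{lem: weyl group over M elements} kills that term. Only $I=\emptyset$ survives, which proves the claim. This is the mirror image of the $r=0$ case in the proof of \Cref{lem: second string lemma}.

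The main obstacle is the invariance of $C$ under $s_t$ for all $t\in[y-(h'-h),y]$; both steps need it on this same interval. I will use \Cref{eq: las que se salen}: a root is moved by $s_t$ only if $t$ is one of its endpoints or adjacent to one, and the roots whose height drops under $s_t$ are those with an endpoint at $t$. I will then check that every root of $\Phi^{\succ\alpha_{i,j}}$ that is not fixed by such $s_t$ lies in $A\cup B$, or is paired by $s_t$ with another root of $C$. The roots with an endpoint in this window fill the left column of $A$ and the right column of $B$, because $y=u+1$ makes these two columns adjacent. The remaining roots, to the left of the trapezoid or above it, are paired within $C$, using convexity (condition (1) of \Cref{def: Trapecio}). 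The boundary roots of height $h$, near $\alpha_{i,j}$, need a separate check. Here the hypothesis $A\subset\Phi^{\succ\alpha_{i,j}}$ ensures that the partner roots of height $h-1$ never enter the picture. I would carry out this check by a picture argument in the pyramid, in the style of the figures above.
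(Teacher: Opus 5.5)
Your two steps are the paper's argument. The paper also applies \Cref{lem: second string lemma} to $S_{h'-h}(\alpha_{y,y+h-1})$, and then re-inserts $\alpha_{x,y},\dots,\alpha_{x,y-(h'-h)}$ by repeated use of \Cref{coro puedo agregar}, which is equivalent to your step two. The problem is the step you defer, which the paper also only asserts: the $s_t$-invariance of $C=\Phi^{\succ\alpha_{i,j}}\setminus(A\cup B)$ for $t\in[y-(h'-h),y]$. This invariance is false when the bottom-left root of $A$ is $\alpha_{i+1,j+1}$, i.e.\ $x=i+1$. In that case $y-(h'-h)=x+h-1=j+1$. The root $\alpha_{i,j+1}$ has height $h+1$ and lies to the left of $A$, so it is in $C$. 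But $s_{j+1}(\alpha_{i,j+1})=\alpha_{i,j}\notin\Phi^{\succ\alpha_{i,j}}$. Your remark that $A\subset\Phi^{\succ\alpha_{i,j}}$ keeps ``partner roots of height $h-1$'' out of the picture misses this case. The offending partner has height $h$ and is excluded only because it is $\alpha_{i,j}$ itself, and $A\subset\Phi^{\succ\alpha_{i,j}}$ gives only $x\ge i+1$.

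This breaks the statement itself, not just the proof. Take $n=4$, $\alpha_{i,j}=\alpha_{1,2}$, $A=\{\alpha_{2,3}\}$ and $B=\{\alpha_{3,4}\}$. Then $x=u=2$, $y=v=3$, $h'=h=2$ and $C=\{\alpha_{1,3},\alpha_{2,4},\alpha_{1,4}\}$. With $r=0$ the claim becomes $\bfM^C_{\lambda-\alpha_{3,4}}=\bfM^C_{\lambda-\alpha_{2,3}}$ whenever $\pint{\lambda}{\alpha_3}=0$. Take $\lambda=[2,2,0,1]_\varpi$, i.e.\ the partition $(5,3,1,1,0)$, and straighten with $\rho=(4,3,2,1,0)$. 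For $\lambda-\alpha_{2,3}$ only $I=\{\alpha_{1,3}\}$ and $I=\{\alpha_{1,3},\alpha_{1,4}\}$ give regular weights; for $\lambda-\alpha_{3,4}$ only $I=\{\alpha_{1,3}\}$ does. This gives
\[
\bfM^C_{\lambda-\alpha_{2,3}}=q\,\underline{\mathbf H}_{[2,0,0,2]_\varpi}-q^2\,\underline{\mathbf H}_{[1,0,0,1]_\varpi}\neq q\,\underline{\mathbf H}_{[1,2,0,0]_\varpi}=\bfM^C_{\lambda-\alpha_{3,4}}.
\]
So the statement needs the extra hypothesis $x\ge i+2$. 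This hypothesis does hold in the paper's only use of the corollary, inside \Cref{lem: ultimo P}, where the trapezoid sits well to the right of the reference root. Under it, your deferred check goes through. The reflection $s_t$ only swaps $\alpha_{t,b}\leftrightarrow\alpha_{t+1,b}$ and $\alpha_{a,t-1}\leftrightarrow\alpha_{a,t}$. Using $A\cup B=\{\alpha_{a,a+g-1}\mid h\le g\le h',\ x\le a\le y+h-g\}$, one checks that every such pair lies entirely inside or entirely outside $C$. The only exception is the pair $\alpha_{x-1,y-(h'-h)}\leftrightarrow\alpha_{x-1,y-(h'-h)-1}$ at $t=y-(h'-h)$, and it is harmless exactly when $x-1>i$.
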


\begin{proof}
The condition $y=u+1$ guarantees that the set $(\Phi^{\succ \alpha_{i,j}} \setminus  A) \setminus S_{h'-h}(\alpha_{y,y+(h-1)})$ is $s_t$-invariant for   $   y-(h'-h)  \leq t \leq y$. 
Thus, \Cref{lem: second string lemma} yields
\begin{equation}\label{eq: intermedio trapecio}
      \mathbf{M}^{ \Phi^{\succ \alpha_{i,j}} \setminus A}_\lambda = q^r   \mathbf{M}^{ (\Phi^{\succ \alpha_{i,j}} \setminus A) \setminus S_{h'-h}(\alpha_{y,y+(h-1)})}_{\lambda -  r\alpha_{y+1,y+(h-1)}} . 
\end{equation}
On the other hand,  we have
\begin{equation}
    B= (A\cup S_{h'-h}(\alpha_{y,y+(h-1)})) \setminus \{\alpha_{x,y-m} \mid  0\leq  m\leq h'-h \}  .
\end{equation}
Therefore, the result follows by a repeated application of \Cref{coro puedo agregar} and \eqref{eq: intermedio trapecio}.
\end{proof}

\begin{example}\rm
We now illustrate \Cref{coro: push pyramid to the right} with a concrete example.
Let $\alpha_{i,j}=\alpha_{1,4}$, so that $h=4$. 
Consider the trapezoid $A$ at height $4$ with top-left corner $\alpha_{3,8}$ and bottom-right corner $\alpha_{7,10}$.
Then $h'=\operatorname{ht}(\alpha_{3,8})=6$, so that $h'-h=2$. 

Let $\lambda = [1,-2,1,3,2,0,0,-3,0,-1,5,-2,0]_{\varpi}$. 
We observe that $R_{6}(\lambda)=R_{7}(\lambda)=0$ and $R_{8}(\lambda)=3 \le h'-h+1=3$.
Applying \Cref{coro: push pyramid to the right}, we obtain
\begin{equation}
    \bfM^{\Phi^{\succ \alpha_{1,4}} \setminus A}_\lambda
    =
    q^{3}\,
    \bfM^{\Phi^{\succ \alpha_{1,4}} \setminus B}_{\,\lambda - 3 \alpha_{9,11}},
\end{equation}
where $B$ is obtained from $A$ by shifting it one unit to the right. 
This is depicted in Figures~\ref{fig:mayor-tpzA} and \ref{fig:mayor-tpzB}, where the corresponding weights are shown below the pyramids.

\medskip

We now apply \Cref{coro: push pyramid to the right} once more to shift the trapezoid an additional unit to the right. 
The height conditions remain unchanged, and the required vanishing conditions are automatically satisfied.
The only point to verify is that
\begin{equation}
R_{9}(\lambda - 3\alpha_{9,11}) \le h'-h+1.
\end{equation}
Indeed,
\begin{equation}
\lambda - 3\alpha_{9,11}
=
[1,-2,1,3,2,0,0,0,\mathbf{-3},-1,2,1,0]_{\varpi},
\end{equation}
so $R_{9}(\lambda - 3\alpha_{9,11})=3$, and therefore the inequality holds.
Applying the corollary again yields

\begin{equation}
    \bfM^{\Phi^{\succ \alpha_{1,4}} \setminus B}_{\,\lambda - 3 \alpha_{9,11}}
    =
    q^{3}\,
    \bfM^{\Phi^{\succ \alpha_{1,4}} \setminus C}_{\,\lambda - 3 \alpha_{9,11} - 3 \alpha_{10,12}},
\end{equation}
where $C$ is obtained from $B$ by shifting it one further unit to the right.

\medskip

We could continue applying the corollary as long as the inequality in the $10$-th coordinate remains valid. 
However, in this case
\begin{equation}
    R_{10}(\lambda - 3 \alpha_{9,11} - 3 \alpha_{10,12}) = 4 > h'-h+1 = 3,
\end{equation}
so the corollary can no longer be applied.
\end{example}

    \begin{figure}[H]
        \centering
        \begin{subfigure}[t]{0.23\textwidth}
        \scalebox{0.3}{
        \begin{tikzpicture}
        \dibu{13}{1}{4}
        \agregoR{4}{3}{7}
        \agregoR{5}{3}{6}
        \agregoR{6}{3}{5}
        \foreach \x / \y in {1/1,2/-2,3/1,4/3,5/2,6/0,7/0,8/-3,9/0,10/-1,11/5,12/-2,13/0}{
       \draw[] (\x ,0.5) node {\huge $\y$ };
     }
        \end{tikzpicture}}
        \caption{ Set $\Phi^{\succ \alpha_{1,4}}\setminus A$}
        \label{fig:mayor-tpzA}
        \end{subfigure}
        \hspace{1cm}
        \begin{subfigure}[t]{0.23\textwidth}
        \scalebox{0.3}{
        \begin{tikzpicture}
        \dibu{13}{1}{4}
        \agregoR{4}{4}{8}
        \agregoR{5}{4}{7}
        \agregoR{6}{4}{6}
        \foreach \x / \y in {1/1,2/-2,3/1,4/3,5/2,6/0,7/0,8/0,9/-3,10/-1,11/2,12/1,13/0}{
       \draw[] (\x ,0.5) node {\huge $\y$ };
     }
        \end{tikzpicture}}
        \caption{ Set $\Phi^{\succ \alpha_{1,4}}\setminus B$}
         \label{fig:mayor-tpzB}
        \end{subfigure}
        \hspace{1cm}  
        \begin{subfigure}[t]{0.23\textwidth}
        \scalebox{0.3}{
        \begin{tikzpicture}
        \dibu{13}{1}{4}
        \agregoR{4}{5}{9}
        \agregoR{5}{5}{8}
        \agregoR{6}{5}{7}
        \foreach \x / \y in {1/1,2/-2,3/1,4/3,5/2,6/0,7/0,8/0,9/0,10/-4,11/2,12/-2,13/3}{
       \draw[] (\x ,0.5) node {\huge $\y$ };
     }
        \end{tikzpicture}}
        \caption{ Set $\Phi^{\succ \alpha_{1,4}}\setminus C$}
        \label{fig:mayor-tpzC}
        \end{subfigure}
        \caption{Illustrating \Cref{coro: push pyramid to the right}}
        \label{fig: tpz}
    \end{figure}

\begin{definition}\rm   \label{def: SSS and YYY}
Let $\alpha_{i,j}\in \Phi^{\geq 2}$ and $h=j-i+1$.
Let $a$ and $k$ be integers satisfying
\begin{equation}
 0\leq k \leq \frac{i}{h}-2    \qquad \mbox{and} \qquad   1\leq a\leq i-(k+1)h+1.
\end{equation}

For $x\geq 0$ we define $ b_{x} = a+(x+1)h-2$ and $a_{x} = b_x-x$.
Furthermore,% we also define the set:
\begin{itemize}
    \item If $a > h$, we define the set
\begin{equation}\label{eq: def YYY}
    \mathrm{Y}_{a,k} = Y_{a,k}(\alpha_{i,j})=
    \Biggl(
        \Phi^{\succ \alpha_{i,j}}
        \setminus
        \Biggl(
            \bigcup_{m=0}^{k}
            [\alpha_{a_{k}-h+2,a_{k}+2+m},\alpha_{b_{k}-m,b_{k}+h}]
        \Biggr)
    \Biggr)
    \cup
    [\alpha_{a-h,a-1},\alpha_{a_{k}-h+1,a_{k}}].
\end{equation}
    \item Otherwise, we define
\begin{equation}\label{eq: def YYYB}
    \mathrm{Y}_{a,k} = \mathrm{Y}_{a,k}(\alpha_{i,j})=
    \Biggl(
        \Phi^{\succ \alpha_{i,j}}
        \setminus
        \Biggl(
            \bigcup_{m=0}^{k}
            [\alpha_{a_{k}-h+2,a_{k}+2+m},\alpha_{b_{k}-m,b_{k}+h}]
        \Biggr)
    \Biggr)
    \cup
    [\alpha_{1,h},\alpha_{a_{k}-h+1,a_{k}}].
\end{equation}
\end{itemize}
\end{definition}

% \begin{remark}\rm
% \Cref{def: SSS and YYY} introduces a definition (e.g., for the set $Y_{a,k}$) that is partitioned into two similar cases depending on the value of $a$.
% The difference between the two resulting definitions corresponds to the interval $[\alpha_{a-h,a-1},\alpha_{a_{k}-h+1,a_{k}}]$ in the case $a>h$, or $[\alpha_{1,h},\alpha_{a_{k}-h+1,a_{k}}]$ in the case $a \leq h$.

% In the remainder of \Cref{sec: Towards the Second Inverse Decomposition}, most results will be proved only for the $a>h$ case, since the $a \leq h$ case follows from analogous arguments.
% \end{remark}

\begin{example}\rm 
     Let $\alpha_{i,j}=\alpha_{23,25}$, so that $h=3$.
     We choose the parameters $(a,k)=(7,4)$. 
     We have $a_4=16$ and $b_4=20$. 
     The set $Y_{7,4}$ is depicted in \Cref{fig: ejemplo y43}.
     To save space, we only displayed roots of height at most $9$. 
     All the non-displayed roots belong to $Y_{7,4}$.
\end{example}

\begin{lemma}\label{lem: fifth string lemma} 
Let $\alpha_{i,j} \in \Phi^{\geq 2}$ and set $h = j-i+1$.
Let $a$ be an integer such that $1\leq a \leq i+1-h$.
Let $\lambda \in X$ and suppose that $R_{a,t}(\lambda)\in \{0,1\}$ for all $t\in [0,h-2]$. 
Furthermore, we define 
\begin{equation}
    d=\sum_{t=0}^{h-2}R_{a,t}(\lambda), \qquad \mu = \sum_{t=0}^{h-2}R_{a,t}(\lambda)\alpha_{a+t+1,a+t+h}, \qquad \mathrm{Y} = \begin{cases}
        \Phi^{\succ \alpha_{i,j}}\cup \{\alpha_{a-h,a-1}\},&\text{if } a>h;\\%, \qquad \mbox{and}
        \Phi^{\succ \alpha_{i,j}},&\text{otherwise}.
    \end{cases}
\end{equation}

Then, we have
\begin{equation}
        \bfM_{\lambda}^{\mathrm{Y}} = q^{d}\bfM_{\lambda - \mu}^{\mathrm{Y}_{a,0}}.
\end{equation}
\end{lemma}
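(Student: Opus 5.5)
The plan is to perform $h-1$ elementary moves, one for each position $s=a+t$ with $t=0,1,\ldots,h-2$, processed from left to right. Each move is modelled on the proof of \Cref{bk} and on \Cref{coro: push pyramid to the right}. Unwinding \Cref{def: SSS and YYY} for $k=0$ gives $b_0=a_0=a+h-2$, and hence
\[
\mathrm{Y}_{a,0}=\bigl(\Phi^{\succ\alpha_{i,j}}\setminus[\alpha_{a,a+h},\alpha_{a+h-2,a+2h-2}]\bigr)\cup[\alpha_{a-h,a-1},\alpha_{a-1,a+h-2}]
\]
in the case $a>h$, and similarly with $[\alpha_{1,h},\alpha_{a-1,a+h-2}]$ otherwise. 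Passing from $\mathrm{Y}$ to $\mathrm{Y}_{a,0}$ therefore exchanges $h-1$ roots of height $h+1$, namely $\alpha_{a+t,a+t+h}$, for the $h-1$ roots of height $h$ given by $\alpha_{a+t-h+1,a+t}$, one for each $t$. This suggests the interpolating sets
\[
\mathrm{Y}^{(t)}=\bigl(\mathrm{Y}\setminus[\alpha_{a,a+h},\alpha_{a+t-1,a+t+h-1}]\bigr)\cup[\alpha_{a-h+1,a},\alpha_{a+t-h,a+t-1}]
\]
and the interpolating weights $\lambda_t=\lambda-\sum_{s<t}R_{a,s}(\lambda)\alpha_{a+s+1,a+s+h}$. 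We have $\mathrm{Y}^{(0)}=\mathrm{Y}$ and $\mathrm{Y}^{(h-1)}=\mathrm{Y}_{a,0}$. I would prove by induction on $t$ that
\[
\bfM_{\lambda}^{\mathrm{Y}}=q^{\sum_{s<t}R_{a,s}(\lambda)}\bfM^{\mathrm{Y}^{(t)}}_{\lambda_t};
\]
the case $t=h-1$ is the statement.

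The first key observation is a telescoping computation via \Cref{remark producto interno}. Only the summand $s=t-1$ of $\lambda_t$ fails to be orthogonal to $\alpha_{a+t}$, and it contributes $+1$ per unit of coefficient. This gives
\[
R_{a+t}(\lambda_t)=R_{a+t}(\lambda)+R_{a,t-1}(\lambda)=R_{a,t}(\lambda)\in\{0,1\}.
\]
So at each step the relevant coordinate of the current weight is $0$ or $-1$, which is exactly the regime of \Cref{lem: second string lemma} with $k\le1$ (the bound $r\le k+1$ then holds). For the inductive step at position $s=a+t$, I would identify $\mathrm{Y}^{(t)}\setminus s_{s}(\mathrm{Y}^{(t)})$ using \eqref{eq: las que se salen} and the bookkeeping of the previous moves, as in the proof of \Cref{bk}. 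The non-invariant part should be a short string of the shape $S_k$ at $\alpha_{a+t,a+t+h}$ (together with the neighbouring root of height $h$ ending at $a+t$). Applying \Cref{lem: second string lemma} removes that string. It produces the factor $q^{R_{a,t}(\lambda)}$ and subtracts $R_{a,t}(\lambda)\alpha_{a+t+1,a+t+h}$, which is precisely the passage $\lambda_t\to\lambda_{t+1}$.

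Next, \Cref{coro puedo agregar} is used to re-insert, at zero cost, the roots that belong in $\mathrm{Y}^{(t+1)}$: the new root $\alpha_{a+t-h+1,a+t}$ and any root of the string that must survive. This is legitimate once the new weight has zero pairing with $\alpha_{a+t}$ and the set is $s_{a+t}$-invariant. When $r=0$ the step degenerates to an application of \Cref{lem: sacar una poner otra} at position $a+t$, exactly as in \Cref{bk}. The hypothesis $a\le i+1-h$ guarantees that all roots of height $h$ involved lie outside $\Phi^{\succ\alpha_{i,j}}$, and that the roots of height $\ge h+1$ near positions $a,\ldots,a+2h-2$ all belong to $\Phi^{\succ\alpha_{i,j}}$. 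The case $a\le h$ is the same argument, with the absent root $\alpha_{a-h,a-1}$ neglected as in the convention adopted before \Cref{bk}.

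The main obstacle is the invariance bookkeeping: verifying at each step that $\mathrm{Y}^{(t)}$ minus the chosen string is $s_t$-invariant for the positions required by \Cref{lem: second string lemma}. The earlier moves alter roots ending at $a+t-1$ and starting at $a+t$, so they interact with $s_{a+t}$. I would first settle the exact shape of the string by drawing the pyramid picture for small $h$, for instance $h=3$. If $k=1$ turns out not to fit, the fallback is a direct two-term expansion $\bfM^{Z\cup\{\beta\}}=\bfM^{Z}-q\bfM^{Z}_{\,\cdot-\beta}$ combined with \Cref{lem: weyl group over M elements}.
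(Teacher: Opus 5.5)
This is correct and is essentially the paper's own proof. Your $\mathrm{Y}^{(t)}$ and $\lambda_t$ are the paper's $C_{t-1}$ and $\lambda-\mu_{t-1}$, each step is \Cref{lem: second string lemma} followed by \Cref{coro puedo agregar}, and it rests on the same telescoping identity $R_{a+t}(\lambda_t)=R_{a,t}(\lambda)$. To settle your hedge, the string is a single root ($k=0$): $\mathrm{Y}^{(t)}\setminus s_{a+t}(\mathrm{Y}^{(t)})=\{\alpha_{a+t,a+t+h}\}$, because $s_{a+t}$ sends $\alpha_{a+t-h,a+t}$ to the previously inserted $\alpha_{a+t-h,a+t-1}$ (to $\alpha_{a-h,a-1}\in\mathrm{Y}$ when $t=0$), so nothing from the string survives and only $\alpha_{a+t-h+1,a+t}$ is re-inserted.
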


\begin{proof}
We only prove the case $a>h$, the case $1\leq a \leq h$ is proved in a similar fashion. 

For $0\leq t \leq h-2$ we define 
\begin{equation}
\begin{array}{rl}
    C_{t} =  & \left(\Phi^{\succ \alpha_{i,j}}\setminus [\alpha_{a,a+h},\alpha_{a+t,a+t+h}]\right)\cup [\alpha_{a-h,a-1},\alpha_{a+t-h+1,a+t}], \\[5pt]
    \mu_{t} =  & \displaystyle  \sum_{m=0}^{t}R_{a,m}(\lambda)\alpha_{a+m+1,a+m+h}.
\end{array}
\end{equation}
We first prove the following:
\begin{equation}\label{eq: inicio}
    \bfM_{\lambda}^{Y} =q^{R_a(\lambda)}\bfM_{\lambda - \mu_0 }^{C_0}.  
\end{equation}

Since $s_{a}(\alpha_{a-h,a-1}) = \alpha_{a-h,a}$ and $a<i$, it follows from \cref{eq: las que se salen} that $Y \setminus \{\alpha_{a,a+h}\}$ is $s_{a}$-invariant.
Then, \Cref{lem: second string lemma} applied to $S_{0}(\alpha_{a,a+h})=\{\alpha_{a,a+h}\}$ yields $\bfM_{\lambda}^{Y} = q^{R_{a}(\lambda)}\bfM_{\lambda - R_{a}(\lambda)\alpha_{a+1,a+h}}^{Y\setminus \{\alpha_{a,a+h}\}}$.
On the other hand,  since $R_a(\lambda - R_{a}(\lambda)\alpha_{a+1,a+h})=0$ we can apply \Cref{coro puedo agregar}  to obtain 
\begin{equation}\label{eq: fifth string lemma 1}
\bfM_{\lambda - R_{a}(\lambda)\alpha_{a+1,a+h}}^{Y\setminus \{\alpha_{a,a+h}\}} =\bfM_{\lambda - R_{a}(\lambda)\alpha_{a+1,a+h}}^{(Y\setminus \{\alpha_{a,a+h}\})\cup \{\alpha_{a-h+1,a}\}}= \bfM_{\lambda - \mu_0}^{C_0} .
\end{equation}
This proves \eqref{eq: inicio}. 
We now prove the following. 

\begin{claim}\label{claimclaimclaim1}
    For all $0\leq t < h-2$ we have $\bfM_{\lambda - \mu_{t}}^{C_{t}} = q^{R_{a,t+1}(\lambda)}\bfM_{\lambda - \mu_{t+1}}^{C_{t+1}}$.
\end{claim}

\begin{proof}
We fix $0\leq t < h-2$ and set $\alpha = \alpha_{a+t+1,a+t+h+1} $ and $\beta=\alpha_{a+t+2,a+t+h+1}$.
We first notice that one can use \cref{eq: las que se salen} to see that $C_{t}\setminus \{\alpha \}$ is $s_{a+t+1}$-invariant.
On the other hand,  by using \Cref{remark producto interno} and the fact that $t<h-2$, it is easy to see that $R_{a+t+1}(\lambda -\mu_t) = R_{a,t+1}(\lambda) $. 
By \Cref{lem: second string lemma} applied to $S_0(\alpha)$ we obtain 
\begin{equation}\label{eq: fifth string lemma 2}
\bfM_{\lambda - \mu_{t}}^{C_{t}} = q^{R_{a,t+1}(\lambda )} \bfM_{\lambda - \mu_{t} - R_{a,t+1}(\lambda) \beta }^{C_{t} \setminus \{\alpha\}} = q^{R_{a,t+1}(\lambda )} \bfM_{\lambda - \mu_{t+1}}^{C_{t} \setminus \{\alpha\}}.
\end{equation}
Another application of \Cref{remark producto interno} yields $R_{a+t+1}(\lambda-\mu_{t+1})=0 $. 
Thus, \Cref{coro puedo agregar} and the definition of the sets $C_t$ yield 
$\bfM_{\lambda - \mu_{t+1}}^{C_{t} \setminus \{\alpha\} }=\bfM_{\lambda - \mu_{t+1}}^{C_{t+1}}  $. 
Finally, a combination of this with \cref{eq: fifth string lemma 2} proves the claim.
\end{proof}
On the other hand, we have  $\mathrm{Y}_{a,0}=C_{h-2}$ and $\mu = \mu_{h-2}$. 
Therefore, the lemma follows by \eqref{eq: inicio} and a repeated application of \Cref{claimclaimclaim1}.
\end{proof}

\begin{example}\rm
Let $\lambda = [2,1,0,0,-1,0,0,0,0,1,0,0]_\varpi$, $a=5$ and $\alpha_{i,j} = \alpha_{8,11}$.
We have $h=4$ and $R_{5,0}(\lambda) =R_{5,1}(\lambda) =R_{5,2} (\lambda)= 1$.
Hence, $\mu = \alpha_{6,9} + \alpha_{7,10} + \alpha_{8,11}$.
Then, we have $\bfM_{\lambda}^{\mathrm{Y}} = q^{3}\bfM_{\lambda - \mu}^{\mathrm{Y}_{5,0}}$, where the sets $\mathrm{Y}$ and $\mathrm{Y}_{5,0}$ are depicted in \Cref{fig: y en y40}, moreover the weights are presented on the bottom of the pyramids.
We stress that the main difference between $\lambda$ and $\lambda - \mu$ is the position of the first negative component. 
\end{example}

\begin{figure}[H]
        \centering
        \begin{subfigure}[t]{0.23\textwidth}
        \scalebox{0.3}{\begin{tikzpicture}
            \dibu{12}{8}{11}
            \agregof{4}{1}{1}
            \foreach \x / \y in {1/2,2/1,3/0,4/0,5/-1,6/0,7/0,8/0,9/0,10/1,11/0,12/0}{
       \draw[] (\x ,0.5) node {\huge $\y$ };
     }
        \end{tikzpicture}}
        \label{fig: y lem}
        \caption{ $\mathrm{Y}$}
        \end{subfigure}
        \hspace{2cm}
        \begin{subfigure}[t]{0.23\textwidth}
        \scalebox{0.3}{\begin{tikzpicture}
            \dibu{12}{8}{11}
            \agregof{4}{1}{4}
            \quitof{5}{5}{7}
            \foreach \x / \y in {1/2,2/1,3/0,4/0,5/0,6/0,7/0,8/-1,9/-1,10/1,11/0,12/1}{
       \draw[] (\x ,0.5) node {\huge $\y$ };
     }
        \end{tikzpicture}}
        \label{fig: lem y40}
        \caption{ $\mathrm{Y}_{5,0}$}
        \end{subfigure}
        \caption{Illustrating \Cref{lem: fifth string lemma}}
        \label{fig: y en y40}
\end{figure}

The next lemma is of a technical nature. Its proof is straightforward, but it reduces to a somewhat tedious case-by-case inspection of the indices of the roots in the relevant subsets, and we therefore omit the details. Instead, we provide a geometric example that makes the underlying idea more transparent.

\begin{lemma}\label{claim: third string lemma 1}
Let  $\alpha_{i,j} \in \Phi^{\geq 2}$, $h=j-i+1$,  $a\geq 1$ and $k\geq 0$.    
Let $I=[a_k+1, b_k+1]$.
Then,  
\begin{enumerate}
    \item\label{item (1) claim third string lemma 1}  If  $b_{k}<i-1$ then the set $\mathrm{Y}_{a,k}\setminus S_k(\alpha_{b_k+1,b_k+h+1}) $ is $s_t$-invariant  for all $t\in I$. 
    \item\label{item (2) claim third string lemma 1}  If  $b_k=i-1$ then the set $Y_{a,k}$ is $s_t$-invariant for all $t\in I =[i-k,i]$. 
\end{enumerate}
\end{lemma}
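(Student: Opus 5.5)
We reduce the claim to a per-root bookkeeping exercise. For a single simple reflection $s_t$, the only roots $\alpha_{c,e}$ that are not fixed are those with $c=t$, $c=t+1$, $e=t$ or $e=t-1$. Moreover, $s_t$ swaps $\alpha_{t,e}\leftrightarrow\alpha_{t+1,e}$ and $\alpha_{c,t-1}\leftrightarrow\alpha_{c,t}$. A set $Z$ is therefore $s_t$-invariant exactly when, for each such pair, either both members lie in $Z$ or neither does. Our plan is to write $\mathrm{Y}_{a,k}$ as $\Phi^{\succ\alpha_{i,j}}$ with two modifications: we delete the block $\mathcal{B}=\bigcup_{m=0}^{k}[\alpha_{a_k-h+2,a_k+2+m},\alpha_{b_k-m,b_k+h}]$ and add the interval $\mathcal{L}=[\alpha_{a-h,a-1},\alpha_{a_k-h+1,a_k}]$ (or $[\alpha_{1,h},\dots]$ when $a\le h$). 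We then check the pair condition for each $t\in I=[a_k+1,b_k+1]=[b_k-k+1,b_k+1]$, piece by piece.

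First we locate the pieces. Since $a_k\le i-h-1$ in case (1) (and $\le i-h$ in case (2)), the roots of $\mathcal{L}$ have height $h$ and right endpoint at most $a_k<t-1$. Their left endpoints are also far to the left. Hence $\mathcal{L}$ is pointwise fixed by every $s_t$ with $t\in I$, and $\mathcal{L}$ can be ignored.

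Next we use \eqref{eq: las que se salen}. For $t\le b_k+1\le i$, the non-invariance of $\Phi^{\succ\alpha_{i,j}}$ under $s_t$ comes only from the pair $\{\alpha_{t-h,t},\alpha_{t-h,t-1}\}$ and from the height-$(h+1)$ pair $\{\alpha_{t,t+h},\alpha_{t+1,t+h}\}$. When $t=i$, the second pair disappears and only $\alpha_{i-h,i}$ sticks out. At height $\ge h+2$, every pair is either wholly contained in $\Phi^{\succ\alpha_{i,j}}$ or wholly outside it.

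The block $\mathcal{B}$ is a trapezoid at height $h+1$. Row $m$ consists of the height-$(h+1+m)$ roots with left endpoint in $[a_k-h+2,\,b_k-m]$ and right endpoint in $[a_k+2+m,\,b_k+h]$. We verify the key fact that, for each $t\in I$, removing $\mathcal{B}$ breaks no pair at heights between $h+2$ and $h+1+k$. The row-$m$ interval is cut at its right boundary exactly at the column $e=b_k+h$, so for $t\le b_k+1$ the affected right-endpoint pairs $(e,t-1),(e,t)$ are either both in or both out. Left-endpoint pairs $(t,\cdot),(t+1,\cdot)$ with $t\ge b_k-k+1$ lie, by the slanted left edge $b_k-m$, either both inside row $m$ or both outside it, except at the edge root $\alpha_{b_k-m,\,b_k+h}$.

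The edge roots $\alpha_{b_k-m,b_k+h}$, $0\le m\le k$, are precisely $S_k(\alpha_{b_k,b_k+h})$, and they sit just beside $S_k(\alpha_{b_k+1,b_k+h+1})$. In case (1) we show that the remaining broken pairs are exactly those pairing an element of $S_k(\alpha_{b_k+1,b_k+h+1})$ with an element of the deleted block. The $t=b_k+1$ height-$(h+1)$ pair $\{\alpha_{b_k+1,b_k+h+1},\alpha_{b_k+2,b_k+h+1}\}$ from \eqref{eq: las que se salen} is of this form. Removing $S_k(\alpha_{b_k+1,b_k+h+1})$ from $\mathrm{Y}_{a,k}$ therefore restores invariance.

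The low-height pair $\{\alpha_{t-h,t},\alpha_{t-h,t-1}\}$ also has to be absorbed. For $t\in I$ we have $t-h\in[a_k-h+1+1,\,b_k-h+1]$. Since $b_k-h+1=a+(k+1)h-1-h$, these height-$h$ roots have left endpoint at least $a_k-h+2$, so the pair $\alpha_{t-h,t-1}$ (height $h$, not in $\Phi^{\succ\alpha_{i,j}}$ because $t-h<i$) and $\alpha_{t-h,t}$ (height $h+1$, lying in $\mathcal{B}$'s bottom row) are both absent. This is exactly where the precise choice of $a_k,b_k$ is used.

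For case (2), $b_k+1=i$, and at $t=i$ the height-$(h+1)$ pair is absent from \eqref{eq: las que se salen}. The roots $\alpha_{i-m,\,i+h}$ forming $S_k(\alpha_{i,j+1})$ have height $h+1+m$. Their partners lie on the boundary between $\Phi^{\succ\alpha_{i,j}}$ and the block in a way that is already balanced, since $\alpha_{i,j}\notin\Phi^{\succ\alpha_{i,j}}$. The same bookkeeping thus gives full $s_t$-invariance of $\mathrm{Y}_{a,k}$ without removing anything.

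The main obstacle is the index bookkeeping at the two slanted edges of the trapezoid $\mathcal{B}$, namely checking that each boundary pair is matched for every $t\in I$ simultaneously. We would organize this by height, treating rows $m=0,\dots,k$ one at a time, and test it against the example in \Cref{fig: ejemplo y43}.
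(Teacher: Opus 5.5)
Your strategy is the right one, and it is the case-by-case inspection the paper alludes to (the paper omits the proof): for each $t\in I$, check that every $s_t$-pair $\{\alpha_{t,e},\alpha_{t+1,e}\}$ or $\{\alpha_{c,t-1},\alpha_{c,t}\}$ lies entirely inside or entirely outside the set. However, your bookkeeping is wrong at the endpoint $t=a_k+1$ of $I$, which is exactly where the interval $\mathcal{L}$ matters.

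You claim $\mathcal{L}$ is pointwise fixed because its right endpoints are $\le a_k<t-1$. For $t=a_k+1$ this fails: $s_{a_k+1}$ sends the last root $\alpha_{a_k-h+1,a_k}$ of $\mathcal{L}$ to $\alpha_{a_k-h+1,a_k+1}$. Likewise, for the low pair you take $t-h\in[a_k-h+2,\,b_k-h+1]$, but the correct range is $[a_k-h+1,\,b_k-h+1]$. At $t=a_k+1$, the root $\alpha_{t-h,t}=\alpha_{a_k-h+1,a_k+1}$ is \emph{not} in the bottom row of $\mathcal{B}$, since that row's left endpoints start at $a_k-h+2$. So it lies in $\mathrm{Y}_{a,k}\setminus S_k(\alpha_{b_k+1,b_k+h+1})$, and its partner $\alpha_{a_k-h+1,a_k}$ lies in $\mathcal{L}$. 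The pair is balanced because both roots are present, not because both are absent. This is not cosmetic. Your argument never uses $\mathcal{L}$, so it would equally prove that $\Phi^{\succ\alpha_{i,j}}\setminus(\mathcal{B}\cup S_k(\alpha_{b_k+1,b_k+h+1}))$ is $s_{a_k+1}$-invariant, which is false in general. The last root of $\mathcal{L}$ is precisely what makes $\mathrm{Y}_{a,k}$ stable under $s_{a_k+1}$; the paper's example points to this root moving ``up-right, safely staying inside the set''.

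Three further points need correcting.
\begin{itemize}
\item In case (1), no $s_t$-pair with $t\in I$ joins $S_k(\alpha_{b_k+1,b_k+h+1})$ to $\mathcal{B}$: their right endpoints are $b_k+h+1$ and $\le b_k+h$ respectively, while $t\le b_k+1$. The only unbalanced pair of $\mathrm{Y}_{a,k}$ is $\{\alpha_{b_k+1,b_k+h+1},\alpha_{b_k+2,b_k+h+1}\}$ at $t=b_k+1$. Its second member is a height-$h$ root outside $\Phi^{\succ\alpha_{i,j}}$ (because $b_k+2\le i$), not a block element.
\item You must also check that deleting all of $S_k(\alpha_{b_k+1,b_k+h+1})$ unbalances nothing. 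This holds because for $t\in[a_k+1,b_k]$ the $s_t$-partner of $\alpha_{t,b_k+h+1}$ is $\alpha_{t+1,b_k+h+1}$, again in that set.
\item In case (2), the operative fact is $\alpha_{i+1,j+1}\in\Phi^{\succ\alpha_{i,j}}$, so at $t=i$ the pair $\{\alpha_{i,j+1},\alpha_{i+1,j+1}\}$ consists of two present roots. The fact $\alpha_{i,j}\notin\Phi^{\succ\alpha_{i,j}}$ plays no role.
\end{itemize}

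A tidy way to run the whole check in case (1): $\mathcal{B}\cup S_k(\alpha_{b_k+1,b_k+h+1})$ is the set of roots with height in $[h+1,h+k+1]$, left endpoint $\ge a_k-h+2$, and right endpoint $\le b_k+h+1$. For $t\in I$, the two members of any $s_t$-pair agree on both endpoint conditions, so only the height window can separate them. Crossing the top of the window would require $t\le a_k$ or $t\ge b_k+3$. Crossing the bottom (heights $h$ and $h+1$) involves only two pairs:
\begin{itemize}
\item $\{\alpha_{t,t+h},\alpha_{t+1,t+h}\}$, where both roots are absent when $b_k<i-1$;
\item $\{\alpha_{t-h,t-1},\alpha_{t-h,t}\}$, where both are absent for $t\ge a_k+2$ and both are present for $t=a_k+1$, thanks to $\mathcal{L}$.
\end{itemize}
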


\begin{example}\rm
In this example, we illustrate the invariance stated in \Cref{claim: third string lemma 1}. 
Set $a=7$ and $k=4$, which gives the corresponding integers $(a_{4},b_{4}) = (16,20)$. 

\medskip
To understand the invariance, it is helpful to recall the geometric action of the simple reflections $s_t$ on the roots: applying $s_t$ moves a root $\alpha_{t,x}$ diagonally down-right to $\alpha_{t+1,x}$, and a root $\alpha_{x,t}$ diagonally down-left to $\alpha_{x,t-1}$. Conversely, it moves $\alpha_{t+1,x}$ diagonally up-left to $\alpha_{t,x}$, and $\alpha_{x,t-1}$ diagonally up-right to $\alpha_{x,t}$.

\medskip
We first address the invariance presented in \Cref{item (1) claim third string lemma 1} of \Cref{claim: third string lemma 1}. The set $Y_{7,4}(\alpha_{23,25})\setminus S_{4}(\alpha_{21,24})$ is depicted in \Cref{fig: bk<i-1}, where the roots of $S_{4}(\alpha_{21,24})$ are highlighted in red. 

\medskip
In this case, it suffices to analyze the roots located at the interior borders of the green region. Recall that $t\in [17,21]$. If we look along the left border of the white trapezoid, the roots that are not fixed by $s_t$ move either up-right or down-left, thus remaining strictly within the green region. At the bottom of the green region, all roots up to $\alpha_{15,16}$ are fixed, while the root $\alpha_{15,16}$ can only move up-right, safely staying inside the set. Similarly, the roots modified by the reflections along the right border of the trapezoid are moved either up-left or down-right, which also keeps them inside the green area. Finally, the two roots $\alpha_{15,22}$ and $\alpha_{16,23}$ directly above the trapezoid are fixed by these reflections. 
This  proves the invariance.

\medskip
Similarly, we can observe the invariance for \Cref{item (2) claim third string lemma 1} of \Cref{claim: third string lemma 1} using the set $Y_{7,4}(\alpha_{21,23})$ depicted in \Cref{fig: bk=i-1}. In this configuration, the previously missing roots (such as $\alpha_{22,24}$) are now included, effectively extending the border of the set. Any boundary root moved outward by the reflections simply lands on these newly included existing roots within the green region. Therefore, the set remains invariant without the need to subtract the red section $S_4(\alpha_{21,24})$.

\begin{figure}[H]
        \centering
        \begin{subfigure}[t]{0.48\textwidth}
        \scalebox{0.29}{\begin{tikzpicture}
        \clip (-1,0) rectangle (28,10);
            \dibu{27}{23}{25}
            \quitof{4}{15}{20}
            \quitof{5}{15}{19}
            \quitof{6}{15}{18}
            \quitof{7}{15}{17}
            \quitof{8}{15}{16}
            \agregof{3}{4}{14}
            \quitoS{4}{21}{4}
        \end{tikzpicture}}
        \caption{Set $Y_{7,4}(\alpha_{23,25})\setminus S_{4}(\alpha_{21,24})$}
        \label{fig: bk<i-1}
        \end{subfigure}
        % \hspace{0.5cm}
        \begin{subfigure}[t]{0.48\textwidth}
        \scalebox{0.29}{\begin{tikzpicture}
        \clip (-1,0) rectangle (28,10);
            \dibu{27}{21}{23}
            \quitof{4}{15}{20}
            \quitof{5}{15}{19}
            \quitof{6}{15}{18}
            \quitof{7}{15}{17}
            \quitof{8}{15}{16}
            \agregof{3}{4}{14}
        \end{tikzpicture}}
        \caption{Set  $Y_{7,4}(\alpha_{21,23})$}
        \label{fig: bk=i-1}
        \end{subfigure}
        \caption{Examples of invariance of sets in \Cref{claim: third string lemma 1}. }
    \end{figure}

\end{example}

\begin{lemma}\label{lem: third string lemma}  
Let  $\alpha_{i,j} \in \Phi^{\geq 2}$, $h=j-i+1$, $a\geq 1$, $k\geq 0$ and $\lambda \in X$. 
Suppose that 
\begin{enumerate}[(a)]
\item $b_{k}<i-h$. \label{itemA}  
\item $R_{t}(\lambda)=0$, for $a_{k}+1\leq t \leq b_{k}$. \label{itemB}
\item $ 0\leq R_{b_k+1}(\lambda) \leq k+1$. \label{itemC}
\item $0\leq R_{b_{k}+1,t}(\lambda) \leq k+2$, for  $1\leq t<h$.   \label{itemD}
\end{enumerate}
Furthermore, we define
$\displaystyle \mu =\sum_{t=0}^{h-1} R_{b_{k}+1,t}(\lambda )  \alpha_{b_{k}+t+2,b_{k}+t+h+1}$ and 
$\displaystyle R=\sum_{t=0}^{h-1} R_{b_{k}+1,t} (\lambda )$.
Then, we have 
\begin{equation}
    \bfM_{\lambda}^{\mathrm{Y}_{a,k}} = q^{R}\bfM_{\lambda - \mu}^{ \mathrm{Y}_{a,k+1}}.
    \end{equation}
\end{lemma}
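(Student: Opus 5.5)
The plan is to follow the proof of \Cref{lem: fifth string lemma}, now starting from $\mathrm{Y}_{a,k}$ instead of $\mathrm{Y}$. We peel off one "string'' at a time and move the trapezoid of missing roots one step to the right. The natural first step uses \Cref{claim: third string lemma 1}\eqref{item (1) claim third string lemma 1}. Since $b_k<i-h<i-1$, the set $\mathrm{Y}_{a,k}\setminus S_k(\alpha_{b_k+1,b_k+h})$ is $s_t$-invariant for all $t\in[a_k+1,b_k+1]$.

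Here we need the string $S_k(\alpha_{b_k+1,b_k+h})=\{\alpha_{b_k+1-m,b_k+h}\mid 0\le m\le k\}$. We should double-check the exact indexing of the string, matching the pattern in \Cref{lem: fifth string lemma}, where $S_0(\alpha_{a,a+h})$ was used and the subtracted root was $\alpha_{a+1,a+h}$. With this in place, \Cref{lem: second string lemma} applies: hypotheses \ref{itemB} and \ref{itemC} give exactly $R_t(\lambda)=0$ for $b_k+1-k\le t<b_k+1$ (note $a_k=b_k-k$) and $R_{b_k+1}(\lambda)=r\le k+1$. It yields
\[
\bfM_\lambda^{\mathrm{Y}_{a,k}}=q^{R_{b_k+1}(\lambda)}\,\bfM^{\mathrm{Y}_{a,k}\setminus S}_{\lambda-R_{b_k+1}(\lambda)\alpha_{b_k+2,b_k+h+1}}.
\]
This produces the $t=0$ summand of $\mu$. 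We then use \Cref{coro puedo agregar}, possibly repeatedly, as in \eqref{eq: fifth string lemma 1}, to add the roots on the left edge that the new set $\mathrm{Y}_{a,k+1}$ requires. This is legitimate because the new weight has $R_{b_k+1}=0$. The result is an intermediate set $C_0$.

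Next, as in \Cref{claimclaimclaim1}, we define intermediate sets $C_t$ for $0\le t\le h-1$ and weights $\mu_t=\sum_{m\le t}R_{b_k+1,m}(\lambda)\alpha_{b_k+m+2,b_k+m+h+1}$. Each $C_t$ interpolates between $\mathrm{Y}_{a,k}$ and $\mathrm{Y}_{a,k+1}$, with $C_{h-1}=\mathrm{Y}_{a,k+1}$. We then prove the step $\bfM^{C_t}_{\lambda-\mu_t}=q^{R_{b_k+1,t+1}(\lambda)}\bfM^{C_{t+1}}_{\lambda-\mu_{t+1}}$. At step $t+1$ we use \Cref{lem: second string lemma} at position $b_k+t+2$ with a string of length $k+2$, since $\mathrm{Y}_{a,k+1}$ has depth $k+2$. \Cref{remark producto interno} computes $R_{b_k+t+2}(\lambda-\mu_t)=R_{b_k+1,t+1}(\lambda)$: the telescoping of the previously subtracted roots leaves only the partial sum. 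Hypothesis \ref{itemD} supplies the bound $\le k+2$. The vanishing conditions $R_s=0$ on the rest of the string's range come from \ref{itemB} together with the earlier steps, which zeroed the coordinates $b_k+1,\dots,b_k+t+1$. After each application of \Cref{lem: second string lemma}, \Cref{coro puedo agregar} again restores the roots needed to reach $C_{t+1}$. Multiplying all the steps gives $q^R$ and the weight $\lambda-\mu$.

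The main obstacle is the combinatorial verification that each $C_t$ minus its string is invariant under the relevant $s$'s, and that the added roots satisfy $\langle\beta,\alpha_r\rangle=1$ with $C_t$ being $s_r$-invariant. This is the analogue of \Cref{claim: third string lemma 1} for the intermediate sets. Hypothesis \ref{itemA}, $b_k<i-h$, is what keeps the moving trapezoid away from $\alpha_{i,j}$ even at $t=h-1$, so that \eqref{eq: las que se salen} does not interfere. We would handle this by the same boundary inspection as in the example following \Cref{claim: third string lemma 1}. The indexing of the strings and of the left-edge roots also has to be fixed consistently with \eqref{eq: def YYY}, which is the fiddliest routine point.
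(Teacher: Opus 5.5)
Your architecture matches the paper's proof. There is one application of \Cref{lem: second string lemma} at position $b_k+1$ to a string of $k+1$ roots. Then come $h-1$ rounds, each applying \Cref{lem: second string lemma} at position $b_k+s+2$ to $S_{k+1}(\alpha_{b_k+s+2,b_k+s+h+2})$, followed by \Cref{coro puedo agregar}. Your weights $\mu_t$, the exponent $R$, and the way (b)--(d) enter through \Cref{remark producto interno} all agree with the paper. Two corrections are needed, and the second is a genuine break in the chain as you describe it. First, the initial string is $S_k(\alpha_{b_k+1,b_k+h+1})$, whose bottom root has height $h+1$. This is the set in \Cref{claim: third string lemma 1}, and it is the one that produces the weight $\lambda-R_{b_k+1}(\lambda)\alpha_{b_k+2,b_k+h+1}$ you wrote.

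Second, nothing may be added after that first removal: you must take $C_0=Z_0:=\mathrm{Y}_{a,k}\setminus S_k(\alpha_{b_k+1,b_k+h+1})$.
\begin{itemize}
\item The roots of $\mathrm{Y}_{a,k+1}$ missing from $Z_0$ are the $h-1$ left-edge strings $A_{s+1}=\{\alpha_{a_k+s-h+2,\,a_k+s+1+m}\mid 0\le m\le k+1\}$, for $0\le s\le h-2$.
\item Take a root you could add at step $0$ using $R_{b_k+1}(\lambda-\mu_0)=0$, for example $\alpha_{a_k-h+2,b_k+1}$. Its $s_{b_k+2}$-image $\alpha_{a_k-h+2,b_k+2}$ lies in the removed trapezoid. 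It cannot be added yet: via $b_k+2$ you would need $R_{b_k+2}(\lambda-\mu_0)=R_{b_k+1,1}(\lambda)=0$, which need not hold, and via the left endpoint $a_k-h+2$ nothing controls $R$.
\item In the same way, any early-added root of some $A_{s+1}$ destroys invariance under some $s_t$ with $t\in[a_k+1,b_k+2]$.
\item That invariance of $C_0\setminus S_{k+1}(\alpha_{b_k+2,b_k+h+2})$ is exactly what the next application of \Cref{lem: second string lemma}, at position $b_k+2$, requires.
\end{itemize}

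The correct bookkeeping is
\[
Z_{s+1}=\bigl(Z_s\setminus S_{k+1}(\alpha_{b_k+s+2,b_k+s+h+2})\bigr)\cup A_{s+1}.
\]
The roots of $A_{s+1}$ are added from the highest one down, at positions $a_k+s+1+m\in[a_k+s+1,b_k+s+2]$, where $R_t(\lambda-\mu_{s+1})=0$. The invariances you flag as the main obstacle then become two statements. First, $Z_s$ is $s_t$-invariant for $t\in[a_k+s+1,b_k+s+1]$. Second, $Z_s\setminus S_{k+1}(\alpha_{b_k+s+2,b_k+s+h+2})$ is $s_t$-invariant for $t\in[a_k+s+1,b_k+s+2]$.

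The analogy with \Cref{lem: fifth string lemma} is what misled you. There every removal is followed by an addition, because that lemma has no counterpart of the initial $S_k$-removal. Here the initial $S_k$-removal comes with no addition.
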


\begin{proof}
By combining \cref{itemB}, \cref{itemC} and  \Cref{claim: third string lemma 1}, we can apply  \Cref{lem: second string lemma}  to $S_{k}(\alpha_{b_k+1,b_k+1+h})$    in order to obtain
\begin{equation}\label{eq: third string lemma 1}
    \bfM_{\lambda}^{\mathrm{Y}_{a,k}} =  q^{R_{b_{k}+1}(\lambda)}\bfM_{\lambda - \mu_{0}}^{\mathrm{Z}_{0}},
\end{equation}
where $\mu_{0} \coloneq R_{b_{k}+1}(\lambda) \alpha_{b_{k}+2,b_{k}+1+h}$ and  $\mathrm{Z}_{0} \coloneq \mathrm{Y}_{a,k}\setminus S_{k}(\alpha_{b_k+1,b_k+h+1})$. 

\medskip
We proceed to iteratively transform $\bfM_{\lambda-\mu_{0}}^{\mathrm{Z}_{0}}$ until reaching $\bfM_{\lambda-\mu}^{\mathrm{Y}_{a,k+1}}$. 
To do this we need to introduce a bit more of notation. 
We define 
\begin{equation}
\begin{array}{rclll}
\mathrm{A}_{s+1} &  = & \displaystyle \{ \alpha_{a_k+s-(h-2),a_k+s+m+1} \mid 0\leq m \leq k+1  \}  , &  & (0\leq s < h-1)  \\[15pt]
\mathrm{Z}_{s+1} &= & \left(\mathrm{Z}_{s}\setminus  S_{k+1}(\alpha_{b_k+s+2,b_k+s+h+2 })  \right)\cup \mathrm{A}_{s+1}, &  & (0\leq s <h-1)\\[5pt]
\mu_{s} &=  & \displaystyle \sum_{u=0}^s R_{b_k+1,u} (\lambda) \alpha_{b_k+2+u,b_k+h+1+u}, &   &  (1\leq s \leq h-1). 
\end{array}
\end{equation}

\begin{claim}\label{claim: third string lemma 2}
For all  $0\leq s \leq h-1$ we have
\begin{enumerate}[(a)]
    \item\label{item: a claim 2} $\mathrm{Z}_{s}$ is $s_{t}$-invariant for $a_{k}+s+1\leq t \leq b_{k}+s+1$.
    \item\label{item: b claim 2} $\mathrm{Z}_{s}\setminus S_{k+1}(\alpha_{b_k+s+2,b_k+s+h+2 })$ is $s_{t}$-invariant for $a_{k}+s+1  \leq t \leq  b_{k}+s+2$.
\end{enumerate}
\end{claim}
\begin{proof}
    The proof uses induction on $s$. 
    The case $s=0$ for \cref{item: a claim 2} is \Cref{claim: third string lemma 1}. 
    Then, we prove that if \cref{item: a claim 2} holds for $s$ then also \cref{item: b claim 2} holds for $s$, and that if \cref{item: b claim 2} holds for $s$  then \cref{item: a claim 2} holds for $s+1$.  
    For the sake of brevity we omit the details. 
\end{proof}

\begin{claim}\label{claim: third string lemma 3}
For all $0\leq s < h-1$ we have
\begin{enumerate}[(a)]
    \item \label{itemAA} $R_t(\lambda - \mu_s) = 0$, for $t\in [a_{k}+s+1,b_{k}+s+1]$.
     \item \label{itemBB} $ R_{b_{k}+s+2} (\lambda -\mu_s) =  R_{b_{k}+1,s+1}(\lambda)$.
    \item \label{itemCC} $ R_{t}(\lambda -\mu_{s+1}) = 0$, 
    for $t\in [a_{k}+s+1,b_{k}+s+2]$.
\end{enumerate}
\end{claim}

\begin{proof}
We fix $0\leq s < h-1$ and  $t\in [a_{k}+s+1,b_{k}+s+1]$. 
Let us first suppose that $a_{k}+s+1 \leq  t\leq b_k$. 
 \Cref{remark producto interno} implies that $\pint{\alpha_{b_k+2+u,b_k+h+1+u}}{\alpha_t} =0$ for all $0\leq u \leq s$. 
Then,  \cref{itemB} in the hypothesis of the lemma allows us to conclude  $R_{t}(\lambda - \mu_s)=0$, as we wanted to show.

We now assume that $t=b_{k}+1$. 
By using \Cref{remark producto interno} we obtain
\begin{equation}
 R_{b_{k}+1}(\lambda - \mu_{s}) =   R_{b_{k}+1}(\lambda)-  R_{b_{k}+1}( \mu_{s})=R_{b_{k}+1}(\lambda)-  R_{b_{k}+1,0}( \lambda) =0.
\end{equation}

Finally, we suppose that $b_{k}+2\leq  t \leq b_k+s+1 $. 
In this case we have 
\begin{equation}
  R_{t}(\lambda -\mu_s) =   
  R_{t}(\lambda) - R_{t}(\mu_{s}) =
  R_{t}(\lambda)+ \sum_{u=0}^s R_{b_k+1,u} \pint{\alpha_{b_k+2+u,b_k+h+1+u}}{\alpha_t}.
\end{equation}

Since $s<h-1$, \Cref{remark producto interno} implies that the only two terms that contribute to the above sum are for $u=t-b_k-2$ and $u=t-b_k-1$. 
Rewriting this  and using the definition of the numbers $R$ we get 
\begin{equation}
R_{t}(\lambda -\mu_s) =   
R_{t}(\lambda)+R_{b_k+1,t-b_k-2}(\lambda) -R_{b_k+1,t-b_k-1}(\lambda) = 
R_{t}(\lambda)- R_{t}(\lambda) =
0.
\end{equation}
This finishes the proof of \cref{itemAA}.

The two remaining items are dealt with similarity, so we omit the details. 
We finish by highlighting that the difference between \cref{itemAA} and \cref{itemCC} is that we do not know the value of $  R_{a_{k}}(\lambda -\mu_0)$.
\end{proof}

By combining  \cref{item: b claim 2} in \Cref{claim: third string lemma 2},   \crefrange{itemAA}{itemBB} in \Cref{claim: third string lemma 3}  and \cref{itemD} in the hypothesis of the lemma, we can apply \Cref{lem: second string lemma} to $S_{k+1}(\alpha_{b_k+s+2,b_k+s+h+2 })$,  for each $0\leq s<h-1$. 
We recall that by definition we have
$\mu_{s+1}= \mu_s +R_{b_k+1,s+1}(\lambda) \alpha_{b_k+s+3,b_k+s+2+h}$.
Thus we get
\begin{equation}\label{eq: third string lemma 2}
    \bfM_{\lambda -\mu_{s}}^{\mathrm{Z}_{s}} = q^{R_{b_{k}+1,s+1}(\lambda)}\bfM_{\lambda -\mu_{s+1}}^{\mathrm{Z}_{s}\setminus S_{k+1}(\alpha_{b_k+s+2,b_k+s+h+2 })}.
\end{equation} 

We now aim to replace the set indexing the $\bfM$-element on the right-hand side of \eqref{eq: third string lemma 2} with $Z_{s+1}$. This can indeed be done by invoking   \cref{item: b claim 2} of \Cref{claim: third string lemma 2} together with \cref{itemCC} in \Cref{claim: third string lemma 3}.
With this in place, we may apply \Cref{coro puedo agregar} $(k+2)$-times in order to add each root of $A_{s+1}$ to the set $\mathrm{Z}_{s}\setminus S_{k+1}(\alpha_{b_k+s+2,b_k+s+h+2 })$.
It is important to note that the addition of these roots must be carried out in order—from the highest root to the lowest root in $A_{s+1}$. Otherwise, the invariance would be affected, preventing us from applying \Cref{coro puedo agregar} as intended. 
In summary, we obtain
\begin{equation}\label{eq: third string lemma 6}
\bfM_{\lambda-\mu_{s}}^{\mathrm{Z}_{s}} = q^{R_{b_{k}+1,s+1}(\lambda)}\bfM_{\lambda-\mu_{s+1}}^{\mathrm{Z}_{s+1}}.
\end{equation}

 Finally,  by combining \eqref{eq: third string lemma 1} and \eqref{eq: third string lemma 6} we get $\bfM_{\lambda}^{\mathrm{Y}_{a,k}} =q^R \bfM_{\lambda - \mu_{h-1}}^{Z_{h-1}}$ and the lemma follows since $\mu =\mu_{h-1}$ and $\mathrm{Z}_{h-1} = \mathrm{Y}_{a,k+1}$.
\end{proof}

\begin{example}\rm \label{ejemplo piramides}
We illustrate \Cref{lem: third string lemma} and its proof for
$\lambda=[1,0,0,0,0,0,0,0,-1,-1,0,1]_\varpi$, $\alpha_{i,j}=\alpha_{13,15}$, and $(a,k)=(4,1)$ in \Cref{fig: y41 y42}.
Here $a_{1}=7$ and $b_{1}=8$.
To save space we draw only positive roots of height $\le 7$ (higher ones belong to any set but omitted).
Roots \emph{deleted} at a step are shown in \textcolor{red}{red}; roots \emph{inserted} are outlined in thick black.

\textbf{(a)} Start with $Y_{4,1}$, which is $s_8$-stable.
\textbf{(b)} To also get $s_9$-stability, delete $\alpha_{9,12}$; this breaks $s_8$-stability, so delete $\alpha_{8,12}$ as well. The result is $Z_0$.
\textbf{(c)} Enforce $s_8,s_9,s_{10}$-stability by passing to $Z_0\setminus S_{10,2}$ (further deletions).
\textbf{(d)} Insert successively $\alpha_{6,10}$, $\alpha_{6,9}$, and $\alpha_{6,8}$ (by \Cref{coro puedo agregar}); after each insertion the set remains stable as required, yielding $Z_1$ (stable under $s_9,s_{10}$).
\textbf{(e)} Delete $\alpha_{9,14}$, $\alpha_{10,14}$, and $\alpha_{11,14}$ to also achieve $s_{11}$-stability, obtaining $Z_1\setminus S_{11,2}$.
\textbf{(f)} Finally, insert $\alpha_{7,11}$, $\alpha_{7,10}$, and $\alpha_{7,9}$ to obtain $Y_{4,2}$.

The weight rows below the pyramids record these changes:
$(a)\to(b)$: subtract $\alpha_{10,12}$;
$(b)\to(c)$: subtract $2\alpha_{11,13}$ (the 10th coordinate equals $-2$);
$(d)\to(e)$: subtract $2\alpha_{12,14}$.
\end{example}

\begin{figure}[htbp]
\centering
% --- Fila 1 ---
\begin{subfigure}[t]{0.32\textwidth}
\centering
\scalebox{0.3}{
        \begin{tikzpicture}
        \clip (-1,0) rectangle (17,8);
            \dibu{16}{13}{15}
            \agregof{3}{1}{5}
            \quitof{4}{6}{8}
            \quitof{5}{6}{7}
            \peso{0}{-1}{-1}{0}{1}{0}{0}{0}{0}
        \end{tikzpicture}
        }
\caption{$Y_{4,1} $}
\label{fig:a}
\end{subfigure}\hfill
\begin{subfigure}[t]{0.32\textwidth}
\centering
\scalebox{0.3}{
        \begin{tikzpicture}
        \clip (-1,0) rectangle (17,8);
            \dibu{16}{13}{15}
            \agregof{3}{1}{5}
            \quitof{4}{6}{8}
            \quitof{5}{6}{7}
            \quitoS{4}{9}{1}
            \peso{0}{0}{-2}{0}{0}{1}{0}{0}{0}
        \end{tikzpicture}
        }
\caption{$Z_0 $}
\label{fig:b}
\end{subfigure}\hfill
\begin{subfigure}[t]{0.32\textwidth}
\centering
\scalebox{0.3}{
        \begin{tikzpicture}
        \clip (-1,0) rectangle (17,8);
            \dibu{16}{13}{15}
            \agregof{3}{1}{5}
            \quitof{4}{6}{9}
            \quitof{5}{6}{8}
            \quitoS{4}{10}{2}
            \peso{0}{0}{0}{-2}{0}{-1}{2}{0}{0}
        \end{tikzpicture}
        }
\caption{$Z_0\setminus S_{10,2}  $}
\label{fig:c}
\end{subfigure}

\par\medskip % salto de línea entre filas

% --- Fila 2 ---
\begin{subfigure}[t]{0.32\textwidth}
\centering
\scalebox{0.3}{
        \begin{tikzpicture}
        \clip (-1,0) rectangle (17,8);
            \dibu{16}{13}{15}
            \agregof{3}{1}{6}
            \quitof{4}{7}{10}
            \quitof{5}{7}{9}
            \quitof{6}{8}{8}
            \agregoA{3}{6}{2}
            \peso{0}{0}{0}{-2}{0}{-1}{2}{0}{0}
        \end{tikzpicture}
        }
\caption{$Z_1$}
\label{fig:d}
\end{subfigure}\hfill
\begin{subfigure}[t]{0.32\textwidth}
\centering
\scalebox{0.3}{
        \begin{tikzpicture}
        \clip (-1,0) rectangle (17,8);
            \dibu{16}{13}{15}
            \agregof{3}{1}{6}
            \quitof{4}{7}{10}
            \quitof{5}{7}{9}
            \quitof{6}{8}{8}
            \quitoS{4}{11}{2}
            \peso{0}{0}{0}{0}{-2}{-1}{0}{2}{0}
        \end{tikzpicture}
        }
\caption{$Z_1\setminus S_{11,2} $}
\label{fig:e}
\end{subfigure}\hfill
\begin{subfigure}[t]{0.32\textwidth}
\centering
\scalebox{0.3}{\begin{tikzpicture}
        \clip (-1,0) rectangle (17,8);
            \dibu{16}{13}{15}
            \agregof{3}{1}{7}
            \quitof{4}{8}{11}
            \quitof{5}{8}{10}
            \quitof{6}{8}{9}
            \agregoA{3}{7}{2}
            \peso{0}{0}{0}{0}{-2}{-1}{0}{2}{0}
        \end{tikzpicture}}
\caption{ $\mathrm{Y}_{4,2}$}
\label{fig: y42}
\end{subfigure}
\caption{Illustrating the proof of \Cref{lem: third string lemma}.}
\label{fig: y41 y42}
\end{figure}

\begin{lemma}\label{lem: fourth string lemma}  
Let  $\alpha_{i,j} \in \Phi^{\geq 2}$, $h=j-i+1$, $a\geq 1$, $k\geq 0$ and $\lambda \in X$.  
We suppose that
\begin{enumerate}[(a)]
\item $b_{k}< i$. \label{itemA4}  
\item $0\leq R_{a,t}(\lambda)\leq 1$, for $0\leq t \leq h-2$. \label{itemB4}
\item $ 0\leq R_{a,h-1,x}(\lambda) \leq x+1$, for $0\leq x\leq k-1$. \label{itemC4}
\item $0\leq R_{a,t,x}(\lambda) \leq x+1$, for $0\leq x\leq k$ and $0\leq t<h-1$.     \label{itemD4}
\end{enumerate}
Furthermore, we define 
\begin{equation}
    \begin{array}{rll}
        \mu_x & = & \displaystyle \left(  \sum_{\ell =0}^x \sum_{m=0}^{h-1} R_{a,m,\ell }(\lambda)\alpha_{b_{\ell }+2 +m -(h -1) ,b_{\ell}+2+m}\right)  - R_{a,h-1,x}  (\lambda)\alpha_{b_{x}+2,b_{x}+2+(h-1)}.  \\[15pt]
        \mathfrak{R} _{k} &= & \displaystyle\left (\sum_{\ell =0}^{k}\sum_{m=0}^{h-1} R_{a,m,\ell }(\lambda)\right) - R_{a,h-1,k}(\lambda)\\[15pt]
        \mathrm{Y} & = & \begin{cases}
            \Phi^{\succ \alpha_{i,j}}\cup \{\alpha_{a-h,a-1}\},&\text{if } a>h;\\
            \Phi^{\succ \alpha_{i,j}},&\text{otherwise}.
        \end{cases}
    \end{array}
\end{equation}
Then, we have 
\begin{equation}
    \bfM_{\lambda}^{\mathrm{Y}} = q^{\mathfrak{R}_k}\bfM_{\lambda - \mu_{k}}^{ \mathrm{Y}_{a,k}}.
\end{equation}
\end{lemma}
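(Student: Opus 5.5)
We argue by induction on $k$, combining \Cref{lem: fifth string lemma} for the base case with \Cref{lem: third string lemma} for the inductive step. The idea is that $\mu_k$ and $\mathfrak{R}_k$ record the cumulative shifts and $q$-powers produced by pushing the trapezoid of deleted roots one layer at a time.

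\textbf{Base case $k=0$.} By definition, $\mu_0=\sum_{m=0}^{h-2}R_{a,m}(\lambda)\,\alpha_{a+m+1,a+m+h}$, because $b_0+2+m-(h-1)=a+m+1$. Likewise $\mathfrak{R}_0=\sum_{m=0}^{h-2}R_{a,m}(\lambda)$. Hypothesis \ref{itemB4} is exactly the hypothesis of \Cref{lem: fifth string lemma}, and that lemma yields $\bfM_\lambda^{\mathrm{Y}}=q^{\mathfrak{R}_0}\bfM_{\lambda-\mu_0}^{\mathrm{Y}_{a,0}}$.

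\textbf{Inductive step $k-1\to k$.} Assume $b_k<i$. Then $b_{k-1}=b_k-h<i-h$, so hypothesis \ref{itemA} of \Cref{lem: third string lemma} holds for $k-1$. Hypotheses \ref{itemB4}--\ref{itemD4} for $k$ imply the same hypotheses for $k-1$, so by induction $\bfM_\lambda^{\mathrm{Y}}=q^{\mathfrak{R}_{k-1}}\bfM_{\lambda'}^{\mathrm{Y}_{a,k-1}}$ with $\lambda'=\lambda-\mu_{k-1}$. We then apply \Cref{lem: third string lemma} to $\lambda'$ with parameter $k-1$. This requires verifying three things:
\[
R_t(\lambda')=0 \ \text{for } t\in[a_{k-1}+1,b_{k-1}],\qquad R_{b_{k-1}+1}(\lambda')\in[0,k],\qquad R_{b_{k-1}+1,t}(\lambda')\in[0,k+1]\ \text{for } 1\le t<h.
\]
The key computation, done with \Cref{remark producto interno}, is to identify the numbers $R_{b_{k-1}+1,t}(\lambda')$ for $0\le t\le h-1$. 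We expect them to telescope:
\[
R_{b_{k-1}+1,t}(\lambda')=R_{a,t,k}(\lambda)\quad(t<h-1),\qquad R_{b_{k-1}+1,0}(\lambda')=R_{a,h-1,k-1}(\lambda).
\]
In each case the string $\alpha_{b_{k-1}+1,\,b_{k-1}+1+t}$ pairs nontrivially only with the last layer of roots in $\mu_{k-1}$, and with the subtracted correction term $R_{a,h-1,k-1}(\lambda)\,\alpha_{b_{k-1}+2,\,b_{k-1}+h+1}$. The total $R_{a,\cdot,\cdot}$ then accumulates the contributions of the earlier layers. Note that $R_{a,m,\ell}$ sums over the weights $\alpha_{a,a+m+h\ell'}$ for $\ell'\le\ell$, and that $b_{\ell}+1-a=(\ell+1)h-1$.

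Granting this identification, hypotheses \ref{itemC4} and \ref{itemD4} give precisely the required bounds $[0,k]$ and $[0,k+1]$. The vanishing on $[a_{k-1}+1,b_{k-1}]$ should follow in the same way as \Cref{claim: third string lemma 3}\ref{itemCC}, i.e.\ from the vanishing already produced at the previous stage. \Cref{lem: third string lemma} then produces the weight $\lambda'-\mu'$ and the exponent $R'$, where
\[
\mu'=\sum_{t=0}^{h-1}R_{b_{k-1}+1,t}(\lambda')\,\alpha_{b_{k-1}+t+2,\,b_{k-1}+t+h+1}.
\]
Since $b_{k-1}+t+2=b_k+2+t-(h-1)$, we get $\mu_{k-1}+\mu'=\mu_k$ and $\mathfrak{R}_{k-1}+R'=\mathfrak{R}_k$. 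The correction term $-R_{a,h-1,x}\alpha_{b_x+2,\dots}$ in the definition of $\mu_x$ is exactly what makes the $t=0$ slot of stage $k$ absorb the $m=h-1$ slot of stage $k-1$.

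\textbf{Main obstacle.} The hard part is this bookkeeping identity for $R_{b_{k-1}+1,t}(\lambda-\mu_{k-1})$. It needs careful index tracking of which roots in $\mu_{k-1}$ meet the string $[b_{k-1}+1,b_{k-1}+1+t]$ at exactly one endpoint. To keep it manageable, we would first prove a small auxiliary formula for $\langle\alpha_{x,y},\alpha_{u,v}\rangle$ in type $A$, and then sum layer by layer. The case $a\le h$ runs identically, since only the auxiliary root added to $\mathrm{Y}$ differs and it is never touched in these steps.
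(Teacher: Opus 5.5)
Your plan is the paper's. You induct on $k$, take the base case from \Cref{lem: fifth string lemma}, and get the step by applying \Cref{lem: third string lemma} with parameter $k-1$ to $\lambda-\mu_{k-1}$. For the base case you also need $a\le i+1-h$, which follows from $b_0\le b_k<i$.

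The gap is the computation you defer, which carries the whole inductive step: the identity you expect there is off by one. The correct identity is
\[
R_{b_{k-1}+1,t}(\lambda-\mu_{k-1})=R_{a,t-1,k}(\lambda)\qquad (1\le t\le h-1),
\]
not $R_{a,t,k}(\lambda)$. Here is a counterexample to your version. Take $h=3$, $a=1$, $k=1$, $t=1$, and write $R_s=R_s(\lambda)$. Then $\mu_0=R_1\alpha_{2,4}+(R_1+R_2)\alpha_{3,5}$ and $R_{3,1}(\lambda-\mu_0)=2R_1+R_2+R_3+R_4=R_{1,0,1}(\lambda)$. By contrast, $R_{1,1,1}(\lambda)=2R_1+2R_2+R_3+R_4+R_5$.

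Your version also omits $t=h-1$, which hypothesis (d) of \Cref{lem: third string lemma} requires. Continuing your pattern there would need $R_{a,h-1,k}(\lambda)\le k+1$, which is not assumed, since (c) stops at $x=k-1$. With the corrected identity, (d) at $x=k$ covers exactly the indices $t-1\in[0,h-2]$. Hypothesis (c) at $x=k-1$ covers the slot $t=0$, which you did get right.

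The same shift breaks your matching of $\mu'$ with $\mu_k$: in fact $b_{k-1}+t+2=b_k+2+(t-1)-(h-1)$, not $b_k+2+t-(h-1)$. So for $t\ge 1$ the $t$-th summand of $\mu'$ is the layer-$k$, slot-$(t-1)$ summand of $\mu_k$, with matching coefficient $R_{a,t-1,k}(\lambda)$. The $t=0$ summand restores the cancelled layer-$(k-1)$, slot-$(h-1)$ term. Only with this matching do you get $\mu_{k-1}+\mu'=\mu_k$ and $\mathfrak{R}_{k-1}+R'=\mathfrak{R}_k$.

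To obtain the identity reliably, do what the paper does in \Cref{claim: fourth string claim}: compute single coordinates rather than pairing long strings against the roots of $\mu_{k-1}$. Your heuristic that only the last layer and the correction term meet the string is not accurate. For $k\ge 2$, the layer-$(k-2)$, slot-$(h-2)$ root $\alpha_{b_{k-2}+1,b_{k-1}}$ ends at $b_{k-1}$. It therefore has pairing $-1$ with every string $\alpha_{b_{k-1}+1,\,b_{k-1}+1+t}$.

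Computing coordinates directly, one finds:
\begin{itemize}
\item $R_t(\lambda-\mu_{k-1})=0$ for $t\in[a,b_{k-1}]$;
\item $R_{b_{k-1}+1}(\lambda-\mu_{k-1})=R_{a,h-1,k-1}(\lambda)$;
\item $R_t(\lambda-\mu_{k-1})=\sum_{m=0}^{k}R_{t-mh}(\lambda)$ for $t\in[b_{k-1}+2,b_k]$.
\end{itemize}
Summing these gives the corrected identity. The first case also yields hypothesis (b) of \Cref{lem: third string lemma} directly. So you do not need to strengthen your induction to carry the vanishing over from the internal claims in the proof of that lemma.
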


\begin{proof}
We begin by proving the following.
 \begin{claim}\label{claim: fourth string claim}
  We have
    \begin{equation}\label{eq: fourth string claim}
       R_{t}(\lambda - \mu_k)  = \left\{\begin{array}{ll}
            0, &  \text{if}\quad t\in [a,b_{k}];\\[5pt]
            R_{a,h-1,k}(\lambda) , &  \text{if}\quad t=b_{k}+1;\\[5pt]
          \displaystyle  \sum_{m=0}^{k+1}R_{t - mh}(\lambda),& \text{if}\quad t\in [b_{k}+2,b_{k+1}].
        \end{array}\right.
    \end{equation}
    \end{claim}
\begin{proof}
By \Cref{remark producto interno} the only term in $\mu_k$ that contributes to the computation of $R_{a}(\lambda - \mu_k)$  is the one associated to $\ell =0$ and $m=0$. Therefore, we have
    \begin{equation}
R_{a}(\lambda -\mu_{k}) = R_a(\lambda ) -R_a(\mu_k) = R_{a}(\lambda)-R_{a,0,0}(\lambda)=0. 
    \end{equation}
    We now assume that $a+1\leq t\leq b_k$ and write $t-a-1=\ell h+m$ for unique non-negative integers $0\leq \ell \leq k$ and $0\leq m \leq h-1$. 
    By \Cref{remark producto interno} we get
    \begin{equation}
        R_t(\mu_k) = \left\{ \begin{array}{ll}
       R_{a,m+1,0}(\lambda)-R_{a,m,0}(\lambda),      &  \mbox{if } \ell = 0 \mbox{ and } m<h-1; \\[3pt]
       R_{a,0,1}(\lambda)-R_{a,0,0}(\lambda)-R_{a,h-1,0}(\lambda) ,     & \mbox{if } \ell = 0 \mbox{ and } m=h-1; \\[3pt]
       R_{a,m+1,\ell}(\lambda)-R_{a,m,\ell}(\lambda) -R_{a,m+1,\ell -1}(\lambda) +R_{a,m,\ell -1}(\lambda), & \mbox{if } \ell > 0 \mbox{ and } m<h-1; \\[3pt]
       R_{a,0,\ell +1}(\lambda)-R_{a,h-1,\ell}(\lambda) -R_{a,0,\ell }(\lambda) +R_{a,h-1,\ell -1}(\lambda), & \mbox{if } \ell > 0 \mbox{ and } m=h-1. \\
        \end{array} \right.
    \end{equation}
 By looking at the definition of the $R$-numbers we can see that for all $\ell $ and $m$ we have
 \begin{equation}
      R_t(\mu_k) = R_{a+\ell h +m+1,0,0} (\lambda)=R_{a+\ell h +m+1} (\lambda) =R_{t}(\lambda) .
 \end{equation}
Consequently, $ R_t(\lambda - \mu_k)=0$ and the first case in \cref{eq: fourth string claim} follows.

We now assume that $t = b_k + 1$.
Observe that, a priori, the term corresponding to $\ell = k$ and $m = h-1$ in the sum defining $\mu_k$ contributes to  $R_t(\mu_k)$.
However, this term is canceled by the one outside the sum.
We introduce $\mu_k$ in this way solely for the sake of a more compact expression.
With this in mind, and using \Cref{remark producto interno}, we obtain
\begin{equation}
\begin{array}{lll}
R_{b_k+1}(\lambda - \mu_k)    &  = & R_{b_{k}+1}(\lambda)-R_{a,h-2,k-1}(\lambda) + R_{a,h-1,k-1}(\lambda) + R_{a,h-2,k}(\lambda) \\
     & = & R_{b_{k}+1}(\lambda)-R_{a,h-2,k-1}(\lambda) + R_{a,h-1,k-1}(\lambda) + R_{a,h-2,k-1}(\lambda) +R_{a,h-2+kh}(\lambda) \\
     &  =  &  R_{b_{k}+1} (\lambda) +R_{a,h-1,k-1}(\lambda)  +R_{a,h-2+kh}(\lambda) \\
     & =  &  R_{a,h-1+hk}(\lambda) + R_{a,h-1,k-1}(\lambda) \\
     &  =  &  R_{a,h-1,k}(\lambda),
\end{array}
\end{equation}
as we wanted to show.

We now assume that $t=b_{k}+2$.
By \Cref{remark producto interno} and by recalling that $b_k+2=a+(k+1)h$, we obtain
\begin{equation}
  R_{b_k+2}(\lambda - \mu_k)= R_{b_{k}+2}(\lambda) -R_{a,h-1,k-1}(\lambda)+R_{a,0,k}(\lambda) = R_{b_{k}+2}(\lambda) + \sum_{m=0}^{k} R_{a+mh}(\lambda) =  \sum_{m=0}^{k+1} R_{a+mh}(\lambda).
\end{equation}

Finally, we assume  $b_k+2 < t \leq b_k+h$. 
In this case we have
\begin{equation}
   R_t(\lambda - \mu_k)  = R_{t}(\lambda)+R_{a,t-b_k-2,k}(\lambda)-R_{a,t-b_k-3,k}(\lambda) =  R_{t}(\lambda)+ \sum_{m=0}^{k} R_{a+t -b_k-2+mh}(\lambda) =  \sum_{m=0}^{k+1} R_{t-mh}(\lambda).
\end{equation}
This finishes the proof of the claim. 
\end{proof}

We now return to the proof of the lemma. 
The argument proceeds by induction on $k$. 
% First, observe that 
% \begin{equation}
%    \mathrm{Y}_{a,0} =\begin{cases}    
%    ( \Phi^{\succ \alpha_{i,j}}  \setminus [\alpha_{a,a+h},\alpha_{a-h+2,a+2}]) \cup [\alpha_{a-h,a-1},\alpha_{a-1,a+h-2}], &\text{if } a>h;\\
%    ( \Phi^{\succ \alpha_{i,j}}  \setminus [\alpha_{a,a+h},\alpha_{a-h+2,a+2}]) \cup [\alpha_{1,h},\alpha_{a-1,a+h-2}],&\text{otherwise}.
%    \end{cases}
% \end{equation}
By \cref{itemA4} and  \cref{itemB4} we have that $a\leq i+1-h$ and that $R_{a,t}(\lambda)=\{0,1\}$ for all $ 0\leq t \leq h-2$, respectively.
Thus, we are in position to apply \Cref{lem: fifth string lemma} to $\alpha_{i,j}$, $a$,  $\lambda$ and $\mu=\mu_0$.
We obtain 
\begin{equation}
   \bfM_{\lambda}^{Y} = q^{ \mathfrak{R}_0} \bfM_{\lambda-\mu_0}^{Y_{a,0}}.  
\end{equation}
This establishes the initial step of the induction.

Assume now that the lemma holds for some $k$, and let us verify the case $k+1$. 
By the inductive hypothesis, we obtain
\begin{equation}\label{eq: poof fourth string lemma 1}
    \bfM_{\lambda}^{Y} = q^{   \mathfrak{R}_{k}}\bfM_{\lambda - \mu_{k}}^{\mathrm{Y}_{a,k}}.
\end{equation}

We now want to apply \Cref{lem: third string lemma} in order to rewrite $\bfM_{\lambda-\mu_k}^{Y_{a,k}}$. 
To do this we need to check that  $\alpha_{i,j}$, $a$, $k$ and $\lambda -\mu_k$  satisfy the hypothesis of that lemma, assuming that $\alpha_{i,j}$, $a$, $k+1$ and $\lambda$ satisfy the hypothesis of the current lemma. 
We need to verify
 
\begin{enumerate}[(I)]
\item $b_{k}<i-h$.
\label{itemAAA}
\item $R_{t}(\lambda-\mu_k)=0$ for $a_{k}+1\leq t \leq b_{k}$. \label{itemBBB}
\item $0\leq R_{b_k+1}(\lambda -\mu_k) \leq k+1$.
\label{itemCCC}
\item $0\leq R_{b_k+1,t}(\lambda -\mu_k) \leq k+2$ for $1\leq t<h$.
\label{itemDDD}
\end{enumerate}

 \Cref{itemAAA} follows from \cref{itemA} applied to $k+1$, since $b_{k+1} = b_{k} + h < i$.
 \Cref{itemBBB} is a direct consequence of  \Cref{claim: fourth string claim}. 
On the other hand, the same claim implies that  $R_{b_k+1}(\lambda-\mu_k) =R_{a,h-1,k}(\lambda)$. 
This together with \cref{itemC4} in the hypothesis (applied to $k+1$) verifies \cref{itemCCC}. 
Finally, using  \Cref{claim: fourth string claim} and the definition of the numbers $R$, we get
\begin{equation}
   R_{b_k+1,t}(\lambda -\mu_k) =   R_{a,t-1,k+1}(\lambda),
\end{equation}
for all $1\leq t < h$. 
Therefore,  by using \cref{itemD4} in the hypothesis (applied to $k+1$) we obtain  \cref{itemDDD}.

This verify that $\alpha_{i,j}$, $(a,k)$, $\lambda - \mu_{k}$ satisfy the hypothesis of \Cref{lem: third string lemma}. By applying this lemma in this context we get
\begin{equation}\label{eq: fourth string lemma 2}
    \bfM_{\lambda - \mu_{k}}^{\mathrm{Y}_{a,k}} = q^{R}\bfM_{\lambda - \mu_{k} - \mu}^{\mathrm{Y}_{a,k+1}},
\end{equation}
where 
\begin{equation}
    R = \sum_{t=0}^{h-1} R_{b_{k}+1,t}(\lambda - \mu_{k}) \qquad \mbox{and} \qquad    \mu = \sum_{t=0}^{h-1}R_{b_{k}+1,t}(\lambda - \mu_{k})\alpha_{b_{k}+t+2,b_{k}+t+h+1}. 
\end{equation}
A direct computation, using \Cref{claim: fourth string claim} and the definition of the numbers $R$,   shows that $\mu_{k+1}=\mu_k +\mu $ and $\mathfrak{R}_{k+1}=\mathfrak{R}_k + R   $.
Therefore, a combination of \eqref{eq: poof fourth string lemma 1} and \eqref{eq: fourth string lemma 2}   yields the result.
\end{proof}

\begin{corollary}
\label{lem: primer paso PPPPPP}
Let $\alpha_{i,j} \in \Phi^{\geq 2}$ and $h=j-i+1$.
Let $(a,b)$ be a pair of integers such that $h+1\leq a < b < i$ with $a\equiv b\equiv i+1 \mod{h}$. 
Let  $\lambda\in X$ be such that  $R_t(\lambda) = 0$ for $t \in [a,i]\setminus \{b\}$ and $R_b(\lambda) = 1$.
Furthermore, we define  $Y=\Phi^{\succ\alpha_{i,j}}\cup \{\alpha_{a-h,a-1}\}$.
Then, we have $\bfM_{\lambda}^{Y} = 0$.
\end{corollary}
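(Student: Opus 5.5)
The plan is to move the extra root $\alpha_{a-h,a-1}$ to the right, through the vanishing region of $\lambda$, until it sits next to position $b$. There I would find a set that is $s_b$-invariant while $s_b\cdot\lambda=\lambda$, and conclude with \Cref{lem: weyl group over M elements}. Write $b=a+(k_0+1)h$ with $k_0\geq 0$; this is possible since $a<b$ and $a\equiv b \pmod h$. Then $b_{k_0}=a+(k_0+1)h-2=b-2$. The hypotheses $b<i$ and $b\equiv i+1\pmod h$ give $b\leq i+1-h$, so $b_{k_0}<i$. Moreover every simple coordinate $R_t(\lambda)$ with $t\in[a,b-1]$ vanishes.

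\textbf{Step 1.} Apply \Cref{lem: fourth string lemma} with $k=k_0$. Hypotheses \ref{itemB4}--\ref{itemD4} involve only the numbers $R_{a,t,x}(\lambda)$, which are sums of $R_s(\lambda)$ over $s\in[a,\,a+t+xh]$. For the ranges required this interval stays inside $[a,b_{k_0}+1]=[a,b-1]$, so all these numbers are $0$. The lemma therefore gives $\mu_{k_0}=0$ and $\mathfrak{R}_{k_0}=0$, hence
\[
\bfM_\lambda^{Y}=\bfM_\lambda^{\mathrm{Y}_{a,k_0}}.
\]
In particular the weight is unchanged, and we still have $R_b(\lambda)=1$ and $R_t(\lambda)=0$ for $t\in[b+1,i]$.

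\textbf{Step 2.} Produce an $s_b$-invariant set. By \Cref{claim: third string lemma 1}\eqref{item (1) claim third string lemma 1}, the set $\mathrm{Y}_{a,k_0}\setminus S_{k_0}(\alpha_{b-1,b+h-1})$ is $s_t$-invariant for $t\in[a_{k_0}+1,b-1]$. Since $R_{b-1}(\lambda)=0$, \Cref{lem: second string lemma} with $r=0$ removes $S_{k_0}(\alpha_{b-1,b+h-1})$ without changing the weight, exactly as in the first step of the proof of \Cref{lem: third string lemma}. I would then inspect which roots of the resulting set $Z$ are moved by $s_b$. From the description \eqref{eq: def YYY} and \eqref{eq: las que se salen}, I expect a single offending root $\gamma$ with $\pint{\gamma}{\alpha_b}=1$, located on the right border of the removed trapezoid. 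If there is also an extra root $\beta\notin Z$ with $\pint{\beta}{\alpha_b}=1$, then \Cref{lem: sacar una poner otra} (at a position where the weight has a zero coordinate) or \Cref{coro puedo agregar} swaps or adds roots until the set becomes $s_b$-invariant. The order is the one prescribed in the proof of \Cref{lem: third string lemma}: highest root first, so that the invariances used at each stage are preserved.

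\textbf{Step 3.} Once an $s_b$-invariant set $Z'$ with $\bfM_\lambda^{\mathrm{Y}_{a,k_0}}=\bfM_\lambda^{Z'}$ is obtained, $\pint{\lambda+\rho}{\alpha_b}=0$ gives $s_b\cdot\lambda=\lambda$. The last assertion of \Cref{lem: weyl group over M elements} then yields $\bfM^{Z'}_\lambda=0$, hence $\bfM_\lambda^Y=0$.

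\textbf{Main obstacle.} The hard part is Step 2: the index bookkeeping showing that, after removing the string $S_{k_0}(\alpha_{b-1,b+h-1})$ and possibly adjoining a small string of roots ending at $b$, the set is genuinely $s_b$-stable. This requires checking the left and right borders of the trapezoid $\bigcup_m[\alpha_{a_{k_0}-h+2,a_{k_0}+2+m},\alpha_{b_{k_0}-m,b_{k_0}+h}]$ against $s_b$. If this fails, my fallback is to mimic \Cref{lem: left solving}. There the $-1$ coordinate is killed by transporting the defect along a chain of applications of \Cref{lem: sacar una poner otra} at positions $\equiv b \pmod h$ until the set is invariant. Here the shift by $h$ would come from the congruence $a\equiv b\equiv i+1 \pmod h$.
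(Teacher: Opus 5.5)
Your Step 1 is correct. With $k_0=(b-a)/h-1$, every number $R_{a,t,x}(\lambda)$ entering \Cref{lem: fourth string lemma} vanishes, so $\mu_{k_0}=0$ and $\mathfrak{R}_{k_0}=0$. The removal of $S_{k_0}(\alpha_{b-1,b+h-1})$ is also fine, and the resulting set $Z$ does satisfy $Z\setminus s_b(Z)=\{\alpha_{b,b+h}\}$. The gap is the next move: this root cannot be disposed of while the weight stays equal to $\lambda$. Both \Cref{lem: sacar una poner otra} and \Cref{coro puedo agregar} need $\pint{\lambda}{\alpha_r}=0$ at the position $r$ where they act. The former can discard $\alpha_{b,b+h}$ only at $r\in\{b,b+h\}$, and the latter discards nothing. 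At $r=b$ the coordinate is $-1$. At $r=b+h$, either $b+h=i+1$, where the hypotheses say nothing about $\pint{\lambda}{\alpha_{i+1}}$, or $b+h<i$, where $\alpha_{b+h,b+2h}$ is a second offender for $s_{b+h}$. More generally, every $t\in[b,i-1]$ still carries an uncompensated root $\alpha_{t,t+h}$, since its $s_t$-image $\alpha_{t+1,t+h}$ is not in $Z$. What your tools actually give at $b$ is $\bfM^{Z}_{\lambda}=q\,\bfM^{Z}_{\lambda-\alpha_{b+1,b+h}}$ (adjoin $\alpha_{b+1,b+h}$ to obtain an $s_b$-invariant set). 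This moves the defect to the right rather than killing it, so Step 3 never receives its input. The fallback modelled on \Cref{lem: left solving} does not help either: that chain relies on each swap position having a single offender, whereas here the positions $b+mh<i$ have two.

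The missing idea is to keep pushing the trapezoid until the defect reaches $i$, accepting a change of weight. The paper applies \Cref{lem: fourth string lemma} with $k=(i+1-a)/h-1$, so that $b_k=i-1$. Hypotheses (b)--(d) still hold: with $c=(b-a)/h$, the $-1$ at $b$ only enters summands of index $\ge c$ (e.g.\ $R_{a,h-1,x}(\lambda)=\max(0,x-c+1)\le x$). This gives $\bfM^{Y}_{\lambda}=q^{\mathfrak{R}_k}\bfM^{\mathrm{Y}_{a,k}}_{\lambda-\mu_k}$. Now the second item of \Cref{claim: third string lemma 1} applies, because at $t=i$ the right-border root $\alpha_{i,i+h}$ is matched by $\alpha_{i+1,i+h}\in\Phi^{\succ\alpha_{i,j}}$. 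Hence $\mathrm{Y}_{a,k}$ is $s_t$-invariant for all $t\in[i-k,i]$. By \Cref{claim: fourth string claim}, $\lambda-\mu_k$ has zero coordinates on $[a,i-1]$ and $R_i(\lambda-\mu_k)=k-c+1$, where $1\le c\le k$. Take the root $\alpha_{x,i}$ of height $k-c+1$, i.e.\ $x=i-k+c$. Then $\pint{\lambda-\mu_k+\rho}{\alpha_{x,i}}=-(k-c+1)+\hgt(\alpha_{x,i})=0$. Also $s_{\alpha_{x,i}}$ lies in the group generated by $s_{i-k},\dots,s_i$, so it preserves $\mathrm{Y}_{a,k}$, and \Cref{lem: weyl group over M elements} gives the vanishing. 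The element is killed by a non-simple reflection at $i$, not by $s_b$.
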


\begin{proof}
Let 
\begin{equation}
       k = \frac{i+1-a}{h}-1 \geq 0 . 
\end{equation}
We begin by checking that the triple $(a,k, \lambda)$ satisfies the hypothesis of  \Cref{lem: fourth string lemma}. 
This is, we need to verify \crefrange{itemA4}{itemD4} in that lemma.

We first notice that $b_k=a+(k+1)h-2 = i-1<i  $. This gives \cref{itemA4}. 

We now notice that $a+h \leq b < i$, since $a<b$ and $a \equiv b \pmod{h}$. 
It follows that for  $0 \leq t \leq h-2$ we have
\begin{equation}
    R_{a,t}(\lambda) = \sum_{m=a}^{a+t}R_{m}(\lambda)= 0, 
\end{equation}
which gives \cref{itemB4}. 
To check \cref{itemC4},  for $0\leq x < k$, we compute 
\begin{equation}\label{EEEEQQQQ}
  \displaystyle  0\leq  R_{a,h-1,x}(\lambda) 
    = \sum_{m=0}^{x} \left(\sum_{t=a}^{a+h(m+1)-1}R_{t}(\lambda)  \right)
    \leq  \sum_{m=1}^{x} 1 = x
\end{equation}
Let us explain the  inequality. 
By the hypothesis on the numbers $R_{t}(\lambda)$, the internal sum in \cref{EEEEQQQQ} is $1$ if $b$ occurs in the sum range and $0$ otherwise. 
Furthermore, if $m=0$ then $b$ does not occur in the corresponding internal sum. 
This verifies \cref{itemC4}. 

Finally, \cref{itemD4} is verified in a similar fashion. 
Concretely, for $0\leq x \leq k$ and $0\leq t < h-1$,  we expand $R_{a,t,x}$ as in \cref{EEEEQQQQ}.
The only difference in this case is that if $t\geq 1$ then $b$ might occur in the internal sum corresponding to $m=0$. 
It follows that  $0\leq  R_{a,h-1,x}(\lambda) \leq x +1$, as we wanted to show. 

Having checked the hypothesis of \Cref{lem: fourth string lemma} for the triple $(a,k,\lambda)$, we can apply it in order to obtain
\begin{equation}
    \bfM_{\lambda}^{Y} = q^{\mathfrak{R}_{k}}\bfM_{\lambda - \mu_{k}}^{\mathrm{Y}_{a,k}},
\end{equation}
where $\mu_{k}$ and $\mathfrak{R}_{k}$ are defined as in \Cref{lem: fourth string lemma}.

We claim that $\bfM_{\lambda - \mu_{k}}^{\mathrm{Y}_{a,k}}=0$, and this would give  the result. 
We recall that our choice of  $k$ gives $b_k=i-1$. 
Therefore, by  \Cref{claim: third string lemma 1} we conclude that $Y_{a,k}$ is invariant under $s_t$ for $t\in [i-k,i]$. 
On the other hand, \Cref{claim: fourth string claim}  and a direct computation yield $R_{b_k+1}(\lambda-\mu_k) = R_{a,h-1,k} = k-c+1$, where $c\coloneqq (b-a)/h \geq 1$. 
Thus, using  \Cref{claim: fourth string claim} once again, we deduce that 
\begin{equation}
    \langle \lambda -\mu_{k}  , \alpha_{a_{k}+c,b_{k}+1} \rangle = - (k- (c-1)). 
\end{equation}
Finally, by applying \Cref{lem: weyl group over M elements} with $w = s_{\alpha_{a_{k}+c,b_{k}+1}}$ we obtain that $\bfM_{\lambda - \mu_{k}}^{\mathrm{Y}_{a,k}}=0$.
This proves our claim and the corollary. 
\end{proof}

\section{Second Inverse Decomposition}
\label{sec: second inverse decomp}

In this section we refine the First Inverse Decomposition from \Cref{prop: second version}.
This version is best suited for establishing \Cref{Conj A} in the following section.

\subsection{The $\mathbf{v}$-operator}

\begin{definition}\rm \label{def: v negrita}
    Let $\lambda \in X^+$ and  $(j,h)$ be a pair of  integers such that $2\leq h \leq j$.
    Suppose that $\pint{\lambda}{\alpha_t}=0$ for all $j-(h-2)\leq t \leq j$.
    Let $\varkappa$ be the smallest non-negative integer such that $Q^{-\varkappa}_{j+1,h}(v^{h-1}_{j,h}(\lambda)) \in X^+$.
    \begin{itemize}
        \item If such an integer exists then we define $\mathbf{v}_{j,h}(\lambda) =  Q^{-\varkappa}_{j+1,h}(v^{h-1}_{j,h}(\lambda))$.
        \item Otherwise, we say that $\mathbf{v}_{j,h}(\lambda)$ is not defined. 
    \end{itemize}
     In the first case we call $\varkappa =\varkappa_{j,h}(\lambda) $ the integer associated to $\mathbf{v}_{j,h}(\lambda)$. 
\end{definition}

\begin{remark}\rm \label{rem: varkappa =0}
    We stress that if $v^{h-1}_{j,h}(\lambda)\in X^+$  then $\varkappa=0$ and $\mathbf{v}_{j,h}(\lambda) = v^{h-1}_{j,h}(\lambda)$.
    On the other hand, if $v^{h-1}_{j,h}(\lambda)$ is not defined, then $\mathbf{v}_{j,h}(\lambda)$ is automatically not defined.
\end{remark}

The following lemma asserts that, under repeated application of  $Q$-operators, the weight $v^{h-1}_{j,h}(\lambda)$ either transforms into the dominant weight $\mathbf{v}_{j,h}(\lambda)$, yielding an identity between the corresponding $\bfM$-elements, or else $v^{h-1}_{j,h}(\lambda)$ fails to reach the dominant chamber, in which case the associated $\bfM$-element vanishes.

\begin{lemma}\rm\label{lem: completar Q en caso h-1}
    Let  $\alpha_{i,j}\in \Phi^{\geq 2}$ and $h=j-i+1$.
    Let $\lambda\in X^{+}$ satisfying $\langle \lambda , \alpha_r \rangle  =0$, for all $i+1\leq r \leq j$.
    Suppose that $v^{h-1}_{j,h}(\lambda)$ is defined and set $Y= \Phi^{\succ \alpha_{i,j}} \cup \{ \alpha_{i-h+1,i}  \} $.
    Then, we have
\begin{equation}
    \bfM_{v^{h-1}_{j,h}(\lambda)}^{Y} =
    \begin{cases}
        q^{\varkappa}\bfM_{\mathbf{v}_{j,h}(\lambda)}^{Y}, & \mbox{if } \mathbf{v}_{j,h}(\lambda) \mbox{ is defined;} \\
        0, & \mbox{otherwise.}
    \end{cases}
\end{equation}
\end{lemma}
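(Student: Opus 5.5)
The plan is to iterate \Cref{lem: free Q2} (and \Cref{Claim: free Q2}) along the chain of weights
\[
\nu_m := Q^{-m}_{j+1,h}(v^{h-1}_{j,h}(\lambda)), \qquad m\ge 0,
\]
where $\nu_0=v^{h-1}_{j,h}(\lambda)$. The argument is an induction on $m$, keeping the set $Y=\Phi^{\succ\alpha_{i,j}}\cup\{\alpha_{i-h+1,i}\}$ fixed throughout. This is the same device used in the proof of \Cref{lem: pre completar v}, Case~B.

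\textbf{Step 1: describe $\nu_0$.} By \Cref{lem: thetas para el caso h-1}, the integers associated to the $Q$-operators in $v^{h-1}_{j,h}$ are $\vartheta_r=j+r$. Hence
\[
\nu_0=P^{h-1}_{j,h}(\lambda)-\sum_{r=1}^{h-1}\alpha_{i+r+1,j+r}.
\]
Using \Cref{remark producto interno}, the vanishing $\langle\lambda,\alpha_r\rangle=0$ on $[i+1,j]$, and \Cref{lem: ceros por Ps} for the $P$-part, I would compute $\langle\nu_0,\alpha_t\rangle$ near the window $[j+1,2j-i]$. Dominance of $\lambda$ should make $\nu_0$ dominant except possibly for a single coordinate equal to $-1$, at position $j+1$ or nearby. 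If $\nu_0\in X^+$, then $\varkappa=0$ and there is nothing to prove (\Cref{rem: varkappa =0}).

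\textbf{Step 2: inductive step.} Suppose $\nu_m$ is defined but not dominant. I will show that it has exactly one coordinate equal to $-1$, at position $p'=j+1+m$. I will also show that the zeros required on the set $T$ of \Cref{lem: free Q2} hold: these come from \Cref{lem: ceros por Qs} applied to the preceding $Q$-steps. The $+1$ at $p'+1$, needed when $k<d$, comes from the previous subtraction $\alpha_{p'-h+2,p'}$, as in \Cref{Claim: free Q2}.

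With $p=i+1+(m \bmod (h-1))$ and $p'=p+k(h-1)$, the congruence $K=\Phi^{\succ\alpha_{i,j}}\sim_T\Phi^{\succ\alpha_{i,j}}$ holds trivially. \Cref{lem: free Q2} then gives
\[
\bfM^{Y}_{\nu_m}=q^{\D{Q,p',h}(\nu_m)}\,\bfM^Y_{\nu_{m+1}},
\]
or $\bfM^Y_{\nu_m}=0$ if $Q_{p',h}(\nu_m)$ is undefined. In the second case $\mathbf{v}_{j,h}(\lambda)$ is undefined, matching the ``otherwise'' clause.

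\textbf{Step 3: termination and bookkeeping.} Each step lowers the weight in dominance order, so either a dominant $\nu_\varkappa$ is reached or some $Q$ becomes undefined. In the dominant case, multiplying the factors gives $q^{\sum_m \D{Q}(\nu_m)}$. For the exponent to be exactly $q^{\varkappa}$, each $\D{Q,p',h}(\nu_m)$ must equal $1$, that is, $\vartheta=p'+h-1$. I would prove this as in \Cref{lem: thetas para el caso h-1}. The defect is carried forward: the $-1$ moves exactly one step right each time. Combining the lower bound $\vartheta\ge p'+h-1$ with $\vartheta_{m+1}\le\vartheta_m+1$ from \Cref{lem: desigualdad de varthetas} pins $\vartheta$ down.

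\textbf{Main obstacle.} The delicate part is verifying the hypotheses of \Cref{lem: free Q2} at each stage. Specifically, the zero pattern on the union of blocks $T$ and the $+1$ at $p'+1$ must persist across the wrap-around, where $m$ passes a multiple of $h-1$ and the block index $d$ increases. I would handle this by strengthening the inductive hypothesis to an explicit formula for $\langle\nu_m,\alpha_t\rangle$ on $[i+1,n]$.
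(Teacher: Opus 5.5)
This is essentially the paper's proof. The paper also writes $v^{h-1}_{j,h}(\lambda)$ via \Cref{lem: thetas para el caso h-1} and uses \Cref{lem: desigualdad de varthetas} to see that every further $Q$-step subtracts a single root $\alpha_{j+k+1,j+k+h-1}$. It then applies \Cref{lem: free Q2} with $T=[i+1,j+k+1]$ and $p'=j+k+1$, obtaining $\bfM^Y_{\nu_k}=q\,\bfM^Y_{\nu_{k+1}}$, or $0$ at $p'=n-h+2$ when $\mathbf{v}_{j,h}(\lambda)$ is undefined.

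Since every step has degree one, you are always in the case $k=d$ of \Cref{lem: free Q2}. Consequently the blocks of $T$ tile the single interval $[i+1,p']$, there is no wrap-around issue, and the hypothesis on $\langle\cdot,\alpha_{p'+1}\rangle$ is never used. The required zeros come directly from the explicit pattern $\langle\nu_k,\alpha_t\rangle=0$ on $[i+1,j+k]$ and $\langle\nu_k,\alpha_{j+k+1}\rangle=-1$, which is forced by the hypothesis on $\lambda$ and the non-dominance of the earlier $\nu$'s. They do not come from \Cref{lem: ceros por Qs}, which is vacuous here.
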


\begin{proof}
We first assume that $\mathbf{v}_{j,h}(\lambda)$ is defined.
If $\varkappa=0$ then  there is nothing to prove.
So that  we can assume $\varkappa>0$.
For $0 \leq k \leq \varkappa$ we define $\mu_k = Q^{-k}_{j+1,h}(v^{h-1}_{j,h}(\lambda))$. 
By combining \Cref{lem: desigualdad de varthetas} and \Cref{lem: thetas para el caso h-1}  we obtain for all $1\leq k \leq \varkappa$ that 
\begin{equation} \label{eq:S6A}
    \mu_k = \mu_{k-1} - \alpha_{j+k+1,j+k+h-1}, 
\end{equation}
or in  words, the $Q$-operator needed to pass from $\mu_{k-1} $ to $\mu_k$ consists of a unique root of height $h-1$. 
On the other hand,  \Cref{lem: thetas para el caso h-1} also implies that
\begin{equation}\label{eq:S6B}
  \mu_0= v^{h-1}_{j,h}(\lambda) = P^{h-1}_{j,h}(\lambda) - \sum_{r=1}^{h-1}\alpha_{i+r+1,j+r} =  P^{h-1}_{j,h}(\lambda) +\varpi_{i+1} -2\varpi_{j+1} +\varpi_{j+h}.
\end{equation}

Furthermore, by definition of $P^{h-1}_{j,h}(\lambda)$  and the assumptions on $\lambda$ we have 
\begin{equation}\label{eq:S6C}
    \pint{P_{j,h}^{h-1}(\lambda)}{\alpha_t} = \begin{cases}
    -1, & \mbox{if } t=i+1;\\
    0, & \mbox{if } t \in [i+2,j];\\
    \pint{\lambda}{\alpha_{j+1}} +1, & \mbox{if }  t=j+1.\\
        \pint{\lambda}{\alpha_t}, & \mbox{if } t\in [j+2,n];\\ 
    \end{cases}
\end{equation}                                                                                                                                                                                                                                                                                                             

Thus, combining \eqref{eq:S6A}, \eqref{eq:S6B}, and \eqref{eq:S6C} with \Cref{remark producto interno}, we obtain, for all 
$0\leq k<\varkappa$, that

\begin{equation}\label{eqS6 mu versus alpha}
\pint{\mu_k}{\alpha_t} = \begin{cases}
    0, & \mbox{if } t\in [i+1, j+k];\\
    -1,& \mbox{if } t=j+k+1.
\end{cases}
\end{equation}

Hence, for $0\leq k \leq \varkappa - 1$, we can apply \Cref{lem: free Q2} to $K=\Phi^{\succ\alpha_{i,j} } $, $\alpha_{i,j}$, $T=T_{k}= [i+1,j+k+1]$, $\mu_{k}$, $p' = j+k+1$ and $p$ be the unique integer such that $i+1\leq p\leq j$ and $p\equiv p' \mod{h-1}$ to get
    \begin{equation}
        \bfM_{\mu_{k}}^{Y} = q\bfM_{\mu_{k+1}}^{Y}.
    \end{equation}
    
Therefore,  $\bfM_{v^{h-1}_{j,h}(\lambda)}^{Y} =  \bfM_{\mu_{0}}^{Y} =q^{\varkappa}  \bfM_{\mu_{\varkappa}}^{Y} =  q^{\varkappa}\bfM_{\mathbf{v}_{j,h}(\lambda)}^{Y}$, as we wanted to show.

We now assume that $\mathbf{v}_{j,h}(\lambda)$ is not defined. 
Although this weight is not defined, the weight $\mu_{n-j-h+1}  $ does exist. 
Furthermore, it can be computed using \cref{eq:S6A} and \cref{eqS6 mu versus alpha} still holds. 
Thus, arguing as in the previous case we obtain
\begin{equation}\label{eq:S6D}
    \bfM_{v^{h-1}_{j,h}(\lambda)}^Y = q^{n-j-h+1} \bfM_{\mu_{n-j-h+1}}^{Y}.
\end{equation}

 On the other hand, by  \Cref{lem: free Q2} applied to $K=\Phi^{\succ \alpha_{i,j} } $, $\alpha_{i,j}$, $T=[i+1,n-h+2]$, $\mu_{n-j-h+1}\in X$, $p'=n-h+2$, and $p$ be the unique positive integer such that $i+1\leq p\leq j$ and $p\equiv p'\mod{h-1}$, we conclude that
    \begin{equation}\label{eq: final completar Q caso h-1}
        \bfM_{\mu_{n-j-h+1}}^{Y} =0.
    \end{equation}
 Therefore, the result follows by combining  \cref{eq:S6D} and  \eqref{eq: final completar Q caso h-1}. 
\end{proof}

%%%%%%%%%%%%%%%%%%%%%%%%%%%%%%%%%%%%%%%%%%%%%%%%%%%%%%%%%%%%
%%%%%%%%%%%%%%%%%%%%%%%%%%%%%%%%%%%%%%%%%%%%%%%%%%%%%%%%%%%%
\subsection{The $\mathbf{P}$-operator}
%%%%%%%%%%%%%%%%%%%%%%%%%%%%%%%%%%%%%%%%%%%%%%%%%%%%%%%%%%%%
%%%%%%%%%%%%%%%%%%%%%%%%%%%%%%%%%%%%%%%%%%%%%%%%%%%%%%%%%%%%

\begin{definition}\rm  \label{def: P negrita}
  Let $(h,k)$ be a pair of integers such that $2\leq h \leq k \leq n$.
  Let $\lambda \in X_{k}^+(L)$. 
  Let $x$ be the smallest positive integer such that $P^{x}_{k,h}(\lambda)\in X_{k}^+(L)$.
  If such an integer exists, then we define $\mathbf{P}_{k,h}(\lambda) =  P^{x}_{k,h}(\lambda)$. 
  Otherwise, we say that  $\bfP_{k,h}(\lambda)$ is not defined. 
\end{definition}

The main goal of this section is to prove the next lemma. 

\begin{lemma}\rm\label{lem: bfP}
  Let $\alpha_{i,j}\in \Phi^{\geq 2}$ and $h=j-i+1$.
  Let  $\lambda \in X_{j}^+(L)$ and  $\mu = P^{h}_{j,h}(\lambda)$.
  If $\pint{\mu}{\alpha_{i}} = -1$, then
    \begin{equation}\label{eq: bfP}
        \bfM_{\mu}^{\succ \alpha_{i,j}} = \left\{ \begin{array}{ll}
            q^{D(\mathbf{P}_{i-1,h})(\mu)}\bfM_{\mathbf{P}_{i-1,h}(\mu)}^{\succ \alpha_{i,j}}, & \text{if } \mathbf{P}_{i-1,h}(\mu) \text{ is defined;} \\[2pt]
            0, & \text{otherwise.}
        \end{array}\right.
    \end{equation}
\end{lemma}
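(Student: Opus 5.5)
The plan is to peel off the $P$-operators composing $\mathbf{P}_{i-1,h}$ one at a time, with \Cref{lem: free P} doing the work at each step. Set $\mu_0=\mu$ and $\mu_x=P^{x}_{i-1,h}(\mu)$, so that $\mathbf{P}_{i-1,h}(\mu)=\mu_{x_0}$, where $x_0$ is the least $x\ge1$ with $\mu_x\in X^+_{i-1}(L)$. I will prove by induction on $x$ that
\[
\bfM_{\mu}^{\succ\alpha_{i,j}}=q^{D(P^{x}_{i-1,h})(\mu)}\bfM_{\mu_x}^{\succ\alpha_{i,j}}
\]
for every $x\le x_0$ at which $\mu_x$ is defined. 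I will also show that the left-hand side vanishes as soon as the chain $\mu_x$ becomes undefined before $x_0$ is reached, or if $x_0$ does not exist.

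Base step $x=1$. First locate the zeros of $\mu$. Since $\lambda\in X^+_j(L)$ and $\mu=P^h_{j,h}(\lambda)$ is defined, \Cref{lem: ceros por Ps} with $k=h$ gives $\pint{\mu_{h-1}'}{\alpha_t}=0$ on $U_d=\bigcup_{m}[i-h+1-mh,\,i-mh]$, where $\mu'_{r}=P^{r}_{j,h}(\lambda)$. Subtracting the last root $\alpha_{\varrho_h,i}$ changes only the pairings with $\alpha_{\varrho_h-1},\alpha_{\varrho_h},\alpha_i,\alpha_{i+1}$. The hypothesis $\pint{\mu}{\alpha_i}=-1$ forces $\pint{\mu'_{h-1}}{\alpha_i}=0$.

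With this I can set up \Cref{lem: free P} with $K=\Phi^{\succ\alpha_{i,j}}$ (note $K\sim_T\Phi^{\succ\alpha_{i,j}}$ trivially), $p=p'=i$, $k=0$, and $T=\bigcup_{m=0}^{d}[i-mh,\,i-mh]$, that is, the positions $\equiv i\pmod h$. Its hypotheses ask for zeros at those positions below $i$. These should follow from the choice of $\varrho$ in the $P$-operators: every position $\equiv i$ strictly between $\varrho$ and $i$ has pairing $0$, otherwise $\varrho$ would have been larger. If the positions in the range below the relevant $\varrho$ need care, I will use the explicit description of $\mu$ from \Cref{lem: ceros por Ps} together with \Cref{lem: desigualdad de varrhos}. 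The lemma then gives $\bfM^{\succ\alpha_{i,j}}_{\mu}=q^{D(P_{i-1,h})(\mu)}\bfM^{\succ\alpha_{i,j}}_{\mu_1}$, or $0$ if $\mu_1$ is undefined.

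Inductive step. If $\mu_x\notin X^+_{i-1}(L)$, then the new $-1$ produced by subtracting $\alpha_{\varrho,i-x}$ sits at position $i-x$, one to the left. This is exactly the shape in \Cref{lem: ceros por Ps} and \Cref{lem: desigualdad de varrhos}, now with $\alpha_{i-x,j-x}$ in the role of $\alpha_{i,j}$. I expect the $-1$ to be the only negative entry in the window, and then \Cref{lem: free P} applies again with $p=i-x$, $p'$ the position of the $-1$, and $K=\Phi^{\succ\alpha_{i,j}}$, which is $T$-congruent to $\Phi^{\succ\alpha_{i-x,j-x}}$ for the relevant $T$.

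Main obstacle. Two points need care. First, I must check $K\sim_T\Phi^{\succ\alpha_{i-x,j-x}}$ once $x\ge1$, since $\Phi^{\succ\alpha_{i,j}}$ is not literally $\Phi^{\succ\alpha_{i-x,j-x}}$. The roots they differ on are of height $h$ with index $>i-x$, and I would verify from \eqref{eq: las que se salen} that $T$ avoids those indices, because $T$ lies at or below $i-x$ and in the required residue classes. Second, once $x\ge h$ the roots shift past a full period, and \Cref{lem: free P} may no longer apply directly with $\Phi^{\succ\alpha_{i,j}}$. I would then use \Cref{lem: primer paso PPPPPP}, or equivalently \Cref{lem: fourth string lemma}, to rule out or vanish the configurations where a $+1$ sits between zeros. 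The vanishing when $\mathbf{P}_{i-1,h}(\mu)$ is undefined comes from the "$0$" branch of \Cref{lem: free P}.
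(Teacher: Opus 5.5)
Your overall scheme of removing one $P$-operator at a time is the paper's, and your base step ($p=p'=i$ in \Cref{lem: free P}) is fine. The inductive step, however, rests on a false claim.

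\textbf{The congruence fails.} For $x\ge 1$ you use the root $\alpha_{i-x,j-x}$, a set $T\ni i-x$, and assert $\Phi^{\succ\alpha_{i,j}}\sim_T\Phi^{\succ\alpha_{i-x,j-x}}$. Consider $\alpha_{i-x+1,j-x+1}$. It lies in $\Phi^{\succ\alpha_{i-x,j-x}}$ but not in $\Phi^{\succ\alpha_{i,j}}$. It is moved by $s_{i-x}$, since $\pint{\alpha_{i-x+1,j-x+1}}{\alpha_{i-x}}=-1$. So condition (a) of \Cref{def: locally greater than alpha} is violated. Saying that $T$ lies at or below $i-x$ is not enough, because $s_t$ also moves roots starting at $t+1$.

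The same problem occurs for any height-$h$ root $\alpha_{i',j'}\neq\alpha_{i,j}$, because $i'\in T$ breaks the congruence. Hence, with $K=\Phi^{\succ\alpha_{i,j}}$, you must keep the root $\alpha_{i,j}$. You then take $p\in(i-h+1,i]$ with $p\equiv b\pmod h$, where $b=i-x$ is the position of the $-1$. This requires zeros on the full intervals $[p-mh,i-mh]$, not just on one residue class. The paper gets these zeros from \Cref{lem: 0s de PPPPPPP}; this is Cases C and D of \Cref{lem: first blackP}.

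\textbf{The missing case $b\equiv i+1\pmod h$.} This means $x\equiv h-1\pmod h$. It first happens at $x=h-1$ and then recurs periodically, not ``once $x\ge h$''. Here $p$ would have to be $i-h+1$, which \Cref{lem: free P} excludes. By the observation above, no other root choice with $K=\Phi^{\succ\alpha_{i,j}}$ rescues it. The paper handles this case with a separate argument.
\begin{itemize}
\item If $\mu_{x+1}$ is defined, the paper inducts on $D=D(P_{b-1,h})(\mu_x)$. For $D=1$ it uses \Cref{lem: left solving}, which does allow $p=i-h+1$. For $D>1$ it sets $a=\varrho+h$ and $Y=\Phi^{\succ\alpha_{i,j}}\cup\{\alpha_{a-h,a-1}\}$, and uses \Cref{lem: primer paso PPPPPP} to get $\bfM^{Y}_{\mu_x}=0$. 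Hence $\bfM^{\succ\alpha_{i,j}}_{\mu_x}=q\,\bfM^{\succ\alpha_{i,j}}_{\mu_x-\alpha_{a-h,a-1}}$, and the new weight has the same $P_{b-1,h}$-image with degree $D-1$.
\item If $\mu_{x+1}$ is undefined, the vanishing does not come from the ``$0$'' branch of \Cref{lem: free P}. The paper applies that lemma to $\Phi^{\succ\alpha_{i-1,j-1}}=\Phi^{\succ\alpha_{i,j}}\cup\{\alpha_{i,j}\}$ to obtain $\bfM^{\succ\alpha_{i,j}}_{\mu_x}=q\,\bfM^{\succ\alpha_{i,j}}_{\mu_x-\alpha_{i,j}}$. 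It then needs the separate induction of \Cref{lem: blackP caso 0} to show the right-hand side vanishes.
\end{itemize}
Your passing mention of \Cref{lem: primer paso PPPPPP} supplies neither of these reductions, so the proof has a genuine gap at every step with $x\equiv h-1\pmod h$.
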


Before embarking in the  proof of \Cref{lem: bfP} we need some preparatory results. 

\begin{lemma}\rm\label{lem: blackP caso 0}
    Let $\alpha_{i,j}\in \Phi^{\geq 2}$  and  $h=j-i+1$.
    Suppose that  $i+1\equiv 0 \mod h$.  
    Let $b$ be an integer such that $h \leq b \leq i$ and $ b\equiv 0 \mod{h}$.
    Let $\mu\in X_{h-1}^{+}(L) $  be such that $\pint{\mu}{\alpha_t}=-\delta_{t,b}$ for $t\in [h,i]$. 
    Then we have
    \begin{equation}\label{eq: first caso 0 blackP}
        \bfM_{\mu - \alpha_{i,j}}^{\succ \alpha_{i,j}} = 0.
    \end{equation}
\end{lemma}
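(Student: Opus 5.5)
The plan is to reduce to the machinery of \Cref{lem: fourth string lemma}, essentially repeating the proof of \Cref{lem: primer paso PPPPPP} with $a=h$. Since $a=h$, the definition of $\mathrm{Y}$ in \Cref{lem: fourth string lemma} falls in the case $a\leq h$, so $\mathrm{Y}=\Phi^{\succ\alpha_{i,j}}$. This is exactly the superscript in \eqref{eq: first caso 0 blackP}. Set $\lambda=\mu-\alpha_{i,j}$ and $k=\frac{i+1-h}{h}-1$. Then $b_k=h+(k+1)h-2=i-1<i$, which gives \cref{itemA4}.

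First I compute the numbers $R_t(\lambda)$ for $t\in[h,i]$ using \Cref{remark producto interno}:
\begin{equation}
R_t(\lambda)=\delta_{t,b}+\delta_{t,i}-\delta_{t,i-1}.
\end{equation}
Here $b\equiv 0$ and $i\equiv -1 \pmod h$, so $b\neq i$. I then verify \crefrange{itemB4}{itemD4} for the triple $(h,k,\lambda)$ by expanding each $R_{h,t,x}(\lambda)$ as a sum of consecutive $R_s$ with $s\in[h,h+t+xh]$.

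The indices $s$ in these sums never exceed $i-1$, because the largest occurs for $x=k$, $t=h-2$, giving $h+h-2+kh=b_k=i-1$. Hence $R_i$ never enters. The only negative contribution is the $-1$ at $s=i-1$, and it appears only in the sum for $x=k$, $t=h-2$. That sum also contains $s=b$, since $b\leq i-1$, so the two cancel. Every other partial sum is a count of how many of the blocks contain $b$. This count lies in $[0,x+1]$ (in $[0,x]$ for $t=h-1$, $x\leq k-1$), exactly as in the proof of \Cref{lem: primer paso PPPPPP}. One special case to watch is $h=2$, where possibly $b=i-1$ and the $+1$ and $-1$ sit at the same position. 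I would check that case separately.

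Applying \Cref{lem: fourth string lemma} gives
\begin{equation}
\bfM_{\lambda}^{\succ\alpha_{i,j}}=q^{\mathfrak{R}_k}\bfM_{\lambda-\mu_k}^{\mathrm{Y}_{h,k}}.
\end{equation}
By \Cref{claim: third string lemma 1}\eqref{item (2) claim third string lemma 1}, with $b_k=i-1$, the set $\mathrm{Y}_{h,k}$ is $s_t$-invariant for $t\in[i-k,i]$. By \Cref{claim: fourth string claim}, $R_t(\lambda-\mu_k)=0$ on $[h,i-1]$ and $R_i(\lambda-\mu_k)=R_{h,h-1,k}(\lambda)$. In this last sum the $+1$ at $i$ now enters, so $R_{h,h-1,k}(\lambda)=k-c+1$ with $c=(b-h)/h\geq 0$.

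The final step is to exhibit a positive root $\alpha_{a_k+c',\,i}$ with
\begin{equation}
\langle\lambda-\mu_k+\rho,\alpha_{a_k+c',i}\rangle=0.
\end{equation}
The reflection $s_{\alpha_{a_k+c',i}}$ lies in the subgroup generated by $s_t$, $t\in[i-k,i]$, so it preserves $\mathrm{Y}_{h,k}$. Then \Cref{lem: weyl group over M elements} kills the term. Concretely, the root $\alpha_{a_k+c',i}$ has height $k-c+1$ and all intermediate $R_t$ vanish, so the pairing is $(k-c+1)-(k-c+1)=0$.

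I expect the main obstacle to be this bookkeeping. Compared with \Cref{lem: primer paso PPPPPP}, the extra $+1$ at $i$ and $-1$ at $i-1$ shift the value of $R_{h,h-1,k}$ by one, and it must be checked that this still matches the height of an available root inside $[i-k,i]$. The boundary case $c=0$, i.e.\ $b=h$, must also be checked: there the root has height $k+1$, which is the full interval $[i-k,i]$. If that case fails, I would instead apply \Cref{lem: second string lemma} directly at the last step.
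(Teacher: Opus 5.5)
Your argument is correct, but it takes a genuinely different route from the paper.

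\textbf{The paper's route.} The paper proves this lemma by induction on $m=\pint{\mu}{\alpha_{h-1}}$, using only \Cref{lem: free P}.
\begin{itemize}
\item For $m=0$, the operator $P_{i-1,h}$ is undefined at $\mu-\alpha_{i,j}$, so the element vanishes.
\item For $m\geq 1$, it obtains $\bfM^{\succ\alpha_{i,j}}_{\mu-\alpha_{i,j}}=q^{D(P_{j,h})(\mu)-1}\bfM^{\succ\alpha_{i,j}}_{\mu'}$ with $\mu'=\mu-\alpha_{h-1,j}$.
\item A second application of \Cref{lem: free P}, now for $\alpha_{i-1,j-1}$, gives $\bfM^{\succ\alpha_{i-1,j-1}}_{\mu'}=0$, i.e.\ $\bfM^{\succ\alpha_{i,j}}_{\mu'}=q\,\bfM^{\succ\alpha_{i,j}}_{\mu'-\alpha_{i,j}}$.
\item Finally, $\mu'$ satisfies the hypotheses with $m-1$.
\end{itemize}

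\textbf{Your route.} You treat the statement as the boundary case $a=h$ of \Cref{lem: primer paso PPPPPP}. You apply \Cref{lem: fourth string lemma} once with $(a,k)=(h,(i+1)/h-2)$ and then a single reflection.
\begin{itemize}
\item Your hypothesis checks are right. Each partial sum equals $[b\le e]-[i-1\le e]$ for its right endpoint $e\le i-1$, so the case $h=2$, $b=i-1$ needs no separate treatment.
\item Your value $R_i(\lambda-\mu_k)=k-c+1$ is also right.
\item What your route buys: there is no induction, and it never uses $\pint{\mu}{\alpha_t}$ for $t<h$. So it proves the statement without the hypothesis $\mu\in X^+_{h-1}(L)$.
\item What the paper's route buys: it stays inside the left-solving lemmas at parameters within their stated ranges. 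Yours uses the $a\le h$ branches of \Cref{lem: fifth string lemma}, \Cref{lem: fourth string lemma} and \Cref{claim: third string lemma 1}, which the paper never writes out.
\end{itemize}

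\textbf{Points to tidy.}
\begin{enumerate}
\item[(i)] Strictly, your pair $(h,k)$ violates the bound $k\le i/h-2$ of \Cref{def: SSS and YYY}: with $i=Nh-1$ you have $k=N-2$. Under that bound $b_k=i-1$ never occurs when $a\le h$, although \Cref{lem: third string lemma} itself already produces such sets. Add a remark that $\mathrm{Y}_{h,k}$ is still well defined, since $a_k-h+1=(N-1)(h-1)\ge 1$. Then verify the invariance directly. $\mathrm{Y}_{h,k}$ consists of three families:
\begin{itemize}
\item all roots of height at least $h+k+2$;
\item the height-$h$ roots $\alpha_{x,x+h-1}$ with $x\le i-k-h$ or $x\ge i+1$;
\item the roots $\alpha_{x,y}$ of height between $h+1$ and $h+k+1$, except those with $x\ge i+1-k-h$ and $y\le j$.
\end{itemize}
A short case check shows that each $s_t$ with $t\in[i-k,i]$ maps this set into itself.
\item[(ii)] The root you need is $\alpha_{i-k+c,\,i}$, i.e.\ $c'=c+1$ relative to $a_k=i-1-k$. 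If you copy the index $\alpha_{a_k+c,b_k+1}$ from the proof of \Cref{lem: primer paso PPPPPP}, the $\rho$-shifted pairing is $1$, not $0$. The boundary case $c=0$ is fine: the root is then $\alpha_{i-k,i}$, which uses the whole interval $[i-k,i]$.
\item[(iii)] Your parenthetical claim that the count for $t=h-1$, $x\le k-1$ lies in $[0,x]$ fails when $b=h$, because the block $[h,2h-1]$ then contains $b$. Hypothesis (c) only requires $\le x+1$, so nothing breaks.
\end{enumerate}
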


\begin{proof}
We proceed by induction on $m = \pint{\mu}{\alpha_{h-1}}$.
If $m=0$ then $\pint{\mu-\alpha_{i,j}}{\alpha_t}=0$ for all $1\leq t \leq i-h$ and $t\equiv i-h \equiv -1 \mod h$. 
By definition of the $P_{i-1,h}$-operator, it follows that  $P_{i-1,h}(\mu-\alpha_{i,j})$ is not defined.
Then,  by \Cref{lem: free P} applied to $K =\Phi^{\succ \alpha_{i,j}}$, $\alpha_{i,j}$, $p = i = p'$ and the weight $\mu- \alpha_{i,j}$ we obtain $\bfM_{\mu-\alpha_{i,j}}^{\succ \alpha_{i,j}} = 0$.

We now suppose that $m\geq 1$ and assume that \eqref{eq: first caso 0 blackP} holds for $m-1$.
In this case $P_{i-1,h}(\mu-\alpha_{i,j})$ is defined.
In fact, we have
\begin{equation}
    \mu'\coloneqq P_{i-1,h}(\mu-\alpha_{i,j}) = \mu-\alpha_{i,j} -\alpha_{h-1,i-1}=\mu-\alpha_{h-1,j}= P_{j,h}(\mu) .
\end{equation}
Then, applying \Cref{lem: free P} with the same parameters as before, we get 
\begin{equation}\label{eq: first caso 0 blackP proof 1}
    \bfM_{\mu-\alpha_{i,j}}^{\succ \alpha_{i,j}} = q^{D(P_{i-1,h})(\mu  - \alpha_{i,j})}\bfM_{\mu'}^{\succ\alpha_{i,j}}=  q^{D(P_{j,h})(\mu  )-1}\bfM_{\mu'}^{\succ\alpha_{i,j}}.
\end{equation}

On the other hand, by the conditions imposed on $b$ and $\mu$ it is easy to see that $P_{b-1,h}(\mu')$ is not defined.
 Therefore, another application of  \Cref{lem: free P}, but now for the parameters
\begin{equation}
\displaystyle    K =\Phi^{\succ\alpha_{i-1,j-1}}, \quad \alpha_{i-1,j-1}, \qquad p = i-h+1, \qquad p'= b, \qquad k = \frac{i-b+1}{h}-1 \quad \mbox{ and } \quad \mu',
\end{equation}
 yields $\bfM_{\mu'}^{\succ \alpha_{i-1,j-1}} =0$.
 By the definition of $\bfM$-elements  we have
 \begin{equation} \label{eq: first caso 0 blackP proof 1 AA}
     0 = \bfM_{\mu'}^{\succ \alpha_{i-1,j-1}}  = \bfM_{\mu'}^{\succ \alpha_{i,j}} -q \bfM_{\mu' -\alpha_{i,j}}^{\succ \alpha_{i,j}}.
 \end{equation}
Thus, by combining \eqref{eq: first caso 0 blackP proof 1} and \eqref{eq: first caso 0 blackP proof 1 AA}, we obtain

\begin{equation}\label{eq: first caso 0 blackP proof 1 AAA}
    \bfM_{\mu-\alpha_{i,j}}^{\succ \alpha_{i,j}} = q^{D(P_{j,h})(\mu)}\bfM_{\mu' -\alpha_{i,j}}^{\succ\alpha_{i,j}}.
\end{equation}

Finally, since $\mu'$ satisfies the hypothesis of the lemma and $\pint{\mu'}{\alpha_{h-1}} =m-1$ we can apply our inductive hypothesis to conclude that the right-hand side of \cref{eq: first caso 0 blackP proof 1 AAA} vanishes, which completes the proof. 
\end{proof}

% \begin{lemma}\rm \label{lem: 0s de PPPPPPP}
%     Let  $\alpha_{i,j}\in \Phi^{\geq 2}$ and $h=j-i+1$.
%     Let $\lambda\in X_{j}^+(L)$ and for $x\geq 0$ we set  $\lambda_{x} = P^x_{j,h}(\lambda)$.
%     Let $k\geq h+1$ be an integer such that  $\lambda_{k}$ is defined.
%     Suppose that $\pint{\lambda_{x}}{\alpha_{j-(x-1)}} = -1$ for all $h\leq x<k$.
%     Then, we have
%     \begin{equation}\label{eq: 0s de PPPPPPP}
%         \pint{\lambda_{k-1}}{\alpha_{t}} =0,\quad\text{for all }\quad t\in [\varrho_{k}+h,i]\setminus \{j-(k-2)\},
%     \end{equation}
%    where $\varrho_{k} = \varrho_{j-(k-1),h}(\lambda_{k-1})$.
   
%   Additionally, if $\varrho_{k}+h \leq j-k+1 \leq i$, then $\pint{\lambda_{k}}{\alpha_{j-k+1}} = -1$.
% \end{lemma}

\begin{lemma}\rm \label{lem: 0s de PPPPPPP}
    Let  $\alpha_{i,j}\in \Phi^{\geq 2}$ and $h=j-i+1$.
    Let $\lambda\in X_{j}^+(L)$ and for $x\geq 0$ we set  $\lambda_{x} = P^x_{j,h}(\lambda)$.
    Let $k\geq h+1$ be an integer such that  $\lambda_{k}$ is defined and $\lambda_{u}\notin X_{j}^+(\lambda) $  for $h\leq u<k$.
    Then, we have
    \begin{equation}\label{eq: 0s de PPPPPPP}
        \pint{\lambda_{k-1}}{\alpha_{t}} =0,\quad\text{for all }\quad t\in [\varrho_{k}+h,i]\setminus \{j-(k-2)\},
    \end{equation}
   where $\varrho_{k} = \varrho_{j-(k-1),h}(\lambda_{k-1})$.
\end{lemma}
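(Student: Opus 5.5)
We will argue by contradiction, following the proof of \Cref{lem: ceros por Ps}, but now tracking the extra negative entry that appears once the index $k$ exceeds $h$. Write $\lambda_x = P^x_{j,h}(\lambda)$ and $\varrho_x=\varrho_{j-x+1,h}(\lambda_{x-1})$. The first step is a structural description of $\lambda_{k-1}$. Since $\lambda\in X_j^+(L)$, each $\lambda_x$ with $x\le h-1$ still lies in $X^+_{j-x+1}(L)$. Each subsequent step $\lambda_x\to\lambda_{x+1}=\lambda_x-\alpha_{\varrho_{x+1},j-x}$ changes the pairings, by \Cref{remark producto interno}, only at the positions $\varrho_{x+1}-1$, $\varrho_{x+1}$, $j-x$ and $j-x+1$. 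We will show by induction on $x\in[h,k-1]$ that the only negative entry of $\lambda_x$ in $[1,i]$ sits at position $j-x+1$ and equals $-1$. The hypothesis $\lambda_u\notin X^+_j(L)$ for $h\le u<k$ guarantees that this $-1$ is never absorbed. Indeed, the $-1$ created at $j-x$ when subtracting $\alpha_{\varrho_{x+1},j-x}$ combines with the $+1$ at $j-x+1$. Hence after each step the $-1$ moves one position to the left, and the entry at $j-(k-2)$ in $\lambda_{k-1}$ is exactly the excluded index.

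The second step is a monotonicity statement for the $\varrho$'s, analogous to \Cref{lem: desigualdad de varrhos}. Subtracting $\alpha_{\varrho_x,\ast}$ raises $\pint{\cdot}{\alpha_{\varrho_x-1}}$ by one, so $\varrho_{x+1}\ge \varrho_x-1$ as long as $\varrho_x\ge 2$. Iterating gives $\varrho_{s+1}\le \varrho_k+(k-s-1)$ for all $s<k$. The argument is the same as in \Cref{lem: desigualdad de varrhos}: the position $\varrho_{s}-1$ becomes strictly positive, and it remains a legal candidate for the next $P$. The one exception is the position carrying the moving $-1$, and this is precisely the position we exclude.

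For the main argument, suppose some $t\in[\varrho_k+h,i]\setminus\{j-k+2\}$ satisfies $\pint{\lambda_{k-1}}{\alpha_t}\neq 0$. By the first step this entry is then positive. If $t\equiv j-k+1 \pmod h$, then by maximality $\varrho_k\ge t$, contradicting $t\ge\varrho_k+h$. Otherwise, as in \Cref{lem: ceros por Ps}, we choose $s<k-1$ with $t\equiv i-s\pmod h$ and with $t$ lying in the window scanned by $P_{j-s,h}$. We then need positivity of $t$ already at stage $s$. The maximality of $\varrho_{s+1}$ gives $t\le\varrho_{s+1}$. Combined with the second step this yields $t\le\varrho_k+(k-s-1)$, and we want this to be $<\varrho_k+h$. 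When $k\ge h+1$ one must use the periodicity: the index $s$ can be taken among the last $h-1$ steps (equivalently, $k-s-1<h$), since residues mod $h$ repeat every $h$ steps.

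The main obstacle is this last point, together with the bookkeeping in the first step. We must check that an entry which is positive at stage $k-1$ was already positive at the relevant earlier stage $s$. Entries only decrease at the positions $\varrho_x$ and $j-x+1$, so we will verify that none of these lies strictly inside $[\varrho_k+h,i]$ except the moving $-1$ position. This uses $\varrho_x\le\varrho_k+(k-x)$ again. We expect the case analysis around the boundary index $j-k+2$ to be the delicate part.
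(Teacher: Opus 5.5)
Your proposal has a genuine gap: the contradiction argument cannot reach the positions $t\in[j-k+3,i]$, which lie to the right of the moving $-1$. This set always lies inside $[\varrho_k+h,i]$, because $\varrho_k\le i-k+1$. It is nonempty as soon as $k\ge h+2$.

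For $s\ge k-h$, the window scanned by $P_{j-s,h}$ is $[1,i-s]\subseteq[1,j-k+1]$. So no $s$ among the last $h-1$ steps has $t$ in its window. For $s\le k-h-1$, the bound $t\le\varrho_k+(k-s-1)$ no longer contradicts $t\ge\varrho_k+h$.

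The subcase $t\equiv i-k+1 \pmod h$ has the same problem. (This, not $j-k+1$, is the residue governed by $\varrho_k$.) Maximality of $\varrho_k$ only speaks about $t\le i-k+1$, and the remaining such $t$, namely $j-k+2+mh$ with $m\ge 1$, again lie in $[j-k+3,i]$.

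A concrete instance: take $h=2$, $(i,j)=(5,6)$, and $\lambda$ with $\pint{\lambda}{\alpha_1}=\pint{\lambda}{\alpha_2}=2$ and $\pint{\lambda}{\alpha_t}=0$ for $3\le t\le 6$. The hypotheses hold with $k=5$ and $\varrho_5=1$, and the claim concerns $t\in\{4,5\}$. Neither lies in any relevant window: the window at step $4$ is $[1,2]$, and that of $\varrho_5$ is $[1,1]$.

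These positions need direct bookkeeping, not maximality. By your own first step, $\lambda_x\notin X_j^+(L)$ means exactly $\pint{\lambda_{x-1}}{\alpha_{j-x+1}}=0$. So for $t=j-x+1$ with $h\le x\le k-2$, the entry goes $0\to -1\to 0$ at steps $x$ and $x+1$. It is never touched again, since all later changes occur at positions $\le j-x$.

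Only $t\le j-k+1$ should enter your contradiction argument. There, the unique $s\in[k-h,k-1]$ with $s\equiv i-t \pmod h$ does satisfy $t\le i-s$.

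In that argument, also correct the pull-back from stage $k-1$ to stage $s$. You must exclude \emph{increments} of the entry at $t$ (these occur at $\varrho_y-1$ and $j-y+2$), not decrements. For $s<y\le k-1$ one has $\varrho_y-1\le \varrho_k+(k-y)-1<\varrho_k+h$ and $j-y+2\ge j-k+3>t$. Your stated claim, that no decrement position other than the moving one lies in $[\varrho_k+h,i]$, is false: the positions $j-x+1$ above are such positions.

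The route the paper indicates avoids both issues by induction on $k$, starting from \Cref{lem: ceros por Ps} at $k=h$. One $P$-step turns the zeros of $\lambda_{k-2}$ on $[\varrho_{k-1}+h,i]\setminus\{j-k+3\}$ into zeros of $\lambda_{k-1}$ on $[\varrho_{k-1}+h,i]\setminus\{j-k+2\}$. Since $\varrho_k\ge\varrho_{k-1}-1$ by \Cref{lem: desigualdad de varrhos}, the only new position is $\varrho_k+h$. This is either $j-k+2$, or it is at most $i-k+1$ and then zero by maximality of $\varrho_k$.
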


\begin{proof}
This result  follows by an inductive argument  on $k$ that  combines  \Cref{lem: desigualdad de varrhos},  \Cref{lem: ceros por Ps} and the definition of the $P$-operator. 
We left the details to the reader.
\end{proof}

% \begin{lemma}\rm\label{lem: first blackP}
%     Let $\alpha_{i,j}\in \Phi^{\geq 2}$ and $h=j-i+1$. 
%     Let  $b$ be an integer such that $h+1 \leq b \leq i$.
%     Let $\lambda\in X_j^+(L)$ and set  $\lambda_{x} = P^{x}_{j,h}(\lambda)$ for $x\geq 0$.
%     Suppose that $\lambda_{j-b+1}$ is defined and $\pint{\lambda_{x}}{\alpha_{j-x+1}}=-1$ for $h\leq x\leq j-b+1$.
    
%     Then
%     \begin{equation}\label{eq: first blackP}
%         \bfM_{\lambda_{j-b+1}}^{\succ \alpha_{i,j}} = \left\{\begin{array}{ll}
%             q^{\D{P,b-1,h}(\lambda_{j-b+1})}\bfM_{\lambda_{j-b+2}}^{\succ \alpha_{i,j}},&\text{if}\quad \lambda_{j-b+2}\quad\text{is defined};\\
%             0, &\text{otherwise}.
%         \end{array}\right. 
%     \end{equation}
% \end{lemma}

\begin{lemma}\rm\label{lem: first blackP} 
    Let $\alpha_{i,j}\in \Phi^{\geq 2}$ and $h=j-i+1$. 
    Let $\lambda\in X_j^+(L)$ and set  $\lambda_{x} = P^{x}_{j,h}(\lambda)$ for $x\geq 0$.
    Let  $u$ be an integer such that $h \leq u \leq i-1$.
    Suppose that $\lambda_{u}$ is defined and $\lambda_{x}\notin X_{j}^+(\lambda) $  for $h\leq x<u$. 
    Then, we have
    \begin{equation}\label{eq: first blackP}
        \bfM_{\lambda_{u}}^{\succ \alpha_{i,j}} = \left\{\begin{array}{ll}
            q^{\D{P,j-u,h}(\lambda_{u})}\bfM_{\lambda_{u+1}}^{\succ \alpha_{i,j}},&\text{if } \lambda_{u+1}\text{ is defined};\\
            0, &\text{otherwise}.
        \end{array}\right. 
    \end{equation}
\end{lemma}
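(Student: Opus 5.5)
The plan is to reduce the statement to \Cref{lem: free P}, applied to $K=\Phi^{\succ \alpha_{i,j}}$ with the root $\alpha_{i,j}$ replaced by a suitable shifted root of height $h$. The point is that $\lambda_{u+1}=P_{j-u,h}(\lambda_u)$ removes a root $\alpha_{\varrho,j-u}$ ending at position $j-u$. So I would first pin down the local shape of $\lambda_u$ near position $j-u+1$.

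\textbf{Step 1: sign pattern of $\lambda_u$.} By induction on $x$, for $h\le x\le u$, I expect $\pint{\lambda_x}{\alpha_{j-x+1}}=-1$. For $x=h$ this is the observation already used in \Cref{lem: bfP}: the $h$ steps of $P$ have subtracted roots ending at $j,j-1,\dots,j-h+1=i$, and the hypothesis $\lambda_x\notin X_j^+(L)$ propagates the $-1$. I would also invoke \Cref{lem: 0s de PPPPPPP} with $k=u+1$, or with $k=u$ when $\lambda_{u+1}$ is undefined, together with \Cref{lem: ceros por Ps}. These should give $\pint{\lambda_u}{\alpha_t}=0$ for all $t$ in a union of blocks $T=\bigcup_{m=0}^{d}[p-mh,\,i'-mh]$, where $i'=j-u+h-1$ is the top of the relevant window and $p'=j-u+1$ carries the entry $-1$. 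The integer $d$ is read off from $\varrho_{u+1}=p-(d+1)h$ in the defined case, and is maximal in the undefined case. The zeros come from the fact that the earlier $\varrho$'s were chosen maximally, and \Cref{lem: desigualdad de varrhos} forces them into the gaps.

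\textbf{Step 2: congruence.} I would set $\alpha_{i',j'}$ to be the height-$h$ root with $i'-h+1<p'\le i'$, so that the $-1$ lies inside the $P$-window of \Cref{lem: free P}. I then need $\Phi^{\succ\alpha_{i,j}}\sim_T\Phi^{\succ\alpha_{i',j'}}$. Because $u\le i-1$, all positions in $T$ lie strictly left of $i$. By \eqref{eq: las que se salen}, the roots of $\Phi^{\succ\alpha_{i,j}}$ moved by $s_t$ for such $t$ agree with those of $\Phi^{\succ\alpha_{i',j'}}$: they are the roots $\alpha_{t-h,t},\alpha_{t,t+h}$. The roots in the symmetric difference have height $h$ and lie between $\alpha_{i',j'}$ and $\alpha_{i,j}$, so they are fixed by $s_t$, which is condition (b) of \Cref{def: locally greater than alpha}. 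Since $\bfM^{\succ\alpha_{i,j}}$ and $\bfM^{K}$ coincide here, \Cref{lem: free P} then yields either $q^{D(P_{p'-1,h})(\lambda_u)}\bfM_{P_{p'-1,h}(\lambda_u)}$ or $0$.

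\textbf{Step 3: matching operators.} It remains to identify $P_{p'-1,h}(\lambda_u)=P_{j-u,h}(\lambda_u)=\lambda_{u+1}$ and to check the degree. Here $p'-1=j-u$ directly, and the congruence class $\bmod h$ of the window agrees. The main obstacle is Step 1, specifically producing exactly the vanishing set $T$ with the correct $d$, and the boundary cases $p'=i'$ versus $p'<i'$. These require careful bookkeeping via \Cref{lem: desigualdad de varrhos}. If that fails, I would instead induct on $u$, iterating \Cref{lem: left solving} directly.
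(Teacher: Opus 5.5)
Your Step 2 is where the argument breaks, and it hides the real difficulty of this lemma.

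\textbf{The shifted root.} Write $b=j-u+1$ for the position of the $-1$. \Cref{lem: free P}, applied to the root $\alpha_{i,j}$ itself, covers exactly those $b$ with $b\equiv p \pmod h$ for some $p\in(i-h+1,i]$, i.e.\ $b\not\equiv i+1\pmod h$. In that case no shift is needed. Keep $K=\Phi^{\succ\alpha_{i,j}}$ and take $k=(p-b)/h$; the zeros on $[\varrho_{u+1}+h,i]\setminus\{b\}$ from \Cref{lem: 0s de PPPPPPP} are then enough. This is what the paper does in its Cases C and D.

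Your shift to $\alpha_{i',j'}$ with $i'=b+h-2$ does not give a $T$-congruence:
\begin{itemize}
\item If $i'<i$, then $i'\in[b,i']\subseteq T$, and $s_{i'}$ moves $\alpha_{i'+1,i'+h}\in\Phi^{\succ\alpha_{i',j'}}$. This root is not in $\Phi^{\succ\alpha_{i,j}}$, contradicting condition (a) of \Cref{def: locally greater than alpha}.
\item If $i'>i$, then $\alpha_{i+1,j+1}\in\Phi^{\succ\alpha_{i,j}}\setminus\Phi^{\succ\alpha_{i',j'}}$ is moved by $s_i$, and $i\in[b,i']\subseteq T$. This contradicts condition (b).
\end{itemize}
Your claimed vanishing is also unavailable when $i'>i$. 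For $t\in[i+1,j]$ one has $\pint{\lambda_u}{\alpha_t}=\pint{\lambda}{\alpha_t}$, which is only known to be $\ge 0$.

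\textbf{The missing case.} Your plan does not address $b\equiv i+1\pmod h$. Here the $-1$ sits in the one residue class missed by the window of $\alpha_{i,j}$, and no change of root repairs this. Your fallback of iterating \Cref{lem: left solving} also fails. If $D=D(P_{b-1,h})(\lambda_u)>1$, then $\pint{\lambda_u}{\alpha_{b-1}}=0$. Subtracting $\alpha_{b-h,b-1}$ therefore creates a second $-1$ at $b-1$, and the hypotheses are not restored.

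The paper treats this case as follows.
\begin{itemize}
\item When $\lambda_{u+1}$ is defined, it inducts on $D$. The case $D=1$ is \Cref{lem: left solving}, which, unlike \Cref{lem: free P}, allows $p=i-h+1$. For $D>1$ it invokes \Cref{lem: primer paso PPPPPP}, which rests on the string and trapezoid lemmas of \Cref{sec: Towards the Second Inverse Decomposition}. This gives $\bfM_{\lambda_u}^{\succ\alpha_{i,j}}=q\,\bfM_{\lambda_u-\alpha_{a-h,a-1}}^{\succ\alpha_{i,j}}$ with $a=\varrho_{u+1}+h$. So it peels off the leftmost height-$h$ piece of $\alpha_{\varrho_{u+1},b-1}$, not the one next to $b$, and this lowers $D$ by one.
\item When $\lambda_{u+1}$ is undefined, it uses $\Phi^{\succ\alpha_{i-1,j-1}}=\Phi^{\succ\alpha_{i,j}}\cup\{\alpha_{i,j}\}$. \Cref{lem: free P} for $\alpha_{i-1,j-1}$ gives $\bfM_{\lambda_u}^{\succ\alpha_{i,j}}=q\,\bfM_{\lambda_u-\alpha_{i,j}}^{\succ\alpha_{i,j}}$, and the right-hand side vanishes by \Cref{lem: blackP caso 0}.
\end{itemize}
Some ingredient of this kind is indispensable, and it is missing from your proposal.
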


\begin{proof}
Let $b= i-u+h$. 
We remark that $b-1=j-u$. 
We split the proof in four cases.

\medskip
\textbf{Case A. }   $\lambda_{u+1}$ is defined and $b\equiv i+1 \mod h$.
\medskip

We recall that $\lambda_{u+1}=P_{b-1,h}(\lambda_{u})$. 
Let $\varrho_{u+1} = \varrho_{b-1,h}(\lambda_{u})$ be the integer associated to $P_{b-1,h}(\lambda_{u})$.

By \Cref{lem: 0s de PPPPPPP}  applied to $k = u+1$ we obtain
 $\pint{\lambda_{u}}{\alpha_{t}} = 0$ for all 
 $t\in T\coloneq[\varrho_{u+1}+h,i]\setminus \{b\}$.
    Furthermore, since $\lambda_{u}\notin X_{j}^+(L)$ we have
   $\pint{\lambda_{u}}{\alpha_{b}} = -1$.

 We  proceed by induction on $D = \D{P,b-1,h}(\lambda_{u})$.

Suppose that $D=1$.
In this case we have $\varrho_{u+1}=i-u=b-h$ and $T=[b+1,i] $.  
Therefore, we can apply  \Cref{lem: left solving} to $\alpha_{i,j}$,  $K=\Phi^{\succ \alpha_{i,j}}$, $p=i-h+1$,  $p^{\prime} = b$ and $\lambda_{u}$ to get 
   \begin{equation}
      \bfM_{\lambda_{u}}^{\succ \alpha_{i,j}}= q\bfM_{\lambda_{u} -\alpha_{b-h,b-1}}^{\succ \alpha_{i,j}} = q\bfM_{\lambda_{u+1}}^{\succ \alpha_{i,j}},  
   \end{equation}
   which proves the lemma for $D=1$.    
   
 We now suppose that  $D>1 $ and that the lemma holds for $D-1$. 
Let $a \coloneqq  \varrho_{u+1} +h$. 
We notice that $a\equiv i-u \mod h$ and therefore $a\equiv b \mod h$. 
Furthermore, since $D>1$ we have $a<b$ and since $b\neq i$ we have $b<i$. 
Thus,  we can apply \Cref{lem: primer paso PPPPPP} to $\alpha_{i,j}$, $a$ and $b$ to obtain $\bfM_{\lambda_{u}}^{Y}=0$, where $Y=\Phi^{\succ \alpha_{i,j}} \cup \{ \alpha_{a-h,a-1}  \}$.
By definition of $\bfM$-elements we have
\begin{equation}
    \bfM_{\lambda_{u}}^{\succ \alpha_{i,j}} = q\bfM_{\lambda_{u} - \alpha_{a-h,a-1}}^{\succ \alpha_{i,j}}.
\end{equation}
On the other hand, we recall that 
\begin{equation}
    \lambda_{u+1} =P_{b-1,h}(\lambda_{u})=\lambda_{u}-\alpha_{\varrho_{u+1}
    ,b-1} =\lambda_{u}-\alpha_{a-h,b-1} =(\lambda_{u}-\alpha_{a-h,a-1})-\alpha_{a,b-1}.
\end{equation}
Since $ \pint{\lambda_{u}-\alpha_{a-h,a-1}}{\alpha_{a}}= 1$ we conclude that 
$ P_{b-1,h}(\lambda_{u}-\alpha_{a-h,a-1}) =  \lambda_{u+1}$.
Furthermore, we have $D(P_{b-1,h})(\lambda_{u} - \alpha_{a-h,a-1}) = D-1$. 
Therefore, our inductive hypothesis gives
   \begin{equation}
       \bfM_{\lambda_{u}}^{\succ \alpha_{i,j}} = q\bfM_{\lambda_{u} - \alpha_{a-h,a-1}}^{\succ \alpha_{i,j}} = qq^{D-1}\bfM_{\lambda_{u+1}}^{\succ \alpha_{i,j}}= q^{D}\bfM_{\lambda_{u+1}}^{\succ \alpha_{i,j}}.
   \end{equation}

\medskip
\textbf{Case B. }  $\lambda_{u+1}$ is not defined and $b\equiv i+1 \mod h$.
\medskip

Since $\lambda_{u}$ is defined and  $\lambda_{u+1}$ is not, \Cref{lem: desigualdad de varrhos} implies  $\varrho_{u} = \varrho_{b,h}(\lambda_{u-1}) = 1$. 
In particular, we have $b\equiv 0 \mod h$.
In addition, by definition of the $P$-operator we have $\pint{\lambda_{u}}{\alpha_{t}}=0$ for all $1\leq t \leq b-h $ with $t\equiv b-h \equiv 0 \mod h$. 
In particular, $\pint{\lambda_{u}}{\alpha_{h}}=0$. 

On the other hand, by  \Cref{lem: 0s de PPPPPPP} applied to $k=u$ we obtain 
$\pint{\lambda_{u-1}}{\alpha_{t}}$ for all $t\in [h+1, i] \setminus \{b+1\}$. 
Furthermore, $\lambda_{u-1}\notin X_j^{+}(L)$ implies $\pint{\lambda_{u-1}}{\alpha_{b+1}}=-1$. 
Since $\lambda_u= \lambda_{u-1}- \alpha_{1,b}$ we obtain  $\pint{\mu_u}{\alpha_t}=0$ for all $t\in  [h+1,i]\setminus\{b\}$ and $\pint{\lambda_{u}}{\alpha_{b}} = -1$.

Summing up, we obtain that $\pint{\lambda_u}{\alpha_t}=0$ for all $t\in  [h,i]\setminus\{b\}$ and $\pint{\lambda_{u}}{\alpha_{b}} = -1$.
Then, we can apply  \Cref{lem: free P} to $\alpha_{i-1,j-1}$, $K=\Phi^{\succ \alpha_{i-1,j-1}}$, $p =i+1-h$, $p'=b$ and $\lambda_{u}$ to obtain $\bfM_{\lambda_{u}}^{\succ \alpha_{i-1,j-1}} = 0$. 
We remark that our use of \Cref{lem: free P} is justified since we use $\alpha_{i-1,j-1}$ rather than $\alpha_{i,j}$, so that our $p$ belongs to the admissible range.
By decomposing $\bfM_{\lambda_{u}}^{\succ \alpha_{i-1,j-1}} = 0$ we get
    \begin{equation}\label{eq: blackP proof 1}
        \bfM_{\lambda_{u}}^{\succ \alpha_{i,j}} =q \bfM_{\lambda_{u} - \alpha_{i,j}}^{\succ \alpha_{i,j}}.
    \end{equation}

Since $b\equiv 0 \mod h$, \Cref{lem: blackP caso 0}  applied to $\mu = \lambda_{u}$ yields  $\bfM_{\lambda_{u} - \alpha_{i,j}}^{\succ \alpha_{i,j}} = 0$. 
Therefore, the result follows by replacing this in \eqref{eq: blackP proof 1}.

\medskip
\textbf{Case C.}  $\lambda_{u+1}$ is defined and $b \not \equiv i+1 \mod h$.
\medskip

As in Case A we obtain  $\pint{\lambda_{u}}{\alpha_{t}} = 0$ for all $t\in [\varrho_{u+1}+h,i]\setminus \{b\}$ and  $\pint{\lambda_{u}}{\alpha_{b}} = -1$.
    Let $p$ be  the unique integer such that  $i-h+1< p\leq i$ and $p\equiv b \mod h$.
    Let $d$ be the integer defined by $\varrho_{u+1} = p-(d+1)h$ and define
    \begin{equation}
        T = \bigcup_{m = 0}^{d}[p-mh,i-mh]\subset [\varrho_{u+1} +h,i].
    \end{equation}
   We apply  \Cref{lem: free P} to  $K=\Phi^{\succ \alpha_{i,j}}$, $\alpha_{i,j}$, $T$, $\lambda_{j-b+1}$, $p' = b$ and $p$ to obtain
\begin{equation}
    \bfM_{\lambda_u}^{\succ\alpha_{i,j}} =q^{D(P_{b-1,h})(\lambda_{u})}  \bfM_{P_{b-1,h}(\lambda_{j-b+1})}^{\succ \alpha_{i,j}} =q^{D(P_{b-1,h})(\lambda_{u})}  \bfM_{\lambda_{u+1}}^{\succ \alpha_{i,j}} ,
\end{equation}
as we wanted to show. 

\medskip
\textbf{Case D.} $\lambda_{u+1}$ is not defined and  $b \not \equiv i+1 \mod h$.
\medskip

As in Case B we get $\varrho_{u}=1$,    $\pint{\lambda_u}{\alpha_t}=0$ for all $t\in  [h+1,i]\setminus\{b\}$ and $\pint{\lambda_{u}}{\alpha_{b}} = -1$.   
Let $p$ be the unique integer such that  $i-h+1\leq p\leq i$ and $p\equiv b \mod h$. 
Then,  by \Cref{lem: free P} applied to $\alpha_{i,j}$, $K=\Phi^{\succ \alpha_{i,j}}$,  $\lambda_{u}$, $p' = b$ and $p$ we obtain that $\bfM_{\lambda_{u}}^{\succ\alpha_{i,j}}=0$.
\end{proof}

We are now in position of proving the main result in this section.

\begin{myproof}[Proof of \Cref{lem: bfP}]
For $x \geq 0$, set $\lambda_x = P^x_{j,h}(\lambda)$.
We stress that $\mu=\lambda_h $.

We first consider the case when $\mathbf{P}_{i-1,h}(\mu)$ is defined.
By hypothesis we have $\lambda_{h} \notin X_{j}^+(L)$.
Let $s\geq h+1$ be the integer such that $\lambda_s = \mathbf{P}_{i-1,h}(\mu) $.
Let $h+1\leq u <s$. 
By the minimality of $s$ we have $\lambda_{u} \notin X_{j}^+(L)$.

Then, by  \Cref{lem: first blackP} we have
\begin{equation}
    \bfM_{\lambda_{u}}^{\succ \alpha_{i,j}}
    = q^{D(P_{j-u,h})(\lambda_{u})}
      \bfM_{\lambda_{u+1}}^{\succ \alpha_{i,j}}
\end{equation}
for all $h \leq u <s$. It follows that
\begin{equation}
 \displaystyle   \bfM_{\mu}^{\succ \alpha_{i,j}} =   \bfM_{\lambda_h}^{\succ \alpha_{i,j}}
    = q^{D}
      \bfM_{\lambda_s}^{\succ \alpha_{i,j}}
    = q^{D(\mathbf{P}_{i - 1,h})(\mu)}
      \bfM_{\mathbf{P}_{i - 1,h}(\mu)}^{\succ \alpha_{i,j}},
\end{equation}
where $\displaystyle D=\sum_{u =h}^{s-1} D(P_{j-u,h})(\lambda_{u})$.
This proves the lemma when $\mathbf{P}_{i - 1,h}(\mu)$ is defined.

We now address the case when $\mathbf{P}_{i-1,h}(\mu)$ is not defined.
Let $s\geq h+1$ be the smallest  integer such that $\lambda_s$ is not defined.
Arguing as in the previous paragraph we obtain
\begin{equation}
    \bfM_{\mu}^{\succ \alpha_{i,j}} =   \bfM_{\lambda_h}^{\succ \alpha_{i,j}} = q^E \bfM_{\lambda_{s-1}}^{\succ \alpha_{i,j}}, 
\end{equation}
for some integer $E$. 
Then another application of \Cref{lem: first blackP} yields $\bfM_{\lambda_{s-1}}^{\succ \alpha_{i,j}}=0$, which proves the lemma in this case. 
\end{myproof}

%%%%%%%%%%%%%%%%%%%%%%%%%%%%%%%%%%%%%%%%%%%%%%%%%%%%%%%%%%%%%%%%%%%%
\subsection{Mixed operators}
%%%%%%%%%%%%%%%%%%%%%%%%%%%%%%%%%%%%%%%%%%%%%%%%%%%%%%%%%%%%%%%%%%%%

In this section we establish a relation between the weights 
$P^{h-1}_{j,h}(\lambda)$, 
$\mathbf{v}_{j,h}(\lambda)$,
$P^{h}_{j,h}(\lambda)$ and
$P_{i,h}\mathbf{v}_{j,h}(\lambda)$
(when all or some of them exist).
This is the key to pass from the First Inverse Decomposition to the Second Inverse Decomposition. 

\begin{lemma}\rm\label{lem: ultimo P}
    Let $\lambda\in X^{+}$,  $\alpha_{i,j}\in \Phi^{\geq 2}$ and $h=j-i+1$. 
    Suppose that $\langle \lambda , \alpha_r \rangle  =0$, for all $i\leq r \leq j$.
    and let 
\begin{equation}
    \begin{array}{lll}
      d_{1} =D(P^{h-1}_{j,h})(\lambda)+1   & \mbox{ }\qquad \mbox{ }\qquad    & d_{2}=D(\mathbf{v}_{j,h})(\lambda)+1 \\
      d_{3} = D(P^{h}_{j,h})(\lambda)   & \mbox{ }\qquad  \mbox{ }\qquad    &  d_{4}=D(P_{i,h}\mathbf{v}_{j,h})(\lambda)
    \end{array}
\end{equation}
    \begin{enumerate}[(a)]
        \item Suppose that $P^{h}_{j,h}(\lambda)$ and $\mathbf{v}_{j,h}(\lambda)$ are defined, then\label{item P existe}
    \begin{equation} \label{eq: ultimo P existe}
      q^{d_{1}} \bfM_{P^{h-1}_{j,h}(\lambda) - \alpha_{i-h+1,i}}^{\succ \alpha_{i,j}} - q^{d_{2}} \bfM_{\mathbf{v}_{j,h}(\lambda)-\alpha_{i-h+1,i}}^{\succ \alpha_{i,j}} = q^{d_{3}} \bfM_{P^{h}_{j,h}(\lambda)}^{\succ \alpha_{i,j}} - q^{d_{4}} \bfM_{P_{i,h}\mathbf{v}_{j,h}(\lambda)}^{\succ \alpha_{i,j}}.
    \end{equation}
        \item Suppose that $\mathbf{v}_{j,h}(\lambda)$ is defined, but $P^{h}_{j,h}(\lambda)$ is not, then\label{item P no existe}
    \begin{equation}\label{eq: ultimo P no existe}
        q^{d_{1}} \bfM_{P^{h-1}_{j,h}(\lambda) - \alpha_{i-h+1,i}}^{\succ \alpha_{i,j}} - q^{d_{2}} \bfM_{\mathbf{v}_{j,h}(\lambda)-\alpha_{i-h+1,i}}^{\succ \alpha_{i,j}} = 0.
    \end{equation}
        \item Suppose that $P^{h}_{j,h}(\lambda)$ is defined, but $\mathbf{v}_{j,h}(\lambda)$ is not, then\label{item V no existe}
    \begin{equation}\label{eq: ultimo P  y v no existe}
        \bfM_{P^{h-1}_{j,h}(\lambda) - \alpha_{i-h+1,i}}^{\succ \alpha_{i,j}} = q^{D(P_{i,h})(P_{j,h}^{h-1}(\lambda))} \bfM_{P^{h}_{j,h}(\lambda)}^{\succ \alpha_{i,j}}.
    \end{equation}
        \item Suppose that $P^{h}_{j,h}(\lambda)$ and $\mathbf{v}_{j,h}(\lambda)$are not defined, then\label{item p y v no existen}
    \begin{equation}\label{eq: ultimo P, ambos no existen}
        \bfM_{P^{h-1}_{j,h}(\lambda) - \alpha_{i-h+1,i}}^{\succ \alpha_{i,j}} = 0.
    \end{equation}
    \end{enumerate}
\end{lemma}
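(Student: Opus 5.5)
We will treat the four items in parallel. The tool that links them is the identity $\bfM^{Y}_{\nu}=\bfM^{\succ\alpha_{i,j}}_{\nu}-q\,\bfM^{\succ\alpha_{i,j}}_{\nu-\alpha_{i-h+1,i}}$ for $Y=\Phi^{\succ\alpha_{i,j}}\cup\{\alpha_{i-h+1,i}\}$.

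\textbf{Step 1 (reduce the $\bfM^Y$-terms to the $\mathbf{v}$-term).} Set $\nu=P^{h-1}_{j,h}(\lambda)$. We first check the hypotheses of \Cref{lem: pre completar v} with $r=i+1$ (Case B, via \Cref{Claim: free Q2}) and $x=h-1$:
\[
\pint{\nu}{\alpha_{i+1}}=-1,\qquad \pint{\nu}{\alpha_t}=0 \ \text{ for } t\in[i+2,j],\qquad \pint{\nu}{\alpha_{j+1}}\ge 1 .
\]
These all follow from \eqref{eq:S6C}. The lemma gives
\[
\bfM^Y_{\nu}=q^{D(Q^{-(h-1)}_{i+1,h})(\nu)}\,\bfM^Y_{v^{h-1}_{j,h}(\lambda)},
\]
or $0$ if the right-hand side is undefined. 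Next, \Cref{lem: completar Q en caso h-1} turns this into $q^{\varkappa}\bfM^Y_{\mathbf{v}_{j,h}(\lambda)}$, or $0$. Expanding both sides with the identity above and multiplying by $q^{d_1-1}$ gives
\[
q^{d_1-1}\bfM^{\succ\alpha_{i,j}}_{\nu}-q^{d_1}\bfM^{\succ\alpha_{i,j}}_{\nu-\alpha_{i-h+1,i}}
= q^{d_2-1}\Bigl(\bfM^{\succ\alpha_{i,j}}_{\mathbf{v}}-q\,\bfM^{\succ\alpha_{i,j}}_{\mathbf{v}-\alpha_{i-h+1,i}}\Bigr),
\]
where $\mathbf{v}=\mathbf{v}_{j,h}(\lambda)$. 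The exponents match because $d_2-1=D(P^{h-1})+D(Q\text{'s})+\varkappa$. Rearranging, the left side of \eqref{eq: ultimo P existe} equals
\[
q^{d_1-1}\bfM^{\succ\alpha_{i,j}}_{\nu}-q^{d_2-1}\bfM^{\succ\alpha_{i,j}}_{\mathbf{v}},
\]
with the $\mathbf{v}$-term dropped when $\mathbf{v}$ is undefined. This yields (a)/(b) once Step 2 is done, and (c)/(d) once we show the $\nu$-term collapses.

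\textbf{Step 2 (the two remaining $\bfM^{\succ}$-terms).} Using the same identity in reverse, we rewrite
\[
q^{d_1-1}\bfM^{\succ\alpha_{i,j}}_{\nu}=q^{d_1}\bfM^{\succ\alpha_{i,j}}_{\nu-\alpha_{i-h+1,i}}+q^{d_1-1}\bfM^Y_\nu .
\]
This is circular as it stands, so instead we aim at the target terms directly.

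For the $P^h$-term we argue as in \Cref{lem: los 3 casos de second version}. The hypothesis $\pint{\lambda}{\alpha_i}=0$ together with \Cref{lem: ceros por Ps} gives vanishing on the required $T$, with $\pint{\nu-\alpha_{i-h+1,i}}{\alpha_{i-h+1}}=-1$. Then \Cref{lem: free P}, with $K=\Phi^{\succ\alpha_{i,j}}$ and $p=p'=i-h+1$, gives
\[
q\,\bfM^{\succ\alpha_{i,j}}_{\nu-\alpha_{i-h+1,i}}=q^{D(P_{i,h})(\nu)}\bfM^{\succ\alpha_{i,j}}_{P^h_{j,h}(\lambda)},
\]
or $0$ if $P^h_{j,h}(\lambda)$ is undefined. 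This proves (c) and (d) directly; note that (c) and (d) do not need Step 1. The same computation applied to $\mathbf{v}$ shows
\[
q\,\bfM^{\succ\alpha_{i,j}}_{\mathbf{v}-\alpha_{i-h+1,i}}=q^{D(P_{i,h})(\mathbf{v})}\bfM^{\succ\alpha_{i,j}}_{P_{i,h}\mathbf{v}},
\]
or $0$. The needed zeros of $\mathbf{v}$ on the left come from those of $\nu$, because the $Q$-operators only touch coordinates $>i$, while $\mathbf{v}$ is dominant. With both identities in hand, the left side of \eqref{eq: ultimo P existe} becomes
\[
q^{d_3}\bfM^{\succ\alpha_{i,j}}_{P^h_{j,h}(\lambda)}-q^{d_4}\bfM^{\succ\alpha_{i,j}}_{P_{i,h}\mathbf{v}}.
\]

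\textbf{Main obstacle.} The step that needs care is showing that $P_{i,h}\mathbf{v}$ is defined exactly when $P^h_{j,h}(\lambda)$ is, with matching degree, i.e.\ $D(P_{i,h})(\mathbf{v})=D(P_{i,h})(\nu)$. In case (b) this gives the vanishing of the $\mathbf{v}$-pair. The reason should be that $\mathbf{v}$ and $\nu$ agree on all coordinates $\le i$ except coordinate $i+1$, where $\nu$ has $-1$ and $\mathbf{v}$ has $0$. Since $\varrho_{i,h}$ only inspects coordinates $\le i$ in the class of $i-h+1$ modulo $h$, the two choices of $\varrho$ coincide. We would verify this from \eqref{eq:S6B} and the explicit root pattern of the $Q$-steps, checking that $\varpi_{i+1}$ is the only low-index change.
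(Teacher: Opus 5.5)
Your Step~2 is where the proof breaks, and all of (a)--(d) depend on it. You apply \Cref{lem: free P} to $\nu-\alpha_{i-h+1,i}$ with $K=\Phi^{\succ\alpha_{i,j}}$ and $p=p'=i-h+1$, following \Cref{lem: los 3 casos de second version}.

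\Cref{lem: free P} requires $i-h+1<p\le i$. This restriction is substantive: it is the reason \Cref{lem: los 3 casos de second version} stops at $s<h-1$. Since $\langle\lambda,\alpha_i\rangle=0$ forces $\langle\nu,\alpha_i\rangle=0$, the weight $\nu-\alpha_{i-h+1,i}$ has $\langle\cdot,\alpha_i\rangle=-1$. It also has $-1$ at $i-h+1$ in the nontrivial case $D(P_{i,h})(\nu)\ge 2$, where $\langle\nu,\alpha_{i-h+1}\rangle=0$.

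For $s<h-1$ the second negative coordinate sits at $j-s>i$, outside the window $T=\bigcup_m[p-mh,i-mh]$, so it does no harm. For $s=h-1$ it sits at $i\in[p,i]\subseteq T$, so the vanishing hypothesis on $T\setminus\{p'\}$ fails for every admissible choice of $p$. None of \Cref{lem: left solving}, \Cref{Claim: free P}, \Cref{lem: free P} allows a second negative coordinate inside $T$.

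Consequently neither $q\bfM^{\succ\alpha_{i,j}}_{\nu-\alpha_{i-h+1,i}}=q^{D(P_{i,h})(\nu)}\bfM^{\succ\alpha_{i,j}}_{P^h_{j,h}(\lambda)}$ nor its $\mathbf{v}$-analogue is established. Your Step~1 is valid, but as you noticed it only restates the problem.

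The missing idea is how to dispose of the extra $-1$ at coordinate $i$. The paper never separates the $\nu$-term from the $\mathbf{v}$-term. For each $a\equiv i+1\pmod h$ with $\varrho<a\le i-h+1$, and $Y=\Phi^{\succ\alpha_{i,j}}\cup\{\alpha_{a-h,a-1}\}$, it proves $\bfM^Y_{\nu-\alpha_{a,i}}=q^{h-1+\varkappa}\bfM^Y_{\mathbf{v}-\alpha_{a,i}}$ in four moves:
\begin{enumerate}
\item \Cref{lem: fourth string lemma} moves both weights to the set $Y_{a,k}$.
\item The reflection $s_{i-k}\cdots s_i$ leaves a single negative coordinate at $i+1$, equal to $-(k+2)$ for the $\nu$-weight and $-(k+1)$ for the $\mathbf{v}$-weight.
\item Then $h-1+\varkappa$ applications of \Cref{coro: push pyramid to the right} carry it across the window where $\nu$ and $\mathbf{v}$ differ.
\item At that point the two weights coincide, because $\nu-\mathbf{v}=\sum_{u=1}^{h-1+\varkappa}\alpha_{i+1+u,i+u+h-1}$.
\end{enumerate}
Expanding $\bfM^Y$ and iterating in $a$ gives (a). The exponents come from $d_1+D(P_{i,h})(\nu)=d_3+1$ and $d_2-d_1=d_4-d_3=\varkappa+h-1$. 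Cases (b)--(d) follow in the same way when some weights are undefined.

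Your two separate identities would be a stronger statement than the lemma, and proving them would still require handling the obstruction at coordinate $i$.
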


\begin{proof}
    We only prove \cref{eq: ultimo P existe}, the other three cases being analogous. 

We  notice that $P_{j,h}^{h}(\lambda) = P_{i,h}(P_{j,h}^{h-1} (\lambda) ) $. 
Let  $\varrho=\varrho_{i,h}(P_{j,h}^{h-1} (\lambda)) $. 
By definition if $d=D(P_{i,h} )(P_{j,h}^{h-1} (\lambda))$ then $\varrho = i -dh+1$. 
Furthermore, we have the 
\begin{equation}\label{eq:degreesA}
    d_1+d=d_3+1.
\end{equation}
We also notice that $\pint{P_{j,h}^{h-1} (\lambda)}{\alpha_t} =     \pint{\mathbf{v}_{j,h}(\lambda)}{\alpha_t}  $  for all $1\leq t \leq i$. 
It follows that $\varrho = \varrho_{i,h} (  \mathbf{v}_{j,h}(\lambda) )$.

On the other hand, let $\varkappa = \varkappa_{j,h}(\lambda) $.
Then, by definition of $\mathbf{v}_{j,h}(\lambda)$ we have 
\begin{equation}\label{eq: relation P con v negrita}
    \mathbf{v}_{j,h}(\lambda) = P^{h-1}_{j,h}(\lambda) -\sum_{u=1}^{h-1+\varkappa}  \alpha_{i+1+u,i+1+u+(h-2)}.
\end{equation}
It follows that 
\begin{equation}\label{eq:degreesB}
    d_2=d_1 + \varkappa +h -1  \qquad \mbox{and} \qquad d_4=d_3 + \varkappa +h -1.
\end{equation}

We prove the following.

\begin{claim}\label{claim complicado}
For all  $\varrho < a \leq i-h+1$ with $a\equiv i+1 \mod h$ we have
\begin{equation}
   \bfM_{P_{j,h}^{h-1}(\lambda) - \alpha_{a,i}}^{\succ \alpha_{i,j}} - q^{\varkappa+h-1} \bfM_{\mathbf{v}_{j,h}(\lambda)-\alpha_{a,i}}^{\succ \alpha_{i,j}}  = q  \left( \bfM_{P_{j,h}^{h-1}(\lambda)-\alpha_{a-h,i}}^{\succ \alpha_{i,j}} - q^{\varkappa+h-1} \bfM_{\mathbf{v}_{j,h}(\lambda)- \alpha_{a-h,i}}^{\succ \alpha_{i,j}} \right).
\end{equation}   
\end{claim}

\begin{proof}
    Let $\mu= P_{j,h}^{h-1}(\lambda) - \alpha_{a,i}$ and $\mu' = \mathbf{v}_{j,h}(\lambda)-\alpha_{a,i}$. 
By combining \Cref{lem: 0s de PPPPPPP}, the hypothesis on $\lambda$ in the lemma, the inequality $\varrho \leq a-h$ and the definition of $\varkappa$ we obtain 
\begin{equation}
    R_t(\mu) = \left\{\begin{array}{rl}
        1, & \text{if } t=a;\\
        0, & \text{if } a+1\leq t < i;\\
        1, & \text{if } t=i;\\
        0, & \text{if } i+1\leq t\leq j+\varkappa, \text{ and } t\neq j+1;\\
        -1, &\text{if } t = j+1\leq j+\varkappa.
    \end{array}\right.
    \qquad  R_t(\mu') = \left\{\begin{array}{rl}
        1, & \text{if } t=a;\\
        0, & \text{if } a+1\leq t < i;\\
        1, & \text{if } t=i;\\
        -1, & \text{if } t= i+1;\\
        0, & \text{if } i+2\leq t\leq j+\varkappa.
    \end{array}\right.
\end{equation}

    Let $Y=\Phi^{\succ\alpha_{i,j}} \cup \{ \alpha_{a-h,a-1} \}$ and $k\geq 0$ defined by the equality $i-h+1-a=kh$.  
    We notice that both $\mu$ and $\mu'$ satisfies the hypothesis of \Cref{lem: fourth string lemma} with $a$ and $k$ as above. Therefore,  using the notation in that lemma, we obtain 
    \begin{equation} \label{eq: alternativa 1}
        \bfM_{\mu}^{Y} = q^{\mathfrak{R}_k}\bfM_{\mu- \mu_k}^{Y_{a,k}}  \qquad \mbox{and} \qquad   \bfM_{\mu'}^{Y} = q^{\mathfrak{R}_k}\bfM_{\mu'- \mu_k}^{Y_{a,k}}.
    \end{equation}
    We stress that both the weight $\mu_k$ and the integer $\mathfrak{R_k}$ depends heavily on some of the components of $\mu$ and $\mu'$. 
    However, in our setting the components of $\mu$ and $\mu'$ involved in the computation of   $\mu_k$ and $\mathfrak{R}_k$ coincide. 
    More precisely, we have 
    \begin{equation}
        \mu_0 = \sum_{t=0}^{h-2} \alpha_{i-t,j-t}, \qquad \mu_{s} = \mu_{s-1} + \sum_{t=0}^{(s+1)h-2}    \alpha_{i-t,j-t}    \qquad \mbox{and} \qquad \mathfrak{R}_{k} =  \frac{k+1}{2}(i-a+h-1) .
    \end{equation}
In particular, we have
\begin{equation}
    \pint{\mu-\mu_k}{\alpha_{i-s}}= \pint{\mu'-\mu_k}{\alpha_{i-s}} = \left\{ \begin{array}{rl}
        -(k+2) , & \mbox{if } s=0;  \\
         0 ,& \mbox{if } 0<s\leq k. 
    \end{array}
    \right.
\end{equation}
    
Then, using  \eqref{eq: alternativa 1}  and \Cref{lem: weyl group over M elements} for the element $s_{i-k}\cdots s_{i-1}s_i $, we obtain 

        \begin{equation} \label{eq: alternativa 2}
        \bfM_{\mu}^{Y} = (-1)^{k+1}q^{\mathfrak{R}_k}\bfM_{\mu- \mu_k  +\gamma_k}^{Y_{a,k}}  \qquad \mbox{and} \qquad   \bfM_{\mu'}^{Y} = (-1)^{k+1}q^{\mathfrak{R}_k}\bfM_{\mu'- \mu_k +\gamma_k}^{Y_{a,k}}, 
    \end{equation}
where $\gamma_{k} =\displaystyle \sum_{s=0}^k (k+1-s) \alpha_{i-s}$. 

To simplify notation we set $ \Upsilon_k \coloneqq \mu- \mu_k  +\gamma_k$ and $\Upsilon_k'\coloneqq\mu'- \mu_k  +\gamma_k$. 
We have
\begin{equation} \label{eq: upsilon }
    \pint{\Upsilon_k}{\alpha_t} = \left\{  
    \begin{array}{rl}
      0,    & \mbox{if } i-k \leq t \leq i; \\
      -(k+2),    & \mbox{if } t=i+1; \\
      0,  & \mbox{if }  i+2\leq  t \leq j+\varkappa \mbox{ and } t\neq j+1;\\
      k+2, & \mbox{if }  t=j+1 \leq j+\varkappa.
    \end{array} \right.
\end{equation}
and 
\begin{equation} \label{eq: upsilon prime}
  \pint{\Upsilon_k'}{\alpha_t} = \left\{  
    \begin{array}{rl}
      0,    & \mbox{if } i-k \leq t \leq i; \\
      -(k+1),    & \mbox{if } t=i+1; \\
      0,  & \mbox{if }  i+2\leq  t \leq j+\varkappa \mbox{ and } t\neq j+1;\\
      k+1, & \mbox{if }  t=j+1 \leq  j+\varkappa.
    \end{array} \right.
\end{equation}

On the other hand, we observe that  since $b_k=i-1$ we have $Y_{a,k}=\Phi^{\succ \alpha_{a-h,a-1}} \setminus A_k$, where $A_k$ is the trapezoid at height $h$ with top-left corner $\alpha_{i-(h+k)+1,i+1}$ and right-bottom corner $\alpha_{i,j}$. 
We emphasize that $h' \coloneq \operatorname{ht} (\alpha_{i-(h+k)+1,i+1}) = h+k+1$. 
So that $h'-h=k+1 $. 

Therefore we are in position to apply \Cref{coro: push pyramid to the right}. 
We obtain
\begin{equation}
    \bfM_{\Upsilon_k}^{Y_{a,k}} =q^{k+2} \bfM_{\Upsilon_k-(k+2)\alpha_{i+2,i+2+(h-2)}}^{\Phi^{\succ \alpha_{a-h,a-1}} \setminus A_{k}(1)} \qquad \mbox{and} \qquad  \bfM_{\Upsilon_k'}^{Y_{a,k}} =q^{k+1} \bfM_{\Upsilon_k'-(k+1)\alpha_{i+2,i+2+(h-2)}}^{\Phi^{\succ \alpha_{a-h,a-1}} \setminus A_{k}(1)},
\end{equation}
where $A_k(1)$ is the trapezoid $A_k$ shifted by one unit to the right. 

We can repeat this process. 
Let us be more precise. 
If we set $\Upsilon_k(1)= \Upsilon_k-(k+2)\alpha_{i+2,i+2+(h-2)}$ and $\Upsilon_k'(1)= \Upsilon_k'-(k+2)\alpha_{i+2,i+2+(h-2)}$ then the conditions in \eqref{eq: upsilon } and \eqref{eq: upsilon prime} are ``shifted by one unit to the right''. 
Concretely, we have
\begin{equation} \label{eq: upsilon A}
    \pint{\Upsilon_k(1)}{\alpha_t} = \left\{  
    \begin{array}{rl}
      0,    & \mbox{if } i+1-k \leq t \leq i+1; \\
      -(k+2),    & \mbox{if } t=i+2; \\
      0,  & \mbox{if }  i+3\leq  t \leq j +\varkappa \mbox{ and } t\neq j+2;\\
      k+2, & \mbox{if }  t=j+2 \leq j+\varkappa.
    \end{array} \right.
\end{equation}
and 
\begin{equation} \label{eq: upsilon prime A}
    \pint{\Upsilon_k'(1)}{\alpha_t} = \left\{  
    \begin{array}{rl}
      0,    & \mbox{if } i+1-k \leq t \leq i+1; \\
      -(k+1),    & \mbox{if } t=i+2; \\
      0,  & \mbox{if }  i+3\leq  t \leq j +\varkappa \mbox{ and } t\neq j+2;\\
      k+1, & \mbox{if }  t=j+2 \leq j+\varkappa.
    \end{array} \right.
\end{equation}
So that we can apply \Cref{coro: push pyramid to the right} again. 
We get
\begin{equation}
\begin{array}{l}
     \bfM_{\Upsilon_k(1)}^{\Phi^{\succ \alpha_{a-h,a-1}} \setminus A_{k}(1)} = q^{k+2} \bfM_{\Upsilon_k(1)-(k+2)\alpha_{i+3,i+3+(h-2)}}^{\Phi^{\succ \alpha_{a-h,a-1}} \setminus A_{k}(2)},   \\[10pt]
  \bfM_{\Upsilon_k'(1)}^{\Phi^{\succ \alpha_{a-h,a-1}} \setminus A_{k}(1)} =q^{k+1} \bfM_{\Upsilon_k'(1)-(k+1)\alpha_{i+3,i+3+(h-2)}}^{\Phi^{\succ \alpha_{a-h,a-1}} \setminus A_{k}(2)},
\end{array}
\end{equation}
where $A_k(2)$ is the trapezoid $A_k$ shifted by two units to the right. 

By continuing this way, we eventually obtain 
\begin{equation}
    \begin{array}{l}
         \bfM_{\mu}^Y =(-1)^{k+1}q^{\mathfrak{R}_k}q^{(k+2)(h-1+\varkappa)} \bfM_{\Upsilon_k-\zeta_k}^{\Phi^{ \succ \alpha_{a-h,a-1}  } \setminus A_k(h-1+\varkappa) }, \\[10pt]
         \bfM_{\mu'}^Y =(-1)^{k+1}q^{\mathfrak{R}_k} q^{(k+1)(h-1+\varkappa)}\bfM_{\Upsilon_k'-\zeta_k'}^{\Phi^{ \succ \alpha_{a-h,a-1}  } \setminus A_k(h-1+\varkappa) },
    \end{array}
\end{equation}
where $A_k(h-1+\varkappa)$ is the trapezoid $A_k$ shifted by $h-1+\varkappa$ units to the right,  
\begin{equation}
    \zeta_k=  (k+2) \sum_{u=1}^{h-1+\varkappa}  \alpha_{i+1+u,i+1+u+(h-2)}  \qquad \mbox{and} \qquad   \zeta_k'=  (k+1) \sum_{u=1}^{h-1+\varkappa}  \alpha_{i+1+u,i+1+u+(h-2)} .
\end{equation}
Equation  \eqref{eq: relation P con v negrita} yields
\begin{equation}
\begin{array}{rl}
   ( \Upsilon_k-\zeta_k) -(\Upsilon_k'-\zeta_k')   & \displaystyle = \mu -\mu' - \sum_{u=1}^{h-1+\varkappa}  \alpha_{i+1+u,i+1+u+(h-2)} \\[10pt]
     &\displaystyle =  P_{j,h}^{h-1}(\lambda) -\mathbf{v}_{j,h}(\lambda) - \sum_{u=1}^{h-1+\varkappa}  \alpha_{i+1+u,i+1+u+(h-2)} \\[10pt]
     & =0.
\end{array}
\end{equation}
Thus, $\bfM_{\mu}^Y= q^{h-1+\varkappa} \bfM_{\mu'}^Y$.
Finally, by the  definition of $Y=\Phi^{\succ \alpha_{i,j}} \cup \{  \alpha_{a-h,a-1} \} $ and the equality $\alpha_{a-h,i}=\alpha_{a-h,a-1}+\alpha_{a,i}  $, we get
\begin{equation}
       \bfM_{P_{j,h}^{h-1}(\lambda) - \alpha_{a,i}}^{\succ \alpha_{i,j}} - q\bfM_{P_{j,h}^{h-1}(\lambda)-\alpha_{a-h,i}}^{\succ \alpha_{i,j}} =
       q^{h-1 + \varkappa } \left( \bfM_{\mathbf{v}_{j,h}(\lambda)-\alpha_{a,i}}^{\succ \alpha_{i,j}}  -q \bfM_{\mathbf{v}_{j,h}(\lambda)- \alpha_{a-h,i}}^{\succ \alpha_{i,j}} \right),
\end{equation}
which is equivalent to our claim.
\end{proof}

We recall that
\begin{equation}
    \varrho = \varrho_{i,h}(P_{j,h}^{h-1} (\lambda)) = \varrho_{i,h} (  \mathbf{v}_{j,h}(\lambda) ) = i -dh+1.
\end{equation}
Therefore, a repeated application of \Cref{claim complicado} starting with $a=i-h+1$ yields
\begin{equation} \label{eq: consequence claim}
\begin{array}{l}
  \qquad\quad\quad \bfM_{P_{j,h}^{h-1}(\lambda) - \alpha_{i-h+1,i}}^{\succ \alpha_{i,j}} - q^{\varkappa+h-1} \bfM_{\mathbf{v}_{j,h}(\lambda)-\alpha_{i-h+1,i}}^{\succ \alpha_{i,j}}  
     \\ = q^{d-1}  \left( \bfM_{P_{j,h}^{h-1}(\lambda)-\alpha_{i-dh+1,i}}^{\succ \alpha_{i,j}} - q^{\varkappa+h-1} \bfM_{\mathbf{v}_{j,h}(\lambda)- \alpha_{i-dh+1,i}}^{\succ \alpha_{i,j}} \right) \\
     = q^{d-1}  \left( \bfM_{P_{j,h}^{h}(\lambda)}^{\succ \alpha_{i,j}} - q^{\varkappa+h-1} \bfM_{P_{i,h}\mathbf{v}_{j,h}(\lambda)}^{\succ \alpha_{i,j}} \right).
\end{array}
\end{equation}  
Thus, \eqref{eq: ultimo P existe} follows by multiplying \eqref{eq: consequence claim} by $q^{d_1}$ and using the identities in \eqref{eq:degreesA} and \eqref{eq:degreesB}.
\end{proof}

\subsection{The second inverse decomposition}

We now turn to the proof of the \emph{Second Inverse Decomposition (SID)}, the central result of this section and the key to prove the positivity of the pre-canonical bases in the next section.

Before stating the result,  let us explain why the SID is needed in the present situation.
If we apply the First Inverse Decomposition in \Cref{prop: second version} to a weight $\lambda$, we can find that some of the weights indexing the $\bfM$-elements in the decomposition may fail to be dominant (disregarding the subtracted roots).
More precisely, the problematic terms are $P_{j,h}^{h-1}(\lambda)$ and $v_{j,h}^{h-1}(\lambda)$.
The SID addresses this issue by replacing these non-dominant weights with dominant ones.
This ensures that all weights involved remain dominant, which will allows us to use an inductive argument  in the proof of \Cref{Conj A}.

\begin{proposition}\label{prop: second version refinado}
Let $\lambda\in X^{+}$, $\alpha_{i,j}\in \Phi^{\geq 2}$ and $h=j-i+1$.  
Define $\beta_m=\alpha_{i-m,j-m}$ and 
\begin{equation}
    k=k_{j,h}(\lambda) \coloneq \min\{ 0 \leq s \leq h-2 \mid \pint{\lambda}{\alpha_{j-s}}>0\} .
\end{equation}

Then, we have
\begin{equation}\label{eq: SID ecuacion combi}
 \bfM_{\lambda}^{\succeq \alpha_{i,j}} = \left\{ \begin{array}{ll}
 \displaystyle    \bfM_{\lambda}^{\succ \alpha_{i,j}}
     - q^{d}\,\bfM_{\mathbf{P}_{j,h}(\lambda)}^{\succ \alpha_{i,j}}
     - \sum_{m=1}^{k}   q^{d_m} X_m , 
       & \mbox{if } k \mbox{ exists;}  \\
 \displaystyle   \bfM_{\lambda}^{\succ \alpha_{i,j}}
     - q^{d}\,\bfM_{\mathbf{P}_{j,h}(\lambda)}^{\succ \alpha_{i,j}}
     - \sum_{m=1}^{h-2}   q^{d_m} X_m
     - q^{e_1} \left(
        \bfM_{\mathbf{v}_{j,h}(\lambda)}^{\succ \alpha_{i,j}}
          - q^{e_2}\,  \bfM_{\mathbf{P}_{i,h}\mathbf{v}_{j,h}(\lambda)}^{\succ \alpha_{i,j}}
       \right) ,    & \mbox{otherwise;}
  \end{array}   \right.  
\end{equation}
where 
$d = D(\mathbf{P}_{j,h})(\lambda)$, 
$d_m = D(v_{j,h}^m)(\lambda)$, 
$e_1 = D(\mathbf{v}_{j,h})(\lambda)$, $ e_2 = D(\mathbf{P}_{i,h})(\mathbf{v}_{j,h}(\lambda))$ and 
\begin{equation} \label{eq: defi X in SID}
  X_m  =  \bfM_{v^{m}_{j,h}(\lambda)}^{\succ \alpha_{i,j}}
         -   q\,\bfM_{v^{m}_{j,h}(\lambda)-\beta_m}^{\succ \alpha_{i,j}}.   
\end{equation}
\end{proposition}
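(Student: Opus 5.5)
The plan is to derive the SID from the First Inverse Decomposition (\Cref{prop: second version}) and then replace its non-dominant weights by dominant ones, using \Cref{lem: bfP}, \Cref{lem: completar Q en caso h-1} and \Cref{lem: ultimo P}. The argument splits according to whether $k=k_{j,h}(\lambda)$ exists.

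\emph{Case where $k$ exists.} Here $\pint{\lambda}{\alpha_t}=0$ for $j-k+1\le t\le j$ and $\pint{\lambda}{\alpha_{j-k}}>0$. Apply \Cref{prop: second version} with this $k$ (note $k\le h-2<h$). The sum of $v^r$-terms is already $\sum_{m=1}^k q^{d_m}X_m$. It remains to treat the term $q^{D(P^k_{j,h})(\lambda)+1}\bfM^{\succ\alpha_{i,j}}_{P^k_{j,h}(\lambda)-\beta_k}$.
\begin{itemize}
\item If $k=0$, this is $q\bfM_{\lambda-\alpha_{i,j}}^{\succ\alpha_{i,j}}$.
\item If $k\ge1$, since $\pint{P^k_{j,h}(\lambda)}{\alpha_{j-k}}\ge 1$, the next operator $P_{j-k,h}$ acts by subtracting exactly $\beta_k$. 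Hence $P^k_{j,h}(\lambda)-\beta_k=P^{k+1}_{j,h}(\lambda)$, and the exponent adds one unit of degree.
\end{itemize}
Iterating, the weight $P^{x}_{j,h}(\lambda)$ stays in $X^+_j(L)$ until the first $x$ at which a $-1$ appears. The resulting term should be rewritten as $q^d\bfM_{\mathbf P_{j,h}(\lambda)}$, by an argument parallel to \Cref{lem: first blackP}: each non-dominant intermediate step is absorbed by \Cref{lem: free P} or \Cref{lem: left solving}, and the term vanishes exactly when $\mathbf P_{j,h}(\lambda)$ is undefined. I expect the bookkeeping to show that the first $x$ with $P^x_{j,h}(\lambda)\in X^+_j(L)$ is reached with the correct degree $d=D(\mathbf P_{j,h})(\lambda)$.

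\emph{Case where $k$ does not exist.} Then $\pint{\lambda}{\alpha_t}=0$ for $j-h+2\le t\le j$. Apply \Cref{prop: second version} with $k=h-1$ and keep the terms $X_m$ for $m\le h-2$. Two terms remain:
\begin{itemize}
\item the $m=h-1$ term $q^{D(v^{h-1})}\bigl(\bfM_{v^{h-1}}-q\bfM_{v^{h-1}-\beta_{h-1}}\bigr)$, whose bracket is $\bfM^{Y}_{v^{h-1}_{j,h}(\lambda)}$ with $Y=\Phi^{\succ\alpha_{i,j}}\cup\{\alpha_{i-h+1,i}\}$;
\item the term $q^{d_1}\bfM_{P^{h-1}_{j,h}(\lambda)-\alpha_{i-h+1,i}}$.
\end{itemize}
By \Cref{lem: completar Q en caso h-1} the first becomes $q^{e_1-1}\bigl(\bfM_{\mathbf v}-q\bfM_{\mathbf v-\alpha_{i-h+1,i}}\bigr)$, or $0$. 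Its $\bfM_{\mathbf v}$-part gives $-q^{e_1}\bfM_{\mathbf v_{j,h}(\lambda)}$. The leftover combination
\[ q^{d_1}\bfM_{P^{h-1}-\alpha_{i-h+1,i}}-q^{d_2}\bfM_{\mathbf v-\alpha_{i-h+1,i}} \]
is exactly the left side of \Cref{lem: ultimo P}. Applying its four cases yields $q^{d_3}\bfM_{P^h_{j,h}(\lambda)}-q^{d_4}\bfM_{P_{i,h}\mathbf v_{j,h}(\lambda)}$ (with the appropriate vanishing cases). Then:
\begin{itemize}
\item If $\pint{\lambda}{\alpha_i}>0$, the hypothesis of \Cref{lem: ultimo P} on $\alpha_i$ fails. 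In that situation $P^{h-1}-\alpha_{i-h+1,i}=P^h$ directly, and the same holds with $\mathbf v$ in place of $P^{h-1}$, so the identity is immediate.
\item Finally, convert $P^h_{j,h}(\lambda)$ into $\mathbf P_{j,h}(\lambda)$ via \Cref{lem: bfP} when $\pint{P^h}{\alpha_i}=-1$ (trivially when it is dominant). Convert $P_{i,h}\mathbf v_{j,h}(\lambda)$ into $\mathbf P_{i,h}\mathbf v_{j,h}(\lambda)$ by \Cref{lem: bfP} applied at the shifted root $\beta_{h-1}$, after checking that $\mathbf v_{j,h}(\lambda)$ is dominant.
\end{itemize}

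\emph{Main obstacle.} I expect the hardest part to be matching exponents: $d=D(\mathbf P_{j,h})(\lambda)$ and $e_2=D(\mathbf P_{i,h})(\mathbf v)$, using \eqref{eq:degreesA} and \eqref{eq:degreesB}. A second difficulty is verifying the hypotheses of \Cref{lem: bfP} for $\mathbf v_{j,h}(\lambda)$ at height $h$ with root $\alpha_{i-h+1,i}$ (in particular that the first $P$-step creates the $-1$ in the right slot). I would handle this by noting that $\mathbf v$ and $P^{h-1}$ agree on coordinates $\le i$, so all $\varrho$'s coincide.
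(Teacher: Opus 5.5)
Your Case~II follows the paper's route: the First Inverse Decomposition at $h-1$, then \Cref{lem: completar Q en caso h-1}, \Cref{lem: ultimo P} and \Cref{lem: bfP}. Your Case~I, however, rests on a false step. You claim that $\pint{P^k_{j,h}(\lambda)}{\alpha_{j-k}}\ge 1$ forces $P_{j-k,h}$ to subtract exactly $\beta_k=\alpha_{i-k,j-k}$. But $\varrho_{j-k,h}$ is decided by the coordinates at $i-k,\,i-k-h,\dots$ (the left end of $\beta_k$), not at $j-k$. For a counterexample, take $n=14$, $h=3$, $(i,j)=(8,10)$ and $\lambda=\varpi_4+\varpi_5+\varpi_9$. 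Then $k=1$ and $P^1_{10,3}(\lambda)=\lambda-\alpha_{5,10}$, with $\pint{P^1_{10,3}(\lambda)}{\alpha_7}=0$. Hence $P^2_{10,3}(\lambda)=P^1_{10,3}(\lambda)-\alpha_{4,9}\neq P^1_{10,3}(\lambda)-\alpha_{7,9}$. Your fallback ``iterating'' paragraph does not repair this. The picture there is inverted: for $1\le x\le k$ the weight $P^x_{j,h}(\lambda)$ has a $-1$ at $j-x+1$, so it is never in $X^+_j(L)$. Moreover, \Cref{lem: first blackP} concerns steps $u\ge h$, which never occur here since $k+1\le h-1$.

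The missing idea is the actual role of $\pint{\lambda}{\alpha_{j-k}}>0$. The $(k+1)$-st step cancels the $-1$ at $j-k+1$ and leaves $\pint{P^{k+1}_{j,h}(\lambda)}{\alpha_{j-k}}=\pint{\lambda}{\alpha_{j-k}}-1\ge 0$. So whenever $P^{k+1}_{j,h}(\lambda)$ is defined, it already lies in $X^+_j(L)$, and $\mathbf P_{j,h}(\lambda)=P^{k+1}_{j,h}(\lambda)$: exactly one extra step, no iteration. The discrepancy between $P^k_{j,h}(\lambda)-\beta_k$ and $P^{k+1}_{j,h}(\lambda)$ is then resolved at the left end:
\begin{enumerate}
\item If $\pint{P^k_{j,h}(\lambda)}{\alpha_{i-k}}\ge 1$, the two weights coincide and the step has degree $1$.
\item If this coordinate is $0$, \Cref{lem: ceros por Ps} (applied with $k+1$) gives vanishing on $\bigcup_m[i-k-mh,i-mh]$. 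So $P^k_{j,h}(\lambda)-\beta_k$ has a single $-1$, at $i-k$, and \Cref{lem: free P} with $p=p'=i-k$ yields $\bfM^{\succ\alpha_{i,j}}_{P^k_{j,h}(\lambda)-\beta_k}=q^{D(P_{j-k,h})(P^k_{j,h}(\lambda))-1}\bfM^{\succ\alpha_{i,j}}_{P^{k+1}_{j,h}(\lambda)}$.
\item If $P^{k+1}_{j,h}(\lambda)$ is undefined, the same zero pattern holds, using \Cref{lem: desigualdad de varrhos} to get $\varrho_k=1$ when $k\ge 1$, and \Cref{lem: free P} returns $0$.
\end{enumerate}

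On Case~II, your extra subcase $\pint{\lambda}{\alpha_i}>0$ is a real point the paper passes over. \Cref{lem: ultimo P} assumes vanishing at $\alpha_i$, while Case~II only gives vanishing on $[i+1,j]$. Your conclusion there is correct, but not for the reason you suggest; it is the same right-end/left-end confusion as in Case~I. The right reason is that $\pint{\lambda}{\alpha_i}\ge1$ forces $\varrho_1=i$. Inductively, $\varrho_r=i-r+1$ for all $r\le h$, since each step raises the coordinate just left of the previous left endpoint. Hence $P^h_{j,h}(\lambda)=P^{h-1}_{j,h}(\lambda)-\beta_{h-1}$, and likewise $P_{i,h}\mathbf v_{j,h}(\lambda)=\mathbf v_{j,h}(\lambda)-\beta_{h-1}$.

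Two smaller corrections:
\begin{enumerate}
\item \Cref{lem: completar Q en caso h-1} produces $q^{e_1}$, not $q^{e_1-1}$, in front of $\bfM^Y_{\mathbf v_{j,h}(\lambda)}$.
\item \Cref{lem: bfP} must be used with the same root $\alpha_{i,j}$, since the superscript stays $\succ\alpha_{i,j}$. Use it with $\mathbf v_{j,h}(\lambda)$ in the role of $P^{h-1}_{j,h}(\lambda)$, which is legitimate because the two agree on coordinates $\le i$; do not apply it ``at the shifted root''.
\end{enumerate}
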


\begin{remark}\rm\label{remark operator not defined refinada}  
      We recall our convention that if some $\mathbf{P}$-operator, $v^m$-operator or $\mathbf{v}$-operator is not defined, then the corresponding $\bfM$-element is set equal to zero. 
\end{remark}

\begin{proof}
We divide the proof into two cases according to whether $k$ exists.

\textbf{Case I. } The integer $k$ exists. 

Applying \Cref{prop: second version} to $\lambda$, $\alpha_{i,j}$ and $k$, we obtain:
\begin{equation}\label{eq: demo first refined version 1}
    \!  \bfM_{\lambda}^{\succeq \alpha_{i,j}}\! \!  = \! \bfM_{\lambda}^{\succ \alpha_{i,j}} \!  - q^{D(P^{k}_{j,h})(\lambda)+1} \bfM_{P^{k}_{j,h}(\lambda) - \beta_{k}}^{\succ \alpha_{i,j}}\! \!  -  \displaystyle\sum_{m=1}^{k} q^{D(v^{m}_{j,h})(\lambda)} \! \left( \bfM_{v^{m}_{j,h}(\lambda)}^{\succ \alpha_{i,j}}\!  -\!  q\bfM_{v^{m}_{j,h}(\lambda)-\beta_m}^{\succ \alpha_{i,j}}\right).
\end{equation}

Comparing this with the target equality  in \eqref{eq: SID ecuacion combi}, it is enough to show the following identity:
\begin{equation}\label{eq: demo first refined version 2}
    \bfM_{P^{k}_{j,h}(\lambda) - \beta_{k}}^{\succ \alpha_{i,j}}  =  \begin{cases}
        q^{D(P_{j-k,h})(P^{k}_{j,h}(\lambda))-1}\bfM_{\mathbf{P}_{j,h}(\lambda)}^{\succ \alpha_{i,j}},& \text{if }\mathbf{P}_{j,h}(\lambda) \text{ is defined};\\
        0,&\text{otherwise}.
    \end{cases}
\end{equation}

We proceed by analyzing these two subcases.

\medskip
\noindent
\textbf{Case IA.} The weight $\mathbf{P}_{j,h}(\lambda)$ is defined.

We first notice that 
\begin{equation}\label{eq: Pk+1 = P negrita}
    \pint{P^{k+1}_{j,h}(\lambda)}{\alpha_{j-k}} = \pint{\lambda}{\alpha_{j-k}} - \sum_{m=j-k}^{j}\pint{\alpha_{\varrho_{m},m}}{\alpha_{j-k}} = \pint{\lambda}{\alpha_{j-k}} - 1 \geq 0,
\end{equation}
where $\varrho_{m} = \varrho_{m,h}(P^{j-m}_{j,h}(\lambda))$. The final equality follows from \Cref{remark producto interno} and the definition of the integer $k = k_{j,h}(\lambda)$. 
From \eqref{eq: Pk+1 = P negrita}, we conclude that $\mathbf{P}_{j,h}(\lambda) = P^{k+1}_{j,h}(\lambda)$.

If $\pint{\lambda}{\alpha_{i-k}} \geq 1$, then by the definition of the $P$-operator, $P^{k+1}_{j,h}(\lambda)= P^{k}_{j,h}(\lambda) - \beta_{k}$.
The result follows via substituting this directly into \eqref{eq: demo first refined version 2} and noticing that $D(P_{j-k,h})(P^{k}_{j,h}(\lambda)) = 1$.

We now prove the equality under the assumption $\pint{\lambda}{\alpha_{i-k}} = 0$. 
By applying \Cref{lem: ceros por Ps} to $\lambda$, $\alpha_{i,j}$, and $k+1$, we get
\begin{equation}\label{eq: demo first refined version 3}
    \pint{P^{k}_{j,h}(\lambda)}{\alpha_{t}} = 0 \quad \mbox{for all}\; t\in T=\bigcup_{m=0}^{d} [i-k-mh,i-mh],
\end{equation}
 where $d$ is the unique integer satisfying $\varrho_{j-k,h}(P^{k}_{j,h}(\lambda)) = i-k-(d+1)h$.
From this, it follows that
\begin{equation} \label{eq: demo first refined version 4}
    \pint{P^{k}_{j,h}(\lambda) - \beta_{k}}{\alpha_{t}}  = \begin{cases}
            0, & \mbox{if } t \in T \setminus \{i-k\};\\
            -1, & \mbox{if } t=i-k.
    \end{cases}
\end{equation}
We can now apply \Cref{lem: free P} to $K=\Phi^{\succ \alpha_{i,j}}$, $\alpha_{i,j}$, $p=p'=i-k$, the set $T$, and $P^{k}_{j,h}(\lambda)-\beta_{k}$.
We obtain
\begin{equation}\label{eq: demo first refined version 5}
    \bfM_{P^{k}_{j,h}(\lambda) - \beta_{k}}^{\succ \alpha_{i,j}} = q^{D(P_{i-k-1,h})(P^{k}_{j,h}(\lambda) - \beta_{k})}\bfM_{P_{i-k-1,h}(P^{k}_{j,h}(\lambda)-\beta_{k})}^{\succ \alpha_{i,j}}
    = q^{\D{P,j-k,h}(P^{k}_{j,h}(\lambda))-1}\bfM_{P^{k+1}_{j,h}(\lambda)}^{\succ \alpha_{i,j}},
\end{equation}
where the second equality holds because
\begin{equation}\label{eq: demo first refined version 6}
\begin{array}{lcl}
    P_{i-k-1,h}(P^{k}_{j,h}(\lambda)-\beta_{k}) &=& P_{j-k,h}(P^{k}_{j,h}(\lambda)) = P^{k+1}_{j,h}(\lambda) \\[10pt]
    D(P_{i-k-1,h})(P^{k}_{j,h}(\lambda) - \beta_{k})+1 &=& \D{P,j-k,h}(P^{k}_{j,h}(\lambda)).
\end{array}
\end{equation}
This establishes the first case of \eqref{eq: demo first refined version 2}.

\medskip
\noindent
\textbf{Case IB.  } The weight $\mathbf{P}_{j,h}(\lambda)$ is not defined.

We must show that $\bfM_{P^{k}_{j,h}(\lambda) - \beta_{k}}^{\succ \alpha_{i,j}} = 0$. 

First, if $P^{k}_{j,h}(\lambda)$ is not defined, then by convention  $\bfM_{P^{k}_{j,h}(\lambda) - \beta_{k}}$  vanishes and we are done.  

Second, assume $P^{k}_{j,h}(\lambda)$ is defined. 
Since $\mathbf{P}_{j,h}(\lambda)$ is not defined, the definition of $k$ forces  
that $P^{k+1}_{j,h}(\lambda)$ is not defined.
Applying \Cref{lem: ceros por Ps} to $\lambda$, $\alpha_{i,j}$, and $k$ gives $\pint{P^{k-1}_{j,h}(\lambda)}{\alpha_{t}} = 0$ for all $t\in T$, where
\begin{equation}
    T=\bigcup_{m=0}^{d}[i-k+1-mh,i-mh],
\end{equation}
and $d$ is the unique integer satisfying $\varrho_{k} = \varrho_{j-k+1,h}(P^{k-1}_{j,h}(\lambda)) = i-k+1-(d+1)h$.
Since $P^{k+1}_{j,h}(\lambda)$ is not defined, \Cref{lem: desigualdad de varrhos} implies $\varrho_{k} = 1$. In other words, $P^{k}_{j,h}(\lambda) = P^{k-1}_{j,h}(\lambda) -\alpha_{1,j-k+1}$.
By \Cref{remark producto interno}, we conclude that $\pint{P^{k}_{j,h}(\lambda)}{\alpha_{t}}=0$ for all $t\in T$.

Furthermore, the condition that $P^{k+1}_{j,h}(\lambda)$ is not defined also implies, by the definition of the $P$-operator, that $\pint{P^{k}_{j,h}(\lambda)}{\alpha_{t}} =0$ for all $t\in \{i-k-mh\mid 0\leq m\leq d\}$.
Combining these zero-conditions, we conclude that $\pint{P^{k}_{j,h}(\lambda)}{\alpha_{t}} =0$ for all $t\in T'$, where
\begin{equation}
    T'=\bigcup_{m=0}^{d} [i-k-mh,i-mh].
\end{equation}
Using \Cref{remark producto interno} again, we obtain
\begin{equation}
    \pint{P^{k}_{j,h}(\lambda) - \beta_{k}}{\alpha_{t}} = \begin{cases}
            0,&\text{if } t\in T'\setminus \{i-k\};\\
            -1,&\text{if } t = i-k.
    \end{cases}
\end{equation}
As in the first case, we apply \Cref{lem: free P} to $K=\Phi^{\succ \alpha_{i,j}}$, $\alpha_{i,j}$, $p=p'=i-k$, the set $T'$, and $P^{k}_{j,h}(\lambda)-\beta_{k}$, to obtain $\bfM_{P^{k}_{j,h}(\lambda)-\beta_{k}}^{\succ \alpha_{i,j}}=0$, as we wanted to show. 

This completes the proof of \textbf{Case I}.

\bigskip
\textbf{Case II. } The integer $k$ does not exist. 

We notice that the hypothesis in this case implies that   $\pint{\lambda}{\alpha_{j-s}}=0$ for all $0\leq s \leq h-2$.

We only prove the case when all the weights involved in \eqref{eq: SID ecuacion combi} exist, the other cases follow in a similar fashion. 

By \Cref{prop: second version} applied to $ h-1$ we have
\begin{equation} \label{eq:loquedaFID}
     \bfM_{\lambda}^{\succeq \alpha_{i,j}} = \bfM_{\lambda}^{\succ \alpha_{i,j}} - q^{D(P^{h-1}_{j,h})(\lambda)+1} \bfM_{P^{h-1}_{j,h}(\lambda) - \beta_{h-1}}^{\succ \alpha_{i,j}} - \displaystyle\sum_{m=1}^{h-1} q^{d_m} \left( \bfM_{v^{m}_{j,h}(\lambda)}^{\succ \alpha_{i,j}} - q\bfM_{v^{m}_{j,h}(\lambda)-\beta_m}^{\succ \alpha_{i,j}}\right).
\end{equation}
By comparing \eqref{eq: SID ecuacion combi} and \eqref{eq:loquedaFID} it is easy to note that both formulas differ by three terms. 
We work with these terms. 
We have

\begin{equation}\label{eq: pasar de FID to SID}
\begin{array}{l}
      \quad   q^{D(P^{h-1}_{j,h})(\lambda)+1} \bfM_{P^{h-1}_{j,h}(\lambda) - \beta_{h-1}}^{\succ \alpha_{i,j}}
     + q^{d_{h-1}} \left( \bfM_{v^{m}_{j,h}(\lambda)}^{\succ \alpha_{i,j}} - q\bfM_{v^{m}_{j,h}(\lambda)-\beta_{h-1}}^{\succ \alpha_{i,j}} \right)    \\[15pt]
     = q^{D(P^{h-1}_{j,h})(\lambda)+1} \bfM_{P^{h-1}_{j,h}(\lambda) - \beta_{h-1}}^{\succ \alpha_{i,j}}
     + q^{e_1} \left( \bfM_{\mathbf{v}_{j,h}(\lambda)}^{\succ \alpha_{i,j}} - q\bfM_{\mathbf{v}_{j,h}(\lambda)-\beta_{h-1}}^{\succ \alpha_{i,j}} \right)   \\[15pt]
     = q^{D(P^{h}_{j,h})(\lambda)} \bfM_{P^{h}_{j,h}(\lambda)}^{\succ \alpha_{i,j}}
     + q^{e_1} \bfM_{\mathbf{v}_{j,h}(\lambda)}^{\succ \alpha_{i,j}} - q^{D(P_{i,h}\mathbf{v}_{j,h})(\lambda)}\bfM_{P_{i,h}\mathbf{v}_{j,h}(\lambda)}^{\succ \alpha_{i,j}}     \\[15pt]
     = q^{d} \bfM_{\mathbf{P}_{j,h}(\lambda)}^{\succ \alpha_{i,j}}
     + q^{e_1} \bfM_{\mathbf{v}_{j,h}(\lambda)}^{\succ \alpha_{i,j}} - q^{e_1+e_2}\bfM_{\mathbf{P}_{i,h}\mathbf{v}_{j,h}(\lambda)}^{\succ \alpha_{i,j}}  . 
\end{array}
\end{equation}

Let us briefly indicate the origin of these equalities.
The first equality is a consequence of \Cref{lem: completar Q en caso h-1}.
The second equality follows from \Cref{lem: ultimo P}.
The last equality is implied by \Cref{lem: bfP}.
Finally, let us note that the equalities appearing in the exponents come directly from the definition of the operators involved and the additivity in the  corresponding degrees.

Then, \eqref{eq: SID ecuacion combi} is obtained by replacing \eqref{eq: pasar de FID to SID} in \eqref{eq:loquedaFID}.
\end{proof}

\section{Positivity}\label{section: positivity}

In this section we prove \Cref{Teo D} from the introduction, and therefore verify \Cref{Conj A}.
More concretely, we prove that for any $\alpha_{i,j}\in \Phi^{\geq 2}$ and any $\lambda \in X^+$, the element $\bfM_{\lambda }^{\succ \alpha_{i,j}}$ can be written as a positive linear combination of elements of the form $\bfM_{\mu}^{\succeq \alpha_{i-k,j-k}}$ for $k\geq 0$, which is the content of \Cref{Teo D}.
From this, \Cref{Conj A} follows easily.

Let us briefly explain our strategy.
Our starting point is the Second Inverse Decomposition in \Cref{prop: second version refinado}.
We begin by isolating the term $\bfM_{\lambda}^{\succ \alpha_{i,j}}$ in \eqref{eq: SID ecuacion combi}.
We then have three cases to consider:
\begin{itemize}
\item The term $\bfM_{\mathbf{P}_{j,h}(\lambda)}^{\succ \alpha_{i,j}}$.

Since $\mathbf{P}_{j,h}(\lambda)< \lambda$ in the dominance order, we may assume by induction that \Cref{Teo D} holds for this weight.

\item The terms $X_m$ (defined in \eqref{eq: defi X in SID}). 

We refer to these terms as good pairs. 
We study these pairs in \Cref{section: good pairs}.
In particular, in \Cref{prop: caso bueno} we provide a positive combinatorial expansion of a term $X_m$ in terms of $\bfM_{\mu}^{\succeq \alpha_{i-m,j-m}}$-elements. 

\item The difference
\[
\bfM_{\mathbf{v}_{j,h}(\lambda)}^{\succ \alpha_{i,j}}
      - q^{e_2}\,  \bfM_{\mathbf{P}_{i,h}\mathbf{v}_{j,h}(\lambda)}^{\succ \alpha_{i,j}}.
\]

We observe that both weights involved in this difference are dominant. Therefore, we can apply \Cref{prop: second version refinado} again to these weights, and using an inductive argument, we conclude that this difference can also be written positively in terms of $\bfM$-elements.      
\end{itemize}

\subsection{Good Pairs} \label{section: good pairs}
In this section we obtain a positive combinatorial formula for the terms $X_m$ occurring in   \Cref{prop: second version refinado}. 
Our first step is the following technical lemma that will serve as the main tool in the proof of the aforementioned formula.

\begin{lemma}\label{lem: lamda - alpha  a v}
    Let  $\alpha_{i,j}\in \Phi^{\geq 3}$  and $h=j-i+1$. Let $1\leq k \leq h-2$ and $Y=\Phi^{\succ \alpha_{i,j} } \cup \{ \alpha_{i-k,j-k} \}$.
    Let  $B\subsetneq [\alpha_{i-(k-1),j-(k-1)},\alpha_{i,j}]$.
    We define
    \begin{equation} \label{eq: defin a ,b , mu}
    \begin{array}{lll}
       a  & = & \min   \{  j-(k-1)\leq u \leq j  \mid \alpha_{u-(h-1),u} \notin B \}. \\[5pt]
       b  & = & \min \{  a<  u \leq j+1  \mid  \alpha_{u-(h-1), u} \in Y\cup B \}.\\[3pt]
       \mu& = & \displaystyle\sum_{m=a}^{b-1} \alpha_{m-h+1,m}
    \end{array}
    \end{equation}
    
    Let $\lambda \in X^{+}$ be such that  $\pint{\lambda}{\alpha_{t}} =0$ for all $t\in [a,b-1]$.
    Then, we have
    \begin{equation}\label{eq: alpha a v}
        \bfM_{\lambda - \mu}^{Y\cup B} = \left\{\begin{array}{ll}
            q^{D(v^{b-a}_{b-1,h})(\lambda) - (b-a)}\bfM_{v^{b-a}_{b-1,h}(\lambda)}^{Y\cup B}, & \text{if}\; v^{b-a}_{b-1,h}(\lambda) \;\text{is defined}; \\[5pt]
            0, & \text{otherwise.}
        \end{array}\right.
    \end{equation}
\end{lemma}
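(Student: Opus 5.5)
Our plan is to reduce the statement to the machinery already developed for the First Inverse Decomposition, namely \Cref{lem: pre completar v} and the string lemmas. The idea is that $\lambda-\mu$ is exactly $P^{b-a}_{b-1,h}(\lambda)$ "read through" the set $Y\cup B$.

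\textbf{Step 1: identify $\lambda-\mu$ with $P^{b-a}_{b-1,h}(\lambda)$ up to a trivial shift.} Since $\pint{\lambda}{\alpha_t}=0$ for $t\in[a,b-1]$ and $\lambda\in X^+$, each operator $P_{m,h}$ with $a\le m\le b-1$, applied in order from $m=b-1$ down to $m=a$, has integer $\varrho$ at most $m-h+1$. We first check, via \Cref{lem: desigualdad de varrhos} and the definition of the $P$-operator, how this compares with the single roots $\alpha_{m-h+1,m}$ in $\mu$. Whenever $\varrho<m-h+1$, the extra pieces $\alpha_{\varrho,m-h}$ are absorbed using \Cref{lem: free P} applied with $K=(Y\cup B)\setminus\{\alpha_{i-k,j-k}\}$. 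Here we exploit that the roots $\alpha_{u-h+1,u}$ for $a\le u<b$ are, by minimality of $a$ and $b$, precisely the height-$h$ roots missing from $Y\cup B$ in that window. This makes $Y\cup B$ behave like $\Phi^{\succ\alpha_{a-1,b-1+?}}$ locally, i.e.\ $T$-congruent in the sense of \Cref{def: locally greater than alpha} on the relevant $T$. The outcome should be
\[
\bfM_{\lambda-\mu}^{Y\cup B}=q^{D(P^{b-a}_{b-1,h})(\lambda)-(b-a)}\,\bfM_{P^{b-a}_{b-1,h}(\lambda)}^{Y\cup B},
\]
with vanishing if $P^{b-a}_{b-1,h}(\lambda)$ is undefined. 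The $-(b-a)$ accounts for the $b-a$ roots already in $\mu$.

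\textbf{Step 2: apply the $Q$-operators.} The weight $\lambda_{b-a}:=P^{b-a}_{b-1,h}(\lambda)$ has $\pint{\lambda_{b-a}}{\alpha_a}=-1$, zeros on $[a+1,b-1]$, and $\pint{\lambda_{b-a}}{\alpha_{b}}\ge1$, as in the computation \eqref{eq: calculo ph-1}. We then invoke \Cref{lem: pre completar v} with $r=a$ and $x=b-a$, in the version where $\Phi^{\succ\alpha_{i,j}}$ is replaced by $Y\cup B$; equivalently, we use \Cref{lem: free Q1}/\Cref{lem: free Q2} step by step, exactly as in the proof of that lemma. This yields
\[
\bfM_{\lambda_{b-a}}^{Y\cup B}=q^{D(Q^{-(b-a)}_{a,h})(\lambda_{b-a})}\,\bfM_{v^{b-a}_{b-1,h}(\lambda)}^{Y\cup B},
\]
or $0$ if the composite is undefined. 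Since $Q^{-(b-a)}_{a,h}=Q^{b-a}_{b-1,h}$, adding degrees gives the exponent $D(v^{b-a}_{b-1,h})(\lambda)-(b-a)$.

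\textbf{Main obstacle.} The crux is verifying the $T$-congruence hypotheses for $Y\cup B$ relative to a shifted root (roughly $\alpha_{a-h,a-1}$ or $\alpha_{b-h,b-1}$ playing the role of $\alpha_{i,j}$). Because $B$ is an arbitrary proper subset of the interval $[\alpha_{i-k+1,j-k+1},\alpha_{i,j}]$, one has to check that the roots outside the window $[a,b-1]$ are fixed by the relevant $s_t$. We would attempt this via the case analysis \eqref{eq: las que se salen}, treating separately $b=j+1$, where the right-hand $Q$-chains run into $\Phi^{\succ\alpha_{i,j}}$ and \Cref{Claim: free Q2} is needed, and $b\le j$, where \Cref{Claim: free Q} suffices.
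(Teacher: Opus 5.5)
Your two-stage plan is the paper's proof. First you peel the roots of $\mu$ off from $\alpha_{b-h,b-1}$ down to $\alpha_{a-h+1,a}$. At each step you either note $P_{m,h}(\nu)=\nu-\alpha_{m-h+1,m}$ or invoke \Cref{lem: free P}, with the zero conditions supplied by \Cref{lem: ceros por Ps}. This gives $\bfM^{Y\cup B}_{\lambda-\mu}=q^{D(P^{b-a}_{b-1,h})(\lambda)-(b-a)}\bfM^{Y\cup B}_{P^{b-a}_{b-1,h}(\lambda)}$. Then you run the $Q$-chain with \Cref{lem: ceros por Qs} and \Cref{lem: free Q1}. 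Your exponent bookkeeping is right, and so are the pairings of $P^{b-a}_{b-1,h}(\lambda)$: $-1$ at $a$, $0$ on $[a+1,b-1]$, $\geq 1$ at $b$. What you leave open is the congruence, which is where the content lies, and two of your choices there need correcting.

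First, \Cref{lem: free P} has no auxiliary root: it relates $\bfM^K_\nu$ and $\bfM^K_{P(\nu)}$ for the same $K$. With $K=(Y\cup B)\setminus\{\alpha_{i-k,j-k}\}$ you get identities for the wrong superscript, and you would have to apply the lemma separately to $\nu$ and $\nu-\alpha_{i-k,j-k}$ and then recombine. Instead, take $K=Y\cup B$ itself and compare it with $\Phi^{\succ\alpha_{b-h,b-1}}$; this is the root your ``$\alpha_{a-1,b-1+?}$'' should be. The two sets differ only in height-$h$ roots:
\begin{itemize}
\item by minimality of $a$, $Y\cup B$ additionally contains the roots ending at $j-k,\dots,a-1$, which are moved only by $s_t$ with $t\in[i-k-1,a-h]\cup[j-k,a]$;
\item $Y\cup B$ may lack roots ending in $(b,j]$, which are moved only by $s_t$ with $t\geq b-h+1$.
\end{itemize}
The sets $T\subset[1,b-h]$ that occur in the $P$-steps (with $p=p'=b-h-u$, $0\le u<b-a$) avoid all these positions. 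So $Y\cup B\sim_T\Phi^{\succ\alpha_{b-h,b-1}}$, and \Cref{lem: free P} applies directly to $\bfM^{Y\cup B}$.

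Second, in the $Q$-steps the auxiliary root is $\alpha_{a-h,a-1}$, and it already lies in $Y\cup B$: it equals $\alpha_{i-k,j-k}$ if $a=j-k+1$, and lies in $B$ otherwise. You then apply \Cref{lem: free Q1} with $K=(Y\cup B)\setminus\{\alpha_{a-h,a-1}\}$, again compared with $\alpha_{b-h,b-1}$, on sets $T\subset[a,n]\setminus[b,j+1]$. Since $b-a\le k\le h-2$, you always have $a\geq (b-h)+2$, so you are always in the setting of \Cref{lem: free Q1}. Your separate treatment of $b=j+1$ via \Cref{Claim: free Q2} is therefore unnecessary. These two congruences are exactly the claim the paper states, and leaves as a direct check, before running the same two inductions you describe.
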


\begin{proof}
We begin with the following.
\begin{claim}\label{claim: alpha a v}
        Let $S = [a,n] \setminus [b,j+1]$ and $J = [1,b-h] \setminus [i-h+1,a-h+1]$. We have
\begin{enumerate}
        \item \label{item a claim alpha a v} $(Y\cup B)\setminus \{\alpha_{a-h,a-1}\}\sim_{S} \Phi^{\succ \alpha_{b-h,b-1}}$. 
        \item \label{item b claim alpha a v} $(Y\cup B)\sim_{J}\Phi^{\succ \alpha_{b-h,b-1}}$.
\end{enumerate}
    \end{claim}
\begin{proof}
    This follows by a direct computation that involves \eqref{eq: las que se salen}.
    We omit the details.
\end{proof}

Suppose that $v_{b-1,h}^{b-a}(\lambda)$ is defined. 
We claim the following.
\begin{claim}\label{claim: pass tov stop in P}
    For all $1\leq u \leq b-a$ we have
    \begin{equation} \label{eq:  pass to v stop in P}
        \bfM^{Y\cup B}_{\lambda -\mu} = q^{D(P_{b-1,h}^u)(\lambda) -u}\bfM_{P_{b-1,h}^u(\lambda)-\mu_u}^{Y\cup B},
    \end{equation}
    where $\displaystyle \mu_u=\sum_{m=a}^{b-u-1}\alpha_{m-h+1,m}$. 
\end{claim}
\begin{proof}
    We prove the claim by induction on $u$.
    Suppose that $u=1$. 
    If $\pint{\lambda}{\alpha_{b-h}}>0$ then $P_{b-1,h}^1(\lambda)= \lambda -\alpha_{b-h,b-1}$. 
    Therefore,   $P_{b-1,h}^1(\lambda) - \mu_1=\lambda -\mu $ and $D(P_{b-1,h}^{1})(\lambda)=1$.
    Thus, the claim follows. 

    By the previous paragraph we can assume that $\pint{\lambda }{\alpha_{b-h}}=0$. 
    Let $T_{1} = \bigcup_{m=0}^{d} \{ b-(m+1)h \} $, where $d$ is the unique integer defined by $\varrho_{1} = \varrho_{b-h-1,h}(\lambda -\mu) = b-h- (d+1)h$.

  Given that $T_{1} \subset J = [1,b-h] \setminus [i-h+1,a-h+1]$, \Cref{claim: alpha a v} leads to $(Y\cup B)\sim_{T_{1}}\Phi^{\succ \alpha_{b-h,b-1}}$.
  Thus we can apply \Cref{lem: free P} to $K=Y\cup B$ and $p=p'=b-h$ to conclude
\begin{equation}
    \bfM_{\lambda -\mu}^{Y\cup B} = q^{D(P_{b-h-1,h})(\lambda - \mu)} \bfM_{P_{b-h-1,h}(\lambda - \mu)}^{Y\cup B}  .
\end{equation}

Therefore, the claim follows by  
  $D(P_{b-h-1,h})(\lambda - \mu)= D(P_{b-1,h})(\lambda ) -1$ and $P_{b-h-1,h}(\lambda - \mu ) = P_{b-1,h}(\lambda) -\mu_1$. 

  \medskip
  We now suppose that  \Cref{claim: pass tov stop in P} holds for some $1\leq u < b-a$. If $\pint{P_{b-1,h}^u(\lambda)}{\alpha_{b-h-u}}>0$ then we have 
  \begin{equation}
     P^{u+1}_{b-1,h}(\lambda) =P_{b-u,h}(P_{b-1,h}^u(\lambda))= P_{b-1,h}^u(\lambda) -\alpha_{{b-h-u,b-u-1}} . 
  \end{equation}

It follows that $ P^{u+1}_{b-1,h}(\lambda)-\mu_{u+1} =P_{b-1,h}^u(\lambda)  - \mu_{u}  $ and $D(P^{u+1}_{b-1,h})(\lambda) = D(P^{u}_{b-1,h})(\lambda) +1$. 
Thus, by replacing these expressions in \cref{eq:  pass to v stop in P} we obtain the claim for $u+1$ as we wanted.

 By the previous paragraph we can assume that $\pint{P_{b-1,h}^u(\lambda)}{\alpha_{b-h-u}}=0$. Let  
 \begin{equation}
     T_{u} = \bigcup_{m=0}^{d} [b-u-(m+1)h,b-(m+1)h],
 \end{equation}
 where $d$ is the unique integer defined by $\varrho_{u} = \varrho_{b-u-1,h}(  P_{b-1,h}^u(\lambda)- \mu_u) = b-u-h - (d+1)h$.

A direct computation and \Cref{lem: ceros por Ps} yield $\pint{P_{b-1,h}^u(\lambda)-\mu_{u}}{\alpha_t}=0$ for all $t\in  T_r\setminus \{b-h-u\}$ and $\pint{P_{b-1,h}^u(\lambda)-\mu_{u}}{\alpha_{b-h-u}}=-1$. 
Given that $T_{u} \subset J = [1,b-h] \setminus [i-h+1,a-h+1]$, \Cref{claim: alpha a v} leads to $(Y\cup B)\sim_{T_{1}}\Phi^{\succ \alpha_{b-h,b-1}}$.
  Thus we can apply \Cref{lem: free P} to $K=Y\cup B$ and $p=p'=b-h-u$ to conclude
\begin{equation}\label{eq: LALALA1}
    \bfM_{ P_{b-1,h}^u(\lambda)-\mu_{u}   }^{Y\cup B} = 
    q^{D(P_{b-h-u-1,h})(  P_{b-1,h}^u(\lambda)-\mu_{u}     )} 
    \bfM_{ P_{b-h-u-1,h}(P_{b-1,h}^u(\lambda)-\mu_{u}  ) }^{Y\cup B}  .
\end{equation}
We have 
\begin{equation}\label{eq: LALALA2}
    P_{b-h-u-1,h}(P_{b-1,h}^u(\lambda)-\mu_{u}  ) = P_{b-u-1}(P_{b-1,h}^u(\lambda))-\mu_{u+1} =P^{u+1}_{b-1,h}(\lambda) -\mu_{u+1}
\end{equation}
 and 
 \begin{equation}\label{eq: LALALA3}
    D(P_{b-h-u-1,h})(  P_{b-1,h}^u(\lambda)-\mu_{u}     ) =  
      D(P_{b-h-u-1,h})(  P_{b-1,h}^u(\lambda)    ) -1.
 \end{equation}
By replacing \cref{eq: LALALA2} and \cref{eq: LALALA3} into \cref{eq: LALALA1}, and using our inductive hypothesis we obtain
\cref{eq:  pass to v stop in P} for $u+1$. 
This finishes the proof of our claim. 
\end{proof}

By applying \Cref{claim: pass tov stop in P} to $u=b-a$ we get
\begin{equation} \label{eq: LALALA4}
    \bfM_{\lambda - \mu}^{Y\cup B} = q^{D(P_{b-1,h}^{b-a})(\lambda)- (b-a)} \bfM_{P_{b-1,h}^{b-a} (\lambda) }^{Y\cup B}.
\end{equation}

Using a similar argument (replacing the role of \Cref{lem: ceros por Ps} and  \Cref{lem: free P}  by \Cref{lem: ceros por Qs} and \Cref{lem: free Q1}, respectively) we obtain 
\begin{equation} \label{eq: LALALA5}
     \bfM_{P_{b-1,h}^{b-a} (\lambda) }^{Y\cup B} = q^{D(Q^{a-b}_{b-1,h})(P_{b-1,h}^{b-a} (\lambda) ) }   \bfM_{ Q^{a-b}_{b-1,h}(P_{b-1,h}^{b-a}(\lambda))  }^{Y\cup B} = q^{D(Q^{a-b}_{b-1,h})(P_{b-1,h}^{b-a} (\lambda) ) } \bfM_{v^{b-a}_{b-1,h}}(\lambda). 
\end{equation}
 Finally, by combining \cref{eq: LALALA4} and \cref{eq: LALALA5} we obtain \cref{eq: alpha a v} when $v^{b-a}_{b-1,h}(\lambda)$ is defined. 

 If $v^{b-a}_{b-1,h}(\lambda)$ is not defined we repeat the same argument. 
 In this case at least one of the $P$ or $Q$ operators involved in $v^{b-a}_{b-1,h}(\lambda)$ must be undefined. 
 At this point, the corresponding application of either 
 \Cref{lem: free P} or \Cref{lem: free Q1} will give $\bfM_{\lambda -\mu }^{Y\cup B}=0$, as required. 
\end{proof}

\begin{example}\rm  \label{ex: como funciona}
    We illustrate how \Cref{lem: lamda - alpha  a v} can be used to obtain a positive expansion of the $X_m$-terms in \eqref{eq: defi X in SID} as linear combinations of $\bfM^{\succeq \alpha_{i-m,j-m}}_{\nu}$-elements.

    Let $n=12$ and $\lambda =[0,0,1,2,1,2,0,0,0,5,2,5]_{\varpi}$. 
    We work with the positive root $\alpha_{4,8}$. 
    Notice that in this case $h=5$. 
    Applying \Cref{prop: second version refinado}, we obtain
    \begin{equation} \label{eq: expansion ejemplo}
        \bfM_{\lambda}^{\succ \alpha_{4,8}} = \bfM_{\lambda}^{\succeq \alpha_{4,8}}+  q^3\bfM_{\mathbf{P}_{8,3}(\lambda)}^{\succ \alpha_{4,8}} + q^2X_{1} + q^4X_{2} + q^6X_{3},
    \end{equation}
  
    where 
    \begin{equation}
    X_{m} = \bfM_{v^{m}_{8,5}(\lambda)}^{\succ \alpha_{4,8}} - \bfM_{v^{m}_{8,5}(\lambda) - \alpha_{4-m,8-m}}^{\succ \alpha_{4,8}}.
    \end{equation}
    We observe that, since the integer $k = k_{8,5}(\lambda) = 3$ exists, the term $$\bfM_{\mathbf{v}_{8,5}(\lambda)}^{\succ \alpha_{4,8}} - q^{D(\mathbf{P},5,4)(\mathbf{v}_{8,5}(\lambda))}\bfM_{\mathbf{P}_{5,5}\mathbf{v}_{8,5}(\lambda)}^{\succ \alpha_{4,8}}$$ does not occur in the sum \eqref{eq: expansion ejemplo}, as stated in \Cref{prop: second version refinado}.

    Let $\mu = v_{8,5}^3(\lambda) = [1, 0, 1, 1, 1, 2, 0, 0, 0, 4, 2, 5]_{\varpi}$.  
    We will focus on showing that the difference
\begin{equation}
  X_3=  \bfM_{\mu}^{\succ \alpha_{4,8}} -q\bfM_{\mu - \alpha_{1,5}}^{\succ \alpha_{4,8}}
\end{equation}
    can be written as a positive linear combination of $\bfM_{\nu}^{\succeq \alpha_{1,5}}$-elements.
    The verification of the positive expansion of the terms $X_1$ and $X_2$ is dealt with similarity, so that for the sake of brevity it is  left to the reader.

    Let $Y=\Phi^{\succ \alpha_{4,8}} \cup \{\alpha_{1,5}\}$. 
    For $J\subset \{6,7,8\}$, we set
    \[
        A_{J}=Y \cup \{\alpha_{c-h+1,c} \mid c \in J\}.
    \]
    We stress that $A_{\{6,7,8\}} = \Phi^{\succeq \alpha_{1,5}}$. 
    By the definition of $\bfM$-elements, we obtain
    \begin{equation}
       X_3= \bfM_{\mu}^{Y} = \bfM_\mu^{A_{\{6\}}} +q\bfM_{\mu-\alpha_{2,6}}^{A_{\{7\}}}
       +q^{2}\bfM_{\mu-\alpha_{2,6}-\alpha_{3,7}}^{A_{\{8\}}}
        +q^{3}\bfM_{\mu-\alpha_{2,6}-\alpha_{3,7}-\alpha_{4,8}}^{Y}.
    \end{equation}
    
    Applying \Cref{lem: lamda - alpha  a v}, we obtain
    \begin{equation}\label{eq: Exa A1}
        X_3= \bfM_{\mu}^{Y} = \bfM_\mu^{A_{\{6\}}} 
       +q^{4}\bfM_{v_{7,5}^2(\mu)}^{A_{\{8\}}}
        +q^{6}\bfM_{v_{8,5}^3 (\mu)}^{Y},
    \end{equation}
    since $v_{6,5}^1(\mu)$ is undefined, $D(v_{7,5}^2)(\mu)=4$, and $D(v_{8,5}^3 )(\mu)=6$. 

    Repeating the same computation with the weight $v_{8,5}^3(\mu)$ playing the role of $\mu$, we obtain
    \begin{equation}\label{eq: Exa A2}
        \bfM_{v_{8,5}^3(\mu)}^{Y} = \bfM_{v_{8,5}^3(\mu)}^{A_{\{6\}}} +q^4
        \bfM_{v_{7,5}^2v_{8,5}^3(\mu)}^{A_{\{8\}}}, 
    \end{equation}
    since $v_{8,5}^3v_{8,5}^3(\mu)$ and $v_{6,5}^1v_{8,5}^3(\mu)$ are undefined, and $D(v_{7,5}^2)(v_{8,5}^3(\mu))=4$.

    Therefore, combining \eqref{eq: Exa A1} and \eqref{eq: Exa A2}, we obtain
    \begin{equation}\label{eq: Exa A3}
        X_3=\bfM_{\mu}^{Y} = \bfM_\mu^{A_{\{6\}}} 
       +q^{4}\bfM_{v_{7,5}^2(\mu)}^{A_{\{8\}}}
        +q^{6}\bfM_{v_{8,5}^3(\mu)}^{A_{\{6\}}} +q^{10}
        \bfM_{v_{7,5}^2v_{8,5}^3(\mu)}^{A_{\{8\}}}.
    \end{equation}

The relevance of this expansion is that we have written a term indexed by the set $Y$ as a positive linear combination of terms indexed by strictly larger sets.

We now expand the terms on the right-hand side of \eqref{eq: Exa A3}.
Using the definition of $\bfM$-elements and \Cref{lem: lamda - alpha  a v}, we obtain
\begin{equation} \label{eq: Exa A4}
    \begin{array}{rl}
      \bfM_{\mu}^{A_{\{6\}}}  = &     \bfM_{\mu}^{A_{\{6,7\}}} +q     \bfM_{\mu-\alpha_{2,7}}^{A_{\{6,8\}}} +q^2 \bfM_{\mu-\alpha_{2,7}-\alpha_{3,8}}^{A_{\{6\}}}   \\[5pt]
       =  &   
       \bfM_{\mu}^{A_{\{6,7\}}} +q^2     \bfM_{v_{7,5}^1(\mu)}^{A_{\{6,8\}}} +q^4 \bfM_{v_{8,5}^2(\mu )}^{A_{\{6\}}}, 
    \end{array}
\end{equation}
since $D(v_{7,5}^1)(\mu)=2$ and $D(v_{8,5}^2)(\mu)=4$. 

Similarly, we obtain
\begin{equation}\label{eq: Exa A5}
    \bfM_{v_{8,5}^2(\mu )}^{A_{\{6\}}} =  \bfM_{v_{8,5}^2(\mu )}^{A_{\{6,7\}}}  + q^2  \bfM_{v_{7,5}^1v_{8,5}^2(\mu )}^{A_{\{6,8\}}},
\end{equation}
since $v_{8,5}^2v_{8,5}^2(\mu )$ is undefined and $D(v_{7,5}^1 )(v_{8,5}^2(\mu ))=2$. 

Combining \eqref{eq: Exa A4} and \eqref{eq: Exa A5}, we obtain
\begin{equation} \label{eq: Exa 6}
   \bfM_{\mu}^{A_{\{6\}}}=   \bfM_{\mu}^{A_{\{6,7\}}} +q^2     \bfM_{v_{7,5}^1(\mu)}^{A_{\{6,8\}}} +q^4 \bfM_{v_{8,5}^2(\mu )}^{A_{\{6,7\}}}  + q^6  \bfM_{v_{7,5}^1v_{8,5}^2(\mu )}^{A_{\{6,8\}}}. 
\end{equation}

Thus, we have obtained an expansion of $\bfM_{\mu}^{A_{\{6\}}}$, which is the first term on the right-hand side of \eqref{eq: Exa A3}.
As before, the relevance of this decomposition is that we have written a term indexed by the set $A_{\{6\}}$ as a positive linear combination of terms indexed by strictly larger sets.

Using similar computations, we can rewrite the other terms in \eqref{eq: Exa A3} as follows:

\begin{equation} \label{eq: Exa 7}
    \begin{array}{rl}
      \bfM_{v_{7,5}^2(\mu)}^{A_{\{8\}}} =       &  \bfM_{v_{7,5}^2(\mu)}^{A_{\{6,8\}}} \\[10pt]
     \bfM_{v_{8,5}^3(\mu)}^{A_{\{6\}}}  =     &  \bfM_{v_{8,5}^3(\mu)}^{A_{\{6,7\}}} +q^2 \bfM_{v_{7,5}^1v_{8,5}^3(\mu)}^{A_{\{6,8\}}}    \\[10pt]
     \bfM_{v_{7,5}^2v_{8,5}^3(\mu)}^{A_{\{8\}}} =   & 
      \bfM_{v_{7,5}^2v_{8,5}^3(\mu)}^{A_{\{6,8\}}}.
    \end{array}
\end{equation}

Thus, by substituting \eqref{eq: Exa 6} and \eqref{eq: Exa 7} into \eqref{eq: Exa A3}, we obtain an expansion of $X_3$ as a linear combination of $\bfM$-elements indexed by sets $A_{J}$ with $|J|=2$. 

Using \Cref{lem: lamda - alpha  a v} once again, we see that all the elements involved in this expansion satisfy
\[
\bfM_{\nu}^{A_{J}}=\bfM_{\nu}^{A_{\{6,7,8\}}} =\bfM_{\nu}^{\succeq \alpha_{1,5}},
\]
except for the term indexed by $\mu$, in which case we have
\begin{equation}
    \bfM_{\mu}^{A_{\{6,7\}}} = \bfM_{\mu}^{\succeq \alpha_{1,5}} + q^2\bfM_{v_{8,5}^1(\mu )}^{\succeq \alpha_{1,5}} .
\end{equation}

Putting all of the above equalities together, we obtain
\begin{equation}
   X_3= \bfM_{\mu}^Y = \sum_{\nu\in S} q^{d(\nu)} \bfM_{\nu}^{\succeq \alpha_{1,5}}, 
\end{equation}
where the weights in the set $S$ and the exponents $d(\nu)$ are given in \Cref{tab: good pair}. 
\end{example}

\begin{table}[h] 
    \centering
    \renewcommand{\arraystretch}{1.5}
    \begin{tabular}{|c|c|c|}\hline
       $\nu$  & $\nu_\varpi$   &  $d(\nu)$  \\ \hline 
        $\mu$ &  $[1, 0, 1, 1, 1, 2, 0, 0, 0, 4, 2, 5]_{\varpi}$  &   $0 $  \\ \hline
        $v^{3}_{8,5}(\mu)$ &  $[2, 0, 1, 0, 1, 2, 0, 0, 0, 3, 2, 5]_{\varpi}$  &   $6$  \\ \hline
        $v^1_{7,5}v^{3}_{8,5}(\mu)$ &  $[2, 1, 0, 0, 1, 2, 0, 0, 0, 3, 1, 6]_{\varpi}$  &   $8 $  \\ \hline
        $v^2_{7,5}v^{3}_{8,5}(\mu)$ &  $[3, 0, 0, 0, 1, 2, 0, 0, 0, 2, 2, 6]_{\varpi}$  &   $10 $  \\ \hline
        $v^2_{8,5}(\mu)$ &  $[1, 1, 1, 0, 1, 2, 0, 0, 0, 4, 1, 5]_{\varpi}$  &   $4 $  \\ \hline
        $v^{1}_{7,5}v^2_{8,5}(\mu)$ &  $[1, 2, 0, 0, 1, 2, 0, 0, 0, 4, 0, 6]_{\varpi}$  &   $6 $  \\ \hline
        $v^{1}_{8,5}(\mu)$ &  $[1, 0, 2, 0, 1, 2, 0, 0, 0, 4, 2, 4]_{\varpi}$  &   $2$  \\ \hline
        $v^{1}_{7,5}(\mu)$ &  $[1, 1, 0, 1, 1, 2, 0, 0, 0, 4, 1, 6]_{\varpi}$  &   $2 $  \\ \hline
        $v^{2}_{7,5}(\mu)$ &  $[2, 0, 0, 1, 1, 2, 0, 0, 0, 3, 2, 6]_{\varpi}$  &   $4 $  \\ \hline
    \end{tabular}
    \renewcommand{\arraystretch}{1}
    \caption{Weights involved in the expansion of $X_3$}
    \label{tab: good pair}
\end{table}

In order to generalize the above example we need to introduce a bit more of notation. 

\begin{definition}\rm
Let $\alpha_{i,j}\in \Phi^{\geq 3}$ and $h=j-i+1$. 
Let $1\leq m \leq h-2$.  
Consider a rectangle divided into $m$ unit boxes arranged in a single row, where each box can be colored either gray or white.
We enumerate the boxes from left to right from $j-m+1$ to $j$.

A \emph{filling sequence of rectangles} is a finite sequence  
$ F = (f_0, f_1, \ldots, f_\ell) $
of colored rectangles satisfying that $f_\ell$  is the rectangle with all boxes gray and this is the only occurrence of this coloring in the whole sequence.

We say that a filling sequence $F$ is \textbf{admissible} if the following two conditions hold:
\begin{enumerate}[(I)]
    \item 
    If a box is gray in $f_i$ at position $s$, then it remains gray in $f_{i+1}$.  
    (Equivalently, once a box becomes gray, it never turns white again.)
    \item
    If  $f_i \neq f_{i+1}$ then there is exactly one gray box in $f_{i+1}$ that is white in $f_{i}$.  
    Furthermore, the new gray box must be placed at the  leftmost white block in $f_i$. 
\end{enumerate}

Given a coloring $B$ we denote by $\mathcal{F}_m(\alpha_{i,j},B)$ the set of  all admissible filling sequences starting at $B$. 
Note that $\mathcal{F}_m(\alpha_{i,j},B)$ is infinite for every $m \geq 1$, unless $B$ is the coloring with all boxes gray. 
\end{definition}

\begin{example}\rm \label{exa: filling sequence}
We represent admissible filling sequences $F=(f_0,f_1,\ldots , f_\ell)$ as a rectangle with $\ell+1$ rows and $m$ columns, with $f_0$ on top and $f_m$ at the bottom. 
Let us fix $\alpha_{10,19}$ and $m=8$. 
In \Cref{fig: 6-admissible filling sequences} we illustrate an admissible filling $F=(f_0,f_1, \ldots , f_{11})$ for the parameters above starting at the colored rectangle $f_0$ with only one gray box indexed by $17$. 
 
\begin{figure}[H]
\centering
\def\Cols{8}
\def\Rows{12}
% Cambia este valor: gray!70 = más oscuro, gray!40 = más claro
\def\CellFill{gray!50}
\begin{tikzpicture}[x=6mm,y=6mm]
  % --- Celdas negras ---
  \foreach \x/\y in {%
    5/11,
    5/10, 3/10,
    5/9, 3/9,
    5/8, 3/8, 0/8,
    5/7, 3/7, 0/7, 2/7,
    5/6, 3/6, 0/6, 2/6,
    5/5, 3/5, 0/5, 2/5,1/5,
    5/4, 3/4, 0/4, 2/4,1/4,4/4,
    5/3, 3/3, 0/3, 2/3,1/3,4/3,7/3,
    5/2, 3/2, 0/2, 2/2,1/2,4/2,7/2,
    5/1, 3/1, 0/1, 2/1,1/1,4/1,7/1,
    5/0, 3/0, 0/0, 2/0,1/0,4/0,7/0,6/0,
  }{
   \fill[\CellFill] (\x,\y) rectangle ++(1,1);
  }
\foreach \x/\n in {0/12,1/13,2/14,3/15,4/16,5/17,6/18,7/19}
{
    \node at (0.5+\x,13) {\n};
}

  % --- Rejilla ---
  \draw[step=1, thin] (0,0) grid (\Cols,\Rows);
\end{tikzpicture}
\label{fig: 6-admissible filling sequences}
\caption{An admissible filling sequence}
\end{figure}
\end{example}

We now associate to each admissible filling sequence a composition of operators. 

\begin{definition}\rm \label{def: good pair weights}
Let $\alpha_{i,j}\in \Phi^{\geq 3}$ and  $h=j-i+1$. 
Let $F=(f_0,f_1,\ldots , f_{\ell})$ be an admissible filling sequence.
We define the operator $X_F=X_{\ell-1}\cdots X_{1}$, where each operator $X_s$ is given as follows.
Let $L(f_s)$ be the leftmost white rectangle in $f_s$, $l_s$ be the number of squares in $L(f_s)$, $a_s$  be the label of the leftmost square in $L(f_s)$ and $b_s$  be the label of the rightmost square in $L(f_s)$. 
Then, we define
\begin{itemize}
    \item If $f_{s-1}=f_{s}$ then $X_s = v_{b_s,h}^{l_s}$.
     \item If $f_{s-1}\neq f_{s}$, $ a_{s-1}\neq b_{s-1}$ and $a_s=a_{s-1}$ then $X_s= v_{b_s,h}^{l_s}$.
    \item Otherwise, we set $X_{s}$ equal to the identity operator.  
\end{itemize}
\end{definition}

\begin{example}\rm
    With the same notation as in \Cref{exa: filling sequence} we have
    \begin{equation}
        X_F =    v_{18,10}^1 \circ  v_{18,10}^1 \circ v_{18,10}^1 \circ I\circ I\circ v_{13,10}^1\circ  v_{13,10}^1 \circ I\circ v_{14,10}^3 \circ v_{14,10}^3 . 
    \end{equation}
\end{example}

\begin{lemma}\rm\label{lem: pre - positivo caso bueno}
    Let  $\alpha_{i,j}\in \Phi^{\geq 3}$ and $h=j-i+1$. 
    Let $1\leq k \leq h-2$ and $Y = \Phi^{\succ \alpha_{i,j}}\cup \{ \alpha_{i-k,j-k}  \}$.
    Let $\lambda \in X^+$ be such that $\pint{\lambda }{\alpha_{t}}=0$ for all $t\in [j-k+1,j]$. 
    Then, for all  $B\subset [\alpha_{i-k+1,j-k+1},\alpha_{i,j}]\coloneqq I$, we have
    \begin{equation} \label{eq: pareja buena es positiva}
        \bfM_{\lambda}^{Y\cup B}= \sum_{F\in \mathcal{F}_k(\alpha_{i,j},B)} q^{D(X_F)(\lambda)} \bfM_{X_F(\lambda)}^{\succeq \alpha_{i-k,j-k}}. 
    \end{equation}
    Here, we identify a set $B\subset I$ with the coloring given by the rule that a square in position $j-k+1 \leq s \leq j$ is gray if and only if $\alpha_{s-h+1,s} \in B$.  
    Furthermore, the $\bfM$-elements in \eqref{eq: pareja buena es positiva} indexed by weights for which they are not defined are omitted. 
\end{lemma}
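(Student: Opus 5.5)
We argue by a double induction: the outer induction is downward on $|B|$, i.e. on the number of white boxes of the coloring $B$; the inner one is on the dominance order of $\lambda$. The base case is $B=I$, the all-gray coloring. In that case $Y\cup B=\Phi^{\succ\alpha_{i,j}}\cup[\alpha_{i-k,j-k},\alpha_{i,j}]=\Phi^{\succeq\alpha_{i-k,j-k}}$. The only admissible filling sequence starting at $B$ is $F=(f_0)$, with $X_F$ the identity and degree $0$, so \eqref{eq: pareja buena es positiva} is a tautology.

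For $B\neq I$, let $a$ be the label of the leftmost white box and let $b$ be as in \eqref{eq: defin a ,b , mu}. Then $[a,b-1]$ is the leftmost white block $L(B)$. We expand along the roots of this block using the definition of $\bfM$-elements, exactly as in \Cref{ex: como funciona}. Write $\gamma_m=\alpha_{m-h+1,m}$ for $a\le m\le b-1$. Adding $\gamma_a$ and splitting off the subsets gives
\begin{equation}
\bfM_{\lambda}^{Y\cup B}=\sum_{c=a}^{b-1} q^{c-a}\,\bfM_{\lambda-\gamma_a-\cdots-\gamma_{c-1}}^{Y\cup B\cup\{\gamma_c\}}+q^{b-a}\,\bfM_{\lambda-\mu}^{Y\cup B}.
\end{equation}
This is obtained by repeatedly writing $\bfM^{S}_\nu=\bfM^{S\cup\{\gamma\}}_\nu+q\bfM^{S}_{\nu-\gamma}$. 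Two kinds of terms appear.

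The first term ($c=a$) is $\bfM^{Y\cup B'}_\lambda$, where $B'=B\cup\{\gamma_a\}$ has one fewer white box. This corresponds to the step $f_0=B\to f_1=B'$ with $X_1$ the identity. For $c>a$, the weight $\lambda-\gamma_a-\cdots-\gamma_{c-1}$ is of the form $\lambda-\mu$ for the set $B\cup\{\gamma_c\}$ with white block $[a,c-1]$. So \Cref{lem: lamda - alpha  a v} converts it into $q^{D(v^{c-a}_{c-1,h})(\lambda)-(c-a)}\bfM_{v^{c-a}_{c-1,h}(\lambda)}$, or into $0$ if the operator is undefined. The factors $q^{c-a}$ cancel, leaving the degree $D(v^{c-a}_{c-1,h})(\lambda)$. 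Similarly, the last term becomes $q^{D(v^{b-a}_{b-1,h})(\lambda)}\bfM^{Y\cup B}_{v^{b-a}_{b-1,h}(\lambda)}$. This is the repeated step $f_{s-1}=f_s$ with $X_s=v^{l_s}_{b_s,h}$.

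To use the hypotheses we must check them at each stage. We need $\pint{\lambda}{\alpha_t}=0$ on the relevant white block, and the new weights $v^{l}_{b,h}(\lambda)$ must again be dominant with the required zeros in $[j-k+1,j]$ on the white positions. Dominance, I expect, follows from the minimality of the $\vartheta$'s (\Cref{lem: desigualdad de varthetas}, \Cref{lem: thetas para el caso h-1}) together with the zero hypothesis. The last term has the same set $Y\cup B$ but a strictly smaller weight, $v^{b-a}_{b-1,h}(\lambda)<\lambda$, so the inner induction on dominance applies to it. Since there are finitely many dominant weights below $\lambda$, the sum terminates. The terms with enlarged sets are handled by the outer induction.

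Finally, we concatenate the expansions. We must check that the resulting terms are in bijection with the admissible filling sequences of \Cref{def: good pair weights}. Filling the leftmost white box corresponds to a change of coloring, and the new leftmost block is either shorter (when $a_s=a_{s-1}$, so $X_s=v^{l_s}_{b_s,h}$) or further right. Keeping the coloring corresponds to a repetition. Degrees add because of the additivity of $D$. I expect the main obstacle to be this bookkeeping: verifying that the operator attached at each step in \Cref{def: good pair weights}, especially the case distinction involving $a_s=a_{s-1}$ versus the identity, matches exactly what \Cref{lem: lamda - alpha  a v} produces after the split. In addition, the zero hypotheses on the intermediate weights must persist; where I am unsure of this, I would verify it first on the pattern of \Cref{tab: good pair}.
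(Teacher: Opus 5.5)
Your proposal is correct and is essentially the paper's own proof: downward induction on $|B|$, with induction on the dominance order for the term that keeps the set $Y\cup B$; expansion along the leftmost white block $[a,b-1]$; conversion of the shifted weights via \Cref{lem: lamda - alpha  a v}; and matching of the three kinds of terms with the first step of an admissible filling sequence (filling at $c=a$ gives the identity, filling at $c>a$ gives $v^{c-a}_{c-1,h}$, repetition gives $v^{b-a}_{b-1,h}$), which agrees with \Cref{def: good pair weights}. The persistence of hypotheses you were unsure about, which the paper leaves implicit, does hold: in $v^{l}_{b,h}=Q^{l}_{b,h}P^{l}_{b,h}$ the right ends of the $P$-roots cancel the left ends of the $Q$-roots, so $v^{l}_{b,h}(\lambda)$, when defined, is dominant, and its pairings with $\alpha_t$ differ from those of $\lambda$ only for $t\le i$ or $t\ge j+2$, hence they are unchanged on $[j-k+1,j]$.
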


\begin{proof}  
   We proceed by downward induction on the size of the set $B\subset I$. 
   If $B=I$, then $Y\cup B = \Phi^{\succeq \alpha_{i-k,j-k}}$ and $\mathcal{F}_{k}(\alpha_{i,j},B)$ consists of a single filling sequence $\{F\}$ such that $X_F$ is the identity operator. 
   Therefore, there is nothing to prove. 
   
   We now fix $B\subset I$ with $|B|<|I|$ and assume that \cref{eq: pareja buena es positiva} holds for all $B_1\subset I$ with $|B|<|B_1|$ and all $\mu \in X^+$ satisfying $\pint{\mu}{\alpha_t}=0$ for all $t\in [j-k+1,j]$. 
   
   For this $B$, we define $a$ and $b$ as in \eqref{eq: defin a ,b , mu}.  
   We also define $J=[\alpha_{a-h+1,a}, \alpha_{b-h,b-1}]$.
   We note that, under the identification described in the statement, the set $J\subset B$ corresponds to the leftmost white region of $B$. 

   For $a\leq c \leq b-1$, we define
   \[
      A_c= Y\cup B \cup \{\alpha_{c-h+1,c}\}
      \qquad\text{and}\qquad
      J_c= [\alpha_{a-h+1,a}, \alpha_{c-h,c-1}].
   \]
   
   By the definition of the $\bfM$-elements, we have
 \begin{equation}  \label{eq: desc subintervalos}
    \bfM_{\lambda}^{Y\cup B} =   \bfM_{\lambda}^{A_a} +     \sum_{c=a+1}^{b-1} 
    q^{c-a}\bfM_{\lambda - \Sigma_{J_c} }^{A_{c} } +q^{b-a}\bfM_{\lambda-\Sigma_{J}}^{Y\cup B}  .
 \end{equation}
 
 Applying \Cref{lem: lamda - alpha  a v}, we obtain
 \begin{equation}  \label{eq: desc subintervalos dominantes}
    \bfM_{\lambda}^{Y\cup B} = 
    \bfM_{\lambda}^{A_a} +  \sum_{c=a+1}^{b-1} q^{D(v_{c-1,h}^{c-a})(\lambda)}\bfM_{v_{c-1,h}^{c-a}(\lambda)}^{A_{c}}  + 
    q^{D(v_{b-1,h}^{b-a})(\lambda )}\bfM_{v_{b-1,h}^{b-a}(\lambda )}^{Y\cup B}. 
 \end{equation}

Since $|Y\cup B|<|A_c|$ for all $a\leq c \leq b-1$, we may apply our inductive hypothesis to rewrite all terms indexed by the sets $A_c$ using \cref{eq: pareja buena es positiva}. 

Although the last term on the right-hand side of \cref{eq: desc subintervalos dominantes} is not covered by the downward induction on $|B|$, we can still apply \cref{eq: pareja buena es positiva} to it because $v_{b-1,h}^{b-a}(\lambda )< \lambda$, and hence we may invoke induction on the dominance order. 

The result then follows from the definition of admissible filling sequences and the operators associated with them. Namely, if
\[
F =(f_0,f_1, ..,f_{\ell})\in \mathcal{F}_k(\alpha_{i,j}, B),
\]
then, under the identification described in the statement, we must have either $f_1=\emptyset$ or $f_1=A_c$ for some $a\leq c \leq b-1$. 
Therefore, we obtain the following disjoint decomposition:
\begin{equation}
    \mathcal{F}(\alpha_{i,j},B) = \mathcal{F}(\alpha_{i,j},B)_{\emptyset} \cup \bigcup_{c=a}^{b-1} \mathcal{F}(\alpha_{i,j},B)_{A_c},  
\end{equation}
where
\[
\mathcal{F}(\alpha_{i,j},B)_{S}
=
\{ F\in \mathcal{F}(\alpha_{i,j},B) \mid f_1=S  \}.
\]
The claim now follows.
\end{proof}

\begin{proposition}\rm \label{prop: caso bueno}
 Let $\lambda \in X^{+}$, $\alpha_{i,j}\in \Phi^{\geq 3}$ and  $h=j-i+1$. 
    Let $1\leq k < h-1$ and suppose that $ \pint{\lambda}{\alpha_{t}} =0$ for $t\in [j-k+1,j]$. 
    Furthermore, we assume that $\mu_k\coloneqq v^k_{j,h}(\lambda) $ is defined.
    Then, we have
    \begin{equation} \label{sumsum}
        \bfM_{\mu_k}^{\succ \alpha_{i,j}} - q\bfM_{\mu_k - \alpha_{i-k,j-k}}^{\succ \alpha_{i,j}} =  \sum_{F\in \mathcal{F}_k(\alpha_{i,j},\emptyset)} q^{D(X_F)(\mu_k)}\bfM_{X_F(\mu_k)}^{ \succeq \alpha_{i-k,j-k}   }, 
    \end{equation}   
    where $\bfM$-elements indexed by undefined weights are neglected. 
\end{proposition}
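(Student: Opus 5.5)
The plan is to reduce \eqref{sumsum} to \Cref{lem: pre - positivo caso bueno} applied with $B=\emptyset$ and the weight $\mu_k$. Set $Y=\Phi^{\succ\alpha_{i,j}}\cup\{\alpha_{i-k,j-k}\}$. Since $\alpha_{i-k,j-k}\notin\Phi^{\succ\alpha_{i,j}}$, the definition of $\bfM$-elements gives
\begin{equation}
\bfM_{\mu_k}^{Y}=\bfM_{\mu_k}^{\succ \alpha_{i,j}} - q\,\bfM_{\mu_k-\alpha_{i-k,j-k}}^{\succ \alpha_{i,j}},
\end{equation}
so the left-hand side of \eqref{sumsum} equals $\bfM_{\mu_k}^{Y\cup\emptyset}$. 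Once the hypotheses of \Cref{lem: pre - positivo caso bueno} hold for $\mu_k$, that lemma with $B=\emptyset$ yields exactly the right-hand side of \eqref{sumsum}. Here $\emptyset$ is the all-white coloring, $\mathcal{F}_k(\alpha_{i,j},\emptyset)$ is the indexing set, and the exponents $D(X_F)(\mu_k)$ match.

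Two hypotheses must be checked. The first is that $\mu_k\in X^+$. The second is that $\pint{\mu_k}{\alpha_t}=0$ for $t\in[j-k+1,j]$.

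For the vanishing, I would write $\mu_k=Q^{k}_{j,h}P^k_{j,h}(\lambda)$ explicitly. We have $P^k_{j,h}(\lambda)=\lambda-\sum_{r=1}^k\alpha_{\varrho_r,j-r+1}$. With $\lambda_k=P^k_{j,h}(\lambda)$ and $\mu_x=Q^{-x}_{j-k+1,h}(\lambda_k)$, the $Q$ part is $\sum_{x=1}^{k}\alpha_{j-k+1+x,\vartheta_x}$. Using \Cref{remark producto interno} and the hypothesis $\pint{\lambda}{\alpha_t}=0$ on $[j-k+1,j]$, the computation in \Cref{lem: v existe o no existe} shows:
\begin{itemize}
\item $\lambda_k$ has coordinate $-1$ at $j-k+1$;
\item $\lambda_k$ has coordinates $0$ on $[j-k+2,j]$;
\item $\lambda_k$ has coordinate $\pint{\lambda}{\alpha_{j+1}}+1$ at $j+1$.
\end{itemize}
I expect the analogue of \Cref{lem: thetas para el caso h-1} to force $\vartheta_x=j+x$ for $x\le k$, by the same induction using \Cref{lem: desigualdad de varthetas}. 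Then $\mu_k=\lambda_k-\sum_{x=1}^{k}\alpha_{j-k+1+x,j+x}$, and a telescoping pairing with $\alpha_t$ for $t\in[j-k+1,j]$ gives $0$. This is the routine bookkeeping step.

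Dominance of $\mu_k$ is where I expect the main obstacle. On the left, coordinates at $t\le i$ are $L$-dominant by construction of the $P$ operators. \Cref{lem: ceros por Ps} controls the zeros that get depleted, and adding back $+1$ at $\varrho_r-1$ keeps nonnegativity. On the right, since $k<h-1$, the $Q$ roots stop at $j+k<j+h-1$. I would need:
\begin{itemize}
\item the coordinate at $j+1$ to remain $\ge 0$ after subtracting the $\alpha_{j+2,\cdot}$ contributions, which follows from the $+1$ gained there;
\item coordinates beyond $j+k$ to be $R$-dominant by the minimality defining $\vartheta$.
\end{itemize}
If $\vartheta_x=j+x$ fails because $\pint{\lambda}{\alpha_{j+1}}$ is small, I would instead invoke the defining property of $Q$ (the result lies in $X^+_{\cdot,h}(R)$). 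I would then combine it with \Cref{lem: ceros por Qs} to get dominance on the whole range $[j-k+1,n]$.

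If dominance turns out only partial, the fallback is to observe where the proof of \Cref{lem: pre - positivo caso bueno} uses dominance. It only needs $\pint{\cdot}{\alpha_t}=0$ on $[a,b-1]$ to invoke \Cref{lem: lamda - alpha  a v}, and dominance elsewhere for the $P,Q$ operators. I would check that $\mu_k$ and all its images $X_F(\mu_k)$ satisfy these conditions.
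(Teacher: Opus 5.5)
Your reduction is the paper's. The paper's proof identifies the left-hand side with $\bfM^{Y}_{\mu_k}$ for $Y=\Phi^{\succ\alpha_{i,j}}\cup\{\alpha_{i-k,j-k}\}$, applies \Cref{lem: pre - positivo caso bueno} with $B=\emptyset$, and simply asserts that $\mu_k$ satisfies that lemma's hypotheses.

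Your verification of those hypotheses, however, rests on a false claim. For $k<h-1$ one never has $\vartheta_x=j+x$. By definition $\vartheta_x=\vartheta_{j-k+x,h}(\mu_{x-1})$ satisfies $\vartheta_x>j-k+x$ and $\vartheta_x\equiv j-k+x \pmod{h-1}$. Hence $\vartheta_x\geq j+h-k\geq j+2$, and $\vartheta_x=j+x$ would force $(h-1)\mid k$. \Cref{lem: thetas para el caso h-1} is special to $k=h-1$, where the first $Q$ acts at $i+1$ and $i+1+(h-1)=j+1$. In the introduction's example, $Q_{10,3}$ subtracts $\alpha_{11,14}$, not a root ending at $11$. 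So your formula $\mu_k=\lambda_k-\sum_x\alpha_{j-k+1+x,j+x}$ is wrong: those roots have height $k$, which is not a multiple of $h-1$. For the same reason, the claim that the $Q$-roots stop at $j+k$ is also false.

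The conclusions you want are still true, and the correct argument uses exactly the bound $\vartheta_x\geq j+2$.

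\emph{Vanishing.} Since $\varrho_r\le i<j-k+1$ and $\vartheta_x\geq j+2$, only two kinds of endpoint act on the window $[j-k+1,j+1]$: the right endpoints $j-r+1$ of the $P$-roots and the left endpoints $j-k+x+1$ of the $Q$-roots. Each family telescopes. The $P$'s contribute $-1$ at $j-k+1$ and $+1$ at $j+1$, and the $Q$'s contribute the opposite. So $\mu_k$ agrees with $\lambda$ on $[j-k+1,j+1]$. This gives $\pint{\mu_k}{\alpha_t}=0$ for $t\in[j-k+1,j]$ and $\pint{\mu_k}{\alpha_{j+1}}=\pint{\lambda}{\alpha_{j+1}}\geq 0$.

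\emph{Dominance.} Argue step by step from $\lambda\in X^+$.
\begin{itemize}
\item On $t<j-k+1$ only the $P$-steps act. Each one lowers only the coordinate at $\varrho_r$ and raises the one at $\varrho_r-1$. The lowered coordinate stays $\geq 0$ because $\varrho_r\equiv i-r+1 \pmod h$ and the result lies in $X^+_{i-r+1,h}(L)$.
\item On $t\geq j+2$ only the $Q$-steps act. Each one lowers only the coordinate at $\vartheta_x$ and raises the one at $\vartheta_x+1$. The lowered coordinate stays $\geq 0$ because $\vartheta_x\equiv j-k+x \pmod{h-1}$ and the result lies in $X^+_{j-k+x,h}(R)$.
\end{itemize}
Hence $\mu_k\in X^+$, and \Cref{lem: pre - positivo caso bueno} applies. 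Neither \Cref{lem: ceros por Qs} nor your fallback is needed.
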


\begin{proof}
    By the conditions imposed on $\lambda$ we have that $\mu_k$ satisfies the conditions in \Cref{lem: pre - positivo caso bueno}. 
    On the other hand, if $Y=\Phi^{\alpha_{i,j}}\cup \{\alpha_{i-k,j-k}\}$ then we have
    \begin{equation}
        \bfM_{\mu_{k}}^Y  = \bfM_{\mu_k}^{\succ \alpha_{i,j}} - q\bfM_{\mu_k - \alpha_{i-k,j-k}}^{\succ \alpha_{i,j}} . 
    \end{equation}
    Therefore, the result follows by applying \Cref{lem: pre - positivo caso bueno} for $B=\emptyset$. 
\end{proof}

\begin{example}\rm
Figure \ref{fig:rectangles} depicts the nine admissible filling sequences that contribute to the expansion of the element $X_3$ in \Cref{ex: como funciona}, as predicted by \Cref{prop: caso bueno}.
The weight associated with each filling sequence is displayed below the corresponding diagram.

Although there are infinitely many admissible filling sequences, only finitely many contribute to the expansion in \eqref{sumsum}.
\end{example}

\begin{figure}[ht]
\centering

\newcommand{\rectcell}[2]{%
\begin{minipage}{0.28\textwidth}
\centering
#1

\vspace{2mm}
$#2$
\end{minipage}%
}

\rectcell{\grayrect[4]{1/6,1/7,1/8,2/6,2/7,3/6}}{\mu}
\hfill
\rectcell{\grayrect{1/6,1/7,1/8,2/6,2/7,3/6}}{v_{8,5}^{3}(\mu)}
\hfill
\rectcell{\grayrect{1/6,1/7,1/8,2/6,2/8,3/6}}{v_{7,5}^{1}v_{8,5}^{3}(\mu)}

\vspace{4mm}

\rectcell{\grayrect[5]{1/6,1/7,1/8,2/6,2/8,3/8}}
{v_{7,5}^{2}v_{8,5}^{3}(\mu)}
\hfill
\rectcell{\grayrect{1/6,1/7,1/8,2/6,2/7,3/6,4/6}}{v_{8,5}^{2}(\mu)}
\hfill
\rectcell{\grayrect{1/6,1/7,1/8,2/6,2/8,3/6,4/6}}
{v_{7,5}^{1}v_{8,5}^{2}(\mu)}

\vspace{4mm}

\rectcell{\grayrect{1/6,1/7,1/8,2/6,2/7,3/6,3/7,4/6}}{v_{8,5}^{1}(\mu)}
\hfill
\rectcell{\grayrect[4]{1/6,1/7,1/8,2/6,2/8,3/6}}{v_{7,5}^{1}(\mu)}
\hfill
\rectcell{\grayrect[4]{1/6,1/7,1/8,2/6,2/8,3/8}}{v_{7,5}^{2}(\mu)}

\caption{Filling sequences of rectangles and their corresponding weights}
\label{fig:rectangles}
\end{figure}

%%%%%%%%%%%%%%%%%%%%%%%%%%%%%%%%%%%%%%%%%%%%%%%%%%%%%%%%%%%%%%%%%

\subsection{Commutation Rules} \label{section COM RUl}

Having established in the previous section that good pairs expand positively, we now turn our attention to the bad pair appearing in the SID.

To handle this case, we require a collection of commutation rules between the relevant operators. The main purpose of this section is to state these rules.

For the sake of brevity, we omit their proofs. The arguments consist of a lengthy case-by-case analysis that, while technical, offers little additional insight into the underlying ideas. Instead, we present several examples that illustrate the rules and provide intuition for why they hold.

We emphasize that the commutation rules stated below do not apply to arbitrary dominant weights. Rather, they are valid only for a special class of dominant weights satisfying an additional condition.

\begin{definition}\rm
Let $\alpha_{i,j} \in \Phi^{\geq 2}$ and set $h = j-i+1$. We say that a dominant weight $\lambda$ satisfies Condition $\mathbf{Z}$ if $P_{i,h}(\lambda)$ is defined and, setting $\varrho \coloneq \varrho_{i,h}(\lambda)$, we have
$$ \pint{\lambda}{\alpha_t} = 0 \quad \text{for all integers } t \in [\varrho+h, j]. $$
\end{definition}

\begin{lemma}\label{lem: super-conmutation}
Let $\lambda \in X^+$, $\alpha_{i,j}\in \Phi^{\geq 2}$, and  $h=j-i+1$. 
Let $s$ be an integer such that $0\leq s\leq h-2$, and define $j_s\coloneqq j-s$. 
Assume that $\lambda$ satisfies Condition $\mathbf{Z}$ and that the weight
$\mathbf{P}_{j_s,h}\mathbf{P}_{i,h}(\lambda)$ is defined.
Then $\mathbf{P}_{j_s,h}(\lambda)$ is also defined, and
\begin{equation}\label{eq: super-commutation}
    \mathbf{P}_{j_s,h}(\lambda)
    =
    \mathbf{P}_{j_s,h}\mathbf{P}_{i,h}(\lambda).
\end{equation}
\end{lemma}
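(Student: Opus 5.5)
We will compare the two chains of $P$-operators directly. Write $\varrho=\varrho_{i,h}(\lambda)$ and $\nu=\mathbf{P}_{i,h}(\lambda)$. By Condition $\mathbf{Z}$ we have $\pint{\lambda}{\alpha_t}=0$ on $[\varrho+h,j]$. Combined with the maximality of $\varrho$, this forces the first step $P_{i,h}(\lambda)=\lambda-\alpha_{\varrho,i}$ to create a $-1$ at position $i+1$ (since $\pint{\lambda}{\alpha_{i+1}}=0$ when $i+1\le j$). Hence $\mathbf{P}_{i,h}(\lambda)$ is obtained from $\lambda$ by a chain $P_{i-x+1,h}\cdots P_{i,h}$ whose intermediate weights all fail $L$-dominance only through a single $-1$ that moves leftwards. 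Then $\nu=\lambda-\Sigma$, where $\Sigma$ is a sum of roots $\alpha_{\varrho_u,i-u+1}$.

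The first key step is an explicit description of $\nu$ on the window $[1,j]$. We will show that $\nu$ agrees with $\lambda$ everywhere on $[1,j]$ except at the positions $\varrho_u-1$, where it gains $+1$, at the final position $i-x+1$, where it loses $1$, and possibly at the $+1$ created at $j+1$. We expect $\nu\in X^+_{i}(L)$ with $\pint{\nu}{\alpha_t}=\pint{\lambda}{\alpha_t}$ for $t\in[i-x+2,j]$. This is obtained inductively from Lemma \ref{lem: desigualdad de varrhos} and Lemma \ref{lem: 0s de PPPPPPP}, together with Remark \ref{remark producto interno}.

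The second step is to run $\mathbf{P}_{j_s,h}$ on $\lambda$ and on $\nu$ in parallel. Since $j_s\in[i+2,j]$ region and $\pint{\lambda}{\alpha_t}=0$ on $[\varrho+h,j]$, each step of the chain for $\lambda$ searches leftward in steps of $h$ through zeros until it falls below $\varrho+h$. We will show that the integers $\varrho$ chosen at each step for $\lambda$ and for $\nu$ are related as follows. On $\nu$ the chain either stops at the same root, or it uses the extra positive entries $\nu$ gained at $\varrho_u-1$. In the latter case, combining the roots $\alpha_{\varrho_u,i-u+1}$ removed by $\mathbf{P}_{i,h}$ with those removed by $\mathbf{P}_{j_s,h}$ from $\nu$ reproduces exactly the roots removed by $\mathbf{P}_{j_s,h}$ from $\lambda$. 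Concretely, we will prove by induction on the number of steps that $P^{y}_{j_s,h}(\lambda)=P^{y'}_{j_s,h}(\nu)$ as soon as the latter chain reaches dominance. We will also prove that no intermediate weight of the $\lambda$-chain is $L$-dominant at $j_s$ earlier. The case split will be on whether the $\lambda$-chain at step $u$ picks a row $\ge\varrho+h$ or not. Because of the zeros, if it picks a row $\ge\varrho+h$, it must pick exactly the row where $\nu$ has its extra $+1$.

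The main obstacle is this synchronisation step, in particular the bookkeeping of the moving $-1$ in both chains. We also need to show that the $\lambda$-chain is defined whenever the $\nu$-chain is. Existence comes from the hypothesis that $\mathbf{P}_{j_s,h}\mathbf{P}_{i,h}(\lambda)$ is defined, read backwards through the correspondence. We will attempt this first for $s=0$ and then treat general $s$ by the same argument, shifting the residues modulo $h$. We will check the rule against the running example of Figure \ref{fig:introA} before doing the general induction.
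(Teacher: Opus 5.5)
There is a genuine gap. The synchronisation of the two chains is the whole content of the lemma, and you assert it rather than prove it. The concrete facts you plan to build it on are also wrong.

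\textbf{The bookkeeping for $\nu$.} By \Cref{remark producto interno}, subtracting $\alpha_{\varrho,i}$ lowers the pairing with $\alpha_i$ and \emph{raises} the pairing with $\alpha_{i+1}$. So $P_{i,h}(\lambda)$ has $+1$ (not $-1$) at $i+1$. It has no negative entry at all when $\varrho=i-h+1$ and $\pint{\lambda}{\alpha_i}\geq 1$. Write $\nu=\mathbf{P}_{i,h}(\lambda)=\lambda-\sum_{u=1}^{x}\alpha_{\varrho_u,i-u+1}$. Then the pairings of $\nu$ differ from those of $\lambda$ by $+1$ at each $\varrho_u-1$, $-1$ at each $\varrho_u$ (you omit these), $-1$ at $i-x+1$, and $+1$ at $i+1$ (not $j+1$). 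So your expected equality $\pint{\nu}{\alpha_t}=\pint{\lambda}{\alpha_t}$ on $[i-x+2,j]$ fails at $t=i+1$: there $\pint{\nu}{\alpha_{i+1}}=1$ but $\pint{\lambda}{\alpha_{i+1}}=0$. This entry is exactly what drives the comparison. Since $\pint{\nu}{\alpha_t}=0$ for $t\in[i+2,j]$, and every row picked by $P_{m,h}$ with $m\leq j$ is $\leq m-h+1\leq i$, the chain computing $\mathbf{P}_{j_s,h}(\nu)$ stops after exactly $j_s-i\geq 1$ steps (recall $j_s\in[i+1,j]$). It stops at $P_{i+1,h}$, where the moving $-1$ meets this $+1$. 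The chain computing $\mathbf{P}_{j_s,h}(\lambda)$ instead still carries a $-1$ at $i+1$ after $P_{i+1,h}$ and must continue. The two chains therefore have different lengths, and there is no step-by-step identification $P^{y}_{j_s,h}(\lambda)=P^{y'}_{j_s,h}(\nu)$ to induct on.

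\textbf{The proposed mechanism.} The case split you describe does not capture what happens. Under Condition $\mathbf{Z}$, an immediate induction shows that neither chain ever picks a row $\geq\varrho+h$: all candidate positions in $[\varrho+h,m-h+1]$ keep pairing $0$. So one branch of your case split is empty and the other is everything. What really happens is a redistribution of picked rows within residue classes modulo $h$.
\begin{itemize}
\item In the example following the lemma, the $\nu$-chain removes $\alpha_{3,11},\alpha_{2,10}$. The $\lambda$-chain removes $\alpha_{3,11},\alpha_{5,10}$, then the five roots of the $\mathbf{P}_{9,3}$-chain, then $\alpha_{2,4}$. The root $\alpha_{2,10}$ is split as $\alpha_{5,10}+\alpha_{2,4}$ across non-adjacent steps.
\item Take $n=12$, $(i,j)=(9,11)$ and $\lambda=[0,0,0,1,1,1,0,0,0,0,0,0]_{\varpi}$, which satisfies Condition $\mathbf{Z}$ with $\varrho=4$. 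The chains diverge at the very first step: $P_{11,3}$ picks row $6$ for $\lambda$ but row $3$ for $\nu$. At $P_{8,3}$ the $\lambda$-chain picks row $3$, where the $\mathbf{P}_{9,3}$-chain picked row $6$. The two families of roots agree only after the exchange $\alpha_{6,11}+\alpha_{3,8}=\alpha_{3,11}+\alpha_{6,8}$ and the splitting $\alpha_{2,10}=\alpha_{5,10}+\alpha_{2,4}$.
\end{itemize}

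\textbf{What a proof must supply.} You have to compare, class by class, the rows picked by the $\lambda$-chain with those picked by the $\mathbf{P}_{i,h}$-chain and the $\nu$-chain together, and also where the moving $-1$ finally stops. The resulting weight depends only on these data. You must show that they produce the same weight; they need not coincide. You must also show that the $\lambda$-chain neither becomes $L$-dominant at $j_s$ prematurely nor becomes undefined. Your proposal names this as the main obstacle but gives no invariant for it, and ``read backwards through the correspondence'' is not an argument for existence.
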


\begin{example}\label{exa:  conmutation}  \rm
In this example we illustrate  \Cref{lem: super-conmutation}.

Let us fix the parameters $(i, j) = (9, 11)$ and $h = 11 - 9 + 1 = 3$. According to the lemma, the integer $s$ can take values in the range $0 \le s \le h-2$, which gives $s \in \{0, 1\}$. 
Thus, the relevant indices are $j_0 = 11$ and $j_1 = 10$. 
Let
\[
    \lambda = [6, 2, 1, 1, 2, 0, 0, 0, 0, 0, 0, 1]_\varpi.
\]
We have
\begin{equation}
    P_{9,3}(\lambda) = \lambda - \alpha_{4,9} = [6, 3, 0, 1, 2, 0, 0, 0, -1, 1, 0, 1]_\varpi .
\end{equation}
In particular,  $\varrho_{9,3}(\lambda) = 4$. 
Therefore, $\lambda$ satisfies Condition $\mathbf{Z}$ since $\pint{\lambda}{\alpha_k} =0$ for all integers $k\in [7,11]$. 
We want to  verify \eqref{eq: super-commutation} for $j_0=11$ and $j_1=11$. 
For $j_0=11$ we have 
\begin{equation}
\begin{aligned}
    \mathbf{P}_{9,3}(\lambda) &= \lambda - \alpha_{4,9} - \alpha_{3,8} - \alpha_{5,7} - \alpha_{4,6} - \alpha_{3,5} \\
    &= [6, 4, 1, 0, 0, 0, 0, 0, 0, 1, 0, 1]_\varpi, \\[5pt]
    \mathbf{P}_{11,3}\mathbf{P}_{9,3}(\lambda) &= \mathbf{P}_{9,3}(\lambda) - \alpha_{3,11} - \alpha_{2,10} \\
    &= [7, 4, 0, 0, 0, 0, 0, 0, 0, 0, 0, 2]_\varpi.
\end{aligned}
\end{equation}
On the other hand, applying $\mathbf{P}_{11,3}$ directly to $\lambda$ yields:
\begin{equation}
\begin{aligned}
    \mathbf{P}_{11,3}(\lambda) &= \lambda - \alpha_{3,11} - \alpha_{5,10} - \alpha_{4,9} -\alpha_{3,8} - \alpha_{5,7} - \alpha_{4,6} - \alpha_{3,5} - \alpha_{2,4} \\
    &= [7, 4, 0, 0, 0, 0, 0, 0, 0, 0, 0, 2]_\varpi \\
    & = \mathbf{P}_{11,3}\mathbf{P}_{9,3}(\lambda).
\end{aligned}
\end{equation}
Similarly, for $j_1 = 10$ we have
\begin{equation}
\begin{aligned}
    \mathbf{P}_{10,3}\mathbf{P}_{9,3}(\lambda) &= \mathbf{P}_{9,3}(\lambda) - \alpha_{2,10} \\
    &= [7, 3, 1, 0, 0, 0, 0, 0, 0, 0, 1, 1]_\varpi, \\[5pt]
    \mathbf{P}_{10,3}(\lambda) &= \lambda - \alpha_{5,10} - \alpha_{4,9} - \alpha_{3,8} - \alpha_{5,7} - \alpha_{4,6}- \alpha_{3,5} - \alpha_{2,4} \\
    &= [7, 3, 1, 0, 0, 0, 0, 0, 0, 0, 1, 1]_\varpi \\ &= \mathbf{P}_{10,3}\mathbf{P}_{9,3}(\lambda).
\end{aligned}
\end{equation}
In both instances, the equality $\mathbf{P}_{j_s, 3}\mathbf{P}_{9,3}(\lambda) = \mathbf{P}_{j_s, 3}(\lambda)$ predicted by the lemma holds. 
\end{example}

\begin{example}\rm
Consider the same parameters as in \Cref{exa:  conmutation} but  the weight 
\[
    \lambda = [6, 2, 3, 0, 2, 1, 0, 0, 0, 0, 0, 1]_{\varpi}.
\]
We have $P_{9,3}(\lambda) = \lambda - \alpha_{1,9} = [5, 2, 3, 0, 2, 1, 0, 0, -1, 1, 0, 1]_{\varpi}$.
In particular, $\varrho_{9,3}(\lambda)=1$. 
Thus,  Condition $\mathbf{Z}$ requires $\langle \lambda, \alpha_t \rangle = 0$ for all $t \in  [4, 11]$. 
Since  $\pint{\lambda}{\alpha_5} = 2\neq 0$, we have that $\lambda$ does not satisfy Condition $\mathbf{Z}$. 

Let us show that \eqref{eq: super-commutation} does not hold in this case. 
We first notice that

\begin{equation}
\begin{aligned}
    \mathbf{P}_{9,3}(\lambda) &= \lambda - \alpha_{1,9}- \alpha_{6,8}- \alpha_{5,7}- \alpha_{4,6}- \alpha_{3,5} \\
    &= [5, 3, 3, 0, 1, 0, 0, 0, 0, 1, 0, 1]_\varpi.
\end{aligned}
\end{equation}

Therefore, we get
\begin{equation}
\begin{aligned}
    \mathbf{P}_{10,3}\mathbf{P}_{9,3}(\lambda) &= \mathbf{P}_{9,3}(\lambda)- \alpha_{5,10} \\
    &= [5, 3, 3, 1, 0, 0, 0, 0, 0, 0, 1, 1]_\varpi.
\end{aligned}
\end{equation}
On the other hand, we have
\begin{equation}
    \begin{aligned}
         \mathbf{P}_{10,3}(\lambda) &= \lambda- \alpha_{5,10} - \alpha_{4,9}- \alpha_{6,8}- \alpha_{5,7}- \alpha_{4,6}- \alpha_{3,5} \\
    &= [6, 3, 4, 0, 0, 0, 0, 0, 0, 0, 1, 1]_\varpi.
    \end{aligned}
\end{equation}
We conclude that 
$   \mathbf{P}_{10,3}\mathbf{P}_{9,3}(\lambda) \neq \mathbf{P}_{10,3}(\lambda)$. 
Thus \eqref{eq: super-commutation} fails for $j_1=10$. 
\end{example}

\begin{lemma}\rm\label{conm. 2}
Let $\alpha_{i,j} \in \Phi^{\geq 3}$ and set $h = j-i+1$. 
Let $s$ and $m$ be integers satisfying $0 \leq s < h-2$ and $1 \leq m \leq h-2-s$. 
Define $j_s = j-s$ and $i_s = i-s$. 
Let $F \in \mathcal{F}_m(\alpha_{i_s,j_s},B)$, where $B$ is the coloring in which all boxes are white, and let $X_F$ denote its corresponding operator. 
Suppose $\lambda \in X^{+}$ is such that $\pint{\lambda}{\alpha_{t}} = 0$ for all $t \in [i+1,j]$. 
If $X_F(v_{j_s,h}^m\mathbf{P}_{i,h}(\lambda))$ is defined, then $\mathbf{P}_{i,h}(X_F v_{j_s,h}^m(\lambda))$ is also defined, and the following equality holds:
\begin{equation}\label{eq: conm. 2}
    X_{F}(v_{j_s,h}^{m}\mathbf{P}_{i,h}(\lambda)) = \mathbf{P}_{i,h}(X_{F} v_{j_s,h}^m(\lambda)).
\end{equation}
\end{lemma}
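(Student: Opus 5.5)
We argue by induction on the length of the filling sequence $F$; equivalently, on the number of non-identity factors of $X_F=X_{\ell-1}\cdots X_1$. Each non-identity factor is of the form $v^{l}_{b,h}$ with $j_s-m+1\le b\le j_s$ and $l\le m$. It therefore suffices to prove two one-step commutation statements and chain them. The first is
\begin{equation}
v^{m}_{j_s,h}\mathbf{P}_{i,h}(\lambda)=\mathbf{P}_{i,h}v^{m}_{j_s,h}(\lambda).
\end{equation}
The second says that for any dominant $\nu$ which vanishes on $\alpha_t$ for $t\in[i+1,j]$ outside a controlled window, and any admissible $v^{l}_{b,h}$ occurring in $X_F$, we have $v^{l}_{b,h}\mathbf{P}_{i,h}(\nu)=\mathbf{P}_{i,h}v^{l}_{b,h}(\nu)$ whenever the left-hand side is defined. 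Each statement includes definedness of the right-hand side.

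First I will describe $\mathbf{P}_{i,h}(\lambda)$ explicitly. Since $\pint{\lambda}{\alpha_t}=0$ on $[i+1,j]$, \Cref{lem: ceros por Ps} and \Cref{lem: 0s de PPPPPPP} show that $\mathbf{P}_{i,h}(\lambda)=\lambda-\sum_r\alpha_{\varrho_r,i-r+1}$. All subtracted roots end at indices $\le i$. Hence $\mathbf{P}_{i,h}$ alters only the coordinates $\pint{\cdot}{\alpha_t}$ with $t\le i+1$, and it raises $\pint{\cdot}{\alpha_{i+1}}$ by $1$.

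Next, the operator $v^{l}_{b,h}=Q^{l}_{b,h}P^{l}_{b,h}$ with $b\le j_s\le j$ and $b-l+1\ge j_s-m+1>i+1$ uses $P$-operators whose roots $\alpha_{\varrho,b'}$ have right ends $b'\in[b-l+1,b]\subset[i+2,j]$. Its $Q$-operators subtract roots lying strictly to the right of $j_s-m+1$. The key point is then that the integers $\varrho_{b',h}$ computed on $\mathbf{P}_{i,h}(\nu)$ and on $\nu$ are related in a specific way. Condition $\mathbf Z$ holds automatically for $\lambda$ because of the zeros on $[i+1,j]$, and it is needed here: \Cref{lem: super-conmutation} shows that the $L$-dominance search for $P_{b',h}$ either stops at the same $\varrho$, or, on $\mathbf{P}_{i,h}(\nu)$, it stops earlier exactly where $\mathbf{P}_{i,h}$ created positive entries. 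In either case, after the whole block $P^{l}_{b,h}$ the total subtraction agrees with first doing $P^{l}_{b,h}$ and then running the $\mathbf{P}_{i,h}$ search. This is the analogue, for $b<j$, of the bookkeeping in \Cref{claim complicado}. The $Q$-part does not interact with $\mathbf{P}_{i,h}$, because it only reads coordinates $\ge j_s-m+1>i+1$, except at $\alpha_{i+1}$ when $b-l+1=i+2$. In that case the $+1$ produced by $\mathbf{P}_{i,h}$ is exactly what \Cref{lem: thetas para el caso h-1} needs to force the same $\vartheta$'s.

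Finally, I will verify that the intermediate weights $X_{r}\cdots X_1v^m_{j_s,h}(\lambda)$ still vanish on the relevant part of $[i+1,j]$, namely outside the white/gray window recorded by the filling sequence. This keeps the induction hypothesis alive; it follows from \Cref{lem: ceros por Qs} and \Cref{remark producto interno}. Definedness then propagates backwards: if every step on the left is defined, the matching step on the right is defined by the comparison of the $\varrho$ and $\vartheta$ integers. I expect the main obstacle to be the second step, namely the comparison of $\varrho_{b',h}(\mathbf{P}_{i,h}(\nu))$ with $\varrho_{b',h}(\nu)$ when the residue classes of $b'$ and $i$ modulo $h$ interleave. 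I would try to settle this first, by a case split on whether $b'\equiv i+1\pmod h$, mirroring Cases A–D of \Cref{lem: first blackP}.
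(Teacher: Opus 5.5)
Your reduction to one-step commutations is reasonable. So is the observation that the $Q$-parts are harmless: every $Q_{b',h}$ that occurs has $b'\ge j_s-m+1\ge i+2$, so it only reads coordinates that $\mathbf{P}_{i,h}$ does not touch. In particular it never reads $\alpha_{i+1}$, so your appeal to \Cref{lem: thetas para el caso h-1} is moot. But this is the easy part. The left ends $\varrho_{b',h}\le b'-h+1\le i_s$ of the roots subtracted by each $P$-block lie exactly in the region that $\mathbf{P}_{i,h}$ rewrites, so the two operators genuinely interact. The whole content of the lemma is the step you postpone: that doing the $P$-block and then the $\mathbf{P}_{i,h}$-search subtracts the same total as the reverse order, with definedness passing from left to right. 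You assert this rather than prove it, and the justification you offer fails on two counts.
\begin{itemize}
\item Condition $\mathbf{Z}$ is not automatic. It requires vanishing on $[\varrho+h,j]$ with $\varrho=\varrho_{i,h}(\lambda)$, and this interval can reach far below $i+1$. For $\alpha_{7,9}$ and $\lambda=\varpi_2+\varpi_7$ one has $\varrho_{7,3}(\lambda)=2$ and $\mathbf{P}_{7,3}(\lambda)=\lambda-\alpha_{2,7}$ defined, yet $\pint{\lambda}{\alpha_7}=1$.
\item \Cref{lem: super-conmutation} is an absorption identity $\mathbf{P}_{j_s,h}\mathbf{P}_{i,h}(\lambda)=\mathbf{P}_{j_s,h}(\lambda)$ between bold operators run until $L$-dominance. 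It says nothing about the individual integers $\varrho_{b',h}$ inside a block $P^{l}_{b,h}$ of fixed length, so it cannot give the dichotomy you attribute to it.
\end{itemize}

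That dichotomy also misses the actual mechanism. Take the paper's example for this lemma ($\alpha_{11,14}$, $h=4$, the $\lambda$ given there). Then $\varrho_{13,4}$ equals $6$ on $P_{14,4}(\lambda)$ but $2$ on $P_{14,4}\mathbf{P}_{11,4}(\lambda)$. The search moves further left, because $\mathbf{P}_{11,4}$ has used up the positive entry at position $6$. The resulting discrepancy $\alpha_{2,13}-\alpha_{6,13}=\alpha_{2,5}$ is repaired only because $\mathbf{P}_{11,4}$ applied to $v^{2}_{14,4}(\lambda)$ needs seven $P$-steps instead of six, and the extra step $P_{5,4}$ subtracts exactly $\alpha_{2,5}$. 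So the length of the $\mathbf{P}_{i,h}$-chain itself changes. A proof has to do three things:
\begin{itemize}
\item control this compensation for every residue class of $b'$ modulo $h$;
\item show that the chain reaches $L$-dominance at position $i$ exactly at the predicted step, which is where definedness of $\mathbf{P}_{i,h}(X_Fv^{m}_{j_s,h}(\lambda))$ comes from;
\item propagate the needed zero pattern through every factor of $X_F$.
\end{itemize}
None of this is carried out. The paper also omits this case analysis and offers only examples, so there is no argument there to compare with. As written, your text is a plan whose decisive step is missing, not a proof.
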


%-------------------------------
%ENUNCIADO ANTERIOR
%-------------------------------

% \begin{lemma}\label{conm. 2}
%     Let $\lambda\in X^+$, $\alpha_{i,j} \in \Phi^{\geq 3}$, $h = j-i+1$, and $(s,m)$ be a pair integers satisfying  $0\leq s < h-2$, $1 \leq m \leq h-2-s$.
%     Set $j_s = j-s$.
%     Let $\mathcal{F}_m$ denote the set of $m$-fillings whose rightmost empty box is labeled by $j_s-m$. 
%     If $\lambda$ satisfies the condition Z, then
%     for any $F \in \mathcal{F}_m$ such that 
%     $X_{F}(v_{j_s,h}^{m}\mathbf{P}_{i,h}(\lambda))$
%     is defined, the following identity holds:
%     \begin{equation}\label{eq: conm. 2}
%          X_{F}(v_{j_s,h}^{m}\mathbf{P}_{i,h}(\lambda)) = \mathbf{P}_{i,h}(X_{F} \, v_{j_s,h}^m(\lambda)).
%     \end{equation}
% \end{lemma}

\begin{example}\rm
In this example, we illustrate the commutation relation described in \Cref{conm. 2}.

Let us fix the parameter $\alpha_{i,j} = \alpha_{11,14}$, $h = j-i+1 = 4$ and consider the operator $\mathbf{P}_{11,4}$. 
We will examine its commutation with the sequence of operators $X_F(v^{2}_{14,4}) = (v^1_{13,4})^3(v^{2}_{14,4})^2$.
This sequence arises from the $2$-filling illustrated in the following figure:

\begin{figure}[H]
\centering
\def\Cols{2}
\def\Rows{6}
% Cambia este valor: gray!70 = más oscuro, gray!40 = más claro
\def\CellFill{gray!50}
\begin{tikzpicture}[x=6mm,y=6mm]
  % --- Celdas negras ---
  \foreach \x/\y in {%
    1/3,
    1/2,
    1/1,
    0/0,1/0
  }{
   \fill[\CellFill] (\x,\y) rectangle ++(1,1);
  }
    \foreach \x/\n in {0/13,1/14}
    {
        \node at (0.5+\x,\Rows + 0.5) {\n};
    }
  % --- Rejilla ---
  \draw[step=1, thin] (0,0) grid (\Cols,\Rows);
\end{tikzpicture}
\caption{2-admissible filling sequences}
\label{fig: 2-admissible filling sequences}
\end{figure}

Consider the following weight:
\[
    \lambda = [3, 3, 2, 3, 3, 2, 0, 0, 0, 0, 0, 0, 0, 0, 2, 2, 0, 1, 5, 0]_\varpi.
\]

\medskip
\noindent First, we apply the composite operator $(v^1_{13,4})^3(v^{2}_{14,4})^2\mathbf{P}_{11,4}$ to $\lambda$:
\begin{equation}
\begin{aligned}
    \mathbf{P}_{11,4}(\lambda) &= \lambda - \alpha_{4,11} - \alpha_{3,10} - \alpha_{6,9} - \alpha_{5,8} - \alpha_{3,6} \\
    &= [3, 5, 2, 2, 3, 0, 0, 0, 0, 0, 0, 1, 0, 0, 2, 2, 0, 1, 5, 0]_\varpi, \\[5pt]
    v^{2}_{14,4}\mathbf{P}_{11,4}(\lambda) &= \mathbf{P}_{11,4}(\lambda) - \alpha_{3,17}- \alpha_{2,16} \\
    &= [4, 5, 1, 2, 3, 0, 0, 0, 0, 0, 0, 1, 0, 0, 2, 1, 0, 2, 5, 0]_\varpi, \\[5pt]
    (v^1_{13,4})^3(v^{2}_{14,4})^2\mathbf{P}_{11,4}(\lambda) &= v^{2}_{14,4}\mathbf{P}_{11,4}(\lambda) - \alpha_{3,17}- \alpha_{2,16}- 3\alpha_{2,19} \\
    &= [8, 2, 0, 2, 3, 0, 0, 0, 0, 0, 0, 1, 0, 0, 2, 0, 0, 3, 2, 3]_\varpi.
\end{aligned}
\end{equation}

\medskip
\noindent Next, we reverse the order: we apply the sequence of $v$-operators to the initial weight $\lambda$ first, followed by the $\mathbf{P}$-operator (i.e., we evaluate $\mathbf{P}_{11,4}(v^1_{13,4})^3(v^{2}_{14,4})^2(\lambda)$):
\begin{equation}
\begin{aligned}
    (v^1_{13,4})^3(v^{2}_{14,4})^2(\lambda) &= \lambda - 2\alpha_{3,17}- 2\alpha_{6,16}- 3\alpha_{2,19} \\
    &= [6, 2, 0, 3, 5, 0, 0, 0, 0, 0, 0, 0, 0, 0, 2, 0, 0, 3, 2, 3]_\varpi, \\[5pt]
    \mathbf{P}_{11,4}\Big((v^1_{13,4})^3(v^{2}_{14,4})^2(\lambda)\Big) &= (v^1_{13,4})^3(v^{2}_{14,4})^2(\lambda) - \alpha_{4,11} - \alpha_{3,10} - \alpha_{2,9}  - \alpha_{5,8} - \alpha_{4,7} - \alpha_{3,6} - \alpha_{2,5} \\
    &= [8, 2, 0, 2, 3, 0, 0, 0, 0, 0, 0, 1, 0, 0, 2, 0, 0, 3, 2, 3]_\varpi.
\end{aligned}
\end{equation}

Therefore, 
\[
    (v^1_{13,4})^3(v^{2}_{14,4})^2\mathbf{P}_{11,4}(\lambda) = \mathbf{P}_{11,4}(v^1_{13,4})^3(v^{2}_{14,4})^2(\lambda).
\] 
\end{example}
% Furthermore, observe that the intermediate weight 
% $$\mu = (v^1_{13,4})^3(v^{2}_{14,4})^2(\lambda) = [6, 2, 0, 3, 5, 0, 0, 0, 0, 0, 0, 0, 0, 0, 2, 0, 0, 3, 2, 3]_\varpi$$
% strictly satisfies Condition $\mathbf{Z}$, since $\varrho_{14,4}(\mu) = 4$ and $\mu_{8} = \mu_{9} = \mu_{10} = \mu_{11} = \mu_{12} = \mu_{13} = \mu_{14} = 0$.

\subsection{Proof of positivity}  \label{section proof of positivity}

The following lemma will allow us to handle the case when a bad pair occurs in the Second Inverse Decomosition. 

\begin{lemma} \label{lem: GranLemma}
    Let $\alpha_{i,j} \in \Phi^{\geq 2} $ and  $h=j-i+1$.
    For $\lambda \in X^{+}$ we define
    \begin{equation}
       \mathcal{A}_\lambda^{i,j} \coloneqq \sum_{\mu< \lambda}  \mathbb{N}[q] \bfM_{\mu}^{\succ\alpha_{i,j}}+
         \sum_{t=0}^{i-1} \left( \sum_{\substack{\mu \in X^+ \\\mu\leq  \lambda}}\mathbb{N}[q]\bfM_{\mu}^{\succeq \alpha_{i-t,j-t}} \right). 
    \end{equation}
If  $\lambda$ satisfies Condition $Z$ and  $\mathbf{P}_{i,h}(\lambda)$ is defined then
\begin{equation} \label{eq: DIFF}
    \bfM_{\lambda}^{\succ \alpha_{i-s,j-s}} - q^{D(\mathbf{P}_{i,h} )(\lambda)}  \bfM_{\mathbf{P}_{i,h}(\lambda)}^{\succ \alpha_{i-s,j-s}} \in  \mathcal{A}_\lambda^{i,j},
\end{equation}
for all $ 0\leq s \leq h-1  $. 
\end{lemma}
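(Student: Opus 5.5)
We would argue by a double induction: an outer induction on $\lambda$ in the dominance order, and an inner induction on $s$ from $0$ up to $h-1$. The key tool is the Second Inverse Decomposition (\Cref{prop: second version refinado}), applied to both $\lambda$ and $\mathbf{P}_{i,h}(\lambda)$ for the roots $\alpha_{i-s,j-s}$. We then compare the resulting expansions term by term, using the commutation rules \Cref{lem: super-conmutation} and \Cref{conm. 2}.

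\emph{Step 1: the base case $s=0$.} We expand $\bfM_{\lambda}^{\succ\alpha_{i,j}}$ via the SID for $\alpha_{i,j}$:
\[
\bfM_{\lambda}^{\succ\alpha_{i,j}}=\bfM_{\lambda}^{\succeq\alpha_{i,j}}+q^{d}\bfM_{\mathbf P_{j,h}(\lambda)}^{\succ\alpha_{i,j}}+\sum_m q^{d_m}X_m+(\text{bad pair}).
\]
We expand $\bfM_{\mathbf{P}_{i,h}(\lambda)}^{\succ\alpha_{i,j}}$ in the same way. Condition $\mathbf Z$ forces $\pint{\lambda}{\alpha_t}=0$ on $[\varrho+h,j]$, so $k_{j,h}(\lambda)$ does not exist and we are in Case II of the SID. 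By \Cref{lem: super-conmutation}, $\mathbf P_{j,h}\mathbf P_{i,h}(\lambda)=\mathbf P_{j,h}(\lambda)$ whenever the left side is defined. Hence the two $\mathbf P$-terms cancel in the difference up to a term $q^{(\cdot)}\bfM^{\succ\alpha_{i,j}}_{\mathbf P_{j,h}(\lambda)}$ with nonnegative coefficient and a strictly smaller weight, which lies in $\mathcal A^{i,j}_\lambda$. One checks the exponents match by additivity of degrees.

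The good pairs $X_m$ of $\lambda$ expand positively by \Cref{prop: caso bueno}. Those of $\mathbf P_{i,h}(\lambda)$ expand as $\sum_F q^{(\cdot)}\bfM^{\succeq\alpha_{i-m,j-m}}_{X_F v^m\mathbf P_{i,h}(\lambda)}$. By \Cref{conm. 2} each such weight equals $\mathbf P_{i,h}(X_Fv^m(\lambda))$, so the difference of the two good-pair contributions regroups into differences of the form
\[
\bfM^{\succeq\alpha_{i-m,j-m}}_{\nu}-q^{D(\mathbf P_{i,h})(\nu)}\bfM^{\succeq\alpha_{i-m,j-m}}_{\mathbf P_{i,h}(\nu)},\qquad \nu=X_Fv^m(\lambda)<\lambda .
\]
We write $\bfM^{\succeq\alpha_{i-m,j-m}}=\bfM^{\succ\alpha_{i-m-1,j-m-1}}$. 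If $m=h-1$ this fails; we handle it by noting that $\Phi^{\succeq\alpha_{i-m,j-m}}=\Phi^{\succ \alpha_{i-m-1,j-m-1}}$ for $m<i-1$. These are then differences of the type \eqref{eq: DIFF} for the smaller weight $\nu$ and shift $s=m+1$, which lie in $\mathcal A^{i,j}_\nu\subseteq\mathcal A^{i,j}_\lambda$ by the outer induction. This requires checking that $\nu$ again satisfies Condition $\mathbf Z$, which we expect to follow from the zero pattern produced by the $v$-operators. Unmatched terms, namely those where $X_F v^m(\lambda)$ is defined but $\mathbf P_{i,h}$ of it is not, survive with positive sign and are harmless.

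\emph{Step 2: the bad pairs.} The SID of $\lambda$ contributes $-q^{e_1}\bigl(\bfM_{\mathbf v(\lambda)}-q^{e_2}\bfM_{\mathbf P_{i,h}\mathbf v(\lambda)}\bigr)$ on the right. The leading term $\bfM^{\succ\alpha_{i,j}}_\lambda$ itself is moved to the left; it does not cancel. The important observation is that this bad pair is itself a difference of type \eqref{eq: DIFF} with $s=0$ at the strictly smaller weight $\mathbf v_{j,h}(\lambda)$, and $\mathbf v_{j,h}(\lambda)$ satisfies Condition $\mathbf Z$ by construction of $\mathbf v$. The bad pair of $\mathbf P_{i,h}(\lambda)$ is handled the same way. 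We must verify that they enter with the correct relative sign so that, after cancellation against matching terms via the commutation rules, only a positive multiple of such differences remains. This sign bookkeeping is where we expect trouble.

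\emph{Step 3: general $s$.} For $s\ge1$ we use $\bfM^{\succ\alpha_{i-s,j-s}}=\bfM^{\succeq\alpha_{i-s+1,j-s+1}}=\bfM^{\succ\alpha_{i-s+1,j-s+1}}-q\bfM^{\succ\alpha_{i-s+1,j-s+1}}_{(\cdot)-\alpha_{i-s+1,j-s+1}}$, or equivalently the SID for $\alpha_{i-s+1,j-s+1}$, and reduce to the case $s-1$ together with terms already lying in $\mathcal A^{i,j}_\lambda$; note that $\bfM^{\succeq\alpha_{i-t,j-t}}_\mu$ with $\mu\le\lambda$ is allowed there. The main obstacle is making the term-by-term matching between the SID of $\lambda$ and that of $\mathbf P_{i,h}(\lambda)$ exact. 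This means showing that every negative term is cancelled or absorbed into an inductively controlled difference, which depends delicately on \Cref{conm. 2}, including the cases where one side is undefined. We would first try to settle this by proving that, under Condition $\mathbf Z$, definedness of $X_Fv^m\mathbf P_{i,h}(\lambda)$ is equivalent to definedness of $\mathbf P_{i,h}X_Fv^m(\lambda)$, and that the degrees differ by exactly $D(\mathbf P_{i,h})$ of the respective arguments.
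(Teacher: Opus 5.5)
Your ingredients are the right ones: the Second Inverse Decomposition for $\lambda$ and for $\mathbf{P}_{i,h}(\lambda)$, cancellation of the $\mathbf{P}$-terms via the super-commutation lemma, and regrouping the good pairs via the second commutation rule into differences of type \eqref{eq: DIFF} at smaller weights. But your induction on $s$ runs in the wrong direction, and this leaves a term unaccounted for. Write $\Delta_\lambda(s)$ for the left-hand side of \eqref{eq: DIFF}. Subtracting the two SID expansions at the root $\alpha_{i-s,j-s}$ produces, besides the terms you discuss, the difference of the leading terms
\[
\bfM^{\succeq\alpha_{i-s,j-s}}_{\lambda}-q^{D(\mathbf{P}_{i,h})(\lambda)}\bfM^{\succeq\alpha_{i-s,j-s}}_{\mathbf{P}_{i,h}(\lambda)}.
\]
This contains a negative term, and your Step 1 never addresses it. Since $\Phi^{\succeq\alpha_{i-s,j-s}}=\Phi^{\succ\alpha_{i-s-1,j-s-1}}$, it is exactly $\Delta_\lambda(s+1)$: the same statement for the \emph{same} weight and a larger shift. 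Induction on dominance does not supply it, because the weight has not decreased, and an upward induction on $s$ does not supply it either.

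Your Step 3 tries to deduce $\Delta_\lambda(s)$ from $\Delta_\lambda(s-1)$, but the actual identity is $\Delta_\lambda(s-1)=\Delta_\lambda(s)+\bigl(E_\lambda-q^{D(\mathbf{P}_{i,h})(\lambda)}E_{\mathbf{P}_{i,h}(\lambda)}\bigr)$. Here $E_\mu$ collects the $\mathbf{P}$-term, good pairs and bad pair of the SID of $\mu$ at $\alpha_{i-s+1,j-s+1}$. So knowing $\Delta_\lambda(s-1)\in\mathcal{A}^{i,j}_\lambda$ only exhibits $\Delta_\lambda(s)$ as a cone element \emph{minus} the positive terms of $E_\lambda$, for instance $q^{(\cdot)}\bfM^{\succ\alpha_{i-s+1,j-s+1}}_{\mathbf{P}_{j-s+1,h}(\lambda)}$. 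That says nothing about membership in the cone. As written, Steps 1 and 3 are circular.

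The fix is a downward induction on $s$ with base case $s=h-1$. There $j-s=i$, so the $\mathbf{P}$-term of the SID of $\lambda$ at $\alpha_{i-h+1,i}$ is $q^{D(\mathbf{P}_{i,h})(\lambda)}\bfM^{\succ\alpha_{i-h+1,i}}_{\mathbf{P}_{i,h}(\lambda)}$. This cancels the subtracted term identically, with no expansion of $\mathbf{P}_{i,h}(\lambda)$ needed. What remains is $\bfM^{\succeq\alpha_{i-h+1,i}}_\lambda$ (admissible since $i\ge h$), the good pairs and the bad pair. For $s<h-1$ the leading difference is then $\Delta_\lambda(s+1)$, already known. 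Your good-pair differences are $\Delta_\nu(s+m+1)$ with $\nu=X_Fv^m_{j-s,h}(\lambda)$, so they are covered by the same downward induction.

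Two smaller corrections:
\begin{itemize}
\item $\mathbf{P}_{i,h}(\lambda)$ satisfies $\pint{\mathbf{P}_{i,h}(\lambda)}{\alpha_{i+1}}=1$ and vanishes on $[i+2,j]$. So its SID falls in the case where $k$ exists and has no bad pair. This is fortunate: such a pair would enter with a negative sign and could not be handled ``the same way''.
\item For $s\ge1$, the bad pair of $\lambda$ is a difference of type \eqref{eq: DIFF} for the root $\alpha_{i-s,j-s}$ and the operator $\mathbf{P}_{i-s,h}$, not $\mathbf{P}_{i,h}$. It therefore needs induction on $i$, as in the paper, or an outer induction over all roots simultaneously. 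The $s=0$ dominance argument of your Step 2 does not cover it.
\end{itemize}
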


\begin{proof}
We proceed by induction on $i$. 
We notice that if $i<h$, then $i-h+1<1$.
Thus, the root $\alpha_{i-h+1,i}$ is not defined. 
It follows that $\mathbf{P}_{i,h}(\lambda)$ is not defined.
Therefore, we must have $i\geq h$, and the base case of our induction is the case $i=h$.

For $0\leq s \leq h-1$ we set $i_s=i-s$ and $j_{s}=j-s$. 
    
By combining \Cref{prop: second version refinado} and \Cref{prop: caso bueno}, we obtain
        \begin{equation}\label{eq:super-lemma-A}
        \bfM_{\lambda}^{\succ \alpha_{i_s,j_s}} =   \bfM_{\lambda}^{\succeq \alpha_{i_s,j_s}} +q^{D(\mathbf{P}_{j_s,h})(\lambda)} \bfM_{ \mathbf{P}_{j_s,h}(\lambda )}^{\succ \alpha_{i_s,j_s}} + \sum_{m=1}^{h-2}  q^{D(v_{j_s,h}^m)(\lambda)}  X_m + q^{D(\mathbf{v}_{j,h})(\lambda)}Y_s,
    \end{equation}
where 
\begin{equation}\label{eq: XXX}
    X_m = \displaystyle\sum_{F\in \mathcal{F}_m} q^{D(X_F)(v_{j_s,h}^m(\lambda))} \mathbf{M}_{X_F(v_{j_s,h}^m(\lambda))}^{\succeq \alpha_{i_s-m,j_s-m}}.
\end{equation}
and 
\begin{equation}\label{eq: YsubS}
    Y_s=  \bfM_{\mathbf{v}_{j_s,h}(\lambda )}^{\succ \alpha_{i_s,j_s}}-q^{D(\mathbf{P}_{i_s,h})(\mathbf{v}_{j_s,h}(\lambda ))}\bfM_{\mathbf{P}_{i_s,h}\mathbf{v}_{j_s,h}(\lambda )}^{\succ \alpha_{i_s,j_s}}.  
\end{equation}

On the other hand, we notice that $\pint{\mathbf{P}_{i,h}(\lambda)}{\alpha_{i+1}} = 1$ and $\pint{\mathbf{P}_{i,h}(\lambda)}{\alpha_{t}} = 0$ for all $t \in [i+2, j]$. 
Thus, \Cref{prop: second version refinado} and \Cref{prop: caso bueno} yield
\begin{equation} \label{eq: M de P_i}
     \mathbf{M}_{\mathbf{P}_{i,h}(\lambda)}^{\succ \alpha_{i_s ,j_s}} = \mathbf{M}_{\mathbf{P}_{i,h}(\lambda)}^{\succeq \alpha_{i_s ,j_s}} +  q^{D(\mathbf{P}_{j_s,h})(\mathbf{P}_{i,h}(\lambda))}\mathbf{M}_{\mathbf{P}_{j_s,h}\mathbf{P}_{i,h}(\lambda)}^{\succ \alpha_{i_s ,j_s}} + \sum_{m=1}^{h-2-s}q^{D(v_{j_s,h}^m)(\mathbf{P}_{i,h}(\lambda))}X'_m,
\end{equation}
where 
\begin{equation}\label{eq: XXX prime}
    X'_m =  \sum_{F\in \mathcal{F}_m} q^{D(X_F)(v_{j_s,h}^m(\mathbf{P}_{i,h}(\lambda)))} \mathbf{M}_{X_F(v_{j_s,h}^m(\mathbf{P}_{i,h}(\lambda)))}^{\succeq \alpha_{i_s-m,j_s-m}}.
\end{equation}

We insist on our convention that if a weight $\mu$ is not defined, then the corresponding $\bfM$-term is set equal to zero.
In order to save space, we set
\begin{equation}
    \Delta_\lambda^{i}(s) \coloneqq   \bfM_{\lambda}^{\succ \alpha_{i-s,j-s}} - q^{D(\mathbf{P}_{i,h} )(\lambda)}  \bfM_{\mathbf{P}_{i,h}(\lambda)}^{\succ \alpha_{i-s,j-s}}. 
\end{equation}

We first treat the base case $i=h$. 
We split the proof into three cases.

\begin{itemize}
    \item $s=h-1$.  We have $j_s=j-(h-1)=i$. Therefore, by \eqref{eq:super-lemma-A} we have
    \begin{equation}
\Delta_{\lambda}^i(s)  =   \bfM_{\lambda}^{\succeq \alpha_{i_s,j_s}}  + \sum_{m=1}^{h-2}  q^{D(v_{j_s,h}^m)(\lambda)}  X_m + q^{D(\mathbf{v}_{j,h})(\lambda)}Y_s.  
    \end{equation}
By \eqref{eq: XXX} we know that $X_m\in \mathcal{A}_{\lambda}^{i,j}$ for all $1\leq m \leq h-2$.  
Thus, we only need to analyze the element $Y_s$. 
We notice that in this case $i_s<i=h$.
By the discussion in the first paragraph of this proof, we conclude that the weight $\mathbf{P}_{i_s,h}\mathbf{v}_{j_s,h}(\lambda )$ is not defined. 
In accordance with our conventions, \eqref{eq: YsubS} reduces to 
\begin{equation}\label{eq: YYY solo}
       Y_s=  \bfM_{\mathbf{v}_{j_s,h}(\lambda )}^{\succ \alpha_{i_s,j_s}}.
\end{equation}
Since $\mathbf{v}_{j_s,h}(\lambda)<\lambda$, it follows that $Y_s\in \mathcal{A}_{\lambda}^{i,j}$. 
Therefore, $\Delta_{\lambda}^i(s) \in  \mathcal{A}_{\lambda}^{i,j}$, as we wanted to show. 

\item $s=h-2$. We combine \eqref{eq:super-lemma-A}
 and \eqref{eq: M de P_i} to compute $\Delta_{\lambda}^i(s)$.
 By using \Cref{lem: super-conmutation}, we get
 \begin{equation}
     \mathbf{P}_{j_s,h}(\lambda) = \mathbf{P}_{j_s,h}\mathbf{P}_{i,h}(\lambda) \qquad \mbox{and} \qquad D(\mathbf{P}_{j_s,h})(\lambda) = D(\mathbf{P}_{i,h})(\lambda) + D(\mathbf{P}_{j_s,h})(\mathbf{P}_{i,h})(\lambda). 
 \end{equation}
 Therefore, the corresponding terms cancel each other in $\Delta_{\lambda}^i(s)$.
 Thus, in this case the lemma reduces to proving that both 
 \begin{equation}
  \bfM_{\lambda}^{\succeq \alpha_{i_s,j_s}} - q^{D(\mathbf{P}_{i,h})(\lambda)} \bfM_{\mathbf{P}_{i,h}(\lambda)}^{\succeq \alpha_{i_s,j_s}}   \qquad \mbox{and} \qquad    Y_s
 \end{equation}
 belong to $\mathcal{A}_{\lambda}^{i,j}$. 

On the one hand, we have
\begin{equation} \label{eq: case h-2}
   \bfM_{\lambda}^{\succeq \alpha_{i_s,j_s}} - q^{D(\mathbf{P}_{i,h})(\lambda)} \bfM_{\mathbf{P}_{i,h}(\lambda)}^{\succeq \alpha_{i_s,j_s}} = \Delta_{\lambda}^i(s+1).  
\end{equation}
The right-hand side of \eqref{eq: case h-2} belongs to $\mathcal{A}_{\lambda}^{i,j}$ by the case $s=h-1$ treated above. 

On the other hand, if $h>2$, then $i_s<i=h$, so we can conclude that $Y_s\in \mathcal{A}_{\lambda}^{i,j}$ by arguing as in the previous case. 
If $h=2$, then $s=0$, so that $Y_s= \Delta_{\mathbf{v}_{j,h}(\lambda)}^i(s)$. 
Since $\mathbf{v}_{j,h}(\lambda) <\lambda$, we can argue by induction on the dominance order to conclude that $Y_s\in \mathcal{A}_{\lambda}^{i,j}$.

\smallskip
 \item Let $0\leq s < h-2$.
 We argue in the same fashion as in the previous case. 
That is, we combine \eqref{eq:super-lemma-A}
 and \eqref{eq: M de P_i} to compute $\Delta_{\lambda}^i(s)$. 
 After this, we use \Cref{lem: super-conmutation} to cancel the terms indexed by $ \mathbf{P}_{j_s,h}(\lambda) $ and $\mathbf{P}_{j_s,h}\mathbf{P}_{i,h}(\lambda)$. 
 Next, we use downward induction on $s$ to conclude that the difference on the left-hand side of \eqref{eq: case h-2} belongs to $\mathcal{A}_{\lambda}^{i,j}$. 
 Similarly, the term $Y_s$ can be shown to belong to $\mathcal{A}_{\lambda}^{i,j}$ by the same argument as in the previous case, namely: either $Y_s \in \mathcal{A}_{\lambda}^{i,j}$ if $s>0$ by \eqref{eq: YYY solo}, or $Y_s \in \mathcal{A}_{\lambda}^{i,j}$ if $s=0$ by induction on the dominance order. 
 
 All in all, we realize that negative terms in $\Delta_{\lambda}^i(s)$ can only arise from the differences between the $X_m$ and $X_m'$ terms. 
 Concretely, this case reduces to proving that 
\begin{equation}
   q^{D(v_{j_s,h}^m)(\lambda)}  X_m  - q^{D(\mathbf{P}_{i,h})(\lambda)} q^{D(v_{j_s,h}^m)(\mathbf{P}_{i,h}(\lambda))}X'_m \in \mathcal{A}_{\lambda}^{i,j}
\end{equation}
for all $1\leq m \leq h-2-s$. 

By combining \eqref{eq: XXX} and \eqref{eq: XXX prime}, together with the additivity of the degrees, it is enough to prove that  
\begin{equation} \label{eq: it is enough A}
   q^{D(X_F\, v_{j_s,h}^m)(\lambda)}  \mathbf{M}_{X_F(v_{j_s,h}^m(\lambda))}^{\succeq \alpha_{i_s-m,j_s-m}}  - q^{D(X_F\, v_{j_s,h}^m \mathbf{P}_{i,h})(\lambda)} \mathbf{M}_{X_F(v_{j_s,h}^m(\mathbf{P}_{i,h}(\lambda)))}^{\succeq \alpha_{i_s-m,j_s-m}}\in \mathcal{A}_{\lambda}^{i,j},
\end{equation}
for all $1\leq m \leq h-2-s$. 

If $X_F(v_{j_s,h}^m(\mathbf{P}_{i,h}(\lambda)))$ is not defined, then we are done by our conventions. 
Otherwise, by \Cref{conm. 2} we have that
\begin{equation}\label{eq: XXX conm}
    X_F(v_{j_s,h}^m(\mathbf{P}_{i,h}(\lambda))) = \mathbf{P}_{i,h}(X_F(v_{j_s,h}^m(\lambda))).
\end{equation}
and 
\begin{equation} \label{eq: XXX  conm degrees}
    D(  X_F \, v_{j_s,h}^m \mathbf{P}_{i,h}  )(\lambda) = D(\mathbf{P}_{i,h})(X_F \, v_{j,h}^m (\lambda)) + D(X_F \, v_{j,h}^m )(\lambda).
\end{equation}

By replacing \eqref{eq: XXX conm} and \eqref{eq: XXX  conm degrees} into \eqref{eq: it is enough A}, we see that it suffices to show that 
\begin{equation}
   \mathbf{M}_{X_F\, v_{j_s,h}^m(\lambda)}^{\succeq \alpha_{i_s-m,j_s-m}}  - q^{ D(\mathbf{P}_{i,h})( X_F \, v_{j_s,h}^m  (  \lambda))} \mathbf{M}_{  \mathbf{P}_{i,h} (X_F \, v_{j_s,h}^m (\lambda))}^{\succeq \alpha_{i_s-m,j_s-m}}\in \mathcal{A}_{\lambda}^{i,j}, 
\end{equation}
for all $1\leq m \leq h-2-s$. 
Finally, this last claim follows since $X_F \, v_{j_s,h}^m (\lambda)$ satisfies Condition $Z$ and by downward induction on $s$.
\end{itemize}
This completes the proof of the base case $i=h$. 
If $i>h$, we repeat the same argument word by word. 
Indeed, the only significant difference between the general case and the base case is the argument needed to ensure that the terms $Y_s$ that may arise throughout the analysis belong to $\mathcal{A}_{\lambda}^{i,j}$. In the case $s>0$, we must argue by induction on $i$ (noticing that if $s>0$, then $i_s<i$), in contrast with the base case, where we showed that such terms were not differences by using \eqref{eq: YYY solo}. 
\end{proof}

The following is \Cref{Teo D} in the introduction.

\begin{theorem}\label{teo: theo D in the main}
    Let $\alpha_{i,j} \in \Phi^{\geq 2}$ and $\lambda \in X^+$. 
    Then 
 \begin{equation}  \label{eq: casi casi final}
      \bfM_{\lambda}^{\succ \alpha_{i,j} } \in  \sum_{k=0}^{i-1}  \sum_{\mu \leq \lambda} \mathbb{N}[q] \bfM_{ \mu }^{\succeq \alpha_{i-k,j-k}}.      
 \end{equation}
\end{theorem}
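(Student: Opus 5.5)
We argue by induction on $\lambda$ in the dominance order; within a fixed $\lambda$ we use a secondary induction on $i$. Write $\mathcal{B}_\lambda^{i,j}$ for the right-hand side of \eqref{eq: casi casi final}. The inductive hypothesis gives $\bfM_{\mu}^{\succ\alpha_{i,j}}\in\mathcal{B}_\mu^{i,j}\subseteq\mathcal{B}_\lambda^{i,j}$ for all dominant $\mu<\lambda$. Hence the cone $\mathcal{A}_\lambda^{i,j}$ of \Cref{lem: GranLemma} is contained in $\mathcal{B}_\lambda^{i,j}$: its first summand consists of elements $\bfM_\mu^{\succ\alpha_{i,j}}$ with $\mu<\lambda$, and its second summand is literally part of $\mathcal{B}_\lambda^{i,j}$. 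So it suffices to show $\bfM_\lambda^{\succ\alpha_{i,j}}\in\mathcal{A}_\lambda^{i,j}$.

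The starting point is the Second Inverse Decomposition, \Cref{prop: second version refinado}, with $\bfM_\lambda^{\succ\alpha_{i,j}}$ isolated on one side. This expresses $\bfM_\lambda^{\succ\alpha_{i,j}}$ as the sum of four kinds of terms, each of which lies in $\mathcal{A}_\lambda^{i,j}$.
\begin{enumerate}
\item The term $\bfM_\lambda^{\succeq\alpha_{i,j}}$ is already a generator of $\mathcal{A}_\lambda^{i,j}$ (take $t=0$).
\item The term $q^{d}\bfM_{\mathbf{P}_{j,h}(\lambda)}^{\succ\alpha_{i,j}}$ lies in $\mathcal{A}_\lambda^{i,j}$ because $\mathbf{P}_{j,h}(\lambda)$ is dominant and strictly below $\lambda$, since it is obtained by subtracting positive roots.
\item Each good pair $q^{d_m}X_m$ expands positively by \Cref{prop: caso bueno}. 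To apply it, set $k'=m$. The hypothesis $\pint{\lambda}{\alpha_t}=0$ for $t\in[j-m+1,j]$ holds because $m\le k$, respectively $m\le h-2$, by the definition of $k_{j,h}(\lambda)$. The output consists of terms $\bfM_{X_F(\mu_m)}^{\succeq\alpha_{i-m,j-m}}$ whose weights are dominant and $\le\lambda$; these are generators of $\mathcal{A}_\lambda^{i,j}$ with $t=m\le i-1$.
\item In Case II there is also the bad pair $\bfM_{\mathbf{v}_{j,h}(\lambda)}^{\succ\alpha_{i,j}}-q^{e_2}\bfM_{\mathbf{P}_{i,h}\mathbf{v}_{j,h}(\lambda)}^{\succ\alpha_{i,j}}$, multiplied by $q^{e_1}$. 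This is the remaining obstacle.
\end{enumerate}

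To handle the bad pair, set $\nu=\mathbf{v}_{j,h}(\lambda)$ and apply \Cref{lem: GranLemma} to $\nu$ with $s=0$. This requires two things. First, $\mathbf{P}_{i,h}(\nu)$ must be defined; if it is not, the bad pair reduces to the single term $\bfM_\nu^{\succ\alpha_{i,j}}$, which is handled by induction since $\nu<\lambda$. Second, $\nu$ must satisfy Condition $\mathbf{Z}$. This is the step I would check most carefully. The plan is to read it off from the construction of $\nu$:
\begin{itemize}
\item $P^{h-1}_{j,h}$ produces zeros on the strings below $i$, by \Cref{lem: ceros por Ps} and \Cref{lem: 0s de PPPPPPP};
\item $\nu$ agrees with $P^{h-1}_{j,h}(\lambda)$ at the positions $t\le i$, as noted in the proof of \Cref{lem: ultimo P};
\item on $[i+1,j]$, the computation \eqref{eqS6 mu versus alpha} from the proof of \Cref{lem: completar Q en caso h-1} together with the dominance of $\nu$ forces $\pint{\nu}{\alpha_t}=0$.
\end{itemize}
This should yield $\pint{\nu}{\alpha_t}=0$ for all $t\in[\varrho_{i,h}(\nu)+h,j]$, which is Condition $\mathbf{Z}$.

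Granting this, \Cref{lem: GranLemma} places the bad pair in $\mathcal{A}_\nu^{i,j}$. Since $\nu<\lambda$, we have $\mathcal{A}_\nu^{i,j}\subseteq\mathcal{A}_\lambda^{i,j}$. Multiplying by $q^{e_1}$ preserves membership, which finishes the induction step.

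The base of the induction is a minimal $\lambda$. There, every auxiliary weight produced by the operators is undefined, and the Second Inverse Decomposition collapses to $\bfM_\lambda^{\succ\alpha_{i,j}}=\bfM_\lambda^{\succeq\alpha_{i,j}}$, which is trivially in the cone.

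Iterating this theorem then gives \Cref{Teo E}. Each $\bfM_\mu^{\succeq\alpha_{i-k,j-k}}$ equals $\bfM_\mu^{\succ\alpha'}$ for the $\preceq$-predecessor $\alpha'$ of $\alpha_{i-k,j-k}$. We apply the theorem to these terms and descend through the roots of height $h$ until we reach $\Phi^{\succeq\alpha_{1,h}}=\Phi^{\geq h}$. This descent terminates because the roots of height $h$ form a finite chain under $\preceq$.
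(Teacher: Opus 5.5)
Your proposal is correct and follows the paper's proof essentially step for step. Both arguments use induction on the dominance order and the Second Inverse Decomposition (\Cref{prop: second version refinado}), handle the $\mathbf{P}$-term by induction, expand the good pairs by \Cref{prop: caso bueno}, and dispose of the bad pair by \Cref{lem: GranLemma} applied to $\nu=\mathbf{v}_{j,h}(\lambda)$. Your plan for Condition $\mathbf{Z}$, which the paper only calls easily seen, goes through: \Cref{lem: ceros por Ps} with $k=h$ gives the zeros of $P^{h-1}_{j,h}(\lambda)$, and hence of $\nu$, on $[\varrho_{i,h}(\nu)+h,i]$, and a direct computation gives the zeros on $[i+1,j]$; your announced secondary induction on $i$ is never actually used.
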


\begin{proof}
    We proceed by induction on the dominance order. 
    The minimal elements in $X^+$ with respect to the dominance order are the zero weight and the fundamental weights. 
    For all these weights, it follows from \Cref{prop: second version refinado} and our conventions that 
    \begin{equation}
        \bfM_{\lambda}^{\succ \alpha_{i,j}} = \bfM_{\lambda}^{\succeq \alpha_{i,j}}. 
    \end{equation}
We now fix $\lambda \in X^{+}$ and assume that the result holds for all $\mu <\lambda$. 
We use \Cref{prop: second version refinado} 
to expand $\bfM_{\lambda}^{\succ \alpha_{i,j}}$. 
If the bad pair does not occur in that expansion we apply  induction on the dominance order (since $\mathbf{P}_{j,h}(\lambda)< \lambda$) and 
\Cref{prop: caso bueno}  to obtain \eqref{eq: casi casi final}. 
Otherwise, we also need to apply to  \Cref{lem: GranLemma} to the weight $\mathbf{v}_{j,h}(\lambda)$, which is easily seen to satisfy Condition $\mathbb{Z}$, to get \eqref{eq: casi casi final} in this case. 
\end{proof}

Finally, we are in position to prove our main theorem, this is, \Cref{Teo E} in the introduction, which we recall here for the reader's convenience.  

\begin{theorem}
     Let $\alpha_{i,j} \in \Phi^{\geq 2} $, $h=j-i+1$ and $\lambda \in X^+$. 
    Then, $ \bfM_{\lambda}^{\succ \alpha_{i,j} }$ can be written as a positive linear combination of the $h$-th pre-canonical basis. 
    In particular, $ \bfM_{\lambda}^{\succ \alpha_{n-h+1,n} }=\bfN^{h+1}_\lambda $ can be written as a positive linear combination of the $h$-th pre-canonical basis, which is the content of \Cref{Conj A}.  
\end{theorem}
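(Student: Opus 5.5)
The plan is to iterate \Cref{teo: theo D in the main} along the total order $\preceq$ restricted to roots of height at least $h$, with an induction whose measure is the position of the root within its height level. The target set is $\Phi^{\geq h}=\Phi^{\succeq \alpha_{1,h}}$, and by \eqref{eq: M es N intro} we have $\bfN^h_\mu=\bfM_\mu^{\succeq\alpha_{1,h}}$. It therefore suffices to show that every $\bfM_\lambda^{\succ\alpha_{i,j}}$ with $j-i+1=h$ lies in $\sum_{\mu\leq\lambda}\mathbb{N}[q]\,\bfM_\mu^{\succeq\alpha_{1,h}}$.

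I would argue by induction on $i$, for fixed $h$. Since $\Phi^{\succ\alpha_{i,j}}=\Phi^{\succeq\alpha_{i+1,j+1}}$ when $j<n$, the statement for roots of height $h$ can be reformulated as follows: for every $1\leq i'\leq n-h+1$ and every dominant $\lambda$, the element $\bfM_\lambda^{\succeq\alpha_{i',i'+h-1}}$ is a positive combination of the elements $\bfM_\mu^{\succeq\alpha_{1,h}}$. At $i'=1$ this is trivial.

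For $i'\geq2$, write $\bfM_\lambda^{\succeq\alpha_{i',j'}}=\bfM_\lambda^{\succ\alpha_{i'-1,j'-1}}$ and apply \Cref{teo: theo D in the main} to the root $\alpha_{i'-1,j'-1}$. This expresses the element as a nonnegative combination of $\bfM_\mu^{\succeq\alpha_{i'-1-k,j'-1-k}}$ with $0\leq k\leq i'-2$ and $\mu\leq\lambda$. Every root occurring here has height $h$ and first index strictly smaller than $i'$, so the induction hypothesis applies to each term. Nonnegative coefficients compose to nonnegative coefficients, and dominance $\mu\leq\lambda$ is transitive, so the claim follows. This also covers the case $\alpha_{i,j}=\alpha_{n-h+1,n}$ itself: there $\Phi^{\succ\alpha_{n-h+1,n}}=\Phi^{\geq h+1}$, which gives $\bfN^{h+1}_\lambda$.

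The main point to check is the boundary bookkeeping. First, the equality $\Phi^{\succ\alpha_{i,j}}=\Phi^{\succeq\alpha_{i+1,j+1}}$ holds only when $j<n$. For $j=n$ one instead applies \Cref{teo: theo D in the main} directly to $\alpha_{n-h+1,n}$. This gives terms $\bfM^{\succeq\alpha_{n-h+1-k,n-k}}$, which are again height-$h$ roots, and the induction covers them. Second, one must confirm that the range $k\leq i-1$ in \Cref{teo: theo D in the main} never produces an invalid index. It does not, since $i-k\geq1$. Everything else is formal, so I expect no genuine obstacle beyond this indexing.
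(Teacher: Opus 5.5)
Your proposal is correct and takes the same approach as the paper, whose proof is simply ``a repeated application of \Cref{teo: theo D in the main}.'' You make that iteration explicit as an induction on the first index $i'$ of the height-$h$ root, using $\Phi^{\succ\alpha_{i'-1,j'-1}}=\Phi^{\succeq\alpha_{i',j'}}$ and the fact that each root produced by the theorem has the same height and a strictly smaller first index, and you correctly treat the boundary case $j=n$ separately.
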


\begin{proof}
  This   follows by a repeated application of \Cref{teo: theo D in the main}. 
\end{proof}

\bibliographystyle{alpha} 
\bibliography{references}

\end{document}